\newcommand{\indi}{\mathds{1}}
\newcommand{\rr}{\ensuremath{\mathbb{R}}}
\newcommand{\Rd}{\ensuremath{{\mathbb{R}^d}}}
\newcommand{\ep}{\ensuremath{\varepsilon}}
\newcommand{\bgamma}{\boldsymbol{\gamma}}
\newcommand{\Eep}{\ensuremath{\mathcal{E}_\ep}}
\def\P{{\mathcal P}}
\newcommand{\grad}{\nabla}
\newcommand{\loc}{{\rm loc}}
\newcommand{\KL}{\ensuremath{\text{ KL}}}
\newcommand{\R}{{\mathord{\mathbb R}}}
\newcommand{\N}{{\mathord{\mathbb N}}}
\newcommand{\supp}{{\mathop{\rm supp\ }}}
\newcommand{\id}{\boldsymbol{ \mathop{\rm id}}}
\newcommand{\la}{\left\langle}
\newcommand{\ra}{\right\rangle}
\newcommand{\F}{\mathcal{F}}
\newcommand{\G}{\mathcal{G}}
\newcommand{\V}{\mathcal{V}}
\newcommand{\Ld}{\mathcal{L}^d}
\newcommand{\argmin}{\operatornamewithlimits{argmin}}
\newcommand{\bt}{\mathbf{t}}
\newcommand{\ird}{\int_{\mathord{\mathbb R}^d}}
\newcommand{\E}{\mathcal{E}}
\newcommand{\rS}{{\rm S}}
\newcommand{\J}{{\rm J}}
\newcommand{\teta}{\tilde{\bEta}}
\newcommand{\bEta}{\boldsymbol{\eta}}
\newcommand{\bxi}{\boldsymbol{\xi}}
\renewcommand{\:}{\colon}
\def\P{{\mathcal P}}
\def\S{{\mathcal S}}
\def\epsilon{\varepsilon}
\def\e{\varepsilon}
\def\F{\mathcal{F}}
\newcommand{\be}{\begin{equation}}
\newcommand{\ee}{\end{equation}}
\newcommand{\bes}{\begin{equation*}}
\newcommand{\ees}{\end{equation*}}
\newtheorem{thm}{Theorem}[section]
\newtheorem{cor}[thm]{Corollary}
\newtheorem{prop}[thm]{Proposition}
\newtheorem{lem}[thm]{Lemma}
\theoremstyle{definition}
\newtheorem{defn}[thm]{Definition}
\theoremstyle{remark}
\newtheorem{rem}[thm]{Remark}
\numberwithin{equation}{section}
\begin{document}
\title[A blob method for inhomogeneous diffusion]{A blob method for inhomogeneous diffusion with applications to multi-agent control and sampling}
\date{\today}

\author{Katy Craig}
\address{University of California, Santa Barbara, Department of Mathematics}
\curraddr{}
\email{kcraig@math.ucsb.edu}
\thanks{The work of K. Craig has been supported by NSF DMS grants 1811012 and 2145900, as well as a Hellman Faculty Fellowship. K. Craig and O. Turanova gratefully acknowledge the support from the Simons Center for Theory of Computing, at which part of this work was completed.}

\author{Karthik Elamvazhuthi}
\address{University of California, Riverside, Department of Mechanical Engineering,}
\curraddr{}
\email{kelamvazhuthi@engr.ucr.edu}
\thanks{The work of K. Elamvazhuthi has been supported by AFOSR grants FA9550-18-1-0502 and FA9550-18-1-0502.}

\author{Matt Haberland}
\address{California Polytechnic State University, BioResource and Agricultural Engineering Department}
\curraddr{}
\email{mhaberla@calpoly.edu}

\author{Olga Turanova}
\address{Michigan State University, Department of Mathematics}
\curraddr{}
\email{turanova@msu.edu}
\thanks{\color{black}The work of O. Turanova is supported by NSF DMS grant 1907221 and NSF DMS grant 2204722.}

\subjclass[2020]{Primary 35Q35, 35Q62, 35Q68, 35Q82, 65M12, 82C22, 93A16.}

\date{\today}

\dedicatory{}

\begin{abstract}
As a counterpoint to classical stochastic particle methods for linear diffusion equations, such as Langevin dynamics for the Fokker-Planck equation, we develop a deterministic particle method for the weighted porous medium equation and prove its convergence on bounded time intervals. This generalizes related work on blob methods for unweighted porous medium equations. From a numerical analysis perspective, our method has several advantages: it is meshfree, preserves the gradient flow structure of the underlying PDE,   converges in arbitrary dimension, and captures the correct asymptotic behavior in simulations.

The fact that our method succeeds in capturing the long time behavior of the weighted porous medium equation is significant from the perspective of related problems in quantization. Just as the Fokker-Planck equation provides a way to quantize a probability measure $\bar{\rho}$ by evolving an empirical measure $\rho^N(t) = \frac{1}{N} \sum_{i=1}^N \delta_{X^i(t)}$ according to   stochastic Langevin dynamics so that $\rho^N(t)$ flows toward $\bar{\rho}$, our particle method provides a  way to quantize $\bar{\rho}$ according to   deterministic particle dynamics approximating the weighted porous medium equation. In this way, our method has natural applications to multi-agent coverage algorithms and sampling probability measures.

A specific case of our  method corresponds  to \emph{confined} mean-field dynamics of training a two-layer neural network for a radial basis  activation function. From this perspective, our convergence result shows that, in the overparametrized regime and as the variance of the radial basis functions goes to zero, the continuum limit  is given by the weighted porous medium equation. This generalizes previous results, which considered the case of a uniform data distribution, to the more general inhomogeneous setting.  
As a consequence of our convergence result, we identify conditions on the target function and data distribution for which convexity of the energy landscape emerges in the continuum limit.
  
\end{abstract}

\maketitle

\setcounter{tocdepth}{1}
\tableofcontents

\vspace{-1cm}

\section{Introduction}
Quantization is a fundamental problem throughout the sciences, in which one seeks to approximate a continuum distribution or signal by discrete objects \cite{graf2007foundations}. Mathematically, the quantization problem may be modeled by fixing a \emph{target probability measure} $\bar{\rho}$ on a subset $\Omega$ of $\rr^d$ and seeking locations $\{X^i\}_{i=1}^N$ in $\Omega$ so that the empirical measure $\rho^N = \frac{1}{N} \sum_{i=1}^N \delta_{X^i}$ approximates $\bar{\rho}$ in an appropriate sense. In statistics, this problem arises in the context of \emph{sampling}, since the locations $\{X^i\}_{i=1}^N$  represent approximate samples drawn from $\bar{\rho}$. In control theory, this problem is relevant to \emph{multi-agent coverage algorithms} \cite{cortes2004coverage,bullo2009distributed}, in which one seeks to control a fleet of robots to evolve from their current locations $\{X^i_0\}_{i=1}^N$ to terminal locations $\{X^i\}_{i=1}^N$ distributed according to $\bar{\rho}$.

There is a vast literature on different approaches to   the quantization problem, arising from the many different criteria by which   $\rho^N = \frac{1}{N} \sum_{i=1}^N \delta_{X^i}$ is considered a ``good'' approximation of $\bar{\rho}$. For example, if one seeks $\rho^N$ to approximate $\bar{\rho}$ optimally in the \emph{Wasserstein metric} of \emph{optimal transport}, recent work has shown that this is  closely related to the  well-known Lloyd's algorithm and has a fascinating connection to weighted fast diffusion equations  \cite{iacobelli2018gradient,bourne2021asymptotic,iacobelli2019asymptotic,bourne2015centroidal,caglioti2015gradient,merigot2011multiscale}.  
In the statistics literature, developing efficient sampling methods and quantifying their convergence is an active area of research, from classical methods based on Langevin dynamics to more recent developments, such as Hamiltonian Monte Carlo or Stein Variational Gradient Descent
\cite{eberlelecturenotes, vishnoi2021introduction,liu2016stein}. 
In the control theory literature, recent work has developed multiagent coverage algorithms  based on stochastic and kernelized particle methods for linear diffusions, as well as theoretically explored the potential of nonlinear diffusions for the coverage task, via finite volume and graph-based methods \cite{mesquita2008optimotaxis,elamvazhuthi2016coverage,eren2017velocity,elamvazhuthi2018nonlinear,krishnan2018distributed}. Other authors have  explored the role of different notions of optimality in designing coverage algorithms \cite{MattOlga2018, MattOlga2020}.

In each of these applications,  quantization methods based on partial differential equations play an important role. A classical approach   is given by evolving the locations of the particles by \emph{Langevin dynamics}, 
\begin{align*}
\begin{cases} d X_t^i = \sqrt{2} d B_t^i - \nabla \log \bar{\rho}(X_t^i) dt , \\
X^i(0)= X^i_0 ,
\end{cases}
\end{align*}
which is the stochastic particle discretization of   the  \emph{Fokker-Planck} equation,
 \begin{align} \label{FPeqn}  \tag{${\rm FP}$} 
\begin{cases} \partial_t\rho = \Delta \rho -\nabla \cdot  \left( \rho   \nabla  \log \bar{\rho}  \right)  , \\
    \rho(0)= \rho_0 .
    \end{cases}
   \end{align}

In the present work, we continue in this line of PDE-principled methods for sampling and coverage algorithms.  
We introduce a new  method based on  the   \emph{weighted porous medium equation} (WPME). Given a bounded, convex domain $\Omega \subseteq \Rd$, a   strictly positive target $\bar{\rho} : \Rd \to \R$ that is log-concave on $\Omega$ and satisfies $\int_\Omega \bar{\rho} = 1$,  and a fixed external potential  $V \in C^2(\Omega)$, we consider the equation,
\begin{align} \label{mainpde} \tag{${\rm WPME}$} 
\begin{cases} \partial_t\rho =\nabla \cdot \left( \frac{\bar{\rho}}{2} \nabla \left(\frac{\rho^2}{\bar\rho^2}  \right) \right)  +\nabla \cdot  \left( \nabla V \rho \right)  , \\
    \rho(0)= \rho_0 ,
    \end{cases}
   \end{align}
with   no-flux boundary conditions on $\partial \Omega$. The initial conditions are chosen to satisfy $\rho_0 \geq 0$ and $\int_\Omega \rho_0 =1$. (See  Proposition \ref{prop:PDE ep=0} for the   definition of weak solution.)  

The  dynamics of (\ref{mainpde})  arise in connection to quantization since, for $V=0$,  solutions of (\ref{mainpde}) converge as $t \to +\infty$ to $\bar{\rho}$  on $\Omega$ in the \emph{Wasserstein metric}; see Proposition \ref{longtime}. Consequently, if one can approximate solutions $\rho(t)$ of (\ref{mainpde}) by an empirical measure $\rho^N(t) = \frac{1}{N} \sum_{i=1}^N \delta_{X^i(t)}$, this   naturally leads to a method for flowing the empirical measure toward     $\bar{\rho}$ on $\Omega$ in the long time limit. 

The main goal of the present work is to develop a \emph{deterministic particle method} for (\ref{mainpde}),  constructing an empirical measure $\rho^N(t) = \frac{1}{N}  \sum_{i=1}^N \delta_{X^i(t)} $ and a system of ordinary differential equations to govern the locations of the particles $X^i(t)$ so that   $\rho^N(t)$ indeed converges, as $N \to +\infty$, to a solution $\rho(t)$ of (\ref{mainpde}) on bounded time intervals. In  Sections \ref{asssec}-\ref{mainresultssec} below, we describe the specific assumptions we impose and the precise statements of our results, including  which of our results continue to hold for $\bar{\rho}$ not log-concave, on unbounded domains $\Omega$, and for less regular $V$.

On one hand,  (\ref{mainpde})  is of  interest outside  the context of quantization. Weighted porous medium equations arise throughout the sciences, from models of fluid flow to biological swarming \cite{GMP2013,vazquez2017mathematical}. From this perspective,  Theorem \ref{thm:conv with particle i.d.} of the present work provides a new numerical method for simulating these phenomena. In particular, our work  extends the \emph{blob method} for the porous medium equation ($\bar{\rho} =  1$), which has been  studied by Oelschl\"ager \cite{oelschlager1990large}, Lions and MasGallic \cite{lions2001methode},   Carrillo, Craig, and Patacchini \cite{CarrilloCraigPatacchini}, and  Burger and Esposito \cite{burger2022porous}, to the case of   weighted porous medium  equations. (See below for a more detailed discussion of the relation with these results.)
This provides a provably convergent numerical method for   (\ref{mainpde}) in arbitrary dimensions, contributing to the   substantial   literature on numerical methods for such equations, including classical finite volume, finite element, and discontinuous Galerkin methods \cite{bessemoulin2012finite,burger2010mixed,carrillo2015finite,sun2018discontinuous}, as well as methods based on alternative deterministic particle methods in  one spatial dimension \cite{daneri2022deterministic,carrillo2017numerical,carrillo2016convergence,di2015rigorous,matthes2017convergent},  Lagrangian evolution of the transport map along the flow \cite{evans2005diffeomorphisms,carrillo2010numerical,carrillo2016numerical,matthes2014convergence,westdickenberg2010variational}, and many others \cite{benamou2016discretization,carrillo2021primal,carrillo2019aggregation,gallouet2021convergence}. From a numerical analysis perspective, the key benefits of our approach are that it is meshfree, deterministic, preserves the gradient flow structure and asymptotic behavior, and converges in arbitrary dimension. 

On the other hand, we believe (\ref{mainpde}) is particularly interesting from the perspective of quantization for several reasons. First, as we describe below,    there is a strong analogy between (\ref{mainpde}) and (\ref{FPeqn}), so that a quantization method based on (\ref{mainpde}) provides a counterpoint to classical Langevin dynamics.

A second reason for studying (\ref{mainpde}) in connection with quantization comes from applications in sampling. Over the past five years, Stein Variational Gradient Decent, originally introduced by Liu and Wang \cite{liu2016stein}, has attracted attention in the statistics community as a novel method for sampling a target measure $\bar{\rho}$ via a deterministic interacting particle system, which has a formal Wasserstein gradient flow structure with respect to a \emph{convex} mobility \cite{lu2019scaling,liu2017stein,korba2020non}. Recent work by Chewi et al. \cite{chewi2020svgd} identified that, when $V=0$, Stein Variational Gradient Descent (SVGD) may be interpreted as a kernelized version of (\ref{mainpde}), which has a rigorous Wasserstein gradient flow structure, as we explain below. In this way, understanding properties of (\ref{mainpde}) and its discretizations has the potential shed light on behavior of SVGD more generally.

A third reason for interest in (\ref{mainpde}) from a quantization perspective comes from applications in control theory. This is due to the fact the particle method we succeed in developing for (\ref{mainpde}) is  \emph{deterministic}, an important attribute in the context of coverage algorithms, since the results of the algorithm wouldn't need to be averaged over many runs, and there is hope that future research could  lead to quantitative convergence guarantees. This is in contrast to the case of classical quantization methods based on (\ref{FPeqn}), for which the natural Langevin particle approximation is stochastic.

 A final reason for interest in (\ref{mainpde}) comes from a variant of the quantization problem arising in models of two-layer neural networks. As we will explain below, the particle method we develop to approximate solutions of (\ref{mainpde}) coincides   with \emph{confined} dynamics for training a two-layer neural network with a radial basis function activation function.  In this way, our convergence result sheds light on the continuum limit of two-layer neural networks, showing that they converge to a solution of (\ref{mainpde}) when confined to the domain $\Omega$; see Corollary \ref{2layernncor}. This generalizes the previous convergence result of Javanmard, Mondelli, and Montanari \cite{javanmard2020analysis} to the case of nonuniform data distributions. As a consequence of this result, we are able provide conditions on the target function and data distribution that guarantee that the continuum limit of the training dynamics of two-layer neural networks is the gradient flow of a \emph{convex} energy, where the relevant notion of convexity along Wasserstein gradient flow is \emph{displacement convexity} or \emph{convexity along Wasserstein geodesics}; see Definition \ref{defi:semiconvexity-geod}. This emergence of convexity in the continuum limit is relevant to the behavior of neural networks in practice, where researchers seek to explain why gradient descent dynamics sometimes converge to a global optimum, in spite of the fact that, at the discrete level, the energy landscape is nonconvex \cite{chizat2018global,wojtowytsch2020convergence}.

 The remainder of the introduction proceeds as follows. In Section \ref{Wpmeintro}, we state fundamental properties of  (\ref{mainpde}) and describe the analogy between (\ref{mainpde}) and    (\ref{FPeqn}). In Section \ref{sec1p2}, we  introduce our particle method for approximating   solutions of (\ref{mainpde}). In Section \ref{sec1p3}, we describe the connection with two-layer neural networks. In Sections \ref{asssec} and \ref{mainresultssec}, we state our main assumptions and  results. Finally, in Section \ref{outlinesection}, we outline our approach and describe directions for future work.

\subsection{The weighted porous medium equation}\label{Wpmeintro} 
A key feature of  (\ref{mainpde}), which serves as a guiding principle of the present work, is that is it a  \emph{Wasserstein gradient flow} of the energy, 
\begin{align} \label{Fdef}
 \F: \P(\Rd) \to \R \cup \{+\infty\} , \quad \F(\mu) &= \E(\mu) + \V(\mu) + \V_\Omega(\mu) , \end{align}
where $\P(\Rd)$ denotes the set of Borel probability measures on $\Rd$; and the \emph{internal energy} $\E$, \emph{external potential energy} $\V$, and   \emph{confining potential energy} $\V_\Omega$ are given by,
\begin{align} \label{Edef}
\E(\mu) &=  \begin{cases} \frac12 \int_{\Rd} \frac{|\mu(x)|^2}{\bar\rho(x)}\, dx &\text{ if } \mu \ll \bar{\rho}(x) dx \text{ and } d \mu(x) = \mu(x) dx , \\ +\infty &\text{ otherwise,}\end{cases} \\
 \V(\mu) &= \int_\Rd V(x) d \mu(x) , \\
  \mathcal{V}_\Omega(\mu) &= \begin{cases} 0 &\text{ if } \supp \mu \subseteq \overline{\Omega} , \\
 +\infty&\text{ otherwise.}  \end{cases} \label{Vdef} \end{align}
The internal energy $\E$ induces the nonlinear diffusion term, the external potential   $\V$ induces the convection term, and the confining potential    $\V_\Omega$ restricts the dynamics to $\Omega$, with no-flux boundary conditions on $\partial \Omega$. Our primary interest, and the main mathematical challenge in establishing our results, is in the nonlinear diffusion induced by $\E$ and its approximation by a deterministic particle method.
In Section \ref{preliminariessec}, we provide detailed background on the Wasserstein metric $W_2$ and Wasserstein gradient flows. In  Proposition \ref{prop:PDE ep=0}, we recall the precise statement of the result that solutions of (\ref{mainpde}) are the gradient flow of $\F$ .

The fact that (\ref{mainpde}) has a gradient flow structure  is in close analogy with the (\ref{FPeqn}) equation: in their seminal work \cite{jordan1998variational}, Jordan, Kinderlehrer, and Otto established that (\ref{FPeqn})  is the Wasserstein gradient flow of the \emph{Kullback-Leibler divergence},  
\begin{align*}
\KL(\mu, \bar{\rho}) =  \int_\Omega  \log(\mu / \bar{\rho}) \, d\mu, \quad \text{ for } \mu \ll \bar{\rho}   .
\end{align*}
From this perspective, it is useful to notice that, when $V=0$, (\ref{mainpde}) can also be thought of as the Wasserstein gradient flow of the  $\chi^2$ \emph{divergence} \cite{tsybakov2004introduction},
\[ \chi^2(\mu, \bar{\rho}) = \begin{cases} \frac12 \int_\Omega \frac{|\mu(x) - \bar{\rho}(x)|^2}{\bar{\rho}(x)} dx , & \text{ if } \mu \ll  \mathcal{L}^d \text, \ d \mu(x) = \mu(x) dx , \text{ and } \supp \mu \subseteq \overline{\Omega}, \\ +\infty &\text{ otherwise.}   \end{cases}  \]
This can be seen by noticing   $\int  {|\mu(x) - \bar{\rho}(x)|^2}/{\bar{\rho}(x)} dx =\int  {|\mu(x) |^2}/{\bar{\rho}(x)} dx -1$, so that, when $V=0$, our energy $\F$ agrees with   $\chi^2$, up to a constant that does not affect the dynamics of the gradient flow: $\F(\mu)+1/2 = \chi^2(\mu, \bar \rho)$.
In what follows, we will always suppose that $\bar{\rho}$ is normalized to satisfy  $\int_\Omega \bar{\rho} = 1$, and so that for $\mu \in \P(\Rd)$, the KL divergence and the $\chi^2$ divergence measure the discrepancy between $\mu$ and $\bar{\rho}$ on $\Omega$ and vanish in the case that $\mu = \bar{\rho}$ on $\Omega$.

The gradient flow structures of (\ref{mainpde}) and (\ref{FPeqn}) have important interpretations from the perspective of quantization, since they encode key information about how quickly solutions are flowing toward $\bar{\rho}$. The fact that solutions of the (\ref{FPeqn}) equation are the Wasserstein gradient flow of the KL divergence is equivalent to saying that   they \emph{dissipate the KL divergence as quickly as possible}, with respect to the Wasserstein structure. In the same way, solutions of the (\ref{mainpde}) equation  \emph{dissipate the $\chi^2$ divergence as quickly as possible}, with respect to the Wasserstein structure.

Another important feature of   (\ref{mainpde})  from the perspective of quantization are the available estimates   quantifying its convergence to equilibrium. Chewi  et  al. \cite{chewi2020svgd}, show that, if  $\Omega= \Rd$, $V = 0$, and $\bar{\rho}$ satisfies a Poincar\'e inequality, then,   along smooth solutions, the Kullback-Leibler divergence decreases exponentially:
 \begin{equation} \label{KLdivergenceexpdecay}
\KL(\rho(t), \bar{\rho}) \leq e^{-C_{\bar{\rho}}  \, t} \KL(\rho(0), \bar{\rho}) , \quad \text{ for } C_{\bar{\rho}} >0  .
\end{equation}
If, in addition,  $\bar{\rho}$ is strongly log-concave, then  the $\chi^2$ divergence   decreases exponentially:
\begin{align*}
\chi^2(\rho(t), \bar{\rho}) \leq   e^{-C_{\bar{\rho}} \, t} \chi^2(\rho(0), \bar{\rho}) , \quad \text{ for } C_{\bar{\rho}}>0 , 
\end{align*}
This mirrors the theory for (\ref{FPeqn}), in which a Poincar\'e inquality ensures exponential decay of the $\chi^2$ divergence and log-concavity ensures decay of the KL divergence. (See Matthes, McCann, and Savar\'e's \emph{flow interchange method} for general results of this form \cite{matthes2009family}. In addition, see Grillo, Muratori, and Porzio \cite{GMP2013}, who rigorously proved   exponential convergence to equilibrium of weak solutions in $L^p$ spaces for all $p<+\infty$.) 
Furthermore,  in  the case of the (\ref{mainpde}) equation, if $\bar{\rho}$ merely satisfies a weaker condition, known as an $L^{2/3}$-Poincar\'e inequality, then Dolbeault  et  al. \cite{dolbeault2008lq} showed that the $\chi^2$ divergence decreases polynomially.  This raises the possibility that, for different choices of $\bar{\rho}$ and  initial conditions $\rho_0$, there may exist contexts in which solutions of (\ref{mainpde}) converge to $\bar{\rho}$ with stronger convergence guarantees than solutions of (\ref{FPeqn}). Since developing  general conditions  on the target $\bar{\rho}$ and the initialization $\rho_0$ that distinguish whether (\ref{mainpde}) or (\ref{FPeqn}) equilibrates more quickly remains an active area of research, we do not claim that the dynamics of (\ref{mainpde}) offer superior long time behavior to (\ref{FPeqn}). Instead, we merely observe that, at the continuum level, (\ref{mainpde})  provides competitive dynamics. Understanding when solutions to (\ref{mainpde}) or (\ref{FPeqn}) converge more quickly to equilibrium may, in the future, shed light on which quantization methods are superior in different contexts.

\subsection{Particle approximation of (\ref{mainpde})} \label{sec1p2}
The aim of the present work is to design a deterministic particle method for approximating solutions of (\ref{mainpde}) that preserves its gradient flow structure. Since solutions of (\ref{mainpde}) are  gradient flows of the energy (\ref{Fdef}-\ref{Vdef}), we seek to approximate them by  gradient flows of the \emph{regularized energy}, defined by,
 \begin{align} \label{Fepdef}
 \F_{\ep,k}: \P(\Rd) \to \R \cup \{+\infty\} , \quad \F_{\ep,k}(\mu) &= \E_\ep(\mu) + \V_\ep(\mu) + \V_k(\mu) , \end{align}
for the energies $\E_\ep(\mu)$, $\V_\ep(\mu)$, and $\V_k(\mu)$ given by,
\begin{align} \label{Eepdef}
\Eep(\mu) &= \frac12 \int_{\rr^d} \frac{|\zeta_\epsilon*\mu|^2(x)}{\bar{\rho}(x)}\, dx, \\
\V_\epsilon(\mu) &= \int_{\rr^d} (\zeta_\epsilon*V)(x) d \mu(x),  \\
\V_k(\mu) &= \int_{\rr^d} V_k (x) d \mu(x) .
\end{align}
Here  $\zeta_\ep \in C^\infty(\Rd)$ is a   rapidly decreasing mollifier and $V_k\in C^2(\Rd)$, for $k \in \mathbb{N}$, is a convex function that vanishes on $\Omega$ and approaches $+\infty$ on  $\overline{\Omega}^c$ as $k \to +\infty$.

The energy $\E_\epsilon(\mu)$ is   an approximation, as $\ep \to 0$,  of $\E $. This regularized energy has superior differentiability properties along empirical measures, ensuring that the gradient flow starting at empirical measure initial data leads to a well-posed particle method. It also enjoys the property,
\begin{align} \label{EepstoE}
\Eep(\rho) = \E(\zeta_\ep*\rho),
\end{align}
which is a key element in our proof of an $ H^1$ bound for $\zeta_\epsilon*\rho_\epsilon$ along solutions of the gradient flow; see Theorem \ref{prop:H1 bd}.
The energy $\V_\ep $ is an approximation of $\V$. While many different methods of approximating $\V$ would work well both numerically and theoretically, we focus our attention on $\V_\epsilon$ due to the connection with two-layer neural networks.
Finally, the energy $\V_k $ is an approximation, as $k \to +\infty$, of $\V_\Omega$.

While the main focus of our work is the analysis of how dynamics induced by $\E_\ep$, for general initial data, approximate dynamics induced by $\E$ (indeed, if $\Omega$ is the entire space $\Rd$ and $V$ is taken to be zero, then the energy $\F_{\ep,k}$ is exactly $\E_\ep$), our analysis of how the gradient flow dynamics induced by $\V_k$ converge to those from $\V_\Omega$ as $k \to +\infty$ also generalizes existing results by Alasio, Bruna, and Carrillo to weighted porous medium equations \cite{alasio2020role}.  (See also recent work by Patacchini and Slep\v{c}ev, which uses a similar approach to study well-posedness of aggregation equations on compact manifolds \cite{patacchini2021nonlocal}.) Our   motivations for considering this approximation of the confining potential $\V_\Omega$ are twofold. First, it simplifies the implementation of the particle method, obviating the need to implement reflection boundary conditions. Second, it allows for the most challenging aspect of the analysis --- the relationship between the dynamics induced by $\E_\ep$ and $\E$ --- to be carried out on $\rr^d$, rather than on a domain with boundary.

Wasserstein gradient flows of the regularized energy $\F_{\ep,k}$ are characterized by the equation,
\begin{align} \tag{${\rm WPME}_{\ep,k}$} \label{mainepspde}
\begin{cases}
\partial_t \rho  = \grad \cdot \left( \rho \left(\grad \zeta_\epsilon * \left(  {\zeta_\epsilon* \rho } /  \bar{\rho } \right) + \nabla \zeta_\ep *V + \nabla V_k \right)\right) , \\
\rho(0)  = \rho_0  ,\end{cases}\end{align}
defined on all of $\Rd$ in the duality with $C^\infty_c(\Rd \times (0,+\infty))$; see Proposition \ref{PDEFeps}. If the initial conditions are given by an empirical measure, $\rho_0 = \sum_{i=1}^N \delta_{X^i_0}m^i$, with $\sum_{i=1}^Nm^i=1$, then the solution remains an empirical measure for all time. Concretely, we have $\rho(t) =\sum_{i=1}^N \delta_{X^i(t)}m^i$, and the locations of the particles $\{X^i(t) \}_{i=1}^N$ are characterized as solutions of, 
     \begin{align} \label{mainepsode}
\begin{cases} & \dot{X}^i(t)  = -\sum_{j=1}^N f(X^i,X^j) m^j  - \nabla \zeta_\ep *  V(X^i) - \nabla V_k(X^i) , \\
&X^i(0) = X^i_0 , 
\end{cases}
\end{align}
for,
\begin{align} \label{fdefintro}
f(x,y) := \int_{\mathbb{R}^d} \frac{ \grad \zeta_\epsilon(x - z) \zeta_\epsilon(y - z)}{\bar{\rho}(z)}  \, dz  ;
\end{align}
see Proposition \ref{prop:ODE Eep}. In Section \ref{numericaldetailssec}, we provide sufficient conditions on $\bar{\rho}$ for which the integral in $f(x,y)$ has an analytic formula, in which case it can be precomputed exactly and does not contribute to the computational complexity of our method.

Based on the intuition that $\F_{\ep,k}$ is an approximation of $\F$, it is natural to   hope that gradient flows of $\F_{\ep, k}$   approximate gradient flows of $\F$. Our main result is that this is indeed true. We show that the particle method defined by  (\ref{mainepsode}) converges to a solution of (\ref{mainpde}) on bounded time intervals, provided that the initial conditions $\rho_0$ have bounded entropy,  the number of particles $N$ grows sufficiently quickly, and $\epsilon \to 0$ and $k \to +\infty$ sufficiently rapidly; see Theorem \ref{thm:conv with particle i.d.}. Note that this method formally extends to equations of the form (\ref{mainpde}) with an additional term $-\nabla \cdot (v\rho)$ on the right hand side, for general velocities $v(x,t)$, by adding a term of the form $v(X^i(t), t)$ to the right hand side of (\ref{mainepsode}). 

Our work on the convergence of the $\ep \to 0$, $k \to +\infty$ limit builds on several previous works. All previous works have considered the spatially homogeneous case $\bar{\rho}  \equiv 1$. The first work in this direction was due to Oelschl\"ager \cite{oelschlager1990large}, who considered the case $V=V_k=0$ and proved convergence to classical, strictly positive solutions of (\ref{mainpde}) in arbitrary dimensions and convergence to weak solutions in one dimension. Subsequently, Lions and Mas-Gallic \cite{lions2001methode}, also in the case $V=V_k=0$, proved convergence of (\ref{mainepspde}) as $\ep \to 0$, provided that the initial conditions $\rho_0$ had uniformly bounded entropy, thereby excluding particle initial data required to connect (\ref{mainepspde}) to the system of  ODEs (\ref{mainepsode}). The assumption of bounded entropy played an important role in Lions and Mas-Gallic's proof of a $\dot{H}^1$ bound for regularized solutions to (\ref{mainepspde}). (In fact, the analogous bound also plays an important role in the present work -- see Theorem \ref{prop:H1 bd} for a generalization of this result to the spatially inhomogeneous setting.) Next,   Carrillo, the first author, and Patacchini \cite{CarrilloCraigPatacchini} generalized Lions and Mas-Gallic's approach to porous medium equations of the form, 
\[ \partial_t \rho = \Delta \rho^m + \nabla \cdot (\rho (\nabla V + \nabla W*\rho)). \]
In the case $m =2$, they obtained convergence of the $\epsilon \to 0$ limit under appropriate continuity and semiconvexity assumptions on $V$ and $W$; for $1 \leq m < 2$, they obtained $\Gamma$-convergence of the corresponding energies as $\ep \to 0$; and for $m >2$, they obtained conditional convergence of the $\epsilon \to 0$ limit, as long as certain a priori estimates were preserved along the flow. Again, Carrillo, Craig, and Patacchini's work required the initial data to have bounded entropy, excluding particle solutions. Very recently, Burger and Esposito \cite{burger2022porous} continued the study of the $m=2$ case for more general velocity fields $v(x,t)$,
\[ \partial_t \rho  + \nabla \cdot( \rho v)  = \Delta \rho^m, \]
and weaker regularity on the mollifier $\zeta$.

Our work makes three contributions to this active area of research. We obtain true convergence of the particle method, relaxing the hypothesis that the initial data have bounded entropy by using stability properties of the regularized flow; see Theorem \ref{thm:conv with particle i.d.}. Our result holds for spatially inhomogeneous porous medium equations, allowing general $\bar{\rho} \in C^1(\Rd)$ that are  bounded above and below on $\mathbb{R}^d$ and log-concave on $\Omega \subseteq\Rd$. (See  Section \ref{asssec} for a discussion of where the log-concavity assumption may be weakened.) Finally, by allowing spatially inhomogenous equations, we identify a connection between our particle method and problems in sampling, control theory, and training of two-layer neural networks.

\subsection{Application to two-layer neural networks} \label{sec1p3} 
An additional reason for interest in the convergence of (\ref{mainepsode}-\ref{fdefintro}) to  (\ref{mainpde}), aside from its utility as a particle approximation, is that the dynamics of (\ref{mainepsode}-\ref{fdefintro}) represent a type of \emph{confined} training dynamics for mean field models of two-layer neural networks with a radial basis function activation function. 
  In this context, one is given a data distribution $\nu$, a nonnegative target function  $f_0 \in L^2(\nu)$, and an activation function $\Phi_\ep(x,z) = \zeta_\ep(x-z)$, and one seeks to choose parameters, $\{X^i\}_{i=1}^N$, so that the empirical measure  $\rho^N = \frac{1}{N} \sum_{i=1}^N \delta_{X^i}$  minimizes the following energy, known as the \emph{population risk}:
\begin{align} \label{poprisk}
 \mathcal{R}_\epsilon(\mu) =  \frac12 \int_\Rd \left|\int_\Rd \Phi_\epsilon(x,z)d \mu(x) - f_0(z) \right|^2 d \nu(z) 
 \end{align}
 
 In several recent works, it was discovered that evolving the parameters $X^i(t)$ by gradient descent of the function $(X^1, \dots, X^n) \mapsto \mathcal{R}_\ep(\rho^N) + \V_\Omega(\rho^N)$  is equivalent to evolving the empirical measure $\rho^N$ by the Wasserstein gradient flow of $\mathcal{R}_\ep$ restricted to $\Omega$ \cite{mei2018mean,chizat2018global,sirignano2020mean, rotskoff2018trainability,javanmard2020analysis, weinan2020machine}.  Various methods for treating the boundary conditions are considered, including projection of the gradient descent direction into the convex hull of the domain $\Omega$ \cite{chizat2018global} or projection onto interior approximations of $\Omega$ \cite{javanmard2020analysis}.
 
 To see the connection with (\ref{mainepsode}-\ref{fdefintro}),
 note that using the definition of $\Phi_\ep$, expanding the square, and applying Tonelli's theorem (see also the associativity property of convolution (\ref{eq:convproperty})), we obtain,
\begin{align} \label{ReptoEep} 
 \mathcal{R}_\epsilon(\mu) &
 = \frac12 \int |\zeta_\ep *\mu(z)|^2 d \nu(z) - \int \zeta_\ep*\mu(z) f_0(z) d \nu(z) + \frac12 \int |f_0(z)|^2 d \nu(z)  \\
 &= \E_\ep (\mu) +  \int (\zeta_\ep* V)(x) d \mu(x) + C  = \E_\ep (\mu) +\V_\ep(\mu) + C ,\nonumber
 \end{align} 
 for,
 \begin{align} \label{poprisktoeeps}
 \nu &= 1/\bar{\rho} \ , \quad V = -f_0 \nu \ , \quad C =   \frac12 \int |f_0(z)|^2 d \nu(z) .
 \end{align}
 Moreover,   the confining potential $\V_k$ provides an explicit method for projecting gradient descent dynamics onto an \emph{exterior} approximation of $\Omega$. In this way,  the \emph{confined} training dynamics given by the gradient flow of  $\mathcal{R}_\ep + \V_k = \F_{\ep,k} + C$ for general initial data $\rho_0$ is characterized by (\ref{mainepspde}), and the evolution for particle initial data corresponds to (\ref{mainepsode}-\ref{fdefintro}).  Corollary \ref{2layernncor}, which follows from  our convergence result for the gradient flows of $\F_{\ep,k}$, states that, for well-behaved initial conditions, particle solutions of (\ref{mainepsode}-\ref{fdefintro}) converge to a gradient flow of, 
\begin{align} \label{popriskep0}
 \mathcal{R}(\mu) = \begin{cases} \frac12 \int \left| \mu(z) - f_0(z) \right|^2 d \nu(z) &\text{ if } \mu \ll \mathcal{L}^d |_\Omega , \quad d \mu(z) = \mu(z) dz , \\
+\infty &\text{ otherwise.}
\end{cases}
 \end{align}
 
 This generalizes previous work due to  Javanmard, Mondelli, and Montanari \cite{javanmard2020analysis}, which considered the limit $\ep \to 0$ in the specific case of a uniform data distribution  $\nu = \indi_\Omega/|\Omega|$, smooth target function $f$, bounded convex domain $\Omega$, and compactly supported radial basis function  $\zeta$.   The fact that our result holds for general nonuniform data distributions $\nu$ is significant from the perspective of two-layer neural networks, since, as can be seen in Corollary \ref{2layernncor}, there is an interplay between the data distribution $\nu$ and the target function $f$ to determine when convexity of the energy $\F_{\ep,k}$ emerges in the continuum limit. 
         
\subsection{Assumptions} \label{asssec}
We now describe our assumptions. We consider a domain $\Omega\subseteq \Rd$ satisfying,
\begin{align}
 \text{  $\Omega$ is   nonempty, open, and convex. }\tag{D}
 \label{Omegaas}
\end{align}

We suppose our mollifier satisfies,
\begin{equation} \tag{M}
  \label{mollifieras}
  \begin{split}
&\text{ $\zeta \in C^2(\Rd)$ is even, nonnegative,  $\|\zeta\|_{L^1(\rr^d)}=1$,   $ D^2 \zeta \in L^\infty(\Rd)$, } \\
&\zeta(x) \leq C_\zeta |x|^{-q}  \text{ and }  |\nabla \zeta(x)| \leq C_\zeta |x|^{-q'}, \text{ for } C_\zeta >0 ,  \quad q>d+1  , \quad q'>d .
\end{split}
\end{equation} 
This  assumption is satisfied by both Gaussians and smooth functions with compact support. Note that this assumption ensures that $\zeta$ has finite first moment, $\int_\Rd |x| \zeta(x) dx< +\infty$.

We suppose the external potential $V$ satisfies,
\begin{align} \tag{V}
\text{ $V \in C^2(\Rd) \cap L^1(\Rd) \cap L^\infty(\Rd)$, with $\nabla V \in L^\infty(\Rd)$ and $D^2 V$ uniformly bounded below.}
\label{Vas}
\end{align}
We are optimistic that our results may continue to hold under weaker regularity hypotheses on $V$,  but we leave this   question to future work, since our primary interest is the approximation of the diffusive dynamics arising from $\E$ via the particle method induced by $\E_\ep$.

We suppose that our approximation of the confining potential $V_k$, for $k \in \mathbb{N}$, satisfies,
\begin{align} \tag{C}
 \label{Vkas} 
 &\text{$V_k$ is nonnegative, convex, and twice differentiable with $D^2 V_k \in L^\infty(\Rd)$,} \\
\tag{Ck}
& \text{$V_k = 0$ on $\Omega$ and $
\displaystyle\lim_{k \to \infty} \left( \inf_{x \in B}  V_k(x) \right) = +\infty$ for any ball $B \subset \joinrel \subset \Omega^c$.} \label{Vkinftyas} 
\end{align}
Note that assumption (\ref{Vkas}) ensures $V_k \in L^1(\mu)$ and $\nabla V_k\in L^2(\mu)$  for any $\mu\in \mathcal{P}(\rr^d)$ with $\int |x|^2 d\mu(x) <+\infty$. These assumptions play the following role in our proof: Assumption (\ref{Vkas})   ensures well-posedness of the gradient flows, and  Assumption (\ref{Vkinftyas})  allows us to recover the correct limiting dynamics as $k \to +\infty$.  In particular, note that (\ref{Vkinftyas}) implies that, in the $k\rightarrow \infty$ limit, $V_k$ approximates the hard cutoff potential $V_\Omega$, which is given by,
\begin{equation}
    \label{eq:VOmega def}
    V_\Omega(x) = \begin{cases}
        0 \quad & \text{ for }x\in \overline{\Omega},\\
        +\infty \quad &\text{ otherwise.}
    \end{cases}
\end{equation}

Finally, we suppose that our target $\bar{\rho}$ satisfies the  regularity assumption,
\begin{align}
\text{ $\bar\rho\in C^1(\rr^d)$ and  there exists $C>0$ so that $1/C \leq \bar{\rho}(x) \leq C,$ for all $x \in \mathbb{R}^d$.} \label{targetas}  \tag{T}
\end{align}
Assumption (\ref{targetas}) is sufficient to ensure that the energy $\F_{\ep,k}$ is lower semicontinuous, convex, and subdifferentiable, so that   gradient flows of $\F_{\ep,k}$ are well posed.  
It also allows us to conclude that the energy $\F$ is lower semicontinuous. However, in order to obtain convexity and subdifferentiability of $\F$, hence well-posedness of   gradient flows, we   require $\bar{\rho}$ to be log-concave on $\Omega$; that is,
\begin{align*}
\text{ $x \mapsto \log(\bar{\rho}(x))$ is concave on $\Omega$. }
\end{align*}

It is an   open question whether well-posedness of the gradient flows of  $\F$   could be obtained under weaker assumptions on $\bar{\rho}$. Interestingly, the main estimates in our proof of the convergence of the gradient flows of $\F_{\ep,k}$ as $\epsilon \to 0$ (Theorem  \ref{prop:H1 bd}, Theorem \ref{newenergygammaprop},  and Proposition \ref{prop:epstozerofixedk})  do not require log-concavity of $\bar{\rho}$. Instead, log-concavity comes into play when we seek to identify that the limit as $\ep \to 0$  and $k\rightarrow \infty$  of gradient flows of  $\F_{\ep,k}$ is indeed a gradient flow of $\F$, since log-concavity of $\bar{\rho}$ ensures that the \emph{metric slope} of $\F$ is a \emph{a strong upper gradient}; see Section \ref{OTsec} and \cite[Section 1.2]{ambrosio2008gradient}. For this reason, we are optimistic that, in future work, it may be possible to extend our results to  $\bar{\rho}$ that are not log-concave, once the difficulty of obtaining well-posedness of the gradient flow of $\F$  and characterization of its strong upper gradient are overcome.

\subsection{Main Results} \label{mainresultssec}

To state our main results, first we introduce some notation. Let the \emph{entropy} $\S(\mu)$ and $p$-th \emph{moment} $M_p(\mu)$, where $p\geq 1$, of a measure $\mu\in \mathcal{P}(\rr^d)$ be given by
\begin{align}
\label{def:entropy}
\S(\mu) = \begin{cases} \int_\Rd \mu(x) \log \mu(x) d\mathcal{L}^d(x) & \text{ if }\mu \ll \mathcal{L}^d \text{ and } d \mu(x) = \mu(x) dx , \\ + \infty &\text{ otherwise,} \end{cases}    \qquad M_p(\mu) = \int_\Rd |x|^p d \mu(x).
\end{align}
Recall that a probability measure $\mu \in \P(\Rd)$ lies in the \emph{domain} of an energy $\G: \P(\Rd) \to \R \cup \{+\infty\}$ if $\G(\mu) <+\infty$. We denote this by $\mu \in D(\G)$. We also write 
\begin{equation}
\label{eq:def Pp}
\P_p(\Rd) = \P(\Rd) \cap D(M_p), \text{ for $p\geq 1$.} 
\end{equation} 
Finally, we often use the notion of \emph{narrow} convergence of probability measures; see Definition \ref{def:narrow conv}.

\begin{thm}[convergence of gradient flows as $k \to +\infty$,  $\ep = \epsilon(k) \to 0$] \label{newmaintheorem}
Assume  (\ref{Omegaas}), (\ref{mollifieras}), (\ref{Vas}), (\ref{Vkas}), (\ref{Vkinftyas}), (\ref{targetas}) and that $\bar\rho$ is log-concave on $\Omega$. Fix $T>0$  and  $\rho(0) \in D(\F) \cap D(\S) \cap \P_2(\Rd)$.

For $\epsilon >0$ and $k\in \mathbb{N}$, let $\rho_{\ep,k}\in AC^2([0,T]; \mathcal{P}_2(\rr^d))$ be the gradient flow of $\F_{\ep,k}$ with initial data $\rho(0)$. Then, as $k \to +\infty$, there exists a sequence $\epsilon = \epsilon(k) \to 0$ so that 
\begin{align*}
\lim_{k \to +\infty} W_1(\rho_{\ep,k}(t), \rho(t)) = 0, \text{ uniformly for } t \in [0,T],
\end{align*} 
 where   $\rho \in AC^2([0,T]; \mathcal{P}_2(\rr^d))$ is the gradient flow of $\F$ with initial data $\rho(0)$.

\end{thm}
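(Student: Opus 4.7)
The natural strategy is to decouple the two limits by sending $\epsilon \to 0$ at fixed $k$ first, then sending $k \to \infty$, and finally extracting a diagonal sequence $\epsilon(k)$. Concretely, introduce the intermediate energy $\F_k := \E + \V + \V_k$ and let $\rho_k \in AC^2([0,T];\mathcal{P}_2(\Rd))$ be the gradient flow of $\F_k$ starting from $\rho(0)$. Well-posedness of this flow follows from the log-concavity of $\bar\rho$ together with (\ref{Vkas}), (\ref{Vas}), (\ref{targetas}), via the general theory of \cite{ambrosio2008gradient}. I plan to show that (i) for each fixed $k$, $\rho_{\epsilon,k} \to \rho_k$ in $W_1$ uniformly on $[0,T]$ as $\epsilon \to 0$, and (ii) $\rho_k \to \rho$ in $W_1$ uniformly on $[0,T]$ as $k \to \infty$, from which a diagonal extraction yields the theorem.

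Step (i) is precisely the content of Proposition \ref{prop:epstozerofixedk}, whose proof relies on the $H^1$ bound of Theorem \ref{prop:H1 bd} and the $\Gamma$-convergence of Theorem \ref{newenergygammaprop}. Note that step (i) does not require log-concavity, as remarked in Section \ref{asssec}; log-concavity is only needed to identify the $\epsilon = 0$ limit as the gradient flow of $\F_k$, i.e.\ to know that $|\partial \F_k|$ is a strong upper gradient.

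Step (ii) follows a standard Serfaty--Sandier style stability argument for gradient flows, implemented via the energy dissipation inequality (EDI). First, since $\rho(0) \in D(\F)$, one has $\supp \rho(0) \subseteq \overline\Omega$, so $\V_k(\rho(0)) = 0$ and $\F_k(\rho(0)) = \F(\rho(0))$ is finite uniformly in $k$. The EDI for $\rho_k$ then yields uniform bounds on $\F_k(\rho_k(t))$, on the metric derivative $|\rho_k'|$ in $L^2(0,T)$, and on the slope $|\partial \F_k|(\rho_k)$ in $L^2(0,T)$; combining this with (\ref{Vas}) and a Grönwall argument gives a uniform second moment bound. A refined Arzelà--Ascoli theorem in $(\mathcal{P}_2(\Rd),W_2)$ then produces a subsequence and a curve $\tilde\rho \in AC^2([0,T];\mathcal{P}_2(\Rd))$ such that $\rho_k(t) \to \tilde\rho(t)$ narrowly, uniformly in $t$, upgraded to $W_1$ by the tightness from the moment bound. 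The uniform bound on $\V_k(\rho_k(t))$, combined with (\ref{Vkinftyas}), forces $\supp \tilde\rho(t) \subseteq \overline\Omega$, so $\V_\Omega(\tilde\rho(t)) = 0$; together with narrow lower semicontinuity of $\E$ and $\V$ (using (\ref{targetas}) and (\ref{Vas})) this gives $\F(\tilde\rho(t)) \le \liminf_k \F_k(\rho_k(t))$. The crucial remaining ingredient is a $\Gamma$-$\liminf$ inequality for the slopes, $|\partial \F|(\tilde\rho(t)) \le \liminf_k |\partial \F_k|(\rho_k(t))$, which combined with the lower semicontinuity of the metric derivative lets me pass to the liminf in the EDI for $\rho_k$ and obtain the EDI for $\F$ along $\tilde\rho$. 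Since log-concavity of $\bar\rho$ guarantees that $|\partial \F|$ is a strong upper gradient, this identifies $\tilde\rho$ as the gradient flow of $\F$ with initial data $\rho(0)$; uniqueness gives $\tilde\rho = \rho$, and the full sequence converges.

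The main obstacle is the slope $\Gamma$-liminf in step (ii): the confining term $\V_k$ perturbs the Euclidean slope of $\F_k$ in a singular way near $\partial \Omega$, so one must argue that along the sequence $\rho_k$, the contribution of $\nabla V_k$ to the subdifferential concentrates on a vanishing set. The correct device is to work with the dual characterization of the slope as the minimal $L^2$ norm of a velocity representing the derivative of the energy; the convexity of $V_k$, the uniform slope bound, and the fact that $\tilde\rho(t)$ is supported in $\overline\Omega$ combine to show that any weak limit of the velocity fields driving $\rho_k$ is admissible for $\F$ with support in $\overline\Omega$, which yields the desired liminf. Once step (ii) is in hand, the diagonal extraction is immediate: for each $k$ pick $\epsilon(k) > 0$ so small that $\sup_{t \in [0,T]} W_1(\rho_{\epsilon(k),k}(t), \rho_k(t)) < 1/k$, as permitted by step (i); combining with the $W_1$-convergence $\rho_k \to \rho$ from step (ii) completes the proof.
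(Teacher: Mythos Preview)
Your overall architecture---send $\epsilon\to 0$ at fixed $k$, then $k\to\infty$, then diagonalize via $W_1(\rho_{\epsilon(k),k},\rho_k)<1/k$---matches the paper exactly. The gap is in your intermediate object $\rho_k$. You define it as \emph{the gradient flow of $\F_k$} and claim well-posedness from log-concavity, but the hypothesis is that $\bar\rho$ is log-concave only on $\Omega$, not on all of $\Rd$. Proposition~\ref{prop:convexity ep=0}(\ref{econvex}) gives convexity of $\E+\V_\Omega$, not of $\E+\V_k$; since $V_k$ is merely soft confinement, geodesics in $\P_2(\Rd)$ between measures in $D(\F_k)$ can carry mass through $\Omega^c$, where no log-concavity is assumed. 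Hence $\F_k$ is not known to be semiconvex along generalized geodesics, its gradient flow is not known to be well-posed, and $|\partial\F_k|$ is not known to be a strong upper gradient. Consequently your step~(i) cannot identify the $\epsilon\to 0$ limit as \emph{the} gradient flow of $\F_k$, and your step~(ii) has no well-defined starting point.

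The paper circumvents this by never invoking a gradient flow of $\F_k$ at all. It introduces the notion of an ``almost'' curve of maximal slope of $\F_k$ (Definition~\ref{almostcms}): Proposition~\ref{prop:epstozerofixedk} shows that the $\epsilon\to 0$ limit (along a $k$-dependent subsequence) is such an object, with $\bEta_k$ characterized pointwise by the PDE relation~(\ref{almostcurvemaxslope1b}) rather than as a minimal subdifferential. Proposition~\ref{ktoinftyprop} then takes these ``almost'' curves as input and proves directly that they converge as $k\to\infty$ to the gradient flow of $\F$; log-concavity is invoked only at this final stage, where the limiting energy is $\F=\E+\V+\V_\Omega$ and Proposition~\ref{prop:convexity ep=0}(\ref{econvex}) does apply. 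Your slope-liminf sketch in step~(ii) is essentially the content of the proof of Proposition~\ref{ktoinftyprop}, but the correct input there is an ``almost'' curve of maximal slope (which need not be unique), not a gradient flow.
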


The preceding theorem  requires that the initial conditions of the gradient flow of $\F_{\ep,k}$ have bounded entropy, which explicitly excludes empirical measure initial data. However, we are able to extend this   result to empirical measure initial data by leveraging stability properties of the gradient flow of $\F_{\ep,k}$. In this way, we obtain the following convergence result for the deterministic particle method to weak solutions of (\ref{mainpde}), provided that the underlying continuum solution has initial data with bounded entropy. In Proposition \ref{prop:PDE ep=0}, we state the precise notion of weak solution of (\ref{mainpde}) that we consider, and in    Lemma \ref{sketchlem2}, we provide an explicit construction of $\rho_{\ep,k}^N(0)$ satisfying condition (\ref{rateofmu0N}).

\begin{thm}[convergence with particle initial data]
\label{thm:conv with particle i.d.}
Assume  (\ref{Omegaas}), (\ref{mollifieras}), (\ref{Vas}), (\ref{Vkas}), (\ref{Vkinftyas}), (\ref{targetas}), and that $\bar\rho$ is log-concave on $\Omega$. 
Fix $T>0$ and   $\rho(0)\in D(\F) \cap D(\S) \cap \P_2(\Rd)$.
For $k,N   \in \mathbb{N}$, $\ep>0$, and $t \in [0,T]$, consider the evolving empirical measure,
\[ \rho^N_{\ep,k}(t)  = \sum_{i=1}^N \delta_{X^{i}_{ \ep,k}}(t) m^i, \quad      m^i \geq 0 ,  \quad  \sum_{i=1}^N m^i = 1 , \] 
where   $X^{i}_{\ep,k} \in C^1([0,T]; \Rd)$   solves,
\begin{align} \label{mainsystemofODEs}
    \begin{cases}
    \dot{X}^{i}_{\ep,k} = -\sum_{j=1}^{N} m^j\int_{\rr^d} \nabla \zeta_\ep(X^{i}_{\ep,k}-z)\zeta_\ep(z-X^{j}_{\ep,k})\frac{1}{\bar\rho(z)}\, dz - \nabla (\zeta_\epsilon*V)(X^{i}_{\ep,k}) - \nabla V_k(X^{i}_{\ep,k}) ,\\
     X^{i}_{\ep,k}(0)=X^{i}_{0,\ep}.
    \end{cases}
\end{align}

Suppose that as  $\ep   \to 0$ there exist $N = N(\ep) \to +\infty$,  so that, for all $k \in \mathbb{N}$,  $\rho^N_{\ep,k}(0) = \sum_{i=1}^N \delta_{X^i_{0,\ep}} m^i$ converges to $\rho(0)$ with the  rate,
\begin{align} \label{rateofmu0N}
\lim_{k \to \infty} e^{- \lambda_\ep T} W_2(\rho^N_{\ep,k}(0), \rho(0)) = 0 , \text{ for } \lambda_\ep = -\ep^{-d-2} ||1/\bar\rho||_{L^\infty(\rr^d)}||D^2\zeta||_{L^\infty(\rr^d)}+ \inf_{\{x, \xi \in \Rd\}} \xi^t D^2 V(x) \xi .
\end{align}
Then, as $k \to +\infty$, there exist $\epsilon = \epsilon(k) \to 0$ and $N = N(\ep) \to +\infty$ for which  $\rho^N_{\ep,k}(t) = \sum_{i=0}^{N}\delta_{X^{i}_{\ep,k}(t)} m^i$ satisfies
\[ \lim_{k \to +\infty} W_1(\rho^N_{\ep,k}(t), \rho(t) ) \to 0 , \text{ uniformly for } t \in [0,T] , \]
 where  $\rho \in AC^2([0,T]; \mathcal{P}_2(\rr^d))$ is the unique weak solution of (\ref{mainpde}) with initial data $\rho(0)$.
\end{thm}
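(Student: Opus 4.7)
The plan is to combine Theorem \ref{newmaintheorem} with the Wasserstein contraction estimate for $\lambda_\ep$-convex gradient flows of $\F_{\ep,k}$, via a triangle inequality. First, I identify the ODE system (\ref{mainsystemofODEs}) as the particle form of the gradient flow of $\F_{\ep,k}$ emanating from the empirical measure $\rho^N_{\ep,k}(0) = \sum_i \delta_{X^i_{0,\ep}} m^i$; this is exactly the content of Proposition \ref{prop:ODE Eep} combined with the PDE characterization (\ref{mainepspde}). Let $\rho_{\ep,k}(t)$ denote the gradient flow of $\F_{\ep,k}$ starting instead from the nice initial datum $\rho(0) \in D(\F) \cap D(\S) \cap \P_2(\Rd)$, which exists and is uniquely determined by the general theory under (\ref{Vkas}) and (\ref{targetas}).

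Next, the Wasserstein semiconvexity of $\F_{\ep,k}$ with modulus $\lambda_\ep$ (the constant appearing in (\ref{rateofmu0N})) yields the standard gradient flow contraction, valid for all $t \in [0,T]$,
\begin{equation*}
W_2\bigl(\rho^N_{\ep,k}(t),\,\rho_{\ep,k}(t)\bigr) \;\leq\; e^{-\lambda_\ep t}\, W_2\bigl(\rho^N_{\ep,k}(0),\,\rho(0)\bigr) \;\leq\; e^{-\lambda_\ep T}\, W_2\bigl(\rho^N_{\ep,k}(0),\,\rho(0)\bigr),
\end{equation*}
where the second inequality uses $-\lambda_\ep > 0$. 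Combining with $W_1 \leq W_2$ and the triangle inequality gives
\begin{equation*}
W_1\bigl(\rho^N_{\ep,k}(t),\,\rho(t)\bigr) \;\leq\; e^{-\lambda_\ep T}\, W_2\bigl(\rho^N_{\ep,k}(0),\,\rho(0)\bigr) \;+\; W_1\bigl(\rho_{\ep,k}(t),\,\rho(t)\bigr).
\end{equation*}
By hypothesis (\ref{rateofmu0N}), for each $\ep$ the first term on the right-hand side can be made arbitrarily small by taking $N = N(\ep)$ large enough. By Theorem \ref{newmaintheorem}, there is a sequence $\ep = \ep(k) \to 0$ along which the second term vanishes uniformly in $t$. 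A diagonal extraction then produces a joint sequence $(\ep(k), N(\ep(k)))$ for which the left-hand side tends to zero uniformly on $[0,T]$. The identification of the limit $\rho$ as the unique weak solution of (\ref{mainpde}) is given by Proposition \ref{prop:PDE ep=0}.

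The main obstacle is reconciling the two modes of convergence along a single diagonal sequence: the stability bound amplifies the initial empirical error by a factor $e^{-\lambda_\ep T}$ that grows exponentially in $\ep^{-d-2}$, while Theorem \ref{newmaintheorem} only supplies a subsequence $\ep(k)\to 0$ without quantitative control. The initial empirical measures must therefore converge to $\rho(0)$ in $W_2$ faster than any polynomial in $\ep$, which is a substantial strengthening of the classical rate $W_2(\rho^N,\rho(0)) \sim N^{-1/d}$ for empirical approximation of an absolutely continuous target and forces $N(\ep)$ to grow super-exponentially in $\ep^{-d-2}$. The assumption (\ref{rateofmu0N}) encapsulates exactly this requirement; the complementary Lemma \ref{sketchlem2} should then witness that such approximating empirical initial data are genuinely constructible, so that the hypothesis is not vacuous and the diagonal extraction can be carried out.
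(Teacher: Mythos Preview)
Your proposal is correct and follows essentially the same route as the paper: identify the ODE system as the gradient flow of $\F_{\ep,k}$ from empirical initial data via Proposition \ref{prop:ODE Eep}, compare it to the gradient flow $\rho_{\ep,k}$ from $\rho(0)$ using the $\lambda_\ep$-contraction estimate, and then invoke Theorem \ref{newmaintheorem} on the second leg of the triangle inequality. The only cosmetic difference is that the paper does not phrase the combination of the two limits as a ``diagonal extraction'' but simply takes the sequence $\ep(k)$ supplied by Theorem \ref{newmaintheorem} and then $N=N(\ep(k))$ from hypothesis (\ref{rateofmu0N}); your closing paragraph on the growth rate of $N(\ep)$ is accurate commentary rather than an additional proof step.
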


The following corollary ensures that the particle method defined in the previous theorem indeed converges to $\bar{\rho}$  on $\Omega$ in the long time limit, as relevant  for applications in quantization.
\begin{cor}[long time limit] \label{quantcor}
Suppose the assumptions of  Theorem \ref{thm:conv with particle i.d.} hold and again denote $\rho^N_{\ep,k}(t) = \sum_{i=0}^{N}\delta_{X^{i}_{\ep,k}(t)} m^i$. In addition, assume $V=0$, $\Omega$ is bounded, and $  \int_\Omega \bar\rho\, d\mathcal{L}^d  = 1$.
 Then  there exist $k=k(t) \to +\infty$, $\ep=\ep(k) \to 0$, and $N = N(\ep) \to +\infty$ so that  
 \[ \lim_{t \to +\infty} W_1\left(\rho^N_{\ep,k}(\cdot, t),   {\indi_{\overline{\Omega}}\bar{\rho}} \right) = 0 . \]

\end{cor}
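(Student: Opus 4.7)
The plan is to combine the long-time convergence of the continuum gradient flow to the equilibrium $\indi_{\overline{\Omega}}\bar\rho$ (Proposition \ref{longtime}) with the finite-time particle approximation of Theorem \ref{thm:conv with particle i.d.}, via a diagonal extraction across an increasing sequence of time horizons.

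First, I would invoke Proposition \ref{longtime}: since $V=0$, $\Omega$ is bounded, and $\int_\Omega \bar\rho\,d\mathcal{L}^d = 1$, the continuum gradient flow $\rho(t)$ of $\F$ with initial data $\rho(0)\in D(\F)\cap D(\S)\cap \P_2(\Rd)$ (the initial data implicit from the hypotheses of Theorem \ref{thm:conv with particle i.d.}) converges to $\indi_{\overline\Omega}\bar\rho$ in $W_2$, hence in $W_1$ since $W_1\leq W_2$ on $\P_2(\Rd)$. I would then extract a strictly increasing sequence $T_n\to+\infty$ so that
\[ \sup_{t\geq T_n} W_1\bigl(\rho(t),\,\indi_{\overline{\Omega}}\bar\rho\bigr) < 1/n. \]

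Next, for each $n$ I would apply Theorem \ref{thm:conv with particle i.d.} with time horizon $T=T_{n+1}$. This supplies sequences $\epsilon^{(n)}(k)\to 0$ and $N^{(n)}(\epsilon)\to+\infty$, together with empirical-measure initial data satisfying the rate condition (\ref{rateofmu0N}) for $T_{n+1}$ (existence of which is given by Lemma \ref{sketchlem2}), such that the associated particle trajectories obey
\[ \lim_{k\to+\infty}\,\sup_{t\in[0,T_{n+1}]} W_1\bigl(\rho^{N^{(n)}(\epsilon^{(n)}(k))}_{\epsilon^{(n)}(k),\,k}(t),\;\rho(t)\bigr)=0. \]
Choose $k_n$ large enough that the supremum above is strictly less than $1/n$, and, passing to subsequences, arrange that $k_n\to+\infty$ is strictly increasing. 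Set $\epsilon_n := \epsilon^{(n)}(k_n)\to 0$ and $N_n := N^{(n)}(\epsilon_n)\to+\infty$.

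Finally, I would define $k(t) := k_n$ for $t\in[T_n,T_{n+1})$, together with $\epsilon(k_n) := \epsilon_n$ and $N(\epsilon_n) := N_n$ (extending to the remaining arguments arbitrarily). Then $k(t),\,N(t)\to+\infty$ and $\epsilon(t)\to 0$ as $t\to+\infty$, and the triangle inequality yields, for $t\in[T_n,T_{n+1})$,
\[ W_1\bigl(\rho^{N_n}_{\epsilon_n,k_n}(t),\,\indi_{\overline{\Omega}}\bar\rho\bigr) \leq W_1\bigl(\rho^{N_n}_{\epsilon_n,k_n}(t),\,\rho(t)\bigr) + W_1\bigl(\rho(t),\,\indi_{\overline{\Omega}}\bar\rho\bigr) < 2/n, \]
which tends to $0$ as $t\to+\infty$. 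The main obstacle is purely combinatorial bookkeeping: the sequences produced by Theorem \ref{thm:conv with particle i.d.} depend on the time horizon through (\ref{rateofmu0N}), so I must ensure that the chosen $(k_n,\epsilon_n,N_n)$ and corresponding initial data actually realize valid particle trajectories on $[0,T_{n+1}]$ for each $n$, rather than merely a formal patchwork across different systems. Once this compatibility is arranged, the stated $W_1$ convergence follows immediately.
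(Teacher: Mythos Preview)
Your proposal is correct and follows essentially the same approach as the paper: invoke Proposition \ref{longtime} for the long-time limit of the continuum flow, then combine with Theorem \ref{thm:conv with particle i.d.} via a diagonal argument over increasing time horizons. The paper's proof is terser---it simply states ``Combining this with Theorem \ref{thm:conv with particle i.d.} gives the result'' without spelling out the diagonal extraction you have carefully detailed---but the underlying argument is the same.
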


The preceding theorems provide sufficient conditions to guarantee convergence of the particle method to (\ref{mainpde}) on bounded time intervals and convergence to the desired target distribution $\bar{\rho}$ on $\Omega$ when $V=0$ and $\Omega$ is bounded. However, these results are purely qualitative, and it remains an   open question to what extent they could be made quantitative in $T$, $k$, $\ep$, and $N$.  For example, an inspection of the construction in Lemma  \ref{sketchlem2} shows that, if the particles are initialized with uniform spacing on a bounded domain $\Omega$, the number of particles is required to grow extremely quickly with respect to $\epsilon$. In particular, it suffices to have
 \[    N(\epsilon,k)^{-1} = o\left( e^{-1/   \epsilon^{d+2}}  \right)   \ \text{ as } \ep \to 0.\]
On the other hand, we observe numerically that $N(\ep) \sim \ep^{-1.01}$ is sufficient for good performance in one dimension. We leave a finer quantitative convergence analysis to future work. For example, it would be interesting to investigate whether  higher regularity of the initial data $ \rho(0)$ could be used to decrease the rate at which $N$ must grow with $\epsilon$ in our rigorous convergence results, as the numerical simulations suggest is possible.

As a second corollary of our main convergence results, we   identify the limit of the confined training dynamics of two-layer neural networks with a radial basis function activation function and quadratic loss, as described in section \ref{sec1p3}. In particular, our result gives sufficient conditions under which the limit of these training dynamics is the gradient flow of a \emph{convex} energy, in the sense that it is convex along Wasserstein geodesics; see Definition \ref{defi:semiconvexity-geod}. 
 \begin{cor}[two-layer neural networks] \label{2layernncor}
Consider a domain $\Omega$ satisfying (\ref{Omegaas}), a radial basis function activation function $\Phi_\ep(x,z) = \zeta_\ep(x-z)$ satisfying (\ref{mollifieras}), a data distribution $\nu = 1/\bar{\rho}$, for $\bar{\rho}$ satisfying (\ref{targetas}) and log-concave on $\Omega$, and a target function $f_0 = -V \bar{\rho}$, for $V$ satisfying  (\ref{Vas}).  For $k \in \mathbb{N}$, consider a confining potential $V_k$ satisfying (\ref{Vkas}) and (\ref{Vkinftyas}). Fix $T>0$. For $\ep >0$, $N   \in \mathbb{N}$, and $t \in [0,T]$, consider the confined  training dynamics of a two-layer neural network corresponding to the   energy $\mathcal{R}_\ep + \V_k$; that is, consider the evolution of the empirical measure of parameters,
\[  \rho_\ep^N(t)  = \sum_{i=1}^N \delta_{X^{i}_{ \ep}}(t) m^i, \quad      m^i \geq 0 ,  \quad  \sum_{i=1}^N m^i = 1 , \] 
where   $X^{i}_{\ep} \in C^1([0,T]; \Rd)$   solves,
\begin{align} 
    \begin{cases}
    \dot{X}^{i}_{\ep} = -\sum_{j=1}^{N} m^j\int_{\rr^d} \nabla \zeta_\ep(X^{i}_{\ep}-z)\zeta_\ep(z-X^{j}_{\ep})\nu(z) \, dz  +\nabla (\zeta_\epsilon*(f_0 \nu))(X^{i}_{\ep})- \nabla V_k(X^{i}_{\ep,k})   ,\\
     X^{i}_{\ep}(0)=X^{i}_{0,\ep}.
    \end{cases}
\end{align}

Suppose   there exists     $\rho(0)\in D(\F) \cap D(\S) \cap \P_2(\Rd)$ so that, for all $\ep >0$, there exists $N = N(\ep)$ so that $\rho^N(0)$ converges to $\rho(0)$ sufficiently quickly, according to  the  rate from equation (\ref{rateofmu0N}). Then, as $k \to +\infty$, there exist $\epsilon = \epsilon(k) \to 0$ and $N = N(\epsilon) \to +\infty$ for which 
 $\rho^N_{\ep}(t) = \sum_{i=1}^{N}\delta_{X^{i}_{\ep}(t)} m^i$ satisfies 
 \[ \lim_{k \to +\infty} W_1(\rho^N_{\ep,k}(t), \rho(t) ) \to 0 , \text{ uniformly for } t \in [0,T] , \]
 where  $\rho \in AC^2([0,T]; \mathcal{P}_2(\rr^d))$ is the unique weak solution of (\ref{mainpde}) with initial data $\rho(0)$.
 
 In particular, whenever $ {\nu}$ is log-convex on $\Omega$ and $f_0 \nu$ is concave on $\Omega$, the limit of the training dynamics is  the gradient flow of the  {convex} energy $\mathcal{R}$.
\end{cor}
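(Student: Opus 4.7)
The plan is to show that this corollary reduces, via the identification in (\ref{ReptoEep})--(\ref{poprisktoeeps}), to Theorem~\ref{thm:conv with particle i.d.}, and then to separately analyze when the limiting energy is displacement convex. First, I would set $V := -f_0 \nu$ and note that, with this choice, $\nu = 1/\bar\rho$, and equation (\ref{ReptoEep}) gives
\begin{equation*}
\mathcal{R}_\epsilon(\mu) = \E_\epsilon(\mu) + \V_\epsilon(\mu) + C, \quad \text{hence} \quad \mathcal{R}_\epsilon(\mu) + \V_k(\mu) = \F_{\epsilon,k}(\mu) + C.
\end{equation*}
Since the constant $C$ does not affect the (sub)differential, gradient flows of $\mathcal{R}_\epsilon + \V_k$ and $\F_{\epsilon,k}$ coincide. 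The hypotheses on $\Omega$, $\zeta$, $\bar\rho$, and $V_k$ imposed in the corollary exactly match (\ref{Omegaas}), (\ref{mollifieras}), (\ref{Vkas}), (\ref{Vkinftyas}), (\ref{targetas}). The regularity of the new $V = -f_0\nu$ follows from the hypothesis that the original $V$ in the corollary statement satisfies (\ref{Vas}) (formulated for $V$ itself); one should simply record that the assumption on $f_0$ is that $-f_0/\bar\rho$ satisfies (\ref{Vas}).

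Next, I would verify that the ODE system in the corollary is precisely (\ref{mainsystemofODEs}) under this identification. Substituting $1/\bar\rho = \nu$ into the first term of (\ref{mainsystemofODEs}) produces the interaction kernel written in the corollary, and $-\nabla(\zeta_\epsilon*V) = -\nabla(\zeta_\epsilon*(-f_0\nu)) = +\nabla(\zeta_\epsilon*(f_0\nu))$, matching the forcing term in the corollary. With these identifications confirmed, the convergence conclusion $W_1(\rho^N_\epsilon(t),\rho(t)) \to 0$ uniformly on $[0,T]$ is an immediate application of Theorem~\ref{thm:conv with particle i.d.}, provided the initial empirical measures satisfy (\ref{rateofmu0N}), which is assumed.

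For the convexity assertion, I would argue that the three constituents of $\F = \E + \V + \V_\Omega$ are each convex along Wasserstein geodesics on $\Pac(\Omega)$, so that $\mathcal{R} = \F + C$ inherits displacement convexity. Precisely: (i) the confining term $\V_\Omega$ is displacement convex because $\Omega$ is convex (Assumption (\ref{Omegaas})), so any $W_2$-geodesic between measures supported in $\overline\Omega$ remains in $\overline\Omega$; (ii) the external potential $\V(\mu) = \int V\,d\mu$ is displacement convex iff $V$ is convex, and since $V = -f_0\nu$ and $f_0\nu$ is assumed concave on $\Omega$, this holds; (iii) for the internal energy $\E(\mu) = \tfrac12 \int \mu^2/\bar\rho\,dx$, I would appeal to the McCann-type criterion for weighted internal energies (as in \cite{ambrosio2008gradient}, Chapter~9): writing $\E(\mu) = \int U(\mu(x),x)\,dx$ with $U(r,x) = r^2/(2\bar\rho(x))$, displacement convexity along geodesics supported in $\Omega$ follows from log-concavity of $\bar\rho$ on $\Omega$, which is equivalent to log-convexity of $\nu = 1/\bar\rho$ on $\Omega$.

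The main obstacle, which is essentially bookkeeping, is the careful verification of displacement convexity for $\E$ under the spatial weight $\bar\rho$; one needs to confirm that the standard displacement-convexity criterion applies in the weighted setting (where the weight is $\bar\rho$ rather than Lebesgue) and that log-concavity of $\bar\rho$ on the convex set $\Omega$ is the correct hypothesis. Beyond this, the proof is a direct application of Theorem~\ref{thm:conv with particle i.d.} with the cosmetic substitution $V \leftrightarrow -f_0\nu$, together with the observation that the three summands making up $\mathcal{R}$ each become displacement convex under the additional hypotheses on $\nu$ and $f_0\nu$.
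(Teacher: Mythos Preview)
Your proposal is correct and follows essentially the same approach as the paper: identify the ODE system with that of Theorem~\ref{thm:conv with particle i.d.} via the substitutions (\ref{poprisktoeeps}), apply that theorem directly for the convergence claim, and then argue convexity of $\mathcal{R}=\F+C$. The only cosmetic difference is that the paper dispatches the convexity assertion by citing Proposition~\ref{prop:convexity ep=0} (which already packages $\E+\V_\Omega$ together via the log-concave extension of $\bar\rho$ to all of $\Rd$), whereas you propose to verify the McCann-type criterion for the weighted internal energy on $\Omega$ from scratch; since Proposition~\ref{prop:convexity ep=0} is exactly that verification, you can simply cite it and avoid the ``bookkeeping obstacle'' you flagged.
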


 Our last main result concerns the behavior of minimizers of the energies $\F_{\ep,k}$ and $\F$.  Our proof of   Theorem  \ref{newmaintheorem} on the convergence of   gradient flows  as $k \to +\infty$ and $\epsilon = \epsilon(k) \to 0$.  leverages the perspective of Serfaty's general metric space framework for $\Gamma$-convergence of   gradient flows \cite{serfaty2011gamma}, which we recall in Section \ref{gammaconvergencerecall}. As a consequence of this approach, we   easily obtain that, under sufficient compactness assumptions on the approximation of our confining potential $V_k$, minimizers of $\F_{\ep,k}$ converge to a minimizer of $\F$.  Unlike in Theorem \ref{newmaintheorem} on convergence of the gradient flows, the rate at which $\epsilon \to 0$ does not depend on the rate $k \to +\infty$. Likewise, this result does not require $\bar{\rho}$ to be log-concave on $\Omega$.

\begin{thm}[minimizers converge to minimizers]  \label{minimizersthm}
Suppose Assumptions (\ref{targetas}), (\ref{Omegaas}), (\ref{mollifieras}), (\ref{Vas}), (\ref{Vkas}), and (\ref{Vkinftyas}) 
hold. Suppose further that $V_k \geq V_1$ for all $k \in \mathbb{N}$ and the sublevel sets of $V_1$ are compact. Then, if $\rho_{\ep,k} \in\mathcal{P}_2(\rr^d)$ is a minimizer of $\F_{\ep,k}$ for all $\ep>0$, $k \in \mathbb{N}$,  then as $\ep\rightarrow 0$, $k \to +\infty$,   $\rho_{\ep,k}$ narrowly converges to $\rho \in \mathcal{P}(\rr^d)$, where $\rho$ is the unique minimizer of $\F$ .
\end{thm}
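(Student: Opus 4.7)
My plan is to apply the direct method of $\Gamma$-convergence: establish equicoercivity of the minimizers, prove the $\Gamma$-liminf and recovery inequalities in the narrow topology, and conclude via strict convexity of $\F$ (which forces uniqueness of the limit minimizer, hence convergence of the full family rather than just a subsequence).

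For equicoercivity, I first observe that for any fixed competitor $\tilde\rho \in D(\F)$ (e.g., a compactly supported smooth density on a ball contained in $\Omega$), the quantity $\F_{\ep,k}(\tilde\rho)$ is uniformly bounded in $\ep,k$: indeed $\V_k(\tilde\rho)=0$ since $\supp\tilde\rho\subseteq\Omega$ and $V_k\equiv 0$ there, $\V_\ep(\tilde\rho)\to \V(\tilde\rho)$ because $V$ is Lipschitz and bounded, and $\E_\ep(\tilde\rho)=\E(\zeta_\ep*\tilde\rho)\to \E(\tilde\rho)$ by (\ref{EepstoE}) together with $L^2$ mollifier convergence and $\bar\rho^{-1}\in L^\infty$ from (\ref{targetas}). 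By minimality, $\sup_{\ep,k}\F_{\ep,k}(\rho_{\ep,k})<+\infty$; since $\E_\ep\geq 0$ and $\V_\ep\geq -\|V\|_{L^\infty}$, this yields a uniform bound $\int V_k\,d\rho_{\ep,k}\leq C$. The hypothesis $V_k\geq V_1$ and Markov's inequality then imply tightness of $\{\rho_{\ep,k}\}$, since sublevel sets of $V_1$ are compact. By Prokhorov's theorem, any sequence $\rho_{\ep_n,k_n}$ with $\ep_n\to 0$ and $k_n\to +\infty$ admits a subsequence converging narrowly to some $\rho^*\in\P(\Rd)$.

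For the $\Gamma$-liminf I treat the three terms of $\F_{\ep_n,k_n}$ separately. Uniform convergence $\zeta_{\ep_n}*V\to V$ (since $V$ is Lipschitz) combined with narrow convergence gives $\V_{\ep_n}(\rho_{\ep_n,k_n})\to \V(\rho^*)$. For the confinement term, if $\supp\rho^*\subseteq\overline\Omega$ then $\V_\Omega(\rho^*)=0$ and the pointwise bound $\V_{k_n}\geq 0$ suffices; otherwise there is an open ball $B$ with $\overline B\subset\Omega^c$ and $\rho^*(B)>0$, so the portmanteau inequality together with (\ref{Vkinftyas}) gives $\V_{k_n}(\rho_{\ep_n,k_n})\geq (\inf_B V_{k_n})\rho_{\ep_n,k_n}(B)\to +\infty=\V_\Omega(\rho^*)$. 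For the internal energy, I use (\ref{EepstoE}) to rewrite $\E_{\ep_n}(\rho_{\ep_n,k_n})=\E(\zeta_{\ep_n}*\rho_{\ep_n,k_n})$; a short argument using tightness and uniform continuity of $\zeta_{\ep_n}*\varphi$ on compact sets shows $\zeta_{\ep_n}*\rho_{\ep_n,k_n}$ converges narrowly to $\rho^*$. Then the Legendre representation
\[\E(\mu)=\tfrac12\sup\Bigl\{2\int\phi\,d\mu-\int\phi^2\bar\rho\,dx : \phi\in C_c(\Rd)\Bigr\}\]
expresses $\E$ as a supremum of narrowly continuous affine functionals, hence $\E$ is narrowly lower semicontinuous, and we obtain $\liminf\E_{\ep_n}(\rho_{\ep_n,k_n})\geq \E(\rho^*)$. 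Summing yields $\liminf\F_{\ep_n,k_n}(\rho_{\ep_n,k_n})\geq \F(\rho^*)$.

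For the recovery direction, for any $\tilde\rho\in D(\F)$ I take the constant sequence $\tilde\rho_{\ep,k}=\tilde\rho$; the estimates of the first paragraph give $\F_{\ep,k}(\tilde\rho)\to \F(\tilde\rho)$, where $\V_k(\tilde\rho)=0$ follows because $\tilde\rho\in D(\F)$ forces $\tilde\rho\in L^2(\bar\rho^{-1}\,dx)\cap\{\supp\subseteq\overline\Omega\}$, hence $\tilde\rho\ll\mathcal{L}^d$ and $|\partial\Omega|=0$. By minimality $\F_{\ep_n,k_n}(\rho_{\ep_n,k_n})\leq \F_{\ep_n,k_n}(\tilde\rho)\to\F(\tilde\rho)$, so combined with the liminf step $\F(\rho^*)\leq \F(\tilde\rho)$ for every $\tilde\rho\in D(\F)$, i.e.\ $\rho^*$ minimizes $\F$. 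Strict convexity of $\F$ on its (convex) domain---since $s\mapsto s^2/\bar\rho(x)$ is strictly convex for each $x$, $\V$ is linear, and $\V_\Omega$ is convex---forces the minimizer to be unique, and a standard subsequence argument upgrades narrow convergence to the full family. The main obstacle is the $\Gamma$-liminf for the internal energy, where both the measure $\rho_{\ep_n,k_n}$ and the regularizing scale $\ep_n$ vary simultaneously; this is resolved by exploiting the identity (\ref{EepstoE}) to transfer the mollifier from the energy onto the measure, combined with narrow lower semicontinuity of $\E$ via its Legendre representation.
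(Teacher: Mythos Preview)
Your proposal is correct and follows essentially the same approach as the paper: bound the minimizers' energy via a fixed competitor in $D(\F)$, use $V_k\geq V_1$ with compact sublevel sets to obtain tightness, pass to narrow limits, combine the $\Gamma$-liminf inequality with the recovery sequence to show the limit minimizes $\F$, and invoke strict convexity of $\E$ for uniqueness and full-family convergence. The only cosmetic differences are that the paper cites its earlier $\Gamma$-convergence results (Theorems~\ref{newenergygammaprop} and~\ref{Vkgammathm}) and a Buttazzo-type lower semicontinuity lemma for $\E$, whereas you reprove these inline via the identity $\E_\ep(\mu)=\E(\zeta_\ep*\mu)$ and a Legendre duality representation of $\E$.
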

 
This theorem has the potential to shed light on the convergence of the gradient flows in the long time limit. In particular, while our main   results on convergence of the gradient flows  only hold on \emph{bounded} time intervals, if one could show that a gradient flow  $\rho_{\ep,k}(t)$ of $\F_{\ep,k}$ indeed converged as $t \to +\infty$ to a minimizer of $\F_{\ep,k}$, uniformly in $\ep>0$ and $k \in \mathbb{N}$, then one could combine the above theorem with the preceding theorems to get convergence of the gradient flows of $\F_{\ep,k}$ to $\F$ globally in time. Proving these estimates remains an open question, closely related to our motivating applications in quantization.

\subsection{Outline of approach and future directions}

\color{black}

\label{outlinesection}  

We now outline our approach to proving these results. We begin, in  Section \ref{preliminariessec}, by  recalling preliminary information on optimal transport, including basic notation in Section \ref{notationsec}, convolution and convergence of measures  in Section \ref{convconvsec},  optimal transport and Wasserstein gradient flows  in Section \ref{OTsec}, and  our variant of  Serfaty's  framework for $\Gamma$-convergence of gradient flows  in Section \ref{gammaconvergencerecall}. 
In Section \ref{energiessec},  we prove several fundamental properties of the energy $\F_{\ep,k}$ and recall known properties of the energy $\F$, including convexity and differentiability  in Section \ref{lscsection}. We give the PDE characterizations of the gradient flows of these energies in Section \ref{GFcharsection} and address the long time behavior of gradient flows of the energy $\F$ in Section \ref{longtimesec}.

With these results in hand, we move on to studying the behavior of gradient flows of $\F_{\ep, k}$ as $\ep \to 0$ and $k \to +\infty$. Section \ref{sec:key estimates} is devoted to proving a key estimate for the analysis of the $\ep \to 0$ limit, which shows that if the initial conditions of the gradient flow of $\F_{\ep,k}$ have bounded entropy, then the mollified gradient flow $\zeta_\ep* \rho_{\ep,k}(t)$ satisfies an $H^1$ bound; see Theorem \ref{prop:H1 bd}. In Section \ref{proofsketchsection}, we sketch our proof of this result, formally integrating by parts, and in Sections \ref{prelimproofsec}-\ref{proofsec}, we prove the result, using the \emph{flow interchange method} developed by Matthes, McCann, and Savar\'e \cite{matthes2009family}.

In Section \ref{sec:Gamma conv}, we use the results of Section \ref{sec:key estimates} to study the $\ep \to 0$ limit. 
In Section \ref{Gammaenergiesep}, we
 obtain $\Gamma$-convergence of the energies $\E_{\ep} + \V_\ep$ as $\ep \to 0$. In Section \ref{eptozeroGFsec}, we move on to considering  convergence of the gradient flows of $\F_{\ep,k}$ as $\ep \to 0$ in Proposition \ref{prop:epstozerofixedk},  under the key hypothesis that the initial conditions of the gradient flow has uniformly bounded entropy.
Ultimately, we prove the gradient flows converge to an ``almost'' curve of maximal slope (see Definition \ref{almostcms}) of the intermediate energy $\F_k$, defined by
 \begin{align} \label{Fkdef}
\F_k(\rho) &= \E(\rho) + \V(\rho) + \V_k(\rho) .
\end{align}
We introduce the notion of an ``almost'' curve of maximal slope as a weakening of the traditional notion of gradient flow.  We need this weakening because we only suppose $\bar{\rho}$ is log-concave on $\Omega$, instead of on all of $\mathbb{R}^d$. Heuristically, this causes the energy $\F_k$ to lack sufficient regularity to define its gradient flow. More precisely, our weak assumptions on $\bar{\rho}$ prevent us from characterizing the strong upper gradient of $\F_k$, which likewise prevent us from defining its curve of maximal slope.

In spite of the fact that our hypotheses on $\bar{\rho}$ are too weak to  identify the $\epsilon \to 0$ limit of gradient flows of the energies $\F_{\ep,k}$ as a true gradient flow, it turns out that our notion of ``almost'' curve of maximal slope is sufficient to identify the behavior as $k \to +\infty$. We consider this limit in
Section \ref{eptozeroGFsecnew}, first obtaining $\Gamma$-convergence of the energies $\F_k$ to $\F$, as $k \to +\infty$, as well as our main theorem on convergence of the minimizers, Theorem \ref{minimizersthm}. We then prove that, as $k \to +\infty$, the ``almost'' curves of maximal slopes of $\F_k$ converge to the unique gradient flow of $\F$. Combining this with the $\ep \to 0$ result from the previous section, we prove our main result, Theorem \ref{newmaintheorem}, showing that gradient flows of $\F_{\ep,k}$  converge to a gradient flow of $\F$ as $k \to +\infty$ and $\ep \to 0$ sufficiently quickly. With these results in hand, we turn in Section \ref{particlesection} to extending the preceding convergence results on the gradient flows as $\ep \to 0$, $k \to +\infty$ to allow for gradient flows with particle initial data, thereby obtaining the proof of Theorem \ref{thm:conv with particle i.d.}. We also prove Corollary \ref{quantcor} on the long time behavior of the particle method and Corollary \ref{2layernncor} on the limit of two-layer neural networks. 
 
 We close in Section \ref{numericssection} with several numerical examples illustrating key properties of our method. We explore the   dynamics and long time behavior of particle solutions,   for targets $\bar{\rho}$ that satisfy the log-concavity assumptions of our main theorems, as well as targets that fail this assumption but satisfy a Poincar\'e inequality. In both cases, we observe that our particle discretization captures the behavior of the continuum PDE when $V=0$ and flows toward $\bar{\rho}$ on $\Omega$ in the long-time limit. We also explore the effect of the confining potential   $V_k$ on the dynamics for various choices of $k$, observing the qualitative   agreement with no-flux boundary conditions on $\Omega$, as well as the quantitative effect on rate of convergence   to (\ref{mainpde}) as $N \to +\infty$, $\ep \to 0$. In the case of strong confinement ($k = 10^9$) and log-concave target $\bar{\rho}$, we observe first order convergence in $N$, with $\epsilon = 4/N^{0.99}$ on $\Omega = (-1,1)$, both for the rate of convergence of the particle method to solutions of   (\ref{mainpde})  and for convergence of the particle method to the target $\bar{\rho}$ on $\Omega$ in the long time limit. Finally, as our scheme   preserves the   gradient flow structure of (\ref{mainpde}), it succeeds in capturing the exponential decay of the KL divergence along particle method solutions (see inequality (\ref{KLdivergenceexpdecay})), up to discretization error  and is energy decreasing for $\F_{\ep,k}$ for all values of $N$, $\ep,$ and $k$. 
 
 There are several directions for future work. Many of our results only lightly use the assumption that $\bar{\rho}$ is log-concave on $\Omega$, and it would be interesting to remove it. A key challenge in this direction is obtaining well-posedness of the gradient flow of $\F$ in the absence of convexity of the energy and proving that the metric slope is a strong upper gradient. A second direction for future work would be to improve methods for computing or approximating $f(x,y)$, as  defined in  (\ref{fdefintro}), which drives the dynamics of our system of ODEs. To compute this exactly involves integrating the reciprocal of the target $\bar{\rho}$ against the mollifiers, which can be done analytically   for a variety of targets $\bar{\rho}$, including  piecewise constant $\bar{\rho}$; see appendix \ref{formulassection}). Better understanding of the minimal information required on $\bar{\rho}$ required to approximate (\ref{fdefintro}) and the effect of this approximation on the dynamics would be important to applying this method in practice, especially when only partial information of $\bar{\rho}$ is known. A third interesting open question  would be to obtain quantitative results on the rate of convergence depending on $N \in \mathbb{N}$, $\ep >0$, and $k \in \mathbb{N}$, particularly if these quantitative estimates could be combined with existing estimates on the long time behavior of (\ref{mainpde}) to provide convergence guarantees regarding the convergence of the particle method to the target $\bar{\rho}$ on $\Omega$.

\section{Preliminaries} \label{preliminariessec}
\subsection{Basic notation} \label{notationsec}

For any $r>0$ and $x \in \Rd$ we use $B_r(x)$ to denote the open ball of center $x$ and radius $r$. We write $\indi_{S}$ for the indicator function of a given subset $S$ of $\rr^d$. i.e., 
\begin{align*}
\indi_S(x) = \begin{cases}1 &\text{ for }x \in S , \\
0 &\text{ otherwise.}\end{cases}\end{align*} 
 We denote the  $d$-dimensional Lebesgue measure by  $\mathcal{L}^d$.  
 
 Given $\mu\in\P(\Rd)$, we write $\mu \ll \mathcal{L}^d$ if $\mu$ is absolutely continuous with respect to $\mathcal{L}^d$, in which case we will denote both the probability measure $\mu$ and its Lebesgue density by the same symbol, e.g. $d \mu(x) = \mu(x) dx$. 
Finally, we let $L^p(\mu;\Omega)$ denote  the Lebesgue space of functions $f$ on $\Omega \subseteq \mathbb{R}^d$ with $|f|^p$ being $\mu$-integrable, and abbreviate $L^p(\Omega) = L^p(\mathcal{L}^d;\Omega)$. (We commit a slight abuse of notation by using  the same notation for the Lebesgue spaces of real-valued and $\rr^d$-valued functions.)

\subsection{Convolution and convergence of measures} \label{convconvsec}
A fundamental aspect of our approach is the regularization of the energy (\ref{Edef}) via convolution with a mollifier. We now recall some elementary results on convolution of probability measures.
For any  $\mu \in\P(\Rd)$ and  $\phi \in L^\infty(\Rd)$, the convolution of $\phi$ with $\mu$ is defined by,
\bes
	\phi*\mu(x) = \int_{\Rd} \phi(x-y) \,d\mu(y) \quad \mbox{for all $x\in\Rd$} .
\ees
Throughout, we use the fact that the definition of convolution allows us to move mollifiers from the measure to the integrand. In particular, for any $f$ bounded below and $\phi \in L^1(\Rd)$  even, we have,
\begin{equation}
    \label{eq:convproperty}
    	\ird f \,d(\phi*\mu) = \ird f*\phi \,d\mu.
\end{equation}
Likewise, we often use the following mollifier exchange lemma, which provides sufficient conditions for  moving functions in and out of  convolutions  within integrals. 
\begin{lem}[{mollifier exchange lemma, \cite[Lemma 2.2]{CarrilloCraigPatacchini}}] \label{move mollifier prop}
Let $f\colon \R^d \to \R$ be Lipschitz continuous with constant $L_f>0$, and let $\sigma$ and $\nu$ be finite, signed Borel measures on $\R^d$. There is $p = p(q,d)>0$ so that, 
\[ 
	\left| \int \zeta_\e *(f\nu) \,d\sigma - \int (\zeta_\e *\nu)f \,d\sigma \right|  \leq \e^{p} L_f \left( \int (\zeta_\e*|\nu|)\,d|\sigma| + C_\zeta |\sigma|(\Rd) |\nu|(\R^d) \right)  \  \text{ for all } \e >0.
\]
\end{lem}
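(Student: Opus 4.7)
The plan is to reduce the mismatch between the two orderings of convolution and multiplication to a single iterated integral, which can then be controlled via the Lipschitz bound on $f$ and the polynomial tail bound on $\zeta$ from (\ref{mollifieras}), with the two contributions balanced through one truncation parameter.

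First, I would write out the pointwise identity
\[
\zeta_\ep * (f\nu)(x) - (\zeta_\ep * \nu)(x)\, f(x) = \int_{\Rd} \zeta_\ep(x-y)\bigl[f(y) - f(x)\bigr]\,d\nu(y),
\]
obtained by pulling $f(x)$ inside the integral against $d\nu(y)$. Taking absolute values pointwise, integrating against $d|\sigma|(x)$, applying Fubini (valid since $|\nu|$, $|\sigma|$ are finite and $\zeta_\ep$ is bounded), and using the Lipschitz estimate $|f(y)-f(x)| \leq L_f |x-y|$ reduces the entire proof to bounding the single quantity $\iint \zeta_\ep(x-y)\,|x-y|\,d|\nu|(y)\,d|\sigma|(x)$ by the right-hand side of the lemma.

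Next, for a threshold $r = \ep^{\alpha}$ with $\alpha \in (0,1)$ to be chosen, I would split the inner $d|\nu|$-integration into the near region $\{|x-y| \leq r\}$ and the far region $\{|x-y| > r\}$. On the near region, I would replace $|x-y|$ by $r$, giving the pointwise bound $L_f\, r\, (\zeta_\ep * |\nu|)(x)$; integration against $d|\sigma|$ then yields the first term of the right-hand side with prefactor $L_f \ep^{\alpha}$. On the far region, I would use the rescaled decay bound
\[
\zeta_\ep(z)\,|z| = \ep^{-d}\zeta(z/\ep)\,|z| \leq C_\zeta\, \ep^{q-d}\,|z|^{\,1-q},
\]
from (\ref{mollifieras}), together with the constraint $|z| > r$, to obtain the uniform estimate $\zeta_\ep(z)|z| \leq C_\zeta\, \ep^{\,q-d-\alpha(q-1)}$; integrating then produces the second term of the right-hand side, with prefactor $L_f C_\zeta\, \ep^{\,q-d-\alpha(q-1)}\,|\sigma|(\Rd)\,|\nu|(\Rd)$.

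Finally, I would balance the two exponents by choosing $\alpha = q - d - \alpha(q-1)$, i.e. $\alpha = (q-d)/q$; the hypothesis $q > d+1$ from (\ref{mollifieras}) forces $\alpha > 1/(d+1) > 0$, so the common exponent $p := (q-d)/q$ depends only on $q$ and $d$, as claimed. The only step requiring any care is this balancing choice: both the near contribution of order $r$ and the far contribution of order $\ep^{q-d} r^{1-q}$ must decay in $\ep$ simultaneously, and this single constraint determines $\alpha$. Everything else is a direct consequence of Fubini, the Lipschitz hypothesis on $f$, and the pointwise decay estimate on $\zeta$ from (\ref{mollifieras}).
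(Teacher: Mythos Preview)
Your proof is correct and follows the standard approach: reduce to the single iterated integral via the Lipschitz bound, split at a threshold $r=\ep^\alpha$, bound the near part by $r\int(\zeta_\ep*|\nu|)\,d|\sigma|$ and the far part via the tail decay of $\zeta$, then balance exponents. The paper does not actually supply a proof of this lemma; it is quoted verbatim from \cite[Lemma~2.2]{CarrilloCraigPatacchini}, and your argument is essentially the one given there.
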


We will often use the following notion of convergence:
\begin{defn}[narrow convergence]
\label{def:narrow conv}
A sequence $\mu_n$ in  $\mathcal{P}(\rr^d)$ is said to \emph{narrowly converge} to $\mu\in \mathcal{P}(\rr^d)$  if $\int f d \mu_n \to \int f d \mu$ for all bounded and continuous functions $f$.
\end{defn}

For fixed $\phi \in C_b(\Rd) \cap L^1(\Rd)$ even and any sequence $\mu_n$ narrowly converging to $\mu$, we immediately obtain from the definition of narrow convergence that, for any $f \in C_b(\Rd)$,
\begin{align} \label{convolutionandnarrow}
\int f (\phi* \mu_n) = \int (f *\phi) d \mu_n  = \int (f *\phi) d \mu = \int f (\phi*\mu),
\end{align}
so $\phi*\mu_n$ narrowly converges to $\phi*\mu$. Moreover, we have:
\begin{lem}[mollifiers and narrow convergence, {\cite[Lemma 2.3]{CarrilloCraigPatacchini}}] \label{weakst convergence mollified sequence}
	Suppose $\zeta_\epsilon$ is a mollifier satisfying Assumption (\ref{mollifieras}), and let $\mu_\e$ be a sequence in $\P(\R^d)$ converging narrowly to $\mu\in\P(\R^d)$. Then $\zeta_\e *\mu_\e$ narrowly converges to $\mu$.
\end{lem}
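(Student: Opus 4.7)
The plan is to fix an arbitrary test function $f\in C_b(\Rd)$ and prove $\int f\,d(\zeta_\e *\mu_\e)\to \int f\,d\mu$. Since $\zeta$, and hence $\zeta_\e$, is even by (\ref{mollifieras}), the identity (\ref{eq:convproperty}) applies and I can rewrite
\[
\int f\,d(\zeta_\e *\mu_\e) \;=\; \int (\zeta_\e *f)\,d\mu_\e.
\]
Adding and subtracting $\int f\,d\mu_\e$, the error decomposes as
\[
\int f\,d(\zeta_\e *\mu_\e)-\int f\,d\mu \;=\; \underbrace{\int (\zeta_\e *f-f)\,d\mu_\e}_{(\mathrm{I})}\;+\;\underbrace{\int f\,d\mu_\e-\int f\,d\mu}_{(\mathrm{II})}.
\]
Term $(\mathrm{II})$ vanishes as $\e\to 0$ directly from the narrow convergence hypothesis, so the entire argument reduces to controlling $(\mathrm{I})$.

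The main obstacle is that $\zeta_\e *f\to f$ does \emph{not} hold uniformly on $\Rd$ for a merely bounded continuous $f$, so one cannot simply bound $|(\mathrm{I})|$ by $\|\zeta_\e *f-f\|_\infty$. I would circumvent this by invoking Prokhorov's theorem: narrow convergence of $\mu_\e$ forces the family $\{\mu_\e\}_\e$ to be tight. Given $\delta>0$, I would choose a compact set $K\subset \Rd$ with $\mu_\e(\Rd\setminus K)<\delta$ for all $\e$, and split
\[
|(\mathrm{I})| \;\le\; \|\zeta_\e *f-f\|_{L^\infty(K)} + 2\|f\|_{L^\infty(\Rd)}\delta.
\]

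The remaining task is to show $\|\zeta_\e *f-f\|_{L^\infty(K)}\to 0$ as $\e\to 0$. For this, I would use that $f$ is uniformly continuous on the compact neighborhood $K+\overline{B_1(0)}$. For $x\in K$, writing $(\zeta_\e *f)(x)-f(x)=\int(f(x-y)-f(x))\zeta_\e(y)\,dy$, I would split the $y$-integral at $|y|<r$ versus $|y|\ge r$ for a small fixed $r$. On $|y|<r$, uniform continuity makes the integrand arbitrarily small (uniformly in $x\in K$), and the integral against $\zeta_\e$ is bounded by $\|\zeta_\e\|_{L^1}=1$. On $|y|\ge r$, I would use the change of variables $y=\e z$ to observe
\[
\int_{|y|\ge r}\zeta_\e(y)\,dy \;=\; \int_{|z|\ge r/\e}\zeta(z)\,dz \;\longrightarrow\; 0
\]
as $\e\to 0$, since $\zeta\in L^1(\Rd)$ by (\ref{mollifieras}); this tail contribution is then controlled by $2\|f\|_\infty$ times that vanishing quantity. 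Combining these estimates yields $\|\zeta_\e *f-f\|_{L^\infty(K)}\to 0$, hence $\limsup_{\e\to 0}|(\mathrm{I})|\le 2\|f\|_\infty\delta$. Since $\delta>0$ was arbitrary, $(\mathrm{I})\to 0$, and adding the vanishing of $(\mathrm{II})$ completes the proof. The only delicate point is the tightness step, which is precisely what allows the pointwise (or locally uniform) approximate-identity property to be upgraded into convergence of integrals against a moving sequence of probability measures.
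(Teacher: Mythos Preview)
Your argument is correct. The paper does not actually prove this lemma; it simply quotes it from \cite[Lemma 2.3]{CarrilloCraigPatacchini}, so there is no ``paper's own proof'' to compare against. Your approach---moving the mollifier onto the test function via (\ref{eq:convproperty}), decomposing the error into $(\mathrm{I})+(\mathrm{II})$, and handling $(\mathrm{I})$ through Prokhorov tightness combined with the locally uniform approximate-identity property of $\zeta_\e$---is the standard route and is carried out cleanly. The only place worth a second look is the uniform-continuity step: you need $|f(x-y)-f(x)|$ small uniformly for $x\in K$ and $|y|<r$, and you correctly secure this by restricting to the compact set $K+\overline{B_1(0)}$ (so that both $x$ and $x-y$ lie there once $r\le 1$). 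Everything else is routine.
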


\subsection{Optimal transport, the Wasserstein metric, and Wasserstein gradient flows} \label{OTsec}

 We now describe basic facts about optimal transport and the Wasserstein metric, which we will use in what follows. For further background, we refer the reader to one of the many excellent textbooks on the subject \cite{ambrosio2008gradient, villani2003topics, santambrogio2015optimal, figalli2021invitation,ambrosio2021lectures}.
 
For a Borel measurable map $\bt \: \R^n \to \R^m$, we say that \emph{$\bt$ transports $\mu \in \P(\R^n)$ to $\nu \in \P(\R^m)$} if $\nu(A) = \mu(\bt^{-1}(A))$ for all measurable sets $A$. We call $\bt$ a \emph{transport map} and denote $\nu$ as $\bt_\# \mu \in \P(\R^m)$,  the \emph{push-forward} of $\mu$ through $\bt$.
For $\mu,\nu\in\P(\R^d)$,  the set of transport plans from $\mu$ to $\nu$ is given by,
\bes
	\Gamma(\mu,\nu) := \{\bgamma \in\P(\R^d\times\R^d) \mid {\pi^1}_\# \bgamma = \mu,\, {\pi^2}_\# \bgamma = \nu\},
\ees
where $\pi^1,\pi^2\colon \R^d \times \R^d \to \R^d$ are the projections of $\R^d\times \R^d$ onto the first and second copy of $\R^d$, respectively. 
 For $p \geq 1$, the \emph{$p$-Wasserstein distance} \cite[Chapter 7]{ambrosio2008gradient} between   $\mu,\nu\in\P_p(\R^d)$ is given by,
\be\label{eq:wass-p}
	W_p(\mu,\nu) = \min_{\bgamma \in \Gamma(\mu,\nu)}  \left( \int_{\R^d\times \R^d} |x-y|^p d \bgamma(x,y) \right)^{1/p},
\ee
where the definitions of the $p$-th moment and the space $\mathcal{P}_p(\rr^d)$ were recalled in (\ref{def:entropy}) and (\ref{eq:def Pp}). 
We say that a transport plan  $\bgamma$ is \emph{optimal} if it attains the minimum in \eqref{eq:wass-p}. We denote the set of optimal transport plans by $\Gamma_0(\mu,\nu)$. 

We  make the following observation: if 
$\bgamma$ is a transport plan from a measure $\mu$ to a Dirac mass $\delta_0$, then  $\bgamma = (\id \times 0)_{\#} \mu$, where $0$ denotes the function $0 : x \mapsto 0$. Using this in the definition (\ref{eq:wass-p}), we obtain,
\begin{equation}
\label{eq:Wp Mp}
W_p^p(\mu,{\delta_0}) = \int_{\Rd \times \Rd} |x - y|^p d \bgamma(x,y) = \int_{\Rd \times \Rd} |x-0|^p d \mu(x) = M_p(\mu).
\end{equation}

Note that 
applying H\"older's inequality in the definition of $W_p(\mu, \nu)$ yields,
\begin{align} \label{W1vsW2}
W_p(\mu,\nu) \leq W_{p'}(\mu,\nu) \ \quad \text{for $1\leq p\leq p'$ and for all } \mu, \nu \in \P_{p'}(\mathbb{R}^d).
\end{align}

Convergence with respect to the $p$-Wasserstein metric is stronger than narrow convergence of probability measures \cite[Remark 7.1.11]{ambrosio2008gradient}. In particular, for $p>1$, if $ \mu_n$ is a sequence in $ \P_p(\R^d)$ and $\mu \in \P_p(\R^d)$, we have,
\begin{align} \label{W2andnarrowconv}
	\mbox{$W_p(\mu_n,\mu) \to 0$ as $n\to\infty$} \iff \left(\mbox{$\mu_n \to \mu$ narrowly and $M_p(\mu_n) \to M_p(\mu)$ as $n\to\infty$}\right).
\end{align}
When $p=1$, the analogous result holds if convergence of first moments is replaced with the requirement that the first moments are \emph{uniformly integrable} \cite[Proposition 7.1.5]{ambrosio2008gradient}.

In order to define Wasserstein gradient flows, we   require the following notion of regularity in time with respect to the Wasserstein metric.

\begin{defn}[absolutely continuous]\label{defi:ac-curve}
We say $\mu:[0,T]\rightarrow \P(\Rd)$ is \emph{2-absolutely continuous} on $[0,T]$, and write $ \mu \in AC^2([0,T];\P_2(\Rd))$, if there exists $f\in L^2([0,T])$ so that, 
\begin{align} \label{eq:ac-p}
	W_2(\mu(t),\mu(s)) \leq \int_s^t f(r)\,d r \quad \mbox{for all $t,s\in (0,T)$ with $s\leq t$.}
\end{align}
\end{defn}
Along such curves, we may define the metric derivative.
\begin{defn}[metric derivative]\label{defi:metric-derivative}
Given $ \mu \in AC^2([0,T];\P_2(\Rd))$, the limit,
\bes
	|\mu'|(t) := \lim_{s\to t} \frac{W_2(\mu(t),\mu(s))}{|t-s|} 
\ees
exists for a.e. $t\in [0,T]$ and is called the\emph{ metric derivative} of $\mu$.
\end{defn}
\noindent In fact, the metric derivative is the minimal square integrable function satisfying ($\ref{eq:ac-p}$): for any $\mu \in AC^2([0,T];\P_2(\Rd))$,  we have  $|\mu'|\in L^2((0,T))$, and for any function $f$ satisfying  (\ref{eq:ac-p}), we have $|\mu'|(t)\leq f(t)$ for a.e. $t\in [0,T]$ (see \cite[Theorem 1.1.2]{ambrosio2008gradient}).

Geodesics form an important class of curves in the Wasserstein metric. Given $\mu_0,\mu_1 \in \P_2(\R^d)$, the geodesics connecting $\mu_0$ to $\mu_1$ are the curves of the form,
\begin{align}\label{eq:geodesic}
	\mu_\alpha = ((1-\alpha) \pi^1 + \alpha \pi^2)_\# \bgamma  \quad \mbox{for $\alpha\in[0,1]$, $\bgamma \in \Gamma_0(\mu,\nu)$}.
\end{align}
More generally, given $\mu_1,\mu_2,\mu_3\in \P_2(\R^d)$, a \emph{generalized} geodesic from $\mu_2$ to $\mu_3$ with base $\mu_1$ is given by,
\begin{align} \label{eq:gen-geodesic}
	\mu_\alpha^{2\to3} = \left((1-\alpha)\pi^2+\alpha\pi^3\right)_\# \bgamma \quad &\mbox{for }\alpha \in[0,1] \text{ and }\bgamma \in \P(\R^d \times \R^d  \times  \R^d) \\ & \text{ such that } {\pi^{1,2}}_\#\bgamma\in\Gamma_0(\mu_1,\mu_2) \text{ and }{\pi^{1,3}}_\#\bgamma\in\Gamma_0(\mu_1,\mu_3), \nonumber
\end{align}
	with $\pi^{1,i}\colon \R^d \times \R^d \times \R^d \to \R^d\times \R^d$ the projection of onto the first and $i$th copies of $\R^d$. Note that when the base $\mu_1$ coincides with one of the endpoints $\mu_2$ or $\mu_3$, a generalized geodesic is a geodesic.

A key property for the uniqueness and stability of Wasserstein gradient flows is  convexity, or more generally semiconvexity, along  generalized geodesics. 
\begin{defn}[semiconvexity]\label{defi:semiconvexity-geod}
	A functional $\G\colon\P_2(\R^d)\to (-\infty,\infty]$ is \emph{semiconvex along generalized geodesics} if there exists $\lambda \in \R$ such that, 
for all $\mu_1,\mu_2,\mu_3 \in \P_2(\R^d)$, there exists a generalized geodesic from $\mu_2$ to $\mu_3$ with base $\mu_1$ for which the following inequality holds:
\begin{align} \label{mainconvexityineq}
	\G(\mu_\alpha^{2\to3}) \leq (1-\alpha)\G(\mu_2) + \alpha\G(\mu_3) - \alpha (1-\alpha) \frac{\lambda}{2} W_{2,\bgamma}^2(\mu_2,\mu_3) \quad \mbox{for all $\alpha\in[0,1]$},
\end{align}
where, 
\bes
	W_{2,\bgamma}^2(\mu_2,\mu_3) := \int_{\Rd\times\Rd\times\Rd} |\pi^2-\pi^3|^2 \,d\bgamma(x,y,z).
\ees
In this case, will sometimes say the functional is \emph{$\lambda$-convex}. 
If a functional is 0-convex, we will say it is \emph{convex}.
\end{defn}
We recall the following sufficient condition for convexity, which is the Wasserstein analogue of the ``above the tangent line'' characterization of convexity from finite dimensional Euclidean space.

\begin{lem}[above the tangent line property {\cite[Proposition 2.8]{craig2017nonconvex}}] \label{abovetanlem}
A functional $\G\colon\P_2(\R^d)\to (-\infty,\infty]$ is \emph{$\lambda$-convex along generalized geodesics} if, for all generalized geodesics $\mu_\alpha^{2 \to 3}$ connecting $\mu_2$ to $\mu_3$ with base $\mu_1$, the map $\alpha \mapsto \G(\mu_\alpha^{2 \to 3})$ is differentiable for all $\alpha \in [0,1]$ and,
\begin{align*}
\G(\mu_3)   - \G(\mu_2) - \left. \frac{d}{d \alpha} \G(\mu_\alpha) \right|_{\alpha = 0} \geq \frac{\lambda}{2} W_{2, \bgamma}^2(\mu_2,\mu_3).
\end{align*}
\end{lem}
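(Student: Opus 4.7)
The plan is to adapt, to the Wasserstein generalized geodesic setting, the elementary one-variable argument: if a differentiable function $h\colon[0,1]\to\mathbb{R}$ satisfies a tangent-line-from-below inequality at each interior point against both endpoints, then one recovers convexity by taking a suitable convex combination of the two inequalities.

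\emph{Setup.} Fix $\mu_1,\mu_2,\mu_3\in \P_2(\rr^d)$ and a generalized geodesic $\mu_\alpha := \mu_\alpha^{2\to 3}$ from $\mu_2$ to $\mu_3$ based at $\mu_1$, associated to a plan $\bgamma \in \P(\rr^d\times\rr^d\times\rr^d)$ as in \eqref{eq:gen-geodesic}. Set $h(\alpha) := \G(\mu_\alpha)$, which is differentiable on $[0,1]$ by hypothesis, and write $W^2 := W_{2,\bgamma}^2(\mu_2,\mu_3)$. The goal is to verify \eqref{mainconvexityineq} for this particular generalized geodesic.

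\emph{Step 1: produce interior tangent-line inequalities.} For each $\beta\in(0,1)$, I would consider the two reparametrized subarcs
\[\tilde\mu_t := \mu_{\beta + (1-\beta)t}, \qquad \bar\mu_t := \mu_{\beta - \beta t}, \qquad t\in[0,1],\]
travelling from $\mu_\beta$ to $\mu_3$ and from $\mu_\beta$ to $\mu_2$, respectively. The natural plans representing these as generalized geodesics based at $\mu_1$ are
\[\tilde\bgamma := \bigl(\pi^1,\,(1-\beta)\pi^2+\beta\pi^3,\,\pi^3\bigr)_\#\bgamma, \qquad \bar\bgamma := \bigl(\pi^1,\,(1-\beta)\pi^2+\beta\pi^3,\,\pi^2\bigr)_\#\bgamma.\]
A direct pushforward computation verifies that the correct marginals are $\mu_1,\mu_\beta,\mu_3$ (resp.\ $\mu_1,\mu_\beta,\mu_2$), that the induced interpolations via \eqref{eq:gen-geodesic} reproduce $\tilde\mu_t$ and $\bar\mu_t$, and that the associated generalized-geodesic squared distances satisfy $W_{2,\tilde\bgamma}^2(\mu_\beta,\mu_3) = (1-\beta)^2 W^2$ and $W_{2,\bar\bgamma}^2(\mu_\beta,\mu_2) = \beta^2 W^2$. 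Applying the hypothesized tangent-line inequality \emph{at parameter $0$} to each of $\tilde\mu_\bullet$ and $\bar\mu_\bullet$, combined with the chain-rule identities $\tfrac{d}{dt}\G(\tilde\mu_t)\big|_{t=0} = (1-\beta)h'(\beta)$ and $\tfrac{d}{dt}\G(\bar\mu_t)\big|_{t=0} = -\beta h'(\beta)$, yields the two inequalities
\begin{align*}
h(1) - h(\beta) - (1-\beta)h'(\beta) &\geq \tfrac{\lambda}{2}(1-\beta)^2 W^2,\\
h(0) - h(\beta) + \beta h'(\beta) &\geq \tfrac{\lambda}{2}\beta^2 W^2.
\end{align*}

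\emph{Step 2: convex combination.} Multiplying the first inequality by $\beta$, the second by $(1-\beta)$, and adding, the $h'(\beta)$ terms cancel and leave
\[\beta h(1) + (1-\beta)h(0) - h(\beta) \geq \tfrac{\lambda}{2}\beta(1-\beta) W^2,\]
which is precisely \eqref{mainconvexityineq} evaluated at parameter $\beta$ along $\mu_\alpha$. The cases $\beta=0$ and $\beta=1$ are trivial equalities.

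\emph{Main obstacle.} The delicate and genuinely nontrivial step is the assertion in Step~1 that $\tilde\bgamma$ and $\bar\bgamma$ define \emph{bona fide} generalized geodesics based at $\mu_1$: that is, that the lateral marginals ${\pi^{1,2}}_\#\tilde\bgamma$ and ${\pi^{1,2}}_\#\bar\bgamma$ belong to $\Gamma_0(\mu_1,\mu_\beta)$, so that the hypothesis of the lemma applies to them with the expected cost. Optimality of the straight-line interpolation $(1-\beta)\pi^2+\beta\pi^3$ against $\pi^1$ does not follow automatically from the separate optimality of ${\pi^{1,2}}_\#\bgamma$ and ${\pi^{1,3}}_\#\bgamma$; it is the structural property of displacement interpolations within the generalized geodesic framework whose verification constitutes the technical heart of the argument, and it is at this point that the reference \cite{craig2017nonconvex} is invoked.
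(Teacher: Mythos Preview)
The paper does not supply its own proof of this lemma; it is quoted from \cite{craig2017nonconvex}, so there is no in-paper argument to compare against. Your reduction to the one-variable convex-combination trick is exactly the standard route, and your identification of the single nontrivial point---that the subarcs $\tilde\mu_t,\bar\mu_t$ must themselves be generalized geodesics with base $\mu_1$, i.e.\ that $(\pi^1,(1-\beta)\pi^2+\beta\pi^3)_\#\bgamma\in\Gamma_0(\mu_1,\mu_\beta)$---is correct.

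However, you should not leave that step as a black box deferred to the reference, because it has a short self-contained proof via cyclical monotonicity. Recall that a plan in $\Gamma(\mu,\nu)$ is optimal for the quadratic cost if and only if its support is cyclically monotone, and that cyclical monotonicity of $\{(x_i,y_i)\}$ is equivalent to $\sum_i\langle x_i, y_i-y_{i+1}\rangle\ge 0$ for every finite cycle. This condition is \emph{linear} in the $y$-variables. Since $(\pi^1,\pi^2)_\#\bgamma$ and $(\pi^1,\pi^3)_\#\bgamma$ are both optimal, any finite family $\{(x_i,a_i,b_i)\}\subset\supp\bgamma$ satisfies both $\sum_i\langle x_i,a_i-a_{i+1}\rangle\ge 0$ and $\sum_i\langle x_i,b_i-b_{i+1}\rangle\ge 0$; taking the convex combination $(1-\beta)\cdot(\text{first})+\beta\cdot(\text{second})$ gives cyclical monotonicity of $\{(x_i,(1-\beta)a_i+\beta b_i)\}$, hence optimality of $(\pi^1,(1-\beta)\pi^2+\beta\pi^3)_\#\bgamma$. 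With this in hand your Steps~1--2 go through verbatim and the proof is complete.
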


For any functional $\G \colon \P_2(\R^d) \to (-\infty , + \infty]$, we denote its \emph{domain} by $D(\G) = \{ \mu \in \P_2(\R^d) \mid \G(\mu) < +\infty\}$, and   say that $\G$ is \emph{proper} if $D(\G) \neq \emptyset$. 
For any measure $\mu$ in the domain of a functional $\G$, we may define the local slope of $\G$ at $\mu$ as follows.
 
 \begin{defn}[local slope] \label{localslopedef}
	Given $\G \colon \P_2(\Rd)  \to (-\infty,\infty]$, for any $\mu \in D(\G)$, the \emph{local slope} is,  
\bes
	|\partial \G|(\mu) = \limsup_{\nu \to \mu} \frac{(\G(\mu) - \G(\nu) )_+}{W_2(\mu,\nu)},
\ees
where  $(s)_+ = \max\{s,0\}$ denotes the positive part of $s$.
\end{defn}

Next, we define the subdifferential of  a functional $\mathcal{G}: \P_2(\Rd) \to (-\infty, +\infty]$ that is lower semicontinuous with respect to Wasserstein convergence and $\lambda$-convex along  generalized geodesics.\footnote{Note that in Ambrosio, Gigli, and Savar\'e \cite[Chapter 10]{ambrosio2008gradient} this is known as the \emph{reduced subdifferential}, which is stronger than their notion of \emph{extended subdifferential}: see Definition 10.3.1 of the extended subdifferential and  equations (10.3.12)-(10.3.13)  for the reduced subdifferential. The reduced subdifferential is sufficient for our purposes, due to the fact that our main $\Gamma$-convergence result considers gradient flow solutions that are absolutely continuous with respect to Lebesgue measure, and we extend the convergence to particle initial data separately.}

\begin{defn}[subdifferential of $\lambda$-convex functional] \label{subdiffdef}
Suppose $\G:\P_2(\Rd) \to (-\infty,+\infty]$  is proper, lower semicontinuous, and $\lambda$-convex along geodesics. Let $\mu \in D(\G)$  and    $\bxi: \Rd \to \Rd$ with $\bxi \in L^2( d \mu)$. We say that $\bxi$ belongs to the \emph{subdifferential} of $\G$ at $\mu$, and write $\bxi \in \partial \G(\mu)$,  if for all $\nu \in \P_2(\Rd)$,
\begin{align} \label{simplersubdiff}
\G(\nu) - \G(\mu) \geq  \int_{\Rd \times \Rd} \la \bxi(x),y-x \ra d\bgamma(x,y) + \frac{\lambda}{2} W_2^2(\mu,\nu) \quad \text{ for all } \bgamma \in \Gamma_0(\mu,\nu).
\end{align}
\end{defn}
\begin{rem}[subdifferential of sum] \label{subdiffsumrem}
Note that if $\G_1$ and $\G_2$ satisfy the hypotheses of Definition \ref{subdiffdef} and $\mu \in D(\G_1) \cap D(\G_2)$, then for any $\bxi_1 \in \partial \G_1(\mu)$ and $\bxi_2 \in \partial \G_2(\mu)$, we have $\bxi_1+\bxi_2 \in \partial (\G_1 + \G_2)(\mu)$.
\end{rem}

The local slope and subdifferential are related by the following proposition, which is a direct adaptation of \cite[Lemma 10.1.5]{ambrosio2008gradient} to the case of functionals which contain measures $\mu$ in their domain that are not necessarily absolutely continuous with respect to Lebesgue measure. We defer the proof to appendix \ref{appendix}.
\begin{prop}[local slope and minimal subdifferential] \label{metricslopesubdifabscts}
Suppose $\G: \P_2(\Rd) \to (-\infty, +\infty]$ is proper, lower semicontinuous, and $\lambda$-convex along generalized geodesics. Then for any   $\mu \in D(|\partial \G|)$, we have,
\begin{align} \label{subdiffmetricineq}
|\partial \G|(\mu) \leq \inf \left\{ \| \bxi\|_{L^2(\mu)} : \bxi \in \partial \G(\mu) \right\} .
\end{align}
If equality holds and $\bxi$ attains the infimum, we will write $\bxi = \partial^\circ \G(\mu)$. In this case, the element of the subdifferential attaining the infimum is unique.

\end{prop}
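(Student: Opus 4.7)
The plan is to mimic the argument of \cite[Lemma 10.1.5]{ambrosio2008gradient}, with care taken to work with transport plans rather than transport maps, since $\mu \in D(|\partial \G|)$ is not assumed to be absolutely continuous with respect to Lebesgue measure. First I would fix $\bxi \in \partial \G(\mu)$ and, given any $\nu \in \P_2(\Rd)$ and any $\bgamma \in \Gamma_0(\mu,\nu)$, use Definition \ref{subdiffdef} together with the Cauchy--Schwarz inequality applied to the pairing $\int \la \bxi(x), y-x\ra \, d\bgamma(x,y)$ with respect to $\bgamma$ to obtain
\begin{equation*}
\G(\mu) - \G(\nu) \leq \|\bxi\|_{L^2(\mu)}\left(\int_{\Rd\times\Rd}|y-x|^2\,d\bgamma(x,y)\right)^{1/2} - \frac{\lambda}{2}W_2^2(\mu,\nu) = \|\bxi\|_{L^2(\mu)}W_2(\mu,\nu) - \frac{\lambda}{2}W_2^2(\mu,\nu),
\end{equation*}
where the last equality uses that $\bgamma$ is optimal. (Here I would use the fact that $\bgamma$ has first marginal $\mu$, so that $\|\bxi\circ\pi^1\|_{L^2(\bgamma)}=\|\bxi\|_{L^2(\mu)}$.)

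Next, I would take the positive part, divide by $W_2(\mu,\nu)$, and pass to the $\limsup$ as $\nu \to \mu$ in $W_2$. The $\lambda$-dependent term $\tfrac{\lambda}{2}W_2(\mu,\nu)$ vanishes in the limit, yielding
\begin{equation*}
|\partial\G|(\mu) = \limsup_{\nu\to\mu}\frac{(\G(\mu)-\G(\nu))_+}{W_2(\mu,\nu)} \leq \|\bxi\|_{L^2(\mu)}.
\end{equation*}
Taking the infimum over $\bxi \in \partial\G(\mu)$ gives the desired inequality \eqref{subdiffmetricineq}.

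For the uniqueness statement, suppose the infimum in \eqref{subdiffmetricineq} is attained and that two elements $\bxi_1, \bxi_2 \in \partial\G(\mu)$ both achieve it, so $\|\bxi_i\|_{L^2(\mu)} = |\partial\G|(\mu)$ for $i=1,2$. Here I would use that the subdifferential, as defined via the inequality \eqref{simplersubdiff}, is a convex subset of $L^2(\mu)$: averaging the defining inequality for $\bxi_1$ and $\bxi_2$ immediately shows $(\bxi_1+\bxi_2)/2 \in \partial\G(\mu)$. By the hypothesis that the infimum is attained, $\|(\bxi_1+\bxi_2)/2\|_{L^2(\mu)} \geq |\partial\G|(\mu)$. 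Combining this with the parallelogram identity in the Hilbert space $L^2(\mu;\Rd)$,
\begin{equation*}
\left\|\tfrac{\bxi_1+\bxi_2}{2}\right\|_{L^2(\mu)}^2 + \left\|\tfrac{\bxi_1-\bxi_2}{2}\right\|_{L^2(\mu)}^2 = \tfrac{1}{2}\|\bxi_1\|_{L^2(\mu)}^2 + \tfrac{1}{2}\|\bxi_2\|_{L^2(\mu)}^2 = |\partial\G|(\mu)^2,
\end{equation*}
forces $\|(\bxi_1-\bxi_2)/2\|_{L^2(\mu)}=0$, so $\bxi_1=\bxi_2$ in $L^2(\mu)$.

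The main technical point, and the only departure from the AGS argument, is the need to keep the estimate at the level of plans rather than maps in the first step, since $\mu$ need not be absolutely continuous; once one recognizes that $\|\bxi\circ\pi^1\|_{L^2(\bgamma)}=\|\bxi\|_{L^2(\mu)}$ whenever $\pi^1_\#\bgamma=\mu$, the Cauchy--Schwarz step goes through verbatim and the remainder of the proof is routine.
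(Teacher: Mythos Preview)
Your proposal is correct and follows essentially the same approach as the paper: the paper also fixes $\bxi\in\partial\G(\mu)$, applies the subdifferential inequality and Cauchy--Schwarz with respect to an optimal plan $\bgamma$, divides by $W_2(\mu,\nu)$ and takes the $\limsup$; for uniqueness the paper simply invokes strict convexity of $\|\cdot\|_{L^2(\mu)}$, which your parallelogram-identity argument spells out explicitly.
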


We now turn to the definition of a gradient flow in the Wasserstein metric (c.f. \cite[Definition 1.1.1, Proposition 8.3.1, Definition 11.1.1, Theorem 11.1.3]{ambrosio2008gradient}). 
\begin{defn}[gradient flow] \label{gradientflowdef}
	Suppose $\G\colon \P_2(\Rd) \to (-\infty,+\infty]$ is proper, lower semicontinous, and $\lambda$-convex along generalized geodesics. A curve $\mu(t) \in AC^2([0,T];\P_2(\Rd))$  is a \emph{gradient flow of $\G$ in the Wasserstein metric} if  $\mu(t)$ is a weak solution of the continuity equation,
\begin{equation}
\label{eq:continuity eqn defn gf}
    \partial_t \mu(t) + \grad \cdot (v(t) \mu(t)) = 0, \text{ in duality with }C_\mathrm{c}^\infty((0,T) \times \Rd)  ,  
\end{equation}
and, 
	\[    v(t) =- \partial^\circ \G(\mu(t)) \text{ for $\mathcal{L}^1$-a.e. } t \in [0,T]. \]
\end{defn}

Next, we recall sufficient conditions for well-posedness of the initial value problem for the gradient flow, when the initial condition $\mu(0)$ is in the closure of the domain of the energy $\overline{D(\G)}$. We also recall equivalent characterizations of the gradient flow as a curve of maximal slope and evolution variational inequality. As the theorem is simply a collection of general results developed by Ambrosio, Gigli, and Savar\'e \cite{ambrosio2008gradient}, we defer its proof to  Appendix \ref{appendix}. Note that our notion of curve of maximal slope differs slightly from Ambrosio, Gigli, and Savar\'e, since we use a version that is integrated in time.

\begin{thm}[well-posedness and characterization of gradient flow] \label{curvemaxslope}
Suppose $\G\colon \P_2(\Rd) \to (-\infty,+\infty]$ is proper, lower semicontinous, and $\lambda$-convex along generalized geodesics and $\mu(0) \in \overline{D(\G)}$. Then, there exists a unique gradient flow 
$\mu(t)$  of $\G$ satisfying $\lim_{t \to 0^+} \mu(t) = \mu(0)$ in the Wasserstein metric.

Furthermore $\mu(t) \in AC^2([0,T];\P_2(\Rd))$  is the gradient flow of $\G$   if and only if $\mu(t)$ satisfies one of the following equivalent conditions:
\begin{enumerate}[(i)]
\item  Curve of Maximal Slope:  \label{item:curve max slope}
\begin{align} \label{curvemaxslopedef}
\frac12 \int_0^t |\mu'|^2(r) dr + \frac12 \int_0^t |\partial \G|^2(\mu(r)) dr \leq \G(\mu(0)) - \G(\mu(t))  , \quad \text{ for all $t \in [0,T]$.} \end{align}
\item  Evolution Variational Inequality: For all $ \nu \in \P_2(\Rd)$ and for $\mathcal{L}^1\text{-a.e. } t \in [0,T]$,
\begin{align*} \frac{1}{2} \frac{d^+}{dt} W_2^2(\mu(t), \nu) + \frac{\lambda}{2} W_2^2(\mu(t), \nu) + \G(\mu(t)) \leq \G(\nu) .
\end{align*}
\end{enumerate}
\end{thm}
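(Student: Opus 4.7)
The plan is to invoke Ambrosio-Gigli-Savar\'e's general theory for gradient flows of $\lambda$-convex functionals, filling in the bridges needed to match the authors' specific formulation. Existence, uniqueness, and the EVI characterization \textit{(ii)} follow directly from AGS's main existence theorem: under properness, lower semicontinuity, and $\lambda$-convexity along generalized geodesics, the minimizing movement (JKO) scheme started at any $\mu(0) \in \overline{D(\G)}$ produces a unique curve $\mu \in AC^2([0,T];\P_2(\Rd))$ satisfying the EVI, with $\lim_{t \to 0^+}\mu(t) = \mu(0)$ in $W_2$. AGS further shows that any EVI solution satisfies the continuity equation with velocity $-\partial^\circ \G(\mu(t))$ for a.e.\ $t$, so it is a gradient flow in the sense of Definition \ref{gradientflowdef}, and conversely.

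Next, the equivalence with the time-integrated curve of maximal slope \textit{(i)} requires slightly more care, since the authors have adopted an integrated variant of AGS's pointwise notion. Under $\lambda$-convexity along generalized geodesics, AGS establishes that the local slope $|\partial \G|$ coincides with the minimal norm $\|\partial^\circ \G\|_{L^2(\mu)}$ of the subdifferential (as recalled in Proposition \ref{metricslopesubdifabscts}) and is a \emph{strong upper gradient}. This yields the chain rule inequality \[ |\G(\mu(t)) - \G(\mu(0))| \leq \int_0^t |\partial \G|(\mu(r))\,|\mu'|(r)\,dr \] for every $\mu \in AC^2([0,T];\P_2(\Rd))$ with $|\partial \G|\circ\mu \in L^2([0,T])$. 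Applying Young's inequality $ab \leq \frac12 a^2 + \frac12 b^2$ inside the integral produces the \emph{opposite} inequality to (\ref{curvemaxslopedef}), so (\ref{curvemaxslopedef}) automatically holds as an equality and forces Young's inequality to be saturated pointwise a.e.

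For the forward direction — that gradient flows satisfy (\ref{curvemaxslopedef}) — I would use that for a gradient flow the continuity-equation velocity equals $v(t) = -\partial^\circ \G(\mu(t))$, so by the identification of the metric derivative with the $L^2$-norm of the minimal velocity, $|\mu'|(t) = \|v(t)\|_{L^2(\mu(t))} = |\partial \G|(\mu(t))$ for a.e.\ $t$, which saturates Young and gives equality in (\ref{curvemaxslopedef}). Conversely, if (\ref{curvemaxslopedef}) holds, Young saturation forces $|\mu'|(t) = |\partial \G|(\mu(t))$ a.e., while equality in the chain rule forces the $L^2(\mu(t))$-velocity field of $\mu$ to be colinear with the minimal element of the subdifferential; uniqueness of this minimizer (Proposition \ref{metricslopesubdifabscts}) then identifies $v(t) = -\partial^\circ \G(\mu(t))$ a.e., giving the gradient flow.

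The main obstacle is not genuinely mathematical but rather bookkeeping: translating between the reduced subdifferential and local slope used here (Definitions \ref{subdiffdef} and \ref{localslopedef}) and the extended subdifferential and pointwise curves of maximal slope used by AGS, and checking that the integrated form in (\ref{curvemaxslopedef}) is genuinely equivalent to the pointwise AGS formulation under $\lambda$-convexity along generalized geodesics. Beyond these translation issues, every ingredient needed for the proof is already present in the general AGS framework.
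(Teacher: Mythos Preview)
Your proposal is correct and follows essentially the same route as the paper: invoke AGS for existence, uniqueness, and the EVI characterization, then use that $|\partial \G|$ is a strong upper gradient together with Young's inequality to pass between the integrated inequality (\ref{curvemaxslopedef}) and the pointwise AGS curve-of-maximal-slope condition. The only minor difference is that for the converse direction (curve of maximal slope $\Rightarrow$ gradient flow) the paper, rather than arguing directly that equality in the chain rule pins down the velocity as $-\partial^\circ\G(\mu(t))$, first reduces the integrated inequality to the pointwise AGS form via the argument that $f(t):=\tfrac{d}{dt}\G(\mu(t))+\tfrac12|\partial\G|^2(\mu(t))+\tfrac12|\mu'|^2(t)$ is nonnegative a.e.\ yet has nonpositive integral, hence vanishes a.e., and then cites \cite[Theorem 11.1.3]{ambrosio2008gradient} as a black box; your sketch effectively reproves the content of that theorem.
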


\subsection{$\Gamma$-convergence of energies and gradient flows} \label{gammaconvergencerecall}
We now recall the general framework of $\Gamma$-convergence of energies, which is a classical tool in the Calculus of Variations. This provides sufficient conditions that, when combined with some compactness, ensure   minimizers of a sequence of energies converge to a minimizer of a limiting energy. Next, we introduce a variant of Serfaty's scheme of $\Gamma$-convergence of gradient flows  \cite{serfaty2011gamma} that is weak enough to accommodate our assumptions on $\bar{\rho}$.  In particular, it will allow us to study the limiting behavior of both gradient flows of $\F_{\ep,k}$ in Section \ref{sec:Gamma conv}, as well as ``almost'' curves of maximal slope in Section \ref{eptozeroGFsecnew}.

We begin by recalling the notion of $\Gamma$-convergence of energies, focusing in particular on the case of energies  defined on $\P(\Rd)$, with respect to the narrow topology. 
\begin{defn}[$\Gamma$-convergence of energies]
\label{def:gamma conv def}
A sequence of functionals $\G_\alpha: \P(\Rd) \to \R \cup \{ +\infty\}$ is said to $\Gamma$-\emph{converge} to $\G: \P(\Rd) \to \R \cup \{ +\infty\}$ if:
\begin{align}
    \label{eq:gamma conv def}
   &\text{for any  sequence $\rho_\alpha\in \mathcal{P}(\mathbb{R}^d)$ converging narrowly to  $\rho\in \mathcal{P}(\mathbb{R}^d)$, } \liminf_{\alpha \to 0} \G_{\alpha}(\rho_\alpha) \geq \G(\rho) ; \\ \label{newenergygammapart1}
   &\text{for any $\rho \in \P_2(\Rd)$, there exists $\rho_\alpha\in \mathcal{P}(\mathbb{R}^d)$ converging narrowly to  $\rho$  s.t.}
\limsup_{\alpha \to 0}\G_{\alpha}(\rho) \leq \G(\rho) .
\end{align}
 \end{defn}

Next, we prove the following lemma, which provides sufficient conditions for compactness of a sequence of absolutely continuous curves.

\begin{lem}[compactness of absolutely continuous curves]
\label{lem:compactprime} 
Fix $T>0$. Suppose we have a sequence $\{\rho_\alpha\}_{\alpha>0} \subseteq AC^2([0,T];\P_2(\Rd))$ and
\begin{align}
&\sup_{\alpha>0} \int_0^T|\rho_\alpha'|^2(r)\, dr< +\infty \quad \text{ and } \quad \sup_{\alpha>0} M_2(\rho_\alpha(0))<+\infty  .   \label{eq:120602}
\end{align}
Then there exists $\rho\in AC^2([0,T];\P_2(\Rd))$  such that, along a subsequence $\alpha\rightarrow 0$, $W_1(\rho_\alpha(t), \rho(t))\rightarrow 0$ uniformly in $t\in [0,T]$, and  
\begin{equation}
\label{eq:120604}
\liminf_{\alpha\rightarrow 0} \int_0^t|\rho_\alpha'|^2(r)\, dr\geq \int_0^t|\rho'|^2(r)\, dr \text{ for every $t\in [0,T]$.}
\end{equation}
\end{lem}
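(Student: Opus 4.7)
The plan is to proceed via a metric Arzel\`a-Ascoli argument followed by a weak compactness argument for the metric derivatives in $L^2$, in the spirit of \cite[Proposition 3.3.1]{ambrosio2008gradient}. First, from the definition of metric derivative and the Cauchy-Schwarz inequality, for any $\alpha>0$ and $0 \leq s \leq t \leq T$,
\begin{equation*}
W_2(\rho_\alpha(s), \rho_\alpha(t)) \leq \int_s^t |\rho_\alpha'|(r)\, dr \leq |t-s|^{1/2} \left(\int_0^T|\rho_\alpha'|^2(r)\, dr\right)^{1/2} \leq C|t-s|^{1/2},
\end{equation*}
so the curves $\{\rho_\alpha\}$ are uniformly $\tfrac{1}{2}$-H\"older continuous in $W_2$, hence in $W_1$ by \eqref{W1vsW2}. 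Combining this estimate at $s=0$ with the identity $W_2(\mu, \delta_0)^2 = M_2(\mu)$ from \eqref{eq:Wp Mp} and the triangle inequality yields $\sup_\alpha \sup_{t \in [0,T]} M_2(\rho_\alpha(t)) < +\infty$.

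In particular, the set $\{\rho_\alpha(t) : \alpha > 0,\ t \in [0,T]\}$ is tight with uniformly integrable first moments, hence relatively compact in $(\P_1(\Rd), W_1)$ by the characterization of $W_1$-convergence discussed after \eqref{W2andnarrowconv}. Combining this pointwise relative compactness with the uniform equi-H\"older continuity established above, the refined Arzel\`a-Ascoli theorem for continuous maps into complete metric spaces, applied via a diagonal argument on a countable dense subset of $[0,T]$ and then upgraded using equi-continuity, produces a subsequence (not relabeled) and a limit curve $\rho : [0,T] \to \P_1(\Rd)$ so that $W_1(\rho_\alpha(t), \rho(t)) \to 0$ uniformly in $t \in [0,T]$. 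That $\rho(t) \in \P_2(\Rd)$ for every $t$ follows from the uniform second-moment bound and Fatou's lemma applied to the truncated test functions $x \mapsto |x|^2 \wedge R$ as $R \to +\infty$.

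To identify $\rho$ as a $2$-absolutely continuous curve and deduce \eqref{eq:120604}, I would use the uniform $L^2$ bound on metric derivatives: along a further subsequence, $|\rho_\alpha'| \rightharpoonup m$ weakly in $L^2((0,T))$ for some nonnegative $m \in L^2((0,T))$. Since $W_2$ is lower semicontinuous with respect to narrow convergence on sets with uniformly bounded second moments, for all $0 \leq s \leq t \leq T$,
\begin{equation*}
W_2(\rho(s), \rho(t)) \leq \liminf_{\alpha \to 0} W_2(\rho_\alpha(s), \rho_\alpha(t)) \leq \liminf_{\alpha \to 0} \int_s^t |\rho_\alpha'|(r)\, dr = \int_s^t m(r)\, dr,
\end{equation*}
which shows $\rho \in AC^2([0,T]; \P_2(\Rd))$ with $|\rho'|(r) \leq m(r)$ for a.e. $r \in [0,T]$. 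The desired inequality \eqref{eq:120604} then follows from weak lower semicontinuity of the $L^2$ norm on $[0,t]$ applied to the weakly convergent sequence $|\rho_\alpha'|$.

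The main obstacle I anticipate is the bookkeeping needed to cleanly run the metric Arzel\`a-Ascoli step in the $W_1$ topology rather than in the weaker narrow topology, so that the pointwise limit automatically carries a finite first moment and the H\"older modulus from $W_2$ transfers to a genuine modulus of continuity of $\rho$ in $W_1$. The uniform second-moment bound is what closes this loop, ensuring that pointwise relative compactness is with respect to $W_1$ and not merely narrow convergence, and that $\rho(t)$ inherits the second-moment bound needed to use lower semicontinuity of $W_2$ in the final identification step.
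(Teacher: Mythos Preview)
Your proposal is correct and follows essentially the same route as the paper's proof: uniform second-moment bound $\Rightarrow$ pointwise $W_1$-relative compactness plus $W_2$-equicontinuity $\Rightarrow$ Arzel\`a--Ascoli in $W_1$, then weak $L^2$-compactness of $|\rho_\alpha'|$ combined with lower semicontinuity of $W_2$ under narrow convergence to identify $\rho\in AC^2$ and deduce \eqref{eq:120604}. The only noteworthy difference is that you obtain the uniform second-moment bound directly via the triangle inequality $M_2(\rho_\alpha(t))^{1/2}=W_2(\rho_\alpha(t),\delta_0)\leq W_2(\rho_\alpha(t),\rho_\alpha(0))+M_2(\rho_\alpha(0))^{1/2}$, whereas the paper invokes Proposition~\ref{secondmomentbound} (a Gronwall-type estimate); your argument is in fact slightly more elementary here.
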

\begin{proof}
First, we shall  produce $\rho:[0,T]\rightarrow \mathcal{P}_2(\rr^d)$ and a subsequence $\rho_\alpha$ such that $W_1(\rho_\alpha(t), \rho(t))\rightarrow 0$ uniformly in $t\in [0,T]$.  
To this end, we use Proposition \ref{secondmomentbound}, together with hypothesis (\ref{eq:120602})  to find that there exists $C = C(T)>0$,   so that,   for all $t\in [0,T]$ and $\alpha >0$, $\rho_\alpha(t)$ belongs to the set $\{\rho: M_2(\rho)\leq C\}$. This set is is narrowly sequentially compact  \cite[Remark 5.1.5, Lemma 5.1.7]{ambrosio2008gradient} and has uniformly integrable 1st moments  \cite[equation 5.1.20]{ambrosio2008gradient}, so it  is relatively compact in the 1-Wasserstein metric \cite[Proposition 7.1.5]{ambrosio2008gradient}. In particular, $\{\rho_\alpha(t)\}_{\alpha>0}$ is relatively compact with respect to the 1-Wasserstein metric, pointwise in time.

Next, using inequality (\ref{W1vsW2})  and hypothesis (\ref{eq:120602}), we deduce   equicontinuity with respect to the 1-Wasserstein metric: for all $0 \leq s \leq t \leq T$,
\begin{align*}
   \sup_{\alpha>0} W_1(\rho_\alpha(s),\rho_\alpha(t)) &\leq \sup_{\alpha>0} W_2(\rho_\alpha(s),\rho_\alpha(t)) \leq \sup_{\alpha>0} \int_s^t |\rho'_\alpha|(r) dr \leq   \sqrt{t-s} \left( \sup_{\alpha>0} \int_0^T |\rho_\alpha'|^2(r) \right)^{1/2}.
\end{align*}
Therefore, the   Ascoli-Arzel\'a theorem  ensures that there exists $\rho:[0,T] \to \P_2(\Rd)$ so that, up to a subsequence,  $W_1(\rho_\alpha(t), \rho(t))\rightarrow 0$ uniformly in $t\in [0,T]$. 

It remains to show  $\rho\in AC^2([0,T];\P_2(\Rd))$  and (\ref{eq:120604}). To see this, note that hypothesis (\ref{eq:120602}) ensures $\{ |\rho_\alpha'|(r) \}_{\alpha>0}$ is bounded in $L^2([0,T])$. Thus, up to another subsequence, it is weakly convergent to some $\nu(r) \in L^2([0,T])$. Thus, for all $0 \leq s \leq t \leq T$, using the lower semicontinuity of the 2-Wasserstein metric with respect to narrow (hence 1-Wasserstein) convergence,
\begin{align*}
W_2(\rho(s), \rho(t)) \leq \liminf_{\alpha \to 0} W_2(\rho_\alpha(s), \rho_\alpha(t)) \leq \liminf_{\alpha \to 0} \int_s^t |\rho'_\alpha|(r) dr = \int_s^t \nu(r) dr .
\end{align*}
This shows $\rho\in AC^2([0,T];\P_2(\Rd))$. Furthermore, by \cite[Theorem 1.1.2]{ambrosio2008gradient}, it ensures $|\mu'|(r) \leq \nu(r)$ for a.e. $ r \in [0,T]$. Thus, by lower semicontinuity of the $L^2([0,T])$ norm with respect to weak convergence,  (\ref{eq:120604}) holds.   
\end{proof}
 
With the preceding  result in hand, we now introduce our variant of $\Gamma$-convergence of gradient flows. Our conditions strongly mirror Serfaty's framework \cite[Theorem 2]{serfaty2011gamma}, in the context of functionals defined on $(\P_2(\Rd), W_2)$ that are  lower semicontinuous and semiconvex along generalized geodesics. The main difference is that we do not identify either $\| \bEta_\alpha\|_{L^2(\rho_\alpha)}$ or $\|\bEta\|_{L^2(\rho)}$ in the below proposition as a strong upper gradient of $\F_\alpha$ or $\F$. For this reason, we cannot conclude that either $\rho_\alpha$ or $\rho$ is a gradient flow of the respective energy. Our version of Serfaty's framework allows us to accommodate our more general assumptions on $\bar{\rho}$, while still ultimately obtaining our main convergence result, Theorem \ref{newmaintheorem}.

\begin{prop}
\label{prop:conv of almost cms}
Let $\mathcal{F},\mathcal{F}_\alpha:\mathcal{P}_2(\rr^d)\rightarrow \rr$ be functionals that are proper and bounded from below uniformly in $\alpha$, and suppose $\mathcal{F}_\alpha$ $\Gamma$-converges to $\mathcal{F}$ as $\alpha \to 0$. Fix $T>0$. Suppose that, for all $\alpha>0$, there exists $\rho_\alpha\in AC^2([0,T];\P_2(\Rd))$ and, for almost all $r\in [0,T]$, there exists  $\bEta_\alpha(r)\in L^2(\rho_\alpha(r))$, such that
\begin{equation}
\label{hyp:gen prop alpha curve max slope}
\frac{1}{2}\int_0^t|\rho_\alpha'|^2(r)\, dr +\frac{1}{2} \int_0^t\int_{\rr^d}|\bEta_\alpha(r)|^2\, d\rho_\alpha(r) \, dr \leq \mathcal{F}_\alpha(\rho_\alpha (0))- \mathcal{F}_\alpha(\rho_\alpha (t)) \quad  \text{ for all } 0 \leq t \leq T.
\end{equation}
Suppose also that there exists $\rho(0) \in D(\F) \cap \P_2(\Rd)$   so that
\begin{align}
\rho_{\alpha}(0) \xrightarrow[]{\alpha \to 0} \rho(0) \text{ narrowly, } \quad \lim_{\alpha \to 0} \F_{\alpha}(\rho_{\alpha}(0)) = \F(\rho(0)),  \quad  \text{ and } \quad  \sup_{\alpha >0} M_2(\rho_{\alpha}(0))< +\infty  . \label{hyp:gen prop F(0)}
\end{align}   

Then, there exists $\rho \in AC^2([0,T];\P_2(\Rd))$ so that, up to a subsequence in $\alpha$, 
\begin{align} \label{conc:epssubseqconvunif} \lim_{\alpha \to 0} W_1(\rho_{\alpha}(r), \rho(r)) = 0 \text{ uniformly for } r \in [0,T].
\end{align}
Furthermore, we have
\begin{align} \frac12 \int_0^t |\rho'|^2(r) dr +  \frac12 \int_0^t \left( \liminf_{\alpha \to 0} \int_{\mathbb{R}^d} |\bEta_\alpha(r)|^2 d \rho_\alpha(r) \right) dr \leq \F(\rho(0)) - \F(\rho(t))  \quad \text{ for all } 0 \leq t \leq T  . \label{generalconc:almostcurvemaxslope}
\end{align}
\end{prop}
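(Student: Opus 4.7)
The plan is to combine the hypothesized almost curve of maximal slope inequality \eqref{hyp:gen prop alpha curve max slope} with the compactness Lemma~\ref{lem:compactprime} and the $\Gamma$-liminf inequality from Definition~\ref{def:gamma conv def}. The argument proceeds in three steps: (i) extract a uniform bound on the $L^2$-norm of the metric derivatives, (ii) apply the compactness lemma to obtain a limit curve, and (iii) pass to the $\liminf$ in \eqref{hyp:gen prop alpha curve max slope}.

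For step (i), I would evaluate \eqref{hyp:gen prop alpha curve max slope} at $t=T$, discard the nonnegative second term on the left, and use the uniform lower bound on $\mathcal{F}_\alpha$ together with the convergence $\mathcal{F}_\alpha(\rho_\alpha(0)) \to \mathcal{F}(\rho(0))$ (which yields a uniform upper bound on $\mathcal{F}_\alpha(\rho_\alpha(0))$) to conclude $\sup_\alpha \int_0^T |\rho_\alpha'|^2(r)\,dr < +\infty$. For step (ii), combining this with the uniform second moment bound from \eqref{hyp:gen prop F(0)} permits an application of Lemma~\ref{lem:compactprime}: there exist a subsequence and a limit curve $\rho \in AC^2([0,T];\mathcal{P}_2(\mathbb{R}^d))$ with $W_1(\rho_\alpha(t),\rho(t)) \to 0$ uniformly on $[0,T]$ and $\liminf_\alpha \int_0^t |\rho_\alpha'|^2\,dr \geq \int_0^t |\rho'|^2\,dr$ for each $t$. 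Since $W_1$-convergence implies narrow convergence, the $W_1$-limit at $t=0$ coincides with the narrow limit prescribed in \eqref{hyp:gen prop F(0)}, so the limit curve indeed starts at $\rho(0)$.

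For step (iii), I would fix $t \in [0,T]$ and take $\liminf_\alpha$ on both sides of \eqref{hyp:gen prop alpha curve max slope}. On the left-hand side, using the superadditivity $\liminf(A_\alpha + B_\alpha) \geq \liminf A_\alpha + \liminf B_\alpha$, the lower bound for the metric derivative term comes from Lemma~\ref{lem:compactprime}, while for the velocity term, Fatou's lemma applied to the nonnegative integrand $r \mapsto \int |\bEta_\alpha(r)|^2\,d\rho_\alpha(r)$ yields
\begin{equation*}
\liminf_\alpha \int_0^t \!\!\int |\bEta_\alpha|^2\,d\rho_\alpha\,dr \;\geq\; \int_0^t \liminf_\alpha \int |\bEta_\alpha|^2\,d\rho_\alpha\,dr.
\end{equation*}
For the right-hand side, the hypothesized convergence $\mathcal{F}_\alpha(\rho_\alpha(0)) \to \mathcal{F}(\rho(0))$ handles the initial energy term. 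For $\mathcal{F}_\alpha(\rho_\alpha(t))$, I would apply the $\Gamma$-liminf inequality \eqref{eq:gamma conv def} to the narrowly convergent sequence $\rho_\alpha(t) \to \rho(t)$ to obtain $\liminf_\alpha \mathcal{F}_\alpha(\rho_\alpha(t)) \geq \mathcal{F}(\rho(t))$, i.e., $\limsup_\alpha [-\mathcal{F}_\alpha(\rho_\alpha(t))] \leq -\mathcal{F}(\rho(t))$. Assembling these bounds yields \eqref{generalconc:almostcurvemaxslope}.

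The main subtlety is on the right-hand side of step (iii): I use the splitting $\liminf_\alpha[\mathcal{F}_\alpha(\rho_\alpha(0)) - \mathcal{F}_\alpha(\rho_\alpha(t))] \leq \lim_\alpha \mathcal{F}_\alpha(\rho_\alpha(0)) + \limsup_\alpha [-\mathcal{F}_\alpha(\rho_\alpha(t))]$, which is valid precisely because the initial-energy term actually converges (by hypothesis), rather than merely having a $\liminf$. Everything else is a careful but direct combination of the compactness and $\Gamma$-convergence tools assembled in Section~\ref{gammaconvergencerecall}.
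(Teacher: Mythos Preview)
Your proposal is correct and follows essentially the same approach as the paper: both extract the uniform metric-derivative bound from \eqref{hyp:gen prop alpha curve max slope}, invoke Lemma~\ref{lem:compactprime} for compactness and the lower semicontinuity of $\int_0^t|\rho_\alpha'|^2$, and then pass to the $\liminf$ using Fatou on the slope term and the $\Gamma$-liminf inequality on $\mathcal{F}_\alpha(\rho_\alpha(t))$. Your explicit remark that the splitting on the right-hand side is justified precisely because $\mathcal{F}_\alpha(\rho_\alpha(0))$ actually converges (not merely has a $\liminf$) is exactly the point, and your observation that the $W_1$-limit at $t=0$ agrees with the prescribed narrow limit $\rho(0)$ is a detail the paper leaves implicit.
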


\begin{proof}
    
By (\ref{hyp:gen prop F(0)}), we may assume,
\begin{equation}
    \label{eq:gen prop pf F(0)}
    \sup_\alpha \mathcal{F}_\alpha(\rho_\alpha(0))<+\infty.
\end{equation}
Using this, together with our assumption that   $\F_\alpha$ is bounded from below uniformly in $\alpha$, we see that the right-hand side of (\ref{hyp:gen prop alpha curve max slope}) is bounded from above uniformly in $\alpha$. From this we deduce $\sup_\alpha \int_0^t|\rho_\alpha'|^2(r)\, dr <+\infty$. Therefore, we may apply Lemma \ref{lem:compactprime}. We find that there exists $\rho\in AC^2([0,T];\P_2(\Rd))$ such that (\ref{conc:epssubseqconvunif}) holds along a subsequence $\alpha\rightarrow 0$ and we also have,
\begin{equation}
    \label{eq:gen prop pf met deriv bd}
 \liminf_{\alpha\rightarrow 0}  \int_0^t|\rho_\alpha'|^2(r)\, dr \geq \int_0^t|\rho'|^2(r)\, dr \text{ for all }t\in [0,T].
\end{equation}

Now,  taking $\liminf_{\alpha \rightarrow 0}$ of  (\ref{hyp:gen prop alpha curve max slope}), we find, for all $t \in [0,T]$,
\begin{equation}
\label{eq:upon taking liminf}
\frac{1}{2}\int_0^t|\rho'|^2(r)\, dr +\liminf_{\alpha\rightarrow 0}\frac{1}{2} \int_0^t\int_{\rr^d}|\bEta_\alpha(r)|^2\, d\rho_\alpha(r) \, dr \leq \liminf_{\alpha\rightarrow 0}\left(\F(\rho_\alpha(0)) - \F(\rho_\alpha(t))\right).
\end{equation}
The $\Gamma$-convergence of $\F_\alpha$ to $\F$ and the narrow convergence of $\rho_\alpha(t)$ to $\rho(t)$ (which follows from the convergence in 1-Wasserstein), as well as the hypothesis (\ref{hyp:gen prop F(0)}), imply the following upper bound for the right-hand side of the previous line:
\[
\F(\rho(0))-\F(\rho(t))\geq \limsup_{\alpha\rightarrow 0} \F(\rho_\alpha(0)) - \F(\rho_\alpha(t))\geq \liminf_{\alpha\rightarrow 0} \F(\rho_\alpha(0)) - \F(\rho_\alpha(t)).
\]
Finally, we use Fatou's Lemma to bound the second  term on the left-hand side of (\ref{eq:upon taking liminf}) from below. Together with the previous line, this yields the desired estimate (\ref{generalconc:almostcurvemaxslope}).

\end{proof}

\section{Gradient flows of energies with regularization and confinement}  \label{energiessec}
We now prove several fundamental properties of the internal energy $\E$ and the regularized internal energy $\E_\epsilon$, with the addition of external potential energies, $\V$ and $\V_\epsilon$, as well as the confining energies, $\V_k$ and $\V_\Omega$. In particular, we will characterize their lower semicontinuity,  convexity, and subdifferentiability. Each of these properties provides information about the one-sided regularity of the energy functional, its first derivative, and its second derivative with respect to the Wasserstein metric. Since our study of gradient flows only considers well-posedness of the flow forward in time (which is natural given that our motivating equation is a diffusion equation), these one-sided estimates on the energy functionals' regularity are sufficient for our analysis. We will close the section by applying these properties  to characterize the gradient flows of these energies in terms of partial differential equations. 
\subsection{Fundamental properties of energies} \label{lscsection}

First, we recall that the functionals $\E$ and $\Eep$ are lower semicontinuous with respect to narrow convergence. Since narrow convergence is weaker than Wasserstein convergence, this in turn implies lower semicontinuity with respect to Wasserstein convergence. The proof of this result is standard, and we defer it to appendix \ref{energyappendix}.

\begin{lem}[lower semicontinuity of $\E$ and $\E_\ep$]
\label{lem:semicontinuity} Suppose Assumptions (\ref{targetas}) and (\ref{mollifieras}) are satisfied. Then, for all $\ep >0$, the functionals $\E$ and $\E_\ep$ are lower semicontinuous with respect to  narrow convergence.
\end{lem}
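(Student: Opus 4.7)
My plan is to handle the two functionals separately, treating $\E_\ep$ directly by a Fatou-type argument and treating $\E$ via a convex duality representation.

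For $\E_\ep$: suppose $\mu_n\to \mu$ narrowly. Since $\zeta_\ep\in C^2(\Rd)$ is bounded and, for each fixed $x\in \Rd$, the map $y\mapsto \zeta_\ep(x-y)$ lies in $C_b(\Rd)$, the definition of narrow convergence gives $(\zeta_\ep*\mu_n)(x)\to (\zeta_\ep*\mu)(x)$ pointwise. Assumption (\ref{targetas}) yields $1/\bar\rho\in L^\infty(\Rd)$, so $|\zeta_\ep*\mu_n|^2/\bar\rho\geq 0$ and Fatou's lemma gives
\[
\liminf_{n\to\infty}\E_\ep(\mu_n)=\liminf_{n\to\infty}\frac12\ird\frac{|\zeta_\ep*\mu_n|^2}{\bar\rho}\,dx\geq \frac12\ird\frac{|\zeta_\ep*\mu|^2}{\bar\rho}\,dx=\E_\ep(\mu).
\]

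For $\E$: the key step is to rewrite $\E$ as a supremum of narrowly continuous affine functionals by exploiting the convexity and quadratic structure of the integrand. For each $x\in \Rd$, the Legendre transform of the convex function $s\mapsto s^2/(2\bar\rho(x))$ is $t\mapsto \bar\rho(x)t^2/2$, so for every real $s$,
\[
\frac{s^2}{2\bar\rho(x)}=\sup_{t\in\R}\left\{ts-\frac{\bar\rho(x)}{2}t^2\right\}.
\]
My plan is to prove the duality formula
\begin{equation}\label{eq:proposal-dual}
\E(\mu)=\sup_{\phi\in C_c(\Rd)}\Bigl\{\ird \phi\,d\mu-\frac12\ird\bar\rho\,\phi^2\,dx\Bigr\}\quad\text{for every }\mu\in\P(\Rd).
\end{equation}
Once \eqref{eq:proposal-dual} is established, lower semicontinuity of $\E$ is immediate: for each fixed $\phi\in C_c(\Rd)$, the functional $\mu\mapsto \int\phi\,d\mu-\tfrac12\int\bar\rho\phi^2\,dx$ is continuous with respect to narrow convergence (the first term by definition of narrow convergence, the second being independent of $\mu$), and a supremum of continuous functionals is lower semicontinuous.

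To prove \eqref{eq:proposal-dual} I will verify both inequalities. For the $\geq$ direction when $\mu\ll \Ld$ with density $\mu(x)$, I will apply the pointwise Legendre identity with $s=\mu(x)$: for any $\phi\in C_c(\Rd)$,
\[
\phi(x)\mu(x)-\frac{\bar\rho(x)}{2}\phi(x)^2\leq \frac{\mu(x)^2}{2\bar\rho(x)},
\]
and integrate. For the $\leq$ direction in the absolutely continuous case, I will approximate the formal optimizer $\phi(x)=\mu(x)/\bar\rho(x)$ by a sequence in $C_c(\Rd)$ using truncation and mollification, using dominated convergence together with the bounds from (\ref{targetas}). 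For $\mu$ with a nonzero singular part $\mu_s$, I will argue that the right-hand side of \eqref{eq:proposal-dual} is $+\infty$: concentrating test functions $\phi_n\in C_c(\Rd)$ near a Borel set $N$ with $\Ld(N)=0$ and $\mu_s(N)>0$ makes $\int\phi_n\,d\mu$ grow without bound while $\int\bar\rho\phi_n^2\,dx$ stays controlled, since $\bar\rho$ is bounded and $|N|=0$.

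The main obstacle is this last step, the identification of the singular case with $+\infty$: I need to construct the concentrating test functions carefully and invoke outer regularity of $\Ld$ and inner regularity of $\mu_s$ (or equivalently, the standard Lebesgue decomposition on neighborhoods of small Lebesgue measure containing most of the mass of $\mu_s$) to make the construction precise. Everything else reduces to standard approximation and Fatou arguments.
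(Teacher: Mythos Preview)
Your proposal is correct and takes a genuinely different route from the paper.

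The paper dispatches $\E$ by invoking a general black-box result of Buttazzo on integral functionals $\mu\mapsto\int g(x,\mu(x))\,dx$: it simply checks that $g(x,y)=y^2/(2\bar\rho(x))$ is lower semicontinuous, convex in $y$, and superlinear in $y$, and then cites \cite[Corollary 3.4.2]{buttazzo1989semicontinuity}. For $\E_\ep$ the paper uses the identity $\E_\ep(\mu)=\E(\zeta_\ep*\mu)$ together with Lemma~\ref{weakst convergence mollified sequence} and the lower semicontinuity of $\E$ just established. Your approach, by contrast, is entirely self-contained: the Fatou argument for $\E_\ep$ is direct and arguably simpler than the paper's reduction, and your convex-duality formula \eqref{eq:proposal-dual} for $\E$ is the standard hands-on substitute for the Buttazzo citation. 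The singular-case step you flag as the main obstacle is indeed the only place requiring care, but your outline (inner regularity of $\mu_s$ to find a compact $K$ with $\mu_s(K)>0$, outer regularity of $\Ld$ to enclose $K$ in an open set of small Lebesgue measure, then a tall bump supported there) is the right construction and goes through without difficulty. The paper's route is shorter to write but imports a reference; yours is longer but transparent and avoids external dependencies.
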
The lower semicontinuity of the external potential energies, $\V$ and $\V_\epsilon$, and the confining energies, $\V_k$ and $\V_\Omega$, with respect to narrow convergence is an immediate consequence of the Portmanteau theorem, see e.g. \cite[Lemma 5.1.7]{ambrosio2008gradient}, since they all are obtained by integrating a function that is lower semicontinuous and bounded below against $\rho$. 
\begin{lem}[lower semicontinuity of $\V$, $\V_\ep$, $\V_k$, $\V_\Omega$] \label{Vlsc}
Under Assumptions  (\ref{mollifieras}), (\ref{Omegaas}), (\ref{Vas}), and (\ref{Vkas}), the energies $\V$, $\V_\epsilon$, $\V_k$, and $\V_\Omega$ are lower semicontinuous with respect to narrow convergence.
\end{lem}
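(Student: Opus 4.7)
The strategy is to observe that each of the four energies has the form $\G(\mu) = \int_{\rr^d} g \, d\mu$ for a suitable integrand $g$, and then to apply the standard fact (Portmanteau-type, cited as \cite[Lemma 5.1.7]{ambrosio2008gradient}) that the map $\mu \mapsto \int g\, d\mu$ is lower semicontinuous with respect to narrow convergence on $\P(\rr^d)$ whenever $g\colon \rr^d \to (-\infty, +\infty]$ is lower semicontinuous and bounded from below. Thus the proof reduces to checking these two properties for each integrand $g \in \{V,\ \zeta_\ep * V,\ V_k,\ V_\Omega\}$.

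First I would handle the three cases in which the integrand is continuous and real-valued. For $\V$: Assumption (\ref{Vas}) gives $V \in C^2(\rr^d) \cap L^\infty(\rr^d)$, so $V$ is continuous (in particular lower semicontinuous) and bounded, hence bounded below. For $\V_\ep$: since $V \in L^\infty$ and $\zeta_\ep \in C^2$ with the decay in (\ref{mollifieras}), the convolution $\zeta_\ep * V$ belongs to $C(\rr^d) \cap L^\infty(\rr^d)$ by a standard dominated convergence argument, so again it is lower semicontinuous and bounded below. For $\V_k$: Assumption (\ref{Vkas}) gives that $V_k$ is nonnegative and twice differentiable, hence continuous and bounded below by $0$.

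The only mildly delicate case is $\V_\Omega$, because the integrand $V_\Omega$ defined in (\ref{eq:VOmega def}) takes the value $+\infty$. Here I would verify lower semicontinuity directly from the definition by examining sublevel sets: for every $t \in \rr$,
\begin{equation*}
\{x \in \rr^d : V_\Omega(x) \leq t\} = \begin{cases} \emptyset & \text{if } t < 0,\\ \overline{\Omega} & \text{if } t \geq 0,\end{cases}
\end{equation*}
and both sets are closed (using Assumption (\ref{Omegaas})). Equivalently, $\{V_\Omega > t\}$ is open for every $t$, which gives lower semicontinuity of $V_\Omega$. Nonnegativity is immediate. Therefore the Portmanteau-type result applies, yielding lower semicontinuity of $\V_\Omega$ with respect to narrow convergence.

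The main (and essentially only) obstacle is the $V_\Omega$ case, since the other three integrands are bounded continuous functions, for which narrow lower semicontinuity is immediate (indeed, in those cases $\G$ is narrowly \emph{continuous}). For $\V_\Omega$ one just needs to observe that the $+\infty$ value is consistent with lower semicontinuity provided $\overline{\Omega}$ is closed, which holds under (\ref{Omegaas}). Combining the four cases completes the proof.
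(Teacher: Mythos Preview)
Your proposal is correct and takes essentially the same approach as the paper: the paper's proof is a one-line appeal to the Portmanteau-type result \cite[Lemma 5.1.7]{ambrosio2008gradient}, observing that each integrand is lower semicontinuous and bounded below. You have simply supplied the verification of these two properties case by case, including the observation that $\V_\Omega(\mu)=\int V_\Omega\,d\mu$ with $V_\Omega$ lower semicontinuous because its sublevel sets are closed.
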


The convexity of the energies $\E$, $\V$, $\V_\epsilon$, $\V_k$, and $\V_\Omega$ follows immediately from the theory developed by Ambrosio, Gigli, and Savar\'e \cite{ambrosio2008gradient}. We recall these results in the following proposition. The proof of this proposition is an immediate consequence of existing theory, so we defer it to appendix \ref{energyappendix}.

\begin{prop}[convexity properties of of $\E+ \V_\Omega$, $\V$, $\V_\epsilon$, and $\V_k$]
\label{prop:convexity ep=0}\
\begin{enumerate}[(i)]
\item \label{econvex} Suppose $\bar{\rho}$  is log-concave on $\Omega$, where $\bar{\rho}$ satisfies Assumption (\ref{targetas}) and $\Omega$ satisfies  Assumption (\ref{Omegaas}). Then   $\E + \V_\Omega$ is convex along generalized geodesics.  
\item \label{vconvex} Suppose Assumptions (\ref{mollifieras}), (\ref{Vas}), and (\ref{Vkas}) hold. Then $\V$ and $\V_\epsilon$ are $\lambda$-convex along generalized geodesics, for $\lambda = \inf_{\{x, \xi \in \Rd\}} \xi^t D^2 V(x) \xi$, and $\V_k$ is convex along generalized geodesics. 
\end{enumerate}
\end{prop}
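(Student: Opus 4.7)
The plan is to verify each claim by reducing to standard geodesic-convexity results from \cite[Chapter 9]{ambrosio2008gradient}.

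For part (\ref{econvex}), I would recognize $\E$ as an internal energy of the form $\int U(d\mu/d\bar\gamma)\, d\bar\gamma$ with reference $\bar\gamma = \bar\rho\, \mathcal{L}^d$ and density $U(s) = s^2/2$. The function $U$ is convex with $U(0)=0$ and satisfies the McCann and doubling conditions, so by the weighted version of the convexity result for internal energies, geodesic convexity of $\E$ would follow from log-concavity of $\bar\rho$ on the support of the relevant measures. Since by hypothesis $\bar\rho$ is only log-concave on $\Omega$, the role of $\V_\Omega$ is to restrict attention to measures with support in $\overline{\Omega}$. The first step is to show that along any generalized geodesic $\mu_\alpha^{2\to 3}$ connecting $\mu_2,\mu_3 \in D(\V_\Omega)$ with base $\mu_1 \in D(\V_\Omega)$, the intermediate measures remain supported in $\overline{\Omega}$: the underlying plan $\bgamma$ is concentrated on $\overline{\Omega}^3$, and since $\overline{\Omega}$ is convex by (\ref{Omegaas}), the map $(1-\alpha)\pi^2+\alpha\pi^3$ pushes $\bgamma$ into a measure supported in $\overline{\Omega}$. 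This yields $\V_\Omega(\mu_\alpha^{2\to 3})=0$ for every $\alpha \in [0,1]$ and reduces the convexity estimate for $\E+\V_\Omega$ to the convexity of $\E$ along that same generalized geodesic, for which log-concavity of $\bar\rho$ is needed only on $\overline{\Omega}$.

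For part (\ref{vconvex}), each of $\V$, $\V_\epsilon$, and $\V_k$ is a potential energy of the form $\mu \mapsto \int W \, d\mu$, and the plan is to invoke the standard result \cite[Proposition 9.3.2]{ambrosio2008gradient} that such an energy is $\lambda$-convex along generalized geodesics whenever the potential $W$ is $\lambda$-convex as a function on $\mathbb{R}^d$. For $\V$, Assumption (\ref{Vas}) gives a uniform lower bound on $D^2 V$, hence $\lambda$-convexity of $V$ for the stated $\lambda$. For $\V_\epsilon$, I would check that $\zeta_\epsilon * V$ inherits the same $\lambda$-convexity by differentiating under the convolution: $D^2(\zeta_\epsilon * V) = \zeta_\epsilon * D^2 V$, and since $\zeta_\epsilon \geq 0$ with unit mass by (\ref{mollifieras}), the pointwise lower bound on $D^2 V$ passes through to $D^2(\zeta_\epsilon * V)$. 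For $\V_k$, Assumption (\ref{Vkas}) gives convexity of $V_k$ directly, so $\V_k$ is $0$-convex.

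The main obstacle is the reduction in part (\ref{econvex}): one must be careful that the log-concavity of $\bar\rho$, which is only assumed on $\Omega$, suffices for the convexity of $\E$ along generalized geodesics connecting measures supported in $\overline{\Omega}$. The convexity of $\overline{\Omega}$ built into (\ref{Omegaas}) is exactly what ensures such generalized geodesics never leave the region where log-concavity holds, so the standard internal-energy calculation (performed in the Lagrangian formulation along the optimal plans defining the generalized geodesic) can be carried out unchanged on $\overline{\Omega}$, with the contribution of $\V_\Omega$ being identically zero along the whole curve.
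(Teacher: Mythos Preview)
Your proposal is correct and, for part (\ref{vconvex}), matches the paper's proof exactly: both invoke \cite[Proposition 9.3.2]{ambrosio2008gradient} and check the Hessian bounds, with the convolution argument for $\zeta_\epsilon * V$ identical.

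For part (\ref{econvex}), your approach works but differs slightly from the paper's. You argue in two steps: first use convexity of $\overline{\Omega}$ to show generalized geodesics between measures in $D(\V_\Omega)$ stay supported in $\overline{\Omega}$ (so $\V_\Omega$ vanishes along them), then invoke the internal-energy convexity calculation restricted to $\overline{\Omega}$. The paper instead absorbs both steps into a single stroke: it defines a log-concave extension $\tilde\rho = e^{-W}$ with $W = -\log\bar\rho$ on $\Omega$ and $W = +\infty$ elsewhere, observes that $\E(\rho) + \V_\Omega(\rho)$ equals the internal energy $\int |\rho|^2/\tilde\rho\,d\mathcal{L}^d$ (with value $+\infty$ unless $\rho \ll \tilde\rho$), and applies \cite[Theorem 9.4.12]{ambrosio2008gradient} directly to this globally log-concave reference measure. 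The paper's packaging is cleaner because it avoids having to re-examine the Lagrangian calculation on a subdomain; your route is more explicit about why convexity of $\Omega$ matters. One minor imprecision in your write-up: the plan $\bgamma$ is concentrated on $\Rd \times \overline{\Omega} \times \overline{\Omega}$, not $\overline{\Omega}^3$, since the base $\mu_1$ need not lie in $D(\V_\Omega)$; this is harmless because the push-forward $((1-\alpha)\pi^2 + \alpha\pi^3)_\#\bgamma$ only involves the last two coordinates.
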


We now aim to show that $\E_\epsilon$ is also semiconvex for all $\epsilon >0$. In order to accomplish this, we begin by characterizing the directional derivative of $\E_\epsilon$. For the reader's convenience, we also recall the directional derivatives of the external potential energies $\V$, $\V_\epsilon$,  and $\V_k$, which have been studied extensively in previous works; see, for example,  \cite[Proposition 10.4.2]{ambrosio2008gradient}.

\begin{prop}[directional derivatives of $\E_\epsilon$, $\V$, $\V_\epsilon$, and $\V_k$]
	\label{prop:Fepder}
	Suppose Assumptions (\ref{targetas}), (\ref{mollifieras}), (\ref{Vas}) and  (\ref{Vkas}) hold.
	Fix $\epsilon >0$,  $\nu_1,\nu_2,\nu_3 \in \mathcal{P}_2(\mathbb{R}^d)$, and $\pmb{\gamma} \in  \mathcal{P}_2(\mathbb{R}^d \times \mathbb{R}^d \times \mathbb{R}^d)$ with $\pi^i_{\#}\pmb{\gamma} =\nu_i$.   Consider the curve, 
	\[ \mu_{\alpha} =   \big ( (1-\alpha)\pi^2 +\alpha \pi^3 \big)_{\#} \pmb{\gamma} \text{ for }\alpha \in [0,1] . \]
	 Then, 
	\begin{align}
	\left. \frac{d}{d\alpha} \E_{\epsilon}(\mu_\alpha) \right|_{\alpha =0} &= \nonumber \frac12 \int \frac{\zeta_\epsilon * \nu_{2  }(x)}{\bar{\rho}(x)}  \int   \la \nabla \zeta_\epsilon\big(x-  y_2\big ) ,  y_3-y_2\ra    d\pmb{\gamma} (y_1,y_2,y_3) \  dx , \\
	\left.	\frac{d}{d \alpha} \V(\mu_\alpha)  \right|_{\alpha = 0} &=  \int   \la\nabla V(y_2), y_3-y_2 \ra  d \bgamma(y_1, y_2, y_3) , \nonumber \\
		\left.	\frac{d}{d \alpha} \V_\epsilon(\mu_\alpha)  \right|_{\alpha = 0} &=  \int \la \nabla (\zeta_\epsilon*V)(y_2) ,y_3-y_2\ra  d \bgamma(y_1, y_2, y_3) , \nonumber \\
\left.	\frac{d}{d \alpha} \V_k(\mu_\alpha)  \right|_{\alpha = 0} &=  \int   \la\nabla V_k(y_2) , y_3-y_2\ra  d \bgamma(y_1, y_2, y_3) . \nonumber 
	\end{align}
 \begin{rem} \label{rem:gamma in Prop Eep der}
 Note that if $\pmb{\gamma} \in  \mathcal{P}_2(\mathbb{R}^d \times \mathbb{R}^d \times \mathbb{R}^d)$  satisfies the hypotheses in the definition of generalized geodesic (\ref{eq:gen-geodesic}), then $\pmb{\gamma}$ satisfies the assumptions of Proposition \ref{prop:Fepder}.
 \end{rem}
 
\end{prop}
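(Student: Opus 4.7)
The plan is to evaluate each functional at $\mu_\alpha$ by first pulling back to the reference measure $\bgamma$ via the push-forward identity
$$\int \phi \, d\mu_\alpha \;=\; \int \phi\bigl((1-\alpha)y_2 + \alpha y_3\bigr) \, d\bgamma(y_1,y_2,y_3),$$
so that the $\alpha$-dependence is transferred inside the integrand. The derivative in $\alpha$ then reduces to chain-rule computations involving $\nabla V$, $\nabla(\zeta_\epsilon * V)$, $\nabla V_k$, and $\nabla \zeta_\epsilon$, and the stated formulas follow once the interchange of derivative and integral is justified by dominated convergence. The main technical content of the proof lies in producing uniform-in-$\alpha$ dominators that are $\bgamma$-integrable (and, for $\E_\epsilon$, also $x$-integrable after Fubini).

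For the three linear functionals the argument is essentially direct. For $\V$, the $\alpha$-derivative of the integrand equals $\nabla V((1-\alpha)y_2 + \alpha y_3)\cdot (y_3 - y_2)$; Assumption (\ref{Vas}) gives $\|\nabla V\|_\infty < +\infty$, so this is dominated by $\|\nabla V\|_\infty (|y_2| + |y_3|)$, which is $\bgamma$-integrable because $\nu_2, \nu_3 \in \P_2(\mathbb{R}^d)$. The case of $\V_\epsilon$ is identical after noting $\nabla(\zeta_\epsilon * V) = \zeta_\epsilon * \nabla V \in L^\infty(\mathbb{R}^d)$. For $\V_k$, Assumption (\ref{Vkas}) gives $\|D^2 V_k\|_\infty < +\infty$, whence $|\nabla V_k(y)| \leq |\nabla V_k(0)| + \|D^2 V_k\|_\infty |y|$, and the $\alpha$-derivative is dominated by $C(1 + |y_2|^2 + |y_3|^2)$, again $\bgamma$-integrable since $\nu_2, \nu_3 \in \P_2(\mathbb{R}^d)$.

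For $\E_\epsilon$, the strategy is similar but requires an additional step because $\mu_\alpha$ enters quadratically. The push-forward identity first gives
$$(\zeta_\epsilon * \mu_\alpha)(x) \;=\; \int \zeta_\epsilon\bigl(x - (1-\alpha)y_2 - \alpha y_3\bigr) \, d\bgamma,$$
and differentiation under the $\bgamma$-integral, dominated by $\|\nabla \zeta_\epsilon\|_\infty (|y_2| + |y_3|)$, yields a pointwise expression for $\tfrac{d}{d\alpha}(\zeta_\epsilon * \mu_\alpha)(x)\big|_{\alpha=0}$. Combining this with the outer chain rule applied to $\E_\epsilon(\mu_\alpha) = \tfrac{1}{2}\int |\zeta_\epsilon * \mu_\alpha|^2 / \bar{\rho} \, dx$ and commuting the $\alpha$-derivative past the $x$-integral produces the stated formula.

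The main obstacle, though mild, is this second interchange of derivative with integration for $\E_\epsilon$, since the $x$-domain is unbounded. To dominate the integrand uniformly in $\alpha$ by an $L^1_x$ function, one combines: the lower bound $\|1/\bar\rho\|_{L^\infty(\mathbb{R}^d)} < +\infty$ from Assumption (\ref{targetas}); the pointwise bound $\zeta_\epsilon * \mu_\alpha \leq \|\zeta_\epsilon\|_{L^\infty(\mathbb{R}^d)}$; and the integrability $\nabla \zeta_\epsilon \in L^1(\mathbb{R}^d) \cap L^\infty(\mathbb{R}^d)$, which follows from the decay estimate $|\nabla \zeta(x)| \leq C_\zeta |x|^{-q'}$ with $q' > d$ in Assumption (\ref{mollifieras}) together with $\zeta \in C^2$. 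An application of Fubini then produces a joint $(\bgamma, x)$-integrable dominator, completing the justification and yielding the formula for $\tfrac{d}{d\alpha}\E_\epsilon(\mu_\alpha)\big|_{\alpha=0}$.
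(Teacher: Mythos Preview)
Your proposal is correct and follows the same overall structure as the paper: pull back to $\bgamma$, differentiate inside via dominated convergence, and for $\E_\epsilon$ first compute $\partial_\alpha(\zeta_\epsilon*\mu_\alpha)(x)$ pointwise and then justify the swap with the $x$-integral.

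The only substantive difference is in how the outer interchange for $\E_\epsilon$ is justified. The paper factors the difference of squares and bounds the difference quotient of $\zeta_\epsilon*\mu_\alpha$ by the constant $\|\nabla\zeta_\epsilon\|_\infty M_1(\bgamma)$, leaving an $\alpha$-dependent dominator $g_\alpha(x)=C\bigl((\zeta_\epsilon*\mu_\alpha)(x)+(\zeta_\epsilon*\mu_0)(x)\bigr)$; since $g_\alpha\to 2C\,\zeta_\epsilon*\mu_0$ pointwise and $\int g_\alpha\equiv 2C$, the \emph{generalized} dominated convergence theorem applies. You instead bound $\zeta_\epsilon*\mu_\alpha\le\|\zeta_\epsilon\|_\infty$ and try to recover $L^1_x$-integrability from the decay of $\nabla\zeta_\epsilon$. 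This can be made to work, but note that your claimed ``uniform-in-$\alpha$ $L^1_x$ dominator'' is not immediate: the argument $x-((1-\alpha)y_2+\alpha y_3)$ of $\nabla\zeta_\epsilon$ shifts with $\alpha$, so the ingredients you list only give $\sup_\alpha\|\,\cdot\,\|_{L^1_x}<\infty$ after Fubini, not a pointwise majorant. To close the gap you would either (i) use the decay $|\nabla\zeta_\epsilon(w)|\le C|w|^{-q'}$ to bound $\sup_{\alpha}|\nabla\zeta_\epsilon(x-w_\alpha(y))|$ by an explicit $L^1_x$ function for each fixed $y$, then differentiate under the $x$-integral for fixed $(y,y')$ and finally under the $\gamma\otimes\gamma$-integral with the uniform bound $C(|y_3-y_2|+|y_3'-y_2'|)$; or (ii) simply use the generalized dominated convergence theorem as the paper does. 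Either route is fine; the paper's is slightly more economical.
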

\begin{proof}
	We begin with the characterization of the directional derivative    $\left. \frac{d}{d\alpha} \E_{\epsilon}(\mu_\alpha) \right|_{\alpha =0}$. As a first step in this direction, we estimate $\left. \frac{d}{d \alpha} \zeta_\epsilon*\mu_\alpha \right|_{\alpha =0}$. 
For all $x \in \mathbb{R}^d$ and $\alpha \in [0,1]$,  
	\begin{align} \label{eq:pp16ex0}
\frac{1}{\alpha} \left( \zeta_\epsilon *  \mu_{\alpha}(x)-\zeta_\epsilon *  \mu_{0 }(x)  \right)  &=   \int   \frac{1}{\alpha} \left[ \zeta_\epsilon(x -   ((1-\alpha)y_2 + \alpha y_3) ) - \zeta_\epsilon(x -  y_2  )  \right]   \ d \bgamma(y_1,y_2,y_3) .    \end{align}
By the mean value theorem for $\zeta_\epsilon$,  we may bound the integrand by,  
\begin{align} \label{eq:pp16ex02}
\frac{1}{\alpha} \|\nabla \zeta_\epsilon \|_\infty \left|   ((1-\alpha)y_2 + \alpha y_3) -   y_2   \right|    \leq \| \nabla \zeta_\epsilon\|_\infty \left|y_3 - y_2 \right| \in L^1(\bgamma) ,
\end{align}
where the integrability  holds since $M_1(\bgamma) \leq M_2(\bgamma)^{1/2} =\left( M_2(\nu_1)+ M_2(\nu_2) + M_2(\nu_3) \right)^{1/2} <+\infty$.
Thus, by the dominated convergence theorem,
\begin{align} \label{zetaepsmualphader}
\lim_{\alpha \to 0} \frac{1}{\alpha} \left( \zeta_\epsilon *  \mu_{\alpha}(x)-\zeta_\epsilon *  \mu_{0 }(x)  \right) &=  \int  \lim_{\alpha \to 0} \frac{1}{\alpha} \left[ \zeta_\epsilon(x -  ((1-\alpha)y_2 + \alpha y_3) ) - \zeta_\epsilon(x -   y_2  )  \right]   \ d \bgamma(y_1,y_2,y_3)   \nonumber \\
 &= \int   \la\nabla \zeta_\epsilon(x- y_2)  ,y_3-y_2\ra  d \bgamma(y_1,y_2,y_3) .
\end{align}

	Now, we use this to compute 	$ \left. \frac{d}{d\alpha} \E_{\epsilon}(\mu_\alpha) \right|_{\alpha = 0}$.  First, note that we may express the difference quotient as,
			\begin{align} \label{Eepsdiffquot}
\frac{1}{\alpha} \left( \E_{\epsilon}(\mu_\alpha) - \E_{\epsilon}(\mu_0) \right)  &=\frac{1}{2\alpha}\int  \left( (\zeta_\epsilon *\mu_\alpha)^2(x) -  (\zeta_\epsilon *\mu_0)^2(x) \right)  \bar{\rho}(x)^{-1}dx   \\
 &=\int \frac{1}{2\alpha } \left[ (\zeta_\epsilon *\mu_\alpha)(x) +  (\zeta_\epsilon *\mu_0)(x) \right] \left[ (\zeta_\epsilon *\mu_\alpha)(x) -  (\zeta_\epsilon *\mu_0)(x) \right] \bar{\rho}(x)^{-1} dx . \nonumber 
		\end{align}
By equations (\ref{eq:pp16ex0}-\ref{eq:pp16ex02}) 
and the fact that $\bar{\rho}$ is uniformly bounded below, the integrand is dominated by, 
\begin{align*}
g_\alpha(x) := C \left[ (\zeta_\epsilon *\mu_\alpha)(x) +  (\zeta_\epsilon *\mu_0)(x) \right] \quad \text{ for } \quad C = \| \grad \varphi_\epsilon \|_\infty \| \bar{\rho}^{-1}\|_\infty M_1(\bgamma) .
\end{align*}
The narrow convergence of $ \mu_\alpha$ to   $\mu_0$ as $\alpha\rightarrow 0$,  and the fact that $\zeta_\epsilon$ is bounded and continuous ensures that $g_\alpha(x) \to 2C(\zeta_\epsilon*\mu_0)(x)$ pointwise. Furthermore,
\begin{align*}
\lim_{\alpha \to 0} \int g_\alpha(x) d x= \lim_{\alpha \to 0} C \int \zeta_\epsilon*\mu_\alpha(x) dx + C \int \zeta_\epsilon*\mu_0(x) d x = 2C.
\end{align*}
Therefore, by the generalized dominated convergence theorem  \cite[Chapter 4, Theorem 19]{royden1988real} and equations (\ref{zetaepsmualphader}) and (\ref{Eepsdiffquot}),
\begin{align*}
\lim_{\alpha \to 0} \frac{1}{\alpha} \left( \E_{\epsilon}(\mu_\alpha) - \E_{\epsilon}(\mu_0) \right)  &= \int \lim_{\alpha \to 0} \frac{1}{2\alpha } \left[ (\zeta_\epsilon *\mu_\alpha)(x) +  (\zeta_\epsilon *\mu_0)(x) \right] \left[ (\zeta_\epsilon *\mu_\alpha)(x) -  (\zeta_\epsilon *\mu_0)(x) \right] \bar{\rho}(x)^{-1} dx  \\
 &=\int  \frac{(\zeta_\epsilon *\mu_0)(x)}{\bar{\rho}(x)} \int   \la \nabla \zeta_\epsilon(x- y_2) ,y_3-y_2 \ra  d \bgamma(y_1,y_2,y_3) \,    dx.
\end{align*}

Next we consider the directional derivative $\left.	\frac{d}{d \alpha} \V(\mu_\alpha)  \right|_{\alpha = 0}$. By definition of $\V$ and $\mu_\alpha$,
\begin{align*}
\lim_{\alpha \to 0} \frac{1}{\alpha} \left( \V(\mu_\alpha) - \V(\mu_0) \right) = \lim_{\alpha \to 0} \int  \frac{1}{\alpha} \left[ V\left(   (1-\alpha) y_2 + \alpha y_3 \right)  - V(y_2) \right] d \bgamma(y_1,y_2,y_3) .
\end{align*}
By the mean value theorem for $V$, we may bound the integrand by,
\begin{align*}
\frac{1}{\alpha} \| \nabla V\|_\infty |  \left( (1-\alpha) y_2 + \alpha y_3 \right) - y_2| \leq \|\nabla V\|_\infty |y_3-y_2| \in L^1(\bgamma). 
\end{align*}
Thus, by the dominated convergence theorem,
\begin{align*}
\lim_{\alpha \to 0} \frac{1}{\alpha} \left( \V(\mu_\alpha) - \V(\mu_0) \right) &=  \int \lim_{\alpha \to 0} \frac{1}{\alpha} \left[ V\left(   (1-\alpha) y_2 + \alpha y_3  \right) - V(y_2) \right] d \bgamma(y_1,y_2,y_3) \\
&= \int \la \nabla V(y_2) ,y_3-y_2\ra \ d \bgamma(y_1, y_2, y_3),
\end{align*}
which gives the result. The result for  $\V_\epsilon$ follows exactly as above, replacing $V$ with $(\zeta_\epsilon*V)$.

Finally, we consider the directional derivative of $\V_k$. By definition of $\V_k$ and $\mu_\alpha$ and the assumption that $V_k \in C^2$ with $\|D^2 V_k\|_\infty < +\infty$, we may apply the Fundamental Theorem of Calculus to conclude,
\begin{align*}
&\lim_{\alpha \to 0} \frac{1}{\alpha} \left( \V_k(\mu_\alpha) - \V_k(\mu_0) \right) \\
&\quad = \lim_{\alpha \to 0} \frac{1}{\alpha} \int_\Rd   \left[ V_k\left(  (1-\alpha) y_2 + \alpha y_3  \right) - V_k(y_2) \right] d \bgamma(y_1,y_2,y_3) , \\
&\quad = \lim_{\alpha \to 0} \frac{1}{\alpha} \int_\Rd \int_0^\alpha  \int_0^\beta (y_3-y_2)^t D^2 V_k((1-s)y_2+s y_3) (y_3-y_2) \, ds \, d \beta + \alpha \la\nabla V_k(y_2),y_3-y_2\ra d \bgamma(y_1,y_2,y_3) \\
&\quad = \int_\Rd \la\nabla V_k(y_2) ,y_3 - y_2 \ra d \gamma(y_1,y_2,y_3) ,
\end{align*}
where the first term vanishes since $D^2 V_k\in L^\infty(\Rd)$ and $\int |y_3-y_2|^2 d \bgamma  \leq 2(M_2(\nu_1) + M_2(\nu_2))<+\infty$.

	\end{proof}

Using this characterization of the directional derivative of $\E_\ep$, we  now prove that our energy $\E_\epsilon$ is $\lambda_\epsilon$-convex along generalized geodesics, where $\lambda_\epsilon \xrightarrow{\epsilon \to 0} - \infty$.

\begin{prop}[semiconvexity of $\E_\epsilon$] \label{lem:Eepsconvex}
Suppose Assumptions (\ref{targetas}) and (\ref{mollifieras}) hold.
	For all $\epsilon >0$, the functional $\E_{\epsilon}$ is $\lambda_\epsilon$-convex along generalized geodesics,
	where,
	\begin{equation}
	\lambda_\epsilon = - \|1/\bar{\rho}\|_{L^\infty(\mathbb{R}^d)}  \| D^2 \zeta_\epsilon\|_{L^{\infty}(\mathbb{R}^d)} =-\ep^{-d-2}||1/\bar\rho||_{L^\infty(\rr^d)}||D^2\zeta||_{L^\infty(\rr^d)}.
	\end{equation}
\end{prop}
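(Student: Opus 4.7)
My plan is to use the ``above the tangent line'' characterization of semiconvexity (Lemma \ref{abovetanlem}). Fix $\mu_1, \mu_2, \mu_3 \in \mathcal{P}_2(\R^d)$ and $\bgamma \in \mathcal{P}_2((\R^d)^3)$ satisfying the definition of a generalized geodesic (\ref{eq:gen-geodesic}), so that $\mu_\alpha := ((1-\alpha)\pi^2 + \alpha \pi^3)_\# \bgamma$ interpolates between $\mu_2$ and $\mu_3$ with base $\mu_1$. It suffices to show that $\alpha \mapsto \mathcal{E}_\epsilon(\mu_\alpha)$ is $C^2$ on $[0,1]$ with
\[
\frac{d^2}{d\alpha^2} \mathcal{E}_\epsilon(\mu_\alpha) \;\geq\; \lambda_\epsilon \, W_{2,\bgamma}^2(\mu_2,\mu_3) \quad \text{for all } \alpha \in [0,1],
\]
since then Taylor expansion at $\alpha = 0$ between $\alpha = 0$ and $\alpha = 1$ yields the ``above the tangent line'' inequality with constant $\lambda_\epsilon$, and Lemma \ref{abovetanlem} (together with the first-derivative formula already established in Proposition \ref{prop:Fepder}) implies $\lambda_\epsilon$-convexity along generalized geodesics.

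To establish the second-derivative bound, introduce the auxiliary function
\[
A(x,\alpha) \;:=\; (\zeta_\epsilon * \mu_\alpha)(x) \;=\; \int_{(\R^d)^3} \zeta_\epsilon\bigl(x - (1-\alpha)y_2 - \alpha y_3\bigr)\, d\bgamma(y_1,y_2,y_3) .
\]
Since $D^2 \zeta_\epsilon \in L^\infty(\R^d)$ by Assumption (\ref{mollifieras}), and since $\int |y_3 - y_2|^2\, d\bgamma = W_{2,\bgamma}^2(\mu_2,\mu_3) < +\infty$, the dominated convergence theorem justifies differentiating under the integral sign twice in $\alpha$, yielding $C^2$ regularity together with the pointwise identity
\[
\partial_\alpha^2 A(x,\alpha) \;=\; \int_{(\R^d)^3} (y_3 - y_2)^t \, D^2\zeta_\epsilon\bigl(x - (1-\alpha)y_2 - \alpha y_3\bigr) (y_3 - y_2)\, d\bgamma .
\]
In particular, $\partial_\alpha^2 A(x,\alpha) \geq -\|D^2\zeta_\epsilon\|_\infty \, W_{2,\bgamma}^2(\mu_2,\mu_3)$, independent of $x$.

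Writing $\mathcal{E}_\epsilon(\mu_\alpha) = \tfrac12 \int A(x,\alpha)^2 / \bar\rho(x) \, dx$ and applying dominated convergence once more (using boundedness of $1/\bar\rho$ from Assumption (\ref{targetas}) and the already-established bounds on $A, \partial_\alpha A, \partial_\alpha^2 A$), one computes
\[
\frac{d^2}{d\alpha^2}\mathcal{E}_\epsilon(\mu_\alpha) \;=\; \int_{\R^d} \frac{(\partial_\alpha A)^2 + A \, \partial_\alpha^2 A}{\bar\rho}\, dx \;\geq\; \int_{\R^d} \frac{A \, \partial_\alpha^2 A}{\bar\rho}\, dx ,
\]
where the first term was discarded using nonnegativity of $\bar\rho$. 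Since $A(x,\alpha) \geq 0$ (as $\zeta_\epsilon \geq 0$) and $\int A(x,\alpha)\, dx = 1$, combining the pointwise lower bound on $\partial_\alpha^2 A$ with $\|1/\bar\rho\|_\infty$ gives
\[
\frac{d^2}{d\alpha^2}\mathcal{E}_\epsilon(\mu_\alpha) \;\geq\; -\|1/\bar\rho\|_{L^\infty}\|D^2\zeta_\epsilon\|_{L^\infty} W_{2,\bgamma}^2(\mu_2,\mu_3) \;=\; \lambda_\epsilon\, W_{2,\bgamma}^2(\mu_2,\mu_3),
\]
which is exactly the required lower bound. The stated scaling $\lambda_\epsilon = -\epsilon^{-d-2}\|1/\bar\rho\|_\infty\|D^2\zeta\|_\infty$ follows from $\zeta_\epsilon(x) = \epsilon^{-d}\zeta(x/\epsilon)$.

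The main (minor) obstacle is bookkeeping the dominated convergence arguments when differentiating twice under the integral in $\alpha$; the key inputs are $D^2\zeta \in L^\infty$ from (\ref{mollifieras}) and the finiteness of $W_{2,\bgamma}^2(\mu_2,\mu_3)$, both of which are in hand. Discarding the $(\partial_\alpha A)^2$ term is crucial: it bypasses any need to control cross-terms and isolates the ``worst case'' coming purely from the curvature of $\zeta_\epsilon$.
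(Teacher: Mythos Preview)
Your proof is correct and is essentially the paper's argument reorganized: both verify the above-the-tangent-line criterion of Lemma \ref{abovetanlem}, discard a nonnegative term coming from the quadratic structure (your $(\partial_\alpha A)^2 \geq 0$ is exactly the paper's use of the convexity of $t\mapsto t^2$), and control the remainder via $\|D^2\zeta_\epsilon\|_\infty$ together with $\int (\zeta_\epsilon * \mu_\alpha)\,dx = 1$. The only difference is that the paper works directly with the endpoints $\alpha=0,1$ rather than computing $d^2/d\alpha^2$, which spares it the second round of dominated-convergence bookkeeping you flag at the end.
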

\begin{proof}
	Let $(\mu_\alpha^{2 \rightarrow 3 })_{\alpha \in [0,1]}$ be a generalized geodesic  with base $\mu_1\in \mathcal{P}_2(\rr^d)$ connecting two probability measures $\mu_2,\mu_3 \in \mathcal{P}_2(\mathbb{R}^d)$, and let $\pmb{\gamma} \in \mathcal{P}_2(\mathbb{R}^d \times \mathbb{R}^d \times \mathbb{R}^d)$ be the associated measure as defined in  (\ref{eq:gen-geodesic}). 
	Since $x \mapsto x^2$ is a convex function, using the above the tangent inequality for convex functions yields,
	\begin{align*}
	\E_{\epsilon}(\mu_3)- \E_{\epsilon}(\mu_2) &= \frac{1}{2}\int \frac{ \big (\zeta_\epsilon * \mu_3 (x)\big )^2}{\bar{\rho}(x)}dx -\frac{1}{2}\int \frac{ \big (\zeta_\epsilon * \mu_2 (x)\big )^2}{\bar{\rho}(x)}dx \nonumber  \\
	&\geq   \int \frac{ \zeta_\epsilon * \mu_2 (x)}{\bar{\rho}(x)} \left(\zeta_\epsilon * \mu_3 (x)) - \zeta_\epsilon * \mu_{2} (x) \right) dx \nonumber \\
	&=  \int \frac{ \zeta_\epsilon * \mu_{2} (x) }{\bar{\rho}(x)} \left( \iiint \zeta_\epsilon(x-y_3)- \zeta_\epsilon(x-y_2) \right)  d \pmb{\gamma} (y_1,y_2,y_3) \, dx. \nonumber 
	\end{align*}
	Therefore, by Proposition \ref{prop:Fepder}, 	\begin{align}
	 & \E_{\epsilon}(\mu_3)  - \E_{\epsilon}(\mu_2) -  \left. \frac{d}{d\alpha}\E_{\epsilon}(\mu_\alpha^{2 \rightarrow 3 }) \right|_{\alpha = 0} \nonumber \\
	&\geq  \int \frac{\zeta_\epsilon * \mu_2 (x)}{\bar{\rho}(x)}\bigg( \iiint   \zeta_\epsilon(x-y_3) - \zeta_\epsilon(x-y_2)  - \la \nabla \zeta_\epsilon\big(x-  y_2\big ) , y_2 -y_3 \ra d\pmb{\gamma} (y_1,y_2,y_3)  \bigg)  dx\nonumber \\
	&\geq -\frac{\|D^2 \zeta_\epsilon\|_{L^{\infty}(\mathbb{R}^d)} }{2}\int \frac{\zeta_\epsilon * \mu_2 (x)}{\bar{\rho}(x)} dx \iiint |y_2-y_3|^2d\pmb{\gamma} (y_1,y_2,y_3)\nonumber  dx \nonumber \\
	&\geq-\frac{ \|1/\bar{\rho}\|_{L^\infty(\mathbb{R}^d)}  \| D^2 \zeta_\epsilon\|_{L^{\infty}(\mathbb{R}^d)}}{2} W^2_{2,\pmb{\gamma}}(\mu_2,\mu_3), \nonumber 
	\end{align}
	where we have applied Young's inequality to conclude that $\|\zeta_\epsilon * \mu_2\|_{L^1(\mathbb{R}^d)}=1$. 
	By Lemma \ref{abovetanlem}, this gives the result. 
	
\end{proof}

The preceding results  ensure that our energies $\E +\V_\Omega, \E_\epsilon, \V, \V_\epsilon,$ and $ \V_k$ are proper, lower semicontinuous, and semiconvex along generalized geodesics. Thus, the gradient flows of each of their energies, as well as the sum of any of the energies, is well posed, by Theorem \ref{curvemaxslope}, for any initial conditions in the closure of their domains. However, in order to characterize these gradient flows in terms of partial differential equations and prove our main $\Gamma$-convergence result, we must now characterize the minimal elements of their subdifferentials.
 
 We begin with the following proposition, identifying elements in the subdifferential of $\E_\epsilon$, $\V$, $\V_\epsilon$, and $\V_k$. Note that the subdifferentials of   $\V$, $\V_\epsilon$, and $\V_k$  were characterized in previous work \cite[Proposition 10.4.2]{ambrosio2008gradient}, and we recall key parts these results in item \ref{Vsubdif}   below   for the reader's convenience.

\begin{prop}[subdifferentials of $\E_\epsilon$, $\V$, $\V_\epsilon$, and $\V_k$] \label{subdifflemma} \
\begin{enumerate}[(i)]
\item Suppose Assumptions (\ref{targetas}) and (\ref{mollifieras}) hold. For all $\epsilon >0$ and $\mu \in D(\E_\epsilon)$, we have $\grad \frac{ \delta \E_\ep}{\delta \mu} \in \partial \E_\epsilon(\mu)$, where $\frac{ \delta \E_\ep}{\delta \mu} =  \zeta_\ep*\left(\left(\zeta_\ep*\mu  \right) / \bar{\rho}\right)$. \label{Eepsubdiff}
\item Suppose Assumptions (\ref{mollifieras}), (\ref{Vas}), and (\ref{Vkas}) hold. For all $\mu \in D(\V)$, we have $\nabla V \in \partial \V(\mu)$. Similarly, for all $\mu \in D(\V_\epsilon)$, we have $\nabla (\zeta_\epsilon * V) \in \partial \V_\epsilon(\mu)$, and, for all $\mu \in D(\V_k)$, we have $\nabla V_k \in \partial \V_k(\mu)$. \label{Vsubdif}
\end{enumerate} 
\end{prop}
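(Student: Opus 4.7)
The plan is to verify Definition \ref{subdiffdef} directly for each functional, invoking the ``above the tangent line'' characterization of $\lambda$-convex functionals (Lemma \ref{abovetanlem}). The ingredients are already in place: Propositions \ref{prop:convexity ep=0} and \ref{lem:Eepsconvex} supply the $\lambda$-convexity along generalized geodesics, while Proposition \ref{prop:Fepder} supplies the directional derivatives along an arbitrary generalized geodesic $\mu_\alpha$ from $\mu$ to $\nu$ with base $\mu$, parameterized by some $\bgamma$. The common strategy across all four functionals will be to recast the directional derivative in the form $\int \langle \bxi(y_2), y_3-y_2\rangle\,d\bgamma$ for the proposed $\bxi$, after which Lemma \ref{abovetanlem} immediately yields $\bxi\in\partial\G(\mu)$.

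For item (ii), this recasting is immediate: the formulas in Proposition \ref{prop:Fepder} already exhibit the directional derivatives of $\V$, $\V_\epsilon$, and $\V_k$ as pairings against $\nabla V(y_2)$, $\nabla(\zeta_\epsilon*V)(y_2)$, and $\nabla V_k(y_2)$, respectively. One then only needs to check that these candidate subdifferential elements belong to $L^2(\mu)$, which follows from Assumption (\ref{Vas}) in the first two cases, and from $D^2V_k\in L^\infty(\Rd)$ combined with $\mu\in \P_2(\Rd)$ in the third.

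For item (i), the substantive step is to rewrite Proposition \ref{prop:Fepder}'s formula for $\left.\frac{d}{d\alpha}\E_\epsilon(\mu_\alpha)\right|_{\alpha=0}$ in the required pairing form. Applying Fubini to swap the $dx$ and $d\bgamma$ integrals, this reduces to the pointwise identity
\begin{equation*}
    \int_{\Rd}\frac{(\zeta_\epsilon*\mu)(x)}{\bar\rho(x)}\,\nabla\zeta_\epsilon(x-y)\,dx \;=\; \pm\,\nabla\!\left[\zeta_\epsilon*\!\left(\frac{\zeta_\epsilon*\mu}{\bar\rho}\right)\right]\!(y),
\end{equation*}
which I would establish by differentiating under the integral sign --- justified since $D^2\zeta\in L^\infty$ and $\|1/\bar\rho\|_{L^\infty}<+\infty$ by Assumption (\ref{targetas}) --- and then using the evenness of $\zeta$ from Assumption (\ref{mollifieras}) to identify the sign. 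The resulting vector field $\nabla\frac{\delta\E_\epsilon}{\delta\mu}$ is bounded on $\Rd$ (using the decay $|\nabla\zeta(x)|\leq C_\zeta|x|^{-q'}$ together with $1/\bar\rho\in L^\infty$), so it trivially lies in $L^2(\mu)$. Combining this identification of the directional derivative with the $\lambda_\epsilon$-convexity from Proposition \ref{lem:Eepsconvex} places $\nabla\frac{\delta\E_\epsilon}{\delta\mu}$ in $\partial\E_\epsilon(\mu)$.

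The only delicate point I anticipate is the bookkeeping of signs when transferring the derivative from $x$ to $y$ via the evenness of $\zeta_\epsilon$, so that the expression matches the convention for $\frac{\delta\E_\epsilon}{\delta\mu}$ stated in the proposition. Beyond this, no genuinely new convexity or integrability estimate is required: the result is essentially a rearrangement of the formulas already proved in Propositions \ref{prop:Fepder}, \ref{prop:convexity ep=0}, and \ref{lem:Eepsconvex}, combined with the above-tangent-line characterization of subdifferentials for $\lambda$-convex functionals.
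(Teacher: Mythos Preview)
Your proposal is correct and follows essentially the same route as the paper: combine the $\lambda$-convexity established in Propositions \ref{prop:convexity ep=0} and \ref{lem:Eepsconvex} with the directional derivative formulas from Proposition \ref{prop:Fepder} along a geodesic from $\mu$ to $\nu$, then use Fubini to recast the $\E_\epsilon$ derivative as a pairing against $\nabla\zeta_\epsilon*\bigl((\zeta_\epsilon*\mu)/\bar\rho\bigr)$, which verifies Definition \ref{subdiffdef}. The paper executes exactly this, introducing the auxiliary plan $\tilde{\bgamma}=(\pi^1,\pi^1,\pi^2)_{\#}\bgamma$ to feed the two-marginal optimal plan into the three-variable formula of Proposition \ref{prop:Fepder}; your sign/factor bookkeeping concern is well placed, as this is the only place where care is needed.
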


\begin{proof}
We begin with the proof of \ref{Eepsubdiff}. Fix $\mu, \nu \in \P_2(\Rd)$ and $\bgamma \in \Gamma_0(\mu,\nu)$. Let $\mu_\alpha = ( (1-\alpha) \pi^1 + \alpha \pi^2)_{\#} \bgamma$ be a geodesic from $\mu$ to $\nu$.
By Lemma \ref{lem:Eepsconvex}, $\E_\ep$ is $\lambda_\epsilon$-convex along generalized geodesics, so in particular, it is convex along $\mu_\alpha$, and  Lemma \ref{abovetanlem} ensures,
\begin{align*}
  \E_\ep(\nu) - \E_\ep(\mu) -   \left. \frac{d}{d \alpha} \E_\ep(\mu_{\alpha}) \right|_{\alpha=0}   \geq \frac{ \lambda_\epsilon}{2} W_2^2(\mu,\nu) .
   \end{align*}
Rearranging and applying Proposition \ref{prop:Fepder}, with $\tilde{\bgamma} = (\pi^1, \pi^1, \pi^2)_{\#} \bgamma$, and Fubini's theorem, yields,  
\begin{align*}
\E_\ep(\nu) - \E_\ep(\mu) &\geq \frac12 \int \frac{\zeta_\epsilon * \mu_{  }(x)}{\bar{\rho}(x)}  \int  \la \nabla \zeta_\epsilon\big(x-  y_2\big ), y_2 -y_3 \ra \ d\tilde{\pmb{\gamma}} (y_1,y_2,y_3) \  dx + \frac{ \lambda_\epsilon}{2} W_2^2(\mu,\nu) \\
&= \frac12 \int \frac{\zeta_\epsilon * \mu_{  }(x)}{\bar{\rho}(x)}  \int \la  \nabla \zeta_\epsilon\big(x-  y_1\big ) , y_1 -y_2 \ra \ d \bgamma(y_1,y_2)\  dx + \frac{ \lambda_\epsilon}{2} W_2^2(\mu,\nu) \\
&=   \frac{1}{2} \int \la \grad \zeta_\epsilon*\left(\frac{ \zeta_\epsilon * \mu }{\bar{\rho}} \right) (y_1) , y_2 -y_1 \ra d\pmb{\gamma}  (y_1,y_2)  +\frac{ \lambda_\epsilon}{2} W_2^2(\mu,\nu)  \\
&= \int  \la \grad \frac{ \delta \E_\ep}{\delta \mu} (y_1) , y_2 -y_1 \ra d\pmb{\gamma}  (y_1,y_2)  + \frac{ \lambda_\epsilon}{2} W_2^2(\mu,\nu) .
\end{align*}
This shows $\grad \frac{ \delta \E_\ep}{\delta \mu} \in \partial \E_\ep(\mu)$, by Definition \ref{subdiffdef} of the subdifferential.

For item \ref{Vsubdif}, we will show the result for $\V$, since the result for $\V_\epsilon$  and $\V_k$ follow from the same argument, simply via  replacing $V$ with $\zeta_\epsilon *V$ and $V_k$, respectively. Let $\nu$, $\mu$, $\bgamma$, and $\tilde{\bgamma}$ be as in the proof of item (i). Applying Lemma \ref{abovetanlem}, Proposition \ref{prop:convexity ep=0}, Proposition \ref{prop:Fepder}, and rearranging, again as in the proof of (i), yields,
\begin{align*}
\V(\nu)-\V(\mu) &\geq \int  \la \nabla V(y_2), y_3-y_2 \ra   d \tilde{\bgamma}(y_1, y_2, y_3)+\frac{\lambda}{2} W^2_2(\nu, \mu)\\
&=\int  \la \nabla V(y_1), y_2-y_1 \ra    d \bgamma(y_1, y_2)+\frac{\lambda}{2} W^2_2(\nu, \mu),
\end{align*}
which shows $\grad V \in \partial \V(\mu)$, by Definition \ref{subdiffdef} of the subdifferential. 
\end{proof}

    Next, we characterize the minimal subdifferential of the energy $\F_{\ep, k}=\E_\epsilon +\V_\epsilon + \V_k$ for all $\epsilon >0$, $k \in \mathbb{N}$.  The proof is standard, and we defer it to Appendix \ref{energyappendix}.
\begin{prop}[minimal subdifferential of   $ \F_{\ep, k}$]
\label{prop:subdiff char ep>0}
Suppose Suppose Assumptions (\ref{targetas}), (\ref{mollifieras}), (\ref{Vas}), and (\ref{Vkas}) hold. 
For all $\epsilon >0$ and $k \in \mathbb{N}$, $\mu \in D(\F_{\ep, k})$, 
\begin{align}  \label{subdiffchareepsk}
\nabla \frac{\partial \E_\ep}{\partial \mu} + \nabla (\zeta_\ep*V) +\nabla V_k = \partial^\circ \F_{\ep, k}(\mu).
\end{align}
 
\end{prop}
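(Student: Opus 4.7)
The plan is to identify $\xi := \nabla\tfrac{\delta \E_\ep}{\delta\mu} + \nabla(\zeta_\ep*V) + \nabla V_k$ as an element of the subdifferential $\partial \F_{\ep,k}(\mu)$, and then show that its $L^2(\mu)$ norm coincides with the local slope $|\partial \F_{\ep,k}|(\mu)$, so that Proposition \ref{metricslopesubdifabscts} forces $\xi = \partial^\circ \F_{\ep,k}(\mu)$.

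For the first step, I would combine Proposition \ref{subdifflemma}(\ref{Eepsubdiff}) and (\ref{Vsubdif}), which give $\nabla\tfrac{\delta \E_\ep}{\delta\mu}\in\partial \E_\ep(\mu)$, $\nabla(\zeta_\ep*V)\in\partial \V_\ep(\mu)$, and $\nabla V_k\in\partial \V_k(\mu)$. By Proposition \ref{prop:convexity ep=0}(\ref{vconvex}) and Proposition \ref{lem:Eepsconvex}, each of $\E_\ep$, $\V_\ep$, $\V_k$ is semiconvex along generalized geodesics, and so is their sum $\F_{\ep,k}$, so Remark \ref{subdiffsumrem} applies and yields $\xi \in \partial \F_{\ep,k}(\mu)$. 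Applying Proposition \ref{metricslopesubdifabscts} then gives the easy bound $|\partial \F_{\ep,k}|(\mu) \leq \|\xi\|_{L^2(\mu)}$.

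The main obstacle is establishing the reverse inequality $|\partial \F_{\ep,k}|(\mu) \geq \|\xi\|_{L^2(\mu)}$. My plan here is to test the local slope (Definition \ref{localslopedef}) against a specific perturbation of $\mu$ designed to realize the steepest descent direction. Concretely, I would set $\mu_\alpha := (\id - \alpha\,\xi)_\# \mu$ for small $\alpha>0$, which can be written in the form of Proposition \ref{prop:Fepder} by taking $\pmb{\gamma} = (\id, \id, \id - \xi)_\# \mu$, so that $\nu_1=\nu_2=\mu$ and $\nu_3 = (\id - \xi)_\# \mu$. Then Proposition \ref{prop:Fepder} (up to constants) computes
\begin{equation*}
\left.\tfrac{d}{d\alpha}\F_{\ep,k}(\mu_\alpha)\right|_{\alpha=0} = -\int_{\Rd} \la\xi(y), \xi(y)\ra\, d\mu(y) = -\|\xi\|_{L^2(\mu)}^2,
\end{equation*}
after identifying $\nabla\zeta_\ep*\big((\zeta_\ep*\mu)/\bar\rho\big) = \nabla\tfrac{\delta\E_\ep}{\delta\mu}$ via Fubini. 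Combining with the elementary transport bound $W_2(\mu,\mu_\alpha)\leq\alpha\|\xi\|_{L^2(\mu)}$ (using $(\id,\id-\alpha\xi)_\#\mu$ as an admissible plan), I would conclude
\begin{equation*}
|\partial \F_{\ep,k}|(\mu) \geq \limsup_{\alpha\to 0^+}\frac{(\F_{\ep,k}(\mu)-\F_{\ep,k}(\mu_\alpha))_+}{W_2(\mu,\mu_\alpha)} \geq \frac{\|\xi\|_{L^2(\mu)}^2}{\|\xi\|_{L^2(\mu)}} = \|\xi\|_{L^2(\mu)}.
\end{equation*}
This pinches all inequalities in Proposition \ref{metricslopesubdifabscts} to equalities, forcing $\xi = \partial^\circ \F_{\ep,k}(\mu)$.

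The technical points requiring care are: (i) verifying the hypothesis $\xi\in L^2(\mu)$ needed to make sense of $\mu_\alpha$, which follows from Assumption (\ref{targetas}), the $L^\infty$-bounds on $\zeta_\ep$, $\nabla\zeta_\ep$, $\nabla(\zeta_\ep*V)$ from Assumptions (\ref{mollifieras}) and (\ref{Vas}), together with $\mu\in D(\F_{\ep,k})\subseteq \P_2(\Rd)$ which controls $\|\nabla V_k\|_{L^2(\mu)}$ via Assumption (\ref{Vkas}); and (ii) justifying that the perturbation $\mu_\alpha$ has second moment controlled uniformly in $\alpha$ so that Proposition \ref{prop:Fepder} is indeed applicable. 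Both are direct consequences of the standing assumptions, so the argument should go through without further difficulty.
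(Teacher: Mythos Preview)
Your proposal is correct and follows essentially the same approach as the paper's proof: establish $\xi\in\partial\F_{\ep,k}(\mu)$ via Proposition \ref{subdifflemma} and Remark \ref{subdiffsumrem}, then obtain the reverse inequality $|\partial\F_{\ep,k}|(\mu)\geq\|\xi\|_{L^2(\mu)}$ by testing the local slope against the push-forward perturbation $\mu_\alpha=(\id-\alpha\,\xi)_{\#}\mu$ and computing the directional derivative with Proposition \ref{prop:Fepder}. The only cosmetic difference is that the paper first writes the slope lower bound for an arbitrary $\psi\in C^1(\Rd)$ with $\nabla\psi\in L^2(\mu)$ and then specializes to $\nabla\psi=-\xi$ (noting that $\xi$ is the gradient of the $C^1$ function $\zeta_\ep*((\zeta_\ep*\mu)/\bar\rho)+\zeta_\ep*V+V_k$), whereas you go directly to this choice; both routes are equivalent.
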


Finally, we close by recalling Ambrosio, Gigli, and Savar\'e's characterization of the minimal subdifferential of  $\F$ \cite[Theorems 10.4.9-10.4.13]{ambrosio2008gradient}. 
\begin{prop}[minimal subdifferential of  $\F$, {\cite[Theorems 10.4.9-10.4.13]{ambrosio2008gradient}}] 
\label{prop:subdiff char ep=0} Assume (\ref{targetas}), (\ref{Vas}), (\ref{Omegaas}), and that $\bar\rho$ is log-concave on $\Omega$.
 Given $\mu \in D(\F)$, we have   $ |\partial \F|(\mu)< +\infty$ if and only if  $(\mu/\bar{\rho})^2 \in W^{1,1}_\loc(\Omega)$ and there exists $\bxi \in L^2(\mu)$ so that, 
\[   \bxi \mu = \frac{\bar{\rho}}{2} \nabla (\mu/\bar{\rho})^2 + \nabla V \mu  \quad \text{ on } \quad \Omega.  \]
In this case, $\bxi \in \partial^\circ \F(\mu) $.
 \end{prop}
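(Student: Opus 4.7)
The proof is essentially a specialization of the subdifferential characterization of weighted internal energies developed in \cite[Chapter 10]{ambrosio2008gradient} to our setting. My plan is to decompose $\F = (\E + \V_\Omega) + \V$, noting that under the running assumptions and the log-concavity of $\bar\rho$ on $\Omega$, both summands are proper, lower semicontinuous, and semiconvex along generalized geodesics (Lemmas \ref{lem:semicontinuity}--\ref{Vlsc} and Proposition \ref{prop:convexity ep=0}). By Remark \ref{subdiffsumrem}, the sum of elements of the respective subdifferentials lies in $\partial \F(\mu)$, so it suffices to identify the minimal subdifferential of each piece and then combine them.

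For the external potential piece $\V$, Proposition \ref{subdifflemma}(ii) already yields $\nabla V \in \partial \V(\mu)$, and since $\V$ is $\lambda$-convex along generalized geodesics with $\nabla V \in L^2(\mu)$ by (\ref{Vas}), the standard AGS argument identifies $\partial^\circ \V(\mu) = \nabla V$. For the weighted internal energy $\E + \V_\Omega$, I would appeal to the AGS characterization of internal energies of the form $\int F(s,x)\,dx$ on probability measures supported in a closed convex set \cite[Theorem 10.4.13]{ambrosio2008gradient}. With integrand $F(s,x) = s^2/(2\bar\rho(x))$, a direct computation gives $F_s(s,x) = s/\bar\rho(x)$, and the conclusion of the AGS theorem becomes: $|\partial(\E+\V_\Omega)|(\mu) < +\infty$ iff $\mu/\bar\rho$ admits a weak gradient locally on $\Omega$ such that $\mu\nabla(\mu/\bar\rho) \in L^2(\mu)$, in which case the minimal element $\bxi_0$ satisfies $\bxi_0 \mu = \mu \nabla(\mu/\bar\rho) = \frac{\bar\rho}{2}\nabla(\mu/\bar\rho)^2$ on $\Omega$. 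The $W^{1,1}_\loc$ regularity statement $(\mu/\bar\rho)^2 \in W^{1,1}_\loc(\Omega)$ is the concrete form of the regularity hypothesis in our quadratic setting, since $\nabla(\mu/\bar\rho)^2 = 2(\mu/\bar\rho)\nabla(\mu/\bar\rho)$ and the required $L^1_\loc$ integrability follows from the $L^2(\mu)$ bound plus $\bar\rho \in L^\infty_\loc$.

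Summing the two minimal elements gives the candidate $\bxi = \nabla(\mu/\bar\rho) + \nabla V$, which satisfies the asserted identity $\bxi \mu = \frac{\bar\rho}{2}\nabla(\mu/\bar\rho)^2 + \nabla V \mu$ on $\Omega$. To upgrade this to minimality of $\bxi$ in $\partial \F(\mu)$, I would invoke Proposition \ref{metricslopesubdifabscts} together with the tangent space characterization from AGS: the minimal subdifferential element always lies in the $L^2(\mu)$-closure of the set of gradients of $C^\infty_c$ functions, and both $\nabla(\mu/\bar\rho)$ and $\nabla V$ belong to this closure, so their sum provides the element attaining the infimum in (\ref{subdiffmetricineq}).

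The main obstacle is a subtlety in applying \cite[Theorem 10.4.13]{ambrosio2008gradient}: the hypothesis of McCann displacement convexity for the weighted internal energy is typically formulated with log-concavity of $\bar\rho$ on the entire ambient space, whereas (\ref{targetas}) and our standing assumption only demand log-concavity on $\Omega$. The key to bridging this gap is that $\V_\Omega$ restricts the admissible measures to those supported in $\overline\Omega$; since $\Omega$ is open and convex by (\ref{Omegaas}), any generalized geodesic connecting two such measures remains supported in $\overline\Omega$, and continuity of $\bar\rho$ up to the boundary allows the needed convexity inequalities to be carried out purely on $\overline\Omega$. Once this point is confirmed, the remainder of the argument is a routine transcription of the AGS template into our notation.
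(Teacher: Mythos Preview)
The paper does not supply its own proof of this proposition: it is stated as a direct citation of \cite[Theorems 10.4.9--10.4.13]{ambrosio2008gradient}, so there is no in-paper argument to compare against. Your reconstruction is broadly correct and follows the AGS template, but two points are worth flagging.

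First, your decomposition-then-sum approach works, but it is slightly indirect. The minimality step---showing that $\nabla(\mu/\bar\rho)+\nabla V$ is the \emph{minimal} element of $\partial\F(\mu)$, not merely some element---relies on the tangent-space characterization (the minimal element is the unique one in $\overline{\{\nabla\phi:\phi\in C^\infty_c\}}^{L^2(\mu)}$). This is fine, but note that \cite[Theorem 10.4.13]{ambrosio2008gradient} already treats sums of internal, potential, and interaction energies in one stroke, so you can apply it directly to $\F=(\E+\V_\Omega)+\V$ and read off both the subdifferential characterization and its minimality without the extra bookkeeping.

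Second, the log-concavity obstacle you identify is genuine. Your proposed fix---arguing that generalized geodesics between measures supported in $\overline\Omega$ stay in $\overline\Omega$ and then working entirely on $\overline\Omega$---can be made to work, but it is cleaner to use the device the paper employs in the proof of Proposition~\ref{prop:convexity ep=0}: replace $\bar\rho$ by the log-concave extension $\tilde\rho=e^{-W}$ with $W(x)=-\log\bar\rho(x)$ for $x\in\Omega$ and $W(x)=+\infty$ otherwise. Then $\E(\mu)+\V_\Omega(\mu)=\tfrac12\int|\mu|^2/\tilde\rho$ for all $\mu\in\P_2(\Rd)$, and $\tilde\rho$ is log-concave on all of $\Rd$, so the AGS hypotheses apply verbatim without any boundary discussion.
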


\subsection{Differential equation characterization of gradient flows} \label{GFcharsection}

We close by identifying the   differential equations that characterize gradient flows of $\F_{\epsilon,k}$  and $\F$. These proofs are natural consequences of the properties of the energies proved in the previous section and the definition of gradient flow, so we defer them to Appendix \ref{energyappendix}.

\begin{prop}[PDE characterization of GF of $\F$]
\label{prop:PDE ep=0} Assume  (\ref{Vas}),  (\ref{targetas}), (\ref{Omegaas}), and $\bar\rho$ is log-concave on $\Omega$. For every $\mu_0\in  \overline{D(\F)}$, we have that 
$\mu(t) \in AC^2([0,T];\P_2(\Rd))$ is the unique Wasserstein gradient flow of $\F$  with initial data $\mu_0$  if and only if $\mu(t)$ 
satisfies,
\begin{equation}
\label{eq:PDE E}
\begin{cases}
\partial_t\mu -\nabla \cdot  \left(\frac{\bar{\rho}}{2} \nabla \left(\frac{\mu^2}{\bar\rho^2}  \right)  + \nabla V \mu \right)=0,   \text{ in     duality with } C^\infty_c( \rr^d \times (0,\infty)) ,
  \\
   \lim_{t \to 0^+} \mu(t) =  \mu_0 \quad \text{ in $W_2$},
\end{cases}
\end{equation}
and  satisfies,
\begin{align}
    \label{eq:soln space1}
&\mu(t) \ll \mathcal{L}^d, \mu = 0   \text{ $\mathcal{L}^d$-a.e.  on  }\Rd \setminus \overline{\Omega}, \text{ and }(\mu(t)/ \bar{\rho})^2 \in W^{1,1}_{\loc}(\Omega) \text{ for $\mathcal{L}^1$-a.e. $t>0$,}  \\
&\int_{\rr^d} \left| \bar{\rho} \ \nabla\left( \mu(t)^2/\bar\rho^2 \right)/(2\mu) + \nabla V \right|^2 d \mu     \in L^1_{loc}(0,\infty) . \label{eq:soln space2}
\end{align}
\end{prop}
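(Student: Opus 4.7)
The plan is to reduce the proposition to two ingredients already in hand: the general well-posedness and characterization theorem for gradient flows (Theorem \ref{curvemaxslope}) and the subdifferential characterization of $\F$ (Proposition \ref{prop:subdiff char ep=0}).

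\textbf{Preliminary step: verifying hypotheses.} First I would check that $\F = \E + \V + \V_\Omega$ is proper, lower semicontinuous, and $\lambda$-convex along generalized geodesics, so that Theorem \ref{curvemaxslope} applies and guarantees a unique gradient flow for every $\mu_0 \in \overline{D(\F)}$. Properness follows by exhibiting, e.g., any smooth probability density supported in $\Omega$ with $\mu \leq C \bar{\rho}$. Lower semicontinuity follows from Lemmas \ref{lem:semicontinuity} and \ref{Vlsc}. For the convexity, the log-concavity of $\bar{\rho}$ on $\Omega$ combined with Proposition \ref{prop:convexity ep=0}(\ref{econvex}) gives convexity of $\E + \V_\Omega$ along generalized geodesics, and Proposition \ref{prop:convexity ep=0}(\ref{vconvex}) gives $\lambda$-convexity of $\V$; hence $\F$ is $\lambda$-convex along generalized geodesics.

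\textbf{Forward implication (gradient flow $\Rightarrow$ PDE $+$ regularity).} Suppose $\mu(t)$ is the gradient flow of $\F$ with initial datum $\mu_0$. By Definition \ref{gradientflowdef}, $\mu(t)$ solves the continuity equation $\partial_t \mu + \nabla \cdot (v\mu) = 0$ in duality with $C^\infty_c(\Rd\times(0,T))$ with $v(t) = -\partial^\circ \F(\mu(t))$ for a.e.\ $t$, and the initial datum is attained in $W_2$. From the curve of maximal slope characterization, Theorem \ref{curvemaxslope}(\ref{item:curve max slope}), we obtain $\int_0^T |\partial \F|^2(\mu(t))\,dt < \infty$, so $|\partial \F|(\mu(t)) < \infty$ for $\mathcal{L}^1$-a.e.\ $t$. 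Proposition \ref{prop:subdiff char ep=0} then yields, for such $t$, that $(\mu(t)/\bar{\rho})^2 \in W^{1,1}_{\loc}(\Omega)$ and
\[
v(t)\,\mu(t) \;=\; -\tfrac{\bar{\rho}}{2}\nabla\!\left(\mu(t)^2/\bar{\rho}^2\right) - \nabla V\,\mu(t) \qquad \text{on } \Omega.
\]
Moreover, finiteness of $\F(\mu(t))$ forces $\mu(t) \ll \bar{\rho}\,\mathcal{L}^d$ (hence $\mu(t) \ll \mathcal{L}^d$) and $\supp \mu(t) \subseteq \overline{\Omega}$, giving (\ref{eq:soln space1}); while $\|v(t)\|^2_{L^2(\mu(t))} = |\partial \F|^2(\mu(t))$ combined with Proposition \ref{metricslopesubdifabscts} gives (\ref{eq:soln space2}). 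Substituting the expression for $v\mu$ into the continuity equation produces the PDE (\ref{eq:PDE E}).

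\textbf{Reverse implication (PDE $+$ regularity $\Rightarrow$ gradient flow).} Conversely, suppose $\mu(t) \in AC^2([0,T];\P_2(\Rd))$ solves (\ref{eq:PDE E}) with the regularity (\ref{eq:soln space1})--(\ref{eq:soln space2}) and matches the initial condition in $W_2$. Define
\[
v(t,x) \;:=\; -\,\frac{\bar{\rho}(x)\,\nabla\!\left(\mu(t,x)^2/\bar{\rho}(x)^2\right)}{2\mu(t,x)} - \nabla V(x),
\]
on the set $\{\mu(t)>0\}$, and extend by zero elsewhere. By (\ref{eq:soln space2}), $v(t)\in L^2(\mu(t))$ for $\mathcal{L}^1$-a.e.\ $t$, and by (\ref{eq:soln space1}) together with Proposition \ref{prop:subdiff char ep=0}, we have $v(t) = -\partial^\circ \F(\mu(t))$ for a.e.\ $t$. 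Rewriting (\ref{eq:PDE E}) as $\partial_t\mu + \nabla\cdot(v\mu)=0$ shows that $\mu(t)$ satisfies the continuity equation of Definition \ref{gradientflowdef}, so it is a gradient flow of $\F$. Uniqueness follows from Theorem \ref{curvemaxslope}.

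\textbf{Main obstacle.} The only nontrivial step is ensuring that the weak formulation in the statement genuinely encodes the continuity equation with the velocity $v$ just defined, and that the a.e.-in-$t$ application of Proposition \ref{prop:subdiff char ep=0} is legitimate. The integrability condition (\ref{eq:soln space2}) is precisely what is needed to identify $v$ as an $L^2(\mu(t))$ vector field pointwise a.e.; combined with (\ref{eq:soln space1}), it packages exactly the hypotheses of Proposition \ref{prop:subdiff char ep=0} along the curve. Once this identification is made, the equivalence is essentially tautological given the framework already developed.
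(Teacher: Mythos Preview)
Your proposal is correct and follows essentially the same route as the paper: both directions reduce to combining Definition \ref{gradientflowdef} and Theorem \ref{curvemaxslope} with the subdifferential characterization in Proposition \ref{prop:subdiff char ep=0}, using the curve-of-maximal-slope inequality to get $|\partial \F|(\mu(t))<\infty$ a.e.\ in the forward direction and the integrability condition (\ref{eq:soln space2}) to identify $v(t)=-\partial^\circ\F(\mu(t))$ in the reverse direction. Your explicit verification of properness, lower semicontinuity, and $\lambda$-convexity is a helpful preamble that the paper leaves implicit (having established these in Section \ref{lscsection}).
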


\begin{rem}[relationship with existing work on nonlinear diffusion equations]  \label{PDEeps0remark}

First, note that if $\overline{\Omega}$ is compact, then the weak formulation of the PDE in equation (\ref{eq:PDE E}) implies that the PDE also holds in the duality with $C^\infty((0, +\infty) \times \Omega)$, which is a weak formulation of the no-flux boundary conditions,
\begin{align} \label{noflux} \frac{\bar{\rho}}{2}  \partial_{\mathbf{n}} \left(\frac{\mu^2}{\bar\rho^2}  \right)  +  \partial_{\mathbf{n}}V \mu = 0 \text{ on } \partial \Omega,
\end{align}
since the test functions are merely required to be compactly supported $ \Rd \times (0, +\infty) $, not $\Omega \times (0,+\infty)$.
In particular, if $\mu$ is a smooth classical solution of (\ref{mainpde}) with no-flux boundary conditions,  it solves (\ref{eq:PDE E}).

In \cite{O}, Otto pioneered the connection between PDEs and Wasserstein gradient flows, characterizing solutions to    homogeneous porous medium equations ($\bar \rho =1$) without boundary $(\Omega = \Rd)$ as gradient flows of the internal energy $\F(\rho) = \frac{1}{2} \int \rho^2$. The notion of solution used in this previous work is stronger than the one in Proposition \ref{prop:PDE ep=0}. In particular, if $\rho$ is a solution to the porous medium equation in this previous sense \cite[Definition 1]{O}, then it is  a solution of (\ref{eq:PDE E}), hence a gradient flow in the sense defined here.

More recently,   Dolbeault, et al. \cite{dolbeault2008lq} and Grillo, Muratori, and Porzio \cite{GMP2013} consider well-posedness of  (\ref{mainpde}). If $u$ is smooth enough, it is a solution to    \cite[equation (1.1)]{GMP2013} (with    $\rho_\nu = \rho_\mu =\bar\rho$, and with $\Omega=\rr^d$) if and only if $\mu:=\bar \rho u$ satisfies (\ref{eq:PDE E}).  More precisely comparing our notion of solution with  \cite[Definition 3.5]{GMP2013},
we observe that our definition requires the same regularity in space, stronger regularity in time, and we employ a smaller class of test functions.

\end{rem}

Next, we provide a PDE characterization of the gradient flow  of $\F_{\epsilon,k}$, the proof of which we again defer to Appendix \ref{energyappendix}.

\begin{prop}[PDE characterization of GF of $\F_{\epsilon,k}$] \label{PDEFeps} Suppose Assumptions (\ref{targetas}), (\ref{mollifieras}), (\ref{Vas}), and (\ref{Vkas}) hold. 
For every $\mu_0\in \overline{D(\F_{\ep,k})}$, we have that $\mu(t) \in AC^2([0,T];\P_2(\Rd))$ is the  unique Wasserstein gradient flow of $\F_{\epsilon,k}$  with initial data $\mu_0$  if and only if $\mu(t)$ satisfies,
\begin{align}
\label{eq:PDE Eepk}
\begin{cases}
    \partial_t\mu -\nabla \cdot  \left(\mu   \left(  \nabla  \zeta_\ep*\left(\frac{\zeta_\ep*\mu}{\bar \rho} \right)+ \nabla (\zeta_\epsilon* V) + \nabla V_k \right) \right)=0,  \text{ in   duality with } C^\infty_c( \rr^d 
    \times (0,\infty)), \\
    \lim_{t \to 0^+} \mu(t) = \mu_0 \text{ in $W_2$} .
    \end{cases}
\end{align} 
\end{prop}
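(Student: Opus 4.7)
The strategy is to assemble the general Wasserstein gradient flow machinery of Theorem \ref{curvemaxslope} with the pointwise subdifferential characterization already established in Proposition \ref{prop:subdiff char ep>0}, so that the PDE (\ref{eq:PDE Eepk}) reduces to the continuity equation in Definition \ref{gradientflowdef} with the explicit velocity field prescribed by the minimal subdifferential. No regularity issue beyond what is provided by Assumptions (\ref{targetas}), (\ref{mollifieras}), (\ref{Vas}), and (\ref{Vkas}) should arise, since all convolutional terms are bounded by virtue of the mollifier.

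First, I would verify that $\F_{\ep,k} = \E_\ep + \V_\ep + \V_k$ satisfies the hypotheses of Theorem \ref{curvemaxslope}. Properness follows from Assumption (\ref{targetas}) by taking any absolutely continuous probability measure supported in a compact subset of $\Omega$ (note $V_k = 0$ on $\Omega$ so $\V_k$ does not cause issues, and $\V_\ep$ is bounded thanks to Assumption (\ref{Vas})). Lower semicontinuity with respect to narrow convergence (hence Wasserstein convergence) is the content of Lemmas \ref{lem:semicontinuity} and \ref{Vlsc}. Semiconvexity along generalized geodesics follows by summing the contributions from Proposition \ref{lem:Eepsconvex} (which gives $\lambda_\ep$-convexity of $\E_\ep$) and Proposition \ref{prop:convexity ep=0}\ref{vconvex} (which gives $\lambda_V$-convexity of $\V_\ep$ and $0$-convexity of $\V_k$). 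Thus $\F_{\ep,k}$ is $(\lambda_\ep+\lambda_V)$-convex along generalized geodesics. Theorem \ref{curvemaxslope} then yields, for each $\mu_0 \in \overline{D(\F_{\ep,k})}$, the existence of a unique gradient flow $\mu \in AC^2([0,T];\P_2(\Rd))$ with $\lim_{t \to 0^+} \mu(t) = \mu_0$ in $W_2$.

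Next, I would translate Definition \ref{gradientflowdef} into (\ref{eq:PDE Eepk}). By Proposition \ref{prop:subdiff char ep>0}, the minimal subdifferential is
\[
\partial^\circ \F_{\ep,k}(\mu) = \nabla\zeta_\ep*\!\Big(\frac{\zeta_\ep*\mu}{\bar\rho}\Big) + \nabla(\zeta_\ep*V) + \nabla V_k,
\]
where I have used that $\nabla \frac{\delta \E_\ep}{\delta\mu} = \nabla\zeta_\ep*((\zeta_\ep*\mu)/\bar\rho)$ from Proposition \ref{subdifflemma}\ref{Eepsubdiff}. Assumptions (\ref{mollifieras}), (\ref{targetas}), (\ref{Vas}) and (\ref{Vkas}) guarantee that each summand is bounded uniformly on $\Rd$, so this vector field lies in $L^\infty(\Rd) \subseteq L^2(\mu(t))$ for every $t$, and in particular the continuity equation
\[
\partial_t \mu + \nabla\cdot(v\,\mu) = 0, \qquad v(t,x) = -\partial^\circ \F_{\ep,k}(\mu(t))(x),
\]
is well-defined in the distributional sense against $C^\infty_c(\Rd\times(0,\infty))$ after noting that $\{x:|x|\le R\}\times[\tau,T]$ exhausts the support of any test function. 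This is exactly the PDE appearing in (\ref{eq:PDE Eepk}), so a gradient flow solves (\ref{eq:PDE Eepk}).

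Conversely, if $\mu(t)\in AC^2([0,T];\P_2(\Rd))$ satisfies (\ref{eq:PDE Eepk}) with $\mu(t)\to\mu_0$ in $W_2$, then it is a weak solution of the continuity equation with velocity $v(t)=-\partial^\circ\F_{\ep,k}(\mu(t))$, which by Definition \ref{gradientflowdef} identifies it as the gradient flow of $\F_{\ep,k}$; uniqueness then follows from Theorem \ref{curvemaxslope}. The only step requiring care is verifying that the distributional formulation against $C^\infty_c(\Rd\times(0,\infty))$ coincides with the notion of weak solution of the continuity equation used in Definition \ref{gradientflowdef}, which I do not expect to cause any real difficulty given the $L^\infty$ bounds on the velocity field and the fact that $\mu(t)$ has finite second moment uniformly on $[0,T]$ (as follows from $AC^2$-regularity and Proposition \ref{secondmomentbound}). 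This completes the proof sketch; the main conceptual content lies entirely in Propositions \ref{lem:Eepsconvex}, \ref{subdifflemma}, and \ref{prop:subdiff char ep>0} established earlier, and this proposition is essentially their packaging into a PDE statement.
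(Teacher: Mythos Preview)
Your approach is essentially the same as the paper's: invoke Theorem \ref{curvemaxslope} for well-posedness, then use Proposition \ref{prop:subdiff char ep>0} to identify $\partial^\circ\F_{\ep,k}$ and read off the PDE from Definition \ref{gradientflowdef}; for the converse, recognize the PDE as the continuity equation with velocity $-\partial^\circ\F_{\ep,k}(\mu)$.

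One small inaccuracy: you assert that ``each summand is bounded uniformly on $\Rd$'', but $\nabla V_k$ is not in $L^\infty(\Rd)$ under Assumption (\ref{Vkas}); only $D^2 V_k$ is bounded, so $\nabla V_k$ has at most linear growth. The correct statement (noted in the paper just after Assumption (\ref{Vkas})) is that $\nabla V_k\in L^2(\mu)$ for any $\mu\in\P_2(\Rd)$, which is all you need for the velocity to lie in $L^2(\mu(t))$ and for the continuity equation to make sense. This does not affect your argument otherwise.
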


Finally, we characterize the dynamics of  the gradient flow of $\F_{\ep,k}$  when the initial data is given by an empirical measure. We show that it remains an empirical measure for all time, that is, ``particles remain particles'', and we  explicitly state the ODE that characterizes the empirical measure's evolution.  The proof is in Appendix \ref{energyappendix}.

\begin{prop}[particle evolution for $\F_{\epsilon,k}$]
\label{prop:ODE Eep} Suppose Assumptions (\ref{targetas}), (\ref{mollifieras}), (\ref{Vas}), and (\ref{Vkas}) hold. 
Fix $\ep>0$, $N\in \N$, $\{X_0^1, \dots, X_0^N\} \in \Rd$, and $\{m^1, \dots, m^N \} \in \R_+$ satisfying $\sum_{i=1}^N m^i=1$. 
Then, there exists a unique continuously differentiable function $X:[0,\infty)\rightarrow \rr^{Nd}$, with components $(X^1(t),..., X^N(t))$, that satisfies the system,
  \begin{equation}     \label{eq:ODEepk} \begin{cases}
    \dot{X}^{i} = -\sum_{j=1}^N m^j\int_{\rr^d} \nabla \zeta_\ep(X^{i}-z)\zeta_\ep(z-X^{j})\frac{1}{\bar\rho(z)}\, dz - \nabla (\zeta_\epsilon*V)(X^{i}) - \nabla V_k(X^{i}) ,\\
     X^{i}(0)=X^{i}_{0}.
    \end{cases}
    \end{equation}
Moreover, $\mu(t)  := \sum_{i=1}^N \delta_{X^i(t)}m^i $ is the unique Wasserstein gradient flow of $\F_{\epsilon,k}$  with initial conditions $\mu(0)$.
\end{prop}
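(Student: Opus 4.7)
The proof has two natural parts corresponding to the two conclusions: well-posedness of the ODE system (\ref{eq:ODEepk}), and identification of the resulting empirical measure as the Wasserstein gradient flow of $\F_{\epsilon,k}$.

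For the first part, the plan is to apply the Cauchy-Lipschitz theorem to the vector field $F : \rr^{Nd} \to \rr^{Nd}$ whose $i$-th component is the right-hand side of (\ref{eq:ODEepk}). I would verify that $F$ is globally Lipschitz. The interaction term
\begin{equation*}
(X^i, X^j) \mapsto \int_{\rr^d} \nabla \zeta_\ep(X^i - z)\,\zeta_\ep(z - X^j)\,\bar\rho(z)^{-1}\, dz
\end{equation*}
has uniformly bounded gradient: differentiation under the integral is justified because $D^2 \zeta_\ep, \nabla \zeta_\ep \in L^\infty(\rr^d)$ with sufficient decay (assumption (\ref{mollifieras})) and $1/\bar\rho$ is bounded (assumption (\ref{targetas})). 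The map $X^i \mapsto \nabla(\zeta_\ep * V)(X^i)$ is Lipschitz since $D^2\zeta_\ep * V$ is bounded by (\ref{mollifieras}) and (\ref{Vas}), and $\nabla V_k$ is Lipschitz by (\ref{Vkas}). Global Lipschitz continuity of $F$ yields the unique global solution $X \in C^1([0, \infty); \rr^{Nd})$.

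For the second part, I would first observe that $\mu(0) = \sum_i m^i \delta_{X^i_0} \in D(\F_{\ep,k})$: indeed $\zeta_\ep * \mu(0) = \sum_i m^i \zeta_\ep(\cdot - X^i_0) \in L^2 \cap L^\infty$, so $\E_\ep(\mu(0)) < +\infty$, while $\V_\ep(\mu(0))$ and $\V_k(\mu(0))$ are trivially finite. Thus Proposition \ref{PDEFeps} applies, producing a unique Wasserstein gradient flow of $\F_{\epsilon,k}$ with initial data $\mu(0)$, characterized as the unique solution to the continuity equation (\ref{eq:PDE Eepk}) in $AC^2([0,T]; \P_2(\rr^d))$. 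It therefore suffices to verify that $\mu(t) := \sum_i m^i \delta_{X^i(t)}$ satisfies these properties. For absolute continuity, the coupling $\bgamma_{s,t} = \sum_i m^i \delta_{(X^i(s), X^i(t))} \in \Gamma(\mu(s), \mu(t))$ gives $W_2^2(\mu(s), \mu(t)) \leq \sum_i m^i |X^i(t) - X^i(s)|^2 \leq \bigl(\int_s^t \max_i |\dot X^i(r)|\, dr\bigr)^2$, and the $\dot X^i$ are bounded on $[0,T]$ by smoothness of $F$ and continuity of $X$.

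For the continuity equation, the key algebraic observation is that since $\zeta_\ep * \mu(t) = \sum_j m^j \zeta_\ep(\cdot - X^j(t))$,
\begin{equation*}
\sum_{j=1}^N m^j \int_{\rr^d} \nabla \zeta_\ep(X^i - z)\,\zeta_\ep(z - X^j)\,\bar\rho(z)^{-1}\, dz \;=\; \nabla \zeta_\ep * \Bigl( \frac{\zeta_\ep * \mu(t)}{\bar\rho}\Bigr)(X^i(t)),
\end{equation*}
so the ODE reads $\dot X^i(t) = v(X^i(t), t)$ for the velocity field $v(x,t) := -\nabla \zeta_\ep * \bigl( (\zeta_\ep*\mu(t))/\bar\rho\bigr)(x) - \nabla(\zeta_\ep*V)(x) - \nabla V_k(x)$. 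Testing $\mu(t)$ against $\phi \in C^\infty_c(\rr^d \times (0, +\infty))$, differentiating $t \mapsto \sum_i m^i \phi(X^i(t), t)$ via the chain rule, and substituting the ODE yields the weak form of (\ref{eq:PDE Eepk}). The uniqueness part of Proposition \ref{PDEFeps} then identifies $\mu(t)$ as the Wasserstein gradient flow. The main technical point is the bookkeeping in the continuity-equation verification, ensuring that the velocity field implicit in the ODE matches exactly the one in (\ref{eq:PDE Eepk}); the Cauchy-Lipschitz step is routine once the Lipschitz bounds are recorded.
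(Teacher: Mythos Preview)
Your proposal is correct and follows essentially the same approach as the paper: both arguments establish Lipschitz continuity of the vector field to invoke Cauchy--Lipschitz, rewrite the interaction sum as $\nabla\zeta_\ep*\bigl((\zeta_\ep*\mu)/\bar\rho\bigr)(X^i)$, verify the weak continuity equation by testing against $\phi\in C^\infty_c(\rr^d\times(0,\infty))$ and applying the chain rule, and then invoke Proposition~\ref{PDEFeps} to identify $\mu(t)$ as the unique gradient flow. Your explicit check of $\mu\in AC^2$ via the coupling $\sum_i m^i\delta_{(X^i(s),X^i(t))}$ is slightly more than the paper records (the paper absorbs this into the appeal to Proposition~\ref{PDEFeps}, whose proof handles it via \cite[Theorem 8.3.1]{ambrosio2008gradient}), but it is correct and harmless.
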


\subsection{Long-time behavior} \label{longtimesec}
We conclude this section by recalling   known properties of the long time behavior of (\ref{mainpde}) or, equivalently, gradient flows of $\F$, which motivate its connection to quantization.

\begin{prop}[long time behavior, {\cite{ambrosio2008gradient}}] \label{longtime} Assume (\ref{Omegaas}),  (\ref{targetas}), $V=0$, $\Omega$ is bounded, and $\bar\rho$ is log-concave on $\Omega$. Let $\rho_0\in D(\F)$ and let $\rho(t)$ be the gradient flow $\rho$ of $\F$ with initial data $\rho_0$. Then we have,
\[ \lim_{t \to +\infty } W_2\left(\rho(t), \frac{\indi_{\overline{\Omega}}\bar{\rho}}{\int_\Omega \bar\rho\, d\mathcal{L}^d}\right) = 0  . 
\]
\end{prop}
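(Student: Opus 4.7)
The plan is to combine an explicit identification of the unique minimizer of $\F$ with the $\mathrm{EVI}_0$ characterization of the gradient flow from Theorem \ref{curvemaxslope}(ii).

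First I would identify the minimizer explicitly. Since $V = 0$, $\F = \E + \V_\Omega$, and every $\mu \in D(\F)$ satisfies $\supp \mu \subseteq \overline\Omega$ and $\mu \ll \bar\rho\, \mathcal{L}^d$. Cauchy--Schwarz yields
\begin{equation*}
1 = \int_\Omega \frac{\mu(x)}{\sqrt{\bar\rho(x)}} \sqrt{\bar\rho(x)}\, dx \leq \Bigl(\int_\Omega \frac{\mu(x)^2}{\bar\rho(x)}\, dx\Bigr)^{1/2} \Bigl(\int_\Omega \bar\rho\, d\mathcal{L}^d\Bigr)^{1/2},
\end{equation*}
with equality iff $\mu$ is proportional to $\bar\rho$ on $\Omega$. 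Combined with the mass constraint $\int \mu = 1$, this shows that
\begin{equation*}
\rho_* := \frac{\indi_{\overline\Omega}\bar\rho}{\int_\Omega \bar\rho\, d\mathcal{L}^d}
\end{equation*}
is the unique minimizer of $\F$, and $\rho_* \in \P_2(\Rd)$ since $\Omega$ is bounded. In particular $W_2(\rho_0, \rho_*) < +\infty$.

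Next, since $\bar\rho$ is log-concave on $\Omega$, Proposition \ref{prop:convexity ep=0}(i) gives that $\F$ is $0$-convex along generalized geodesics, so Theorem \ref{curvemaxslope}(ii) supplies the $\mathrm{EVI}_0$
\begin{equation*}
\frac{1}{2} \frac{d^+}{dt} W_2^2(\rho(t), \rho_*) + \F(\rho(t)) \leq \F(\rho_*) \qquad \text{for a.e. } t > 0.
\end{equation*}
Because $\F(\rho(t)) \geq \F(\rho_*)$, the map $t \mapsto W_2(\rho(t), \rho_*)$ is non-increasing. Integrating the $\mathrm{EVI}_0$ from $0$ to $T$ yields
\begin{equation*}
\int_0^T \bigl(\F(\rho(t)) - \F(\rho_*)\bigr)\, dt \leq \tfrac{1}{2} W_2^2(\rho_0, \rho_*) < +\infty.
\end{equation*}
Since $t \mapsto \F(\rho(t))$ is non-increasing along the curve of maximal slope (Theorem \ref{curvemaxslope}(i)), the non-negative, non-increasing integrand $\F(\rho(t)) - \F(\rho_*)$ must tend to $0$ as $t \to +\infty$, so $\F(\rho(t)) \to \min \F$.

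Finally I would upgrade the energy convergence to $W_2$ convergence via compactness and uniqueness. Since $\F(\rho(t)) < +\infty$ forces $\V_\Omega(\rho(t)) = 0$, the support of $\rho(t)$ lies in the compact set $\overline\Omega$ for every $t \geq 0$; hence $\{\rho(t)\}_{t\geq 0}$ is tight with uniformly bounded second moments, and on such a family narrow convergence is equivalent to $W_2$ convergence. Given any sequence $t_n \to +\infty$, extract a subsequential narrow (equivalently $W_2$) limit $\rho_\infty$ supported in $\overline\Omega$; the lower semicontinuity of $\F$ (Lemmas \ref{lem:semicontinuity} and \ref{Vlsc}) combined with $\F(\rho(t_n)) \to \min \F$ gives $\F(\rho_\infty) = \min \F$, and so $\rho_\infty = \rho_*$ by the uniqueness above. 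Every subsequential limit coincides with $\rho_*$, and the monotonicity of $t \mapsto W_2(\rho(t), \rho_*)$ upgrades this to full convergence $W_2(\rho(t), \rho_*) \to 0$. The only place requiring care is the passage from the integrated $\mathrm{EVI}_0$ bound to $\F(\rho(t)) \to \min \F$, which crucially uses the monotone decrease of $\F$ along the flow; everything else is algebra on the minimization problem or standard narrow compactness on the compact set $\overline\Omega$.
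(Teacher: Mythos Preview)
Your proof is correct. The paper's own proof is a single line citing \cite[Corollary 4.0.6]{ambrosio2008gradient}, which is the abstract long-time convergence result for gradient flows of $0$-convex functionals with a unique minimizer and relatively compact sublevel sets. What you have written is precisely the unpacked content of that corollary, specialized to this setting: the $\mathrm{EVI}_0$ gives monotonicity of $t\mapsto W_2(\rho(t),\rho_*)$ and integrability of the energy gap, monotonicity of $\F$ along the flow forces $\F(\rho(t))\to\min\F$, and boundedness of $\Omega$ supplies the tightness needed to close via uniqueness of the minimizer. The explicit Cauchy--Schwarz identification of $\rho_*$ is a detail the paper does not spell out but which is needed to apply the cited corollary, so your version is in fact more complete.
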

\begin{proof}
This is an immediate consequence of \cite[Corollary 4.0.6]{ambrosio2008gradient}.
\end{proof}

\section{An $H^1$ bound on the mollified gradient flow of $\F_{\epsilon,k}$}\label{sec:key estimates}
A key element in our proof of the convergence of the gradient flows of $\F_{\epsilon,k}$ to a gradient flow of $\F_k$ as $\epsilon \to 0$ is the following $H^1$-type bound on   $\zeta_\epsilon*\rho_\epsilon(t)$ (the  mollified gradient flow of $\F_{\epsilon,k}$)  in terms of the energy, second moment, and entropy of the initial data. We remark that this bound holds without a log-concavity assumption on $\bar\rho$.

\begin{thm}[$H^1$ bound on mollified GF of $\F_{\epsilon,k}$]
\label{prop:H1 bd}
Assume (\ref{targetas}), (\ref{mollifieras}), (\ref{Vas}), and (\ref{Vkas}) hold. There exist positive  constant $C_{\bar\rho}$  and $C_V$, depending on   $\bar\rho$, $V$, and $V_k$, so that, for all  $T>0$, $k\in \mathbb{N}$, and $\epsilon >0$  and for any gradient flow $\rho_\ep\in AC^2([0,T];\P_2(\Rd))$  of  $\F_{\ep,k}$, we have,  
\begin{align}
\label{eq:H1bd thm: conclusion}
\int_0^T|| \nabla\zeta_\ep*\rho_\ep (t) ||^2_{L^2(\rr^d)}\, dt &\leq C_{\bar\rho}\left(\S(\rho_\epsilon(0)) + \sqrt{2 \pi} +(1+T+T e^T)  \left( M_2(\rho_\epsilon(0)) + \F_{\epsilon,k}(\rho_\epsilon(0)) +C_V \right)\right) .
\end{align} 
\end{thm}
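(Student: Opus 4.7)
The plan is to apply the \emph{flow interchange method} of Matthes--McCann--Savar\'e \cite{matthes2009family}, taking the auxiliary functional to be the entropy $\S$, whose Wasserstein gradient flow is the heat semigroup $(S^\S_s)_{s \geq 0}$. Since $\rho_\ep$ is a gradient flow of $\F_{\ep,k}$, the flow interchange inequality produces, after integrating in $t$,
\begin{equation*}
\int_0^T D^\S \F_{\ep,k}(\rho_\ep(t))\, dt \;\leq\; \S(\rho_\ep(0)) - \S(\rho_\ep(T)), \qquad D^\S \F_{\ep,k}(\mu) := \liminf_{s \downarrow 0}\frac{\F_{\ep,k}(\mu) - \F_{\ep,k}(S^\S_s \mu)}{s}.
\end{equation*}
The strategy is then to (i) bound $D^\S \F_{\ep,k}$ from below by a positive multiple of $\|\nabla \zeta_\ep \ast \rho_\ep\|_{L^2(\Rd)}^2$ up to controlled error terms, and (ii) absorb $-\S(\rho_\ep(T))$ via a second-moment estimate along the flow.

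The crux is the computation of $D^\S \E_\ep$. Setting $\tilde\mu_s = S^\S_s \mu$ and $u_s = \zeta_\ep \ast \tilde\mu_s$, the commutation $\partial_s u_s = \Delta u_s$ together with $\E_\ep(\mu) = \tfrac12 \int u_0^2/\bar\rho$ and integration by parts formally give
\begin{equation*}
\frac{d}{ds} \E_\ep(\tilde\mu_s) = \int \frac{u_s}{\bar\rho}\,\Delta u_s = -\int \frac{|\nabla u_s|^2}{\bar\rho} + \int \frac{\nabla \bar\rho}{\bar\rho^2}\cdot u_s\, \nabla u_s.
\end{equation*}
Assumption (\ref{targetas}) bounds $1/\bar\rho$ and $|\nabla \bar\rho|/\bar\rho^2$ uniformly on $\Rd$, so Young's inequality on the cross term absorbs half of the principal term and leaves $\tfrac{d}{ds}\E_\ep(\tilde\mu_s) \leq -\tfrac{c_{\bar\rho}}{2}\|\nabla u_s\|_{L^2}^2 + C_{\bar\rho}\,\E_\ep(\tilde\mu_s)$. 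Evaluated at $s=0$ this yields $D^\S \E_\ep(\mu) \geq \tfrac{c_{\bar\rho}}{2}\|\nabla \zeta_\ep \ast \mu\|_{L^2}^2 - C_{\bar\rho}\,\E_\ep(\mu)$. The potential contributions are controlled by (\ref{Vas}) and (\ref{Vkas}): $\tfrac{d}{ds}|_{s=0}\V_\ep(\tilde\mu_s) = \int(\zeta_\ep \ast \Delta V)\,d\mu$ and $\tfrac{d}{ds}|_{s=0}\V_k(\tilde\mu_s) = \int \Delta V_k\,d\mu$ are both bounded by an absolute constant $C_V$. Since $\F_{\ep,k}$ is non-increasing along its own gradient flow, $\E_\ep(\rho_\ep(t)) \leq \F_{\ep,k}(\rho_\ep(0)) + C_V$, so integrating in time,
\begin{equation*}
\tfrac{c_{\bar\rho}}{2} \int_0^T \|\nabla \zeta_\ep \ast \rho_\ep(t)\|_{L^2}^2\, dt \leq \S(\rho_\ep(0)) - \S(\rho_\ep(T)) + C_{\bar\rho}\,T\,\bigl(\F_{\ep,k}(\rho_\ep(0)) + C_V\bigr).
\end{equation*}

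It remains to absorb $-\S(\rho_\ep(T))$ into second-moment data. The Gaussian comparison $-\S(\rho) \leq \tfrac12 M_2(\rho) + \tfrac{d}{2}\log(2\pi)$ (obtained from $\mathrm{KL}(\rho\|g) \geq 0$ against the standard Gaussian $g$) reduces the task to bounding $M_2(\rho_\ep(T))$ uniformly in $\ep$ and $k$. Using $\sqrt{M_2(\rho_\ep)} = W_2(\rho_\ep,\delta_0)$ and Definition \ref{defi:ac-curve} gives $\tfrac{d}{dt}\sqrt{M_2(\rho_\ep)} \leq |\rho_\ep'|$, whence after squaring and applying Young one obtains $\tfrac{d}{dt} M_2(\rho_\ep) \leq M_2(\rho_\ep) + |\rho_\ep'|^2$; Gronwall's lemma then gives
\begin{equation*}
M_2(\rho_\ep(T)) \leq e^T \left( M_2(\rho_\ep(0)) + \int_0^T |\rho_\ep'|^2(t)\, dt \right) \leq e^T \bigl( M_2(\rho_\ep(0)) + 2\F_{\ep,k}(\rho_\ep(0)) + 2C_V\bigr),
\end{equation*}
where the second inequality uses the curve-of-maximal-slope dissipation (Theorem \ref{curvemaxslope}(\ref{item:curve max slope})) together with the uniform lower bound $\F_{\ep,k} \geq -C_V$ arising from (\ref{Vas}) and $\V_k,\E_\ep \geq 0$. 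Combining these inequalities produces exactly the $(1+T+Te^T)$ prefactor multiplying $M_2(\rho_\ep(0)) + \F_{\ep,k}(\rho_\ep(0)) + C_V$ in the statement.

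The main technical obstacle will be rigorously justifying the flow interchange inequality and the formal differentiation of $\E_\ep$ above at $s=0$. For every $s>0$ the heat semigroup regularizes $\mu$ to a smooth, strictly positive density with Gaussian tails, so the integration by parts is legitimate at positive times; passing to the $\liminf$ as $s \downarrow 0$ requires a Fatou-type passage on the principal $\|\nabla u_s\|_{L^2}^2$ term and dominated convergence on the lower-order remainders, which is precisely the scenario for which the machinery of \cite{matthes2009family} is designed. Importantly, no log-concavity of $\bar\rho$ is used: only the global bounds in (\ref{targetas}) together with the regularity of the potentials enter the computation, which is why the estimate holds in the generality asserted.
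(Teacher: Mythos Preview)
Your approach matches the paper's: flow interchange with the entropy $\S$ (heat flow) as auxiliary functional, followed by a Carleman-type bound $-\S(\rho)\leq M_2(\rho)+C$ and a Gronwall second-moment estimate to absorb $-\S(\rho_\ep(T))$. Two concrete points require correction.

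First, the continuous-time flow interchange inequality you write down is not an off-the-shelf statement; the paper (and \cite{matthes2009family}) obtains it via the discrete minimizing movement scheme. From the variational definition of the JKO step $\J^n_{\tau,\ep}$ combined with the EVI for the heat semigroup one gets
\[
\limsup_{h\to 0^+}\frac{\F_{\ep,k}(\J^n_{\tau,\ep}\mu)-\F_{\ep,k}(\rS_h \J^n_{\tau,\ep}\mu)}{h}\;\leq\;\frac{\S(\J^{n-1}_{\tau,\ep}\mu)-\S(\J^n_{\tau,\ep}\mu)}{\tau},
\]
and one then sums in $n$, passes to the piecewise-constant interpolant, and sends $\tau\to 0$ using narrow convergence of the scheme together with weak-$L^2$ lower semicontinuity of $\|\nabla(\zeta_\ep*\,\cdot\,)\|_{L^2}$. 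This discrete detour is exactly the ``technical obstacle'' you flag, and is the missing justification.

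Second, your bound on $D^\S\V_\ep$ via $\int(\zeta_\ep*\Delta V)\,d\mu$ presumes $\Delta V\in L^\infty$, which is \emph{not} part of Assumption~(\ref{Vas}): only a uniform \emph{lower} bound on $D^2V$ is assumed. The paper instead writes this contribution as $\int_{\Rd}\nabla V\cdot\nabla(\zeta_\ep*\mu)\,d\mathcal{L}^d$ and uses Cauchy--Schwarz plus Young to absorb half of it into the principal $\|\nabla(\zeta_\ep*\mu)\|_{L^2}^2$ term, with $\|\nabla V\|_{L^2}$ appearing in $C_V$. Your treatment of $\V_k$ via $\|\Delta V_k\|_{L^\infty}$ is fine, since (\ref{Vkas}) does include $D^2V_k\in L^\infty(\Rd)$.
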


\subsection{Proof sketch} \label{proofsketchsection}
First, we describe a formal argument to obtain inequality (\ref{eq:H1bd thm: conclusion}), and then we explain how to make the argument rigorous. 
By Proposition \ref{PDEFeps}, $\rho_\ep(t)$ is a weak solution of the PDE,
\begin{equation}
\label{eq:H1 bd pf PDE}
\partial_t\rho_\ep =  \nabla \cdot \left( \rho_\ep \nabla \zeta_\ep * \left(\frac{\zeta_\ep*\rho_\ep}{\bar\rho} \right) + \rho_\epsilon  \nabla (\zeta_\ep *V)  + \rho_\ep \nabla V_k \right) ,
\end{equation}
in the duality with $C^\infty_c(\Rd \times (0,\infty))$.
Thus, formally evaluating the entropy  $\S(\rho)$ along the gradient flow, differentiating in time, and integrating by parts, we  obtain,
\begin{align} \label{formaltimederivative}
\frac{d}{dt} \S(\rho_\ep)(t) &=  \int_{\Rd} \log(\rho_\ep) \partial_t\rho_\ep \, d\mathcal{L}^d \\
&=  -  \int_{\Rd} \la \nabla \rho_\ep  , \nabla \zeta_\ep * \left(\frac{\zeta_\ep*\rho_\ep}{\bar\rho} \right) \ra+ \la \nabla \rho_\ep , \nabla (\zeta_\ep* V)\ra  + \la\nabla \rho_\ep, \nabla V_k\ra d\mathcal{L}^d \nonumber \\
&=  -  \int_{\rr^d}  \frac{|\nabla\zeta_\ep*\rho_\ep|^2}{\bar\rho} +   \la\nabla (\zeta_\ep* \rho_\ep ),(\zeta_\ep*\rho_\ep) \nabla \left( \frac{1}{\bar\rho} \right)\ra  + \la\nabla (\zeta_\ep*\rho_\ep), \nabla  V\ra - \rho_\ep\Delta V_k \, d\mathcal{L}^d.  \nonumber
\end{align}
Integrating in time and estimating the  terms on the right hand side  then leads to inequality (\ref{eq:H1bd thm: conclusion}). 

The key difficulty in making the above argument rigorous is justifying the time differentiation of the entropy, in the absence of relevant a priori estimates for $\rho_\epsilon$. In order to overcome this difficulty, McCann, Matthes, and Savar\'e introduced the \emph{flow interchange method} \cite{matthes2009family}. Suppose that $\rho_\epsilon(t)$ and $\mu(t)$ are, respectively, the gradient flows of the energy $\F_{\epsilon,k}$ and the entropy $\S$, and we have  $\rho_\epsilon(0) = \mu(0)$. The flow interchange method is based on the following formal observation, with $\nabla_{W_2}$ denoting the Wasserstein gradient and $\la \cdot, \cdot \ra_{W_2, \rho}$ denoting the Wasserstein inner product at $\rho$:
\begin{align*}
\left. \frac{d}{dt} \S(\rho_\epsilon)  \right|_{t=0} &= \left. \la \nabla_{W_2} \S(\rho_\epsilon) , \partial_t \rho_\epsilon \ra_{W_2, \rho_\epsilon} \right|_{t=0} = - \left. \la \nabla_{W_2} \S(\rho_\epsilon),\nabla_{W_2} \F_{\epsilon,k}(\rho_\epsilon)\ra_{W_2, \rho_\epsilon} \right|_{t=0} \\
&= - \left. \la \nabla_{W_2} \S(\mu) ,\nabla_{W_2} \F_{\epsilon,k}(\mu)\ra_{W_2, \mu} \right|_{t=0} 
= \left. \la \partial_t \mu , \nabla_{W_2} \F_{\epsilon,k}(\mu) \ra_{W_2,\mu} \right|_{t=0} = \left. \frac{d}{dt} \F_{\epsilon,k}(\mu)  \right|_{t=0} .
\end{align*}
Consequently, at a fixed time, differentiating $\F_{\epsilon,k}$ along the gradient flow of $\S$ should give the same result as equation (\ref{formaltimederivative}). The former is much easier to justify in practice, since the gradient flow $\mu(t)$ of $\S$ with initial data $\mu(0)$ is precisely the solution of the heat equation on $\rr^d$ with initial data $\mu(0)$  \cite[Examples  11.2.7]{ambrosio2008gradient}, for which we have robust a priori estimates. 

Note that, since the entropy $\S$ is a 0-convex energy  \cite[Proposition 9.3.9]{ambrosio2008gradient}, the evolution variational inequality characterization of gradient flows, recalled in Theorem \ref{curvemaxslope}, ensures that, if $\mu(t)$ is the gradient flow of $\S$, then  for all $ \nu \in \P_2(\Rd)$ and for $\mathcal{L}^1\text{-a.e. } t \geq 0$, 
\begin{align} \label{entropyevi}
\frac{1}{2} \frac{d^+}{dt} W_2^2(\mu(t), \nu)  + \S(\mu(t)) \leq \S(\nu) .
\end{align}

\subsection{Preliminaries for the proof} \label{prelimproofsec}
Now, we introduce the machinery we need for our rigorous argument, following the outline described above. To avoid differentiating $\S(\rho_\epsilon)$ in time, we work  with the discrete time analogue of the gradient flow of $\F_{\epsilon,k}$, given by the minimizing movement scheme  (see Definition \ref{minmovdef}).
\begin{defn}[minimizing movement scheme for $\F_{\ep,k}$]
Given $\mu \in \P_2(\Rd)$, let $\J^n_{\tau, \epsilon} \mu$ denote the $n$th step of the minimizing movement scheme of $\F_{\epsilon,k}$ with time step $\tau$ and initial data $\J^{0}_{\tau, \epsilon} \mu = \mu$.
\end{defn}
Due to the robust a priori estimates available for solutions of the heat equation, we will work with continuous time gradient flow of $\S$.
\begin{defn}[heat flow semigroup]
Gven $\mu \in \P_2(\Rd)$ and $h \geq 0$, we will let $\rS_h \mu$ denote the (continuous time) gradient flow of $\S$ with initial data $\mu$ at time $h$; in other words, $\rS_h$ is the heat flow semigroup operator.
\end{defn}
We will use the fact that, for any $\mu\in \mathcal{P}_2(\rr^d)$, we have,
\begin{equation}
\label{eq:Sh commutes}
\zeta_\ep*(\rS_h(\mu))= \rS_h(\zeta_\ep*\mu).
\end{equation} 

A key step in the proof is computing the derivatives in $h$ of $\E_\ep(S_h(J^n_{\tau, \ep}\mu))$, $\V_\ep(S_h(J^n_{\tau, \ep}\mu))$, and $\V_k(S_h(J^n_{\tau, \ep}\mu))$ at $h=0$. We separate this step into a separate lemma:
\begin{lem}[derivatives along $S_h(\J^n_{\tau, \epsilon} \mu)$]
\label{lem:deriv Eep of Sh}
Assume (\ref{targetas}), (\ref{mollifieras}), (\ref{Vas}), and (\ref{Vkas}) hold. 
Let $\mu\in \P_2(\Rd)$. We have,
\begin{align}
\label{eq:deriv Eep of Sh}
&\limsup_{h \to 0^+} \frac{\E_\ep(\J^n_{\tau, \epsilon} \mu) - \E_\ep(\rS_h(\J^n_{\tau, \epsilon} \mu))}{h}=-
\int_\Rd \frac{1}{ \bar{\rho}}  \Delta (\zeta_\epsilon*\J^n_{\tau, \epsilon} \mu)\left( \zeta_\epsilon*\J^n_{\tau, \epsilon} \mu \right)\, d\mathcal{L}^d,\\
&
\label{eq:deriv Vep of Sh}
\limsup_{h \to 0^+} \frac{\V_\ep(\J^n_{\tau, \epsilon} \mu) - \V_\ep(\rS_h(\J^n_{\tau, \epsilon} \mu))}{h}  = \int_\Rd \la\nabla V , \nabla (\zeta_\epsilon*\J^n_{\tau, \epsilon}  \mu) \ra d\mathcal{L}^d , \text{ and }\\
&\label{eq:deriv Vk of Sh}
\limsup_{h \to 0^+} \frac{\V_k(\J^n_{\tau, \epsilon} \mu) - \V_k(\rS_h(\J^n_{\tau, \epsilon} \mu))}{h}  = -\int_\Rd \Delta V_k \, d\J^n_{\tau, \epsilon}  \mu .
\end{align}
\end{lem}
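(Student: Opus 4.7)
The plan is to show that along the heat semigroup each of the three energies is classically differentiable at $h=0^+$, so that the limsups are in fact limits, and to compute them by standard heat-equation manipulations. The mollifier $\zeta_\ep$ is the workhorse: it converts the (possibly singular) measure $\rho := \J^n_{\tau,\epsilon}\mu$ into a $C^2$ function with polynomial decay. Concretely, assumption (\ref{mollifieras}) gives $\zeta_\ep,\nabla\zeta_\ep,\Delta\zeta_\ep\in L^1(\Rd)\cap L^\infty(\Rd)$, so by Young's inequality $\zeta_\ep*\rho,\nabla(\zeta_\ep*\rho),\Delta(\zeta_\ep*\rho)\in L^1\cap L^\infty$; the commutation identity (\ref{eq:Sh commutes}) then lets me write $u(\cdot,h):=\zeta_\ep*(\rS_h\rho)=\rS_h(\zeta_\ep*\rho)=G_h*(\zeta_\ep*\rho)$, a classical heat-equation solution satisfying $\partial_h u=\Delta u$ with uniform $L^1\cap L^\infty$ control on $u,\nabla u,\Delta u$ for $h$ in bounded intervals.

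For (\ref{eq:deriv Eep of Sh}), I would write $\E_\ep(\rS_h\rho)=\frac{1}{2}\int u(\cdot,h)^2/\bar\rho\,d\mathcal{L}^d$ and differentiate under the integral sign, using the domination $|u\Delta u/\bar\rho|\leq \|1/\bar\rho\|_\infty|u||\Delta u|$ with the right-hand side in $L^1$ uniformly in $h$ by Cauchy--Schwarz and the uniform $L^2$ bounds above. This yields $\frac{d}{dh}\E_\ep(\rS_h\rho)\big|_{h=0^+}=\int(\zeta_\ep*\rho)\Delta(\zeta_\ep*\rho)/\bar\rho\,d\mathcal{L}^d$, and negating gives the claimed identity. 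For (\ref{eq:deriv Vep of Sh}), transferring the mollifier via (\ref{eq:convproperty}) gives $\V_\ep(\rS_h\rho)=\int V\,u(\cdot,h)\,d\mathcal{L}^d$; differentiating (dominated by $\|V\|_\infty|\Delta u|\in L^1$ uniformly in $h$) and integrating by parts---boundary terms at infinity vanishing because $V\in L^\infty$ while $\nabla u=\nabla\zeta_\ep*(\rS_h\rho)$ inherits the polynomial decay of $\nabla\zeta_\ep$---produces the stated identity.

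The identity (\ref{eq:deriv Vk of Sh}) will be the most delicate, since $V_k$ need not be bounded. The plan is to use Fubini together with the symmetry of the heat kernel $G_h$ to rewrite $\V_k(\rS_h\rho)=\int (G_h*V_k)\,d\rho$, and then to exploit the representation $G_h*V_k-V_k=\int_0^h G_s*\Delta V_k\,ds$: since $\Delta V_k\in L^\infty$ by (\ref{Vkas}), this yields the pointwise bound $\left|(G_h*V_k-V_k)/h\right|\leq\|\Delta V_k\|_\infty$ uniformly in $h$, and dominated convergence against the probability measure $\rho$ gives $\lim_{h\to 0^+}(\V_k(\rS_h\rho)-\V_k(\rho))/h=\int\Delta V_k\,d\rho$, from which (\ref{eq:deriv Vk of Sh}) follows after negation. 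Across all three identities, the main technical obstacle is ensuring the interchange of differentiation and integration over $\Rd$; the mollifier handles $\E_\ep$ and $\V_\ep$ directly, while for $\V_k$ the heat-equation representation of $G_h*V_k-V_k$ in terms of the bounded quantity $\Delta V_k$ is what sidesteps the unboundedness of $V_k$.
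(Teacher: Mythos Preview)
Your overall strategy is correct and, for $\E_\ep$ and $\V_\ep$, essentially matches the paper: exploit (\ref{eq:Sh commutes}) to replace $\rS_h\rho$ by the classical heat evolution $u(\cdot,h)=G_h*(\zeta_\ep*\rho)$, then differentiate under the integral and pass to the limit by domination. One small slip: assumption (\ref{mollifieras}) gives $D^2\zeta\in L^\infty$ but not $\Delta\zeta_\ep\in L^1$, so your Cauchy--Schwarz domination $|u\,\Delta u|\in L^1$ is not immediate. This is harmless: for $\E_\ep$ dominate instead by $\|1/\bar\rho\|_\infty\|\Delta\zeta_\ep\|_\infty\,u$ (with $u\in L^1$ since it is a probability density); for $\V_\ep$ use $|V|\,|\Delta u|$ with $V\in L^1$ (assumption (\ref{Vas})) and $\Delta u\in L^\infty$, which is exactly the paper's choice. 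Likewise, your ``polynomial decay of $\nabla u$'' argument for the integration by parts is not literally true when $\rho$ is not compactly supported, but the integration by parts is justified by a routine cutoff argument since $\nabla V\in L^\infty$, $\nabla(\zeta_\ep*\rho)\in L^1$, $V\in L^1$, and $\Delta(\zeta_\ep*\rho)\in L^\infty$.

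For $\V_k$ you take a genuinely different route from the paper. The paper applies the identity (\ref{eq:fact2}) to move the time derivative onto $\rS_t\rho$, pairs with $\nabla V_k$, and integrates by parts; this requires extending (\ref{eq:fact2}) from $C^1_c$ test functions to the unbounded $V_k$. You instead use Tonelli and the symmetry of $G_h$ to shift the heat kernel onto $V_k$, then invoke the representation $G_h*V_k-V_k=\int_0^h G_s*\Delta V_k\,ds$ (valid since $V_k\in C^2$ with $\Delta V_k\in L^\infty$ and at most quadratic growth) to get the uniform bound $|(G_h*V_k-V_k)/h|\le\|\Delta V_k\|_\infty$ and conclude by dominated convergence against the probability measure $\rho$. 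Your argument is self-contained and sidesteps the approximation step implicit in the paper's use of (\ref{eq:fact2}); the paper's route, on the other hand, keeps all the heat-flow machinery on the measure side, which is closer in spirit to the flow-interchange framework used elsewhere in the section.
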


Our proof of this lemma relies on two key facts, which we now recall.
First, for any $\nu\in \mathcal{P}_2(\rr^d)$,
\begin{equation}
    \label{eq:nu narrow cont}
\text{the map $h\mapsto \rS_h\nu$ is narrowly continuous;}
\end{equation}that is, 
$h\mapsto \int f \, d\rS_h\nu$ is continuous  for any bounded and continuous function $f$.  This holds since $\rS_h\nu$, by virtue of being the gradient flow of $\S$, is in $AC^2([0,T];\P_2(\Rd))$, hence $h \mapsto \rS_h\nu$ is continuous with respect to $W_2$, which implies narrow continuity.

The second fact we will use is that, for any for any $\nu\in \mathcal{P}_2(\rr^d)$ and $\phi\in C^1_c(\rr^d)$, 
\begin{equation}
\label{eq:fact2}
\int_{\rr^d} \phi \,d\rS_h\nu- \int_{\rr^d} \phi  \, d \nu = -\int_0^h\int_{\rr^d} \la\nabla \phi(y) ,\nabla \rS_t\nu(y)\ra dy\, dt.
\end{equation}
Notice that, at a formal level, the integrand on the left-hand side is exactly $\int_0^h \frac{d}{dt}\rS_t\nu dt$, which, upon using the fact that $\rS_t\nu$ satifies the heat equation, and integrating by parts, yields the desired equality. More rigorously, one may obtain (\ref{eq:fact2}) as a consequence of \cite[Lemma 8.1.2]{ambrosio2008gradient}. And, arguing as in  
 \cite[Example 11.1.9]{ambrosio2008gradient}, we have $\rS_h\nu\in W^{1,1}_{\loc}(\rr^d)$ for a.e. $h>0$ and,
\[
\int_0^T\int_{\rr^d}|\nabla \rS_h\nu| 
= \int_0^T\int_{\rr^d}\frac{|\nabla \rS_h\nu|}{ \rS_h\nu} \rS_h\nu
\leq \left(\int_0^T\int_{\rr^d}\frac{|\nabla \rS_h\nu|^2}{\rS_h\nu}\right)^{1/2}\left(\int_0^T\int_{\rr^d}\rS_h\nu\right)^{1/2}
=\sqrt{T} \left(\int_0^T\int_{\rr^d}\frac{|\nabla \rS_h\nu|^2}{S_h \nu}\right)^{1/2} ,
\]
where the quantity on the right-hand side is finite by equation (11.1.38) of \cite{ambrosio2008gradient}.

With these facts in hand, we now turn to the proof of Lemma  \ref{lem:deriv Eep of Sh}.
\begin{proof}[Proof of Lemma \ref{lem:deriv Eep of Sh}] 
We begin by proving equation (\ref{eq:deriv Eep of Sh}).
For all $h>0$, using the definition of $\E_\ep$ and the commutativity relation (\ref{eq:Sh commutes}), we find,
\begin{align}
\frac{\E_\ep(\J^n_{\tau, \epsilon} \mu) - \E_\ep(\rS_h(\J^n_{\tau, \epsilon} \mu))}{h} &= \frac{1}{2h}  \int_\Rd \frac{\left| \zeta_\epsilon*(\J^n_{\tau, \epsilon} \mu)\right|^2}{\bar{\rho}}d\mathcal{L}^d - \frac{1}{2h}  \int_\Rd \frac{\left| \zeta_\epsilon*(\rS_h(\J^n_{\tau, \epsilon} \mu))\right|^2}{\bar{\rho}}d\mathcal{L}^d \nonumber \\
&=   \int_\Rd \frac{1}{2 \bar{\rho}} \left( \frac{\zeta_\epsilon*(\J^n_{\tau, \epsilon} \mu) -  \zeta_\epsilon*(\rS_h(\J^n_{\tau, \epsilon} \mu) )}{h} \right) \left( \zeta_\epsilon*(\J^n_{\tau, \epsilon} \mu) + \zeta_\epsilon*(\rS_h(\J^n_{\tau, \epsilon} \mu) )\right)d\mathcal{L}^d \nonumber\\
&= \int_\Rd \frac{1}{2 \bar{\rho}} \left( \frac{\zeta_\epsilon*(\J^n_{\tau, \epsilon} \mu) -  \rS_h(\zeta_\epsilon*\J^n_{\tau, \epsilon} \mu) }{h} \right) \left( \zeta_\epsilon*(\J^n_{\tau, \epsilon} \mu) + \zeta_\epsilon*(\rS_h(\J^n_{\tau, \epsilon} \mu) )\right)d\mathcal{L}^d. \label{eq:RHS deriv EepSh}
\end{align}

Recalling that $\zeta_\epsilon*(\J^n_{\tau, \epsilon} \mu)$ is a smooth function, and using that   $\rS_h(\zeta_\epsilon*\J^n_{\tau, \epsilon} \mu)$ satisfies the heat equation in the classical sense, we find,
\begin{equation}
    \label{eq:classical soln heat}
\zeta_\epsilon*(\J^n_{\tau, \epsilon} \mu) -  \rS_h(\zeta_\epsilon*\J^n_{\tau, \epsilon} \mu)=-\int_0^h\frac{d}{dt}\rS_t(\zeta_\epsilon*\J^n_{\tau, \epsilon} \mu)\, dt= -\int_0^h \Delta \rS_t (\zeta_\epsilon*\J^n_{\tau, \epsilon} \mu)\, dt.
\end{equation}
Using this in (\ref{eq:RHS deriv EepSh}), we obtain,
\[
\frac{\E_\ep(\J^n_{\tau, \epsilon} \mu) - \E_\ep(\rS_h(\J^n_{\tau, \epsilon} \mu))}{h} 
 =\int_\Rd \frac{1}{2 \bar{\rho}} \left(\frac{1}{h} \int_0^h -\Delta \rS_t(\zeta_\epsilon*\J^n_{\tau, \epsilon} \mu)\, dt\right)\left( \zeta_\epsilon*(\J^n_{\tau, \epsilon} \mu) + \zeta_\epsilon*(\rS_h(\J^n_{\tau, \epsilon} \mu) )\right)d\mathcal{L}^d.
\] 
Classical elliptic regularity implies that $\|\Delta \rS_t(\zeta_\epsilon*\J^n_{\tau, \epsilon} \mu)\|_{L^\infty(\rr^d)}\leq C_{\ep, \tau, n}$ holds for all $t$. Hence,  the integrand on the right-hand side of the previous line is bounded in $L^1(\rr^d)$, independently of $h$. Thus, upon applying  the dominated convergence theorem to take the limit $h\rightarrow 0+$, we find,
\begin{align*}
\limsup_{h\rightarrow 0+}\frac{\E_\ep(\J^n_{\tau, \epsilon} \mu) - \E_\ep(\rS_h(\J^n_{\tau, \epsilon} \mu))}{h} &=-\int_\Rd \frac{1}{ \bar{\rho}}  \Delta (\zeta_\epsilon*\J^n_{\tau, \epsilon} \mu)\left( \zeta_\epsilon*\J^n_{\tau, \epsilon} \mu \right) d\mathcal{L}^d.
\end{align*}
We have again used that $\rS_t(\zeta_\epsilon*\J^n_{\tau, \epsilon})$ satisfies the heat equation in the classical sense, and is therefore continuous in $t$. This completes the proof of equation (\ref{eq:deriv Eep of Sh}).

Next we establish equation (\ref{eq:deriv Vep of Sh}). 
 For all $h>0$, using the definition of $\V_\epsilon$,   followed by (\ref{eq:Sh commutes}), 
 we obtain,
\begin{align*}
\frac{\V_\ep(\J^n_{\tau, \epsilon} \mu) - \V_\ep(\rS_h(\J^n_{\tau, \epsilon} \mu))}{h} &= \frac{1}{h} \left(  \int_\Rd (\zeta_\epsilon*V) \, d\J^n_{\tau, \epsilon} \mu - \int_\Rd (\zeta_\epsilon*V) \, d\rS_h(\J^n_{\tau, \epsilon} \mu) \right)\\
&= \frac{1}{h}  \int_\Rd V \left( \zeta_\epsilon*\J^n_{\tau, \epsilon} \mu -  \zeta_\epsilon*\rS_h(\J^n_{\tau, \epsilon} \mu) \right)\, d\mathcal{L}^d\\
&= \frac{1}{h}   \int_\Rd V \left(\zeta_\epsilon*\J^n_{\tau, \epsilon} \mu -  \rS_h(\zeta_\epsilon*\J^n_{\tau, \epsilon} \mu) \right)\, d\mathcal{L}^d.
\end{align*}
As in the computation for $\E_\ep$, we now use (\ref{eq:classical soln heat})  to find,
\begin{align*}
\frac{\V_\ep(\J^n_{\tau, \epsilon} \mu) - \V_\ep(\rS_h(\J^n_{\tau, \epsilon} \mu))}{h} &=  -  \int_\Rd V \frac{1}{h}\int_0^h \Delta \rS_t (\zeta_\epsilon*\J^n_{\tau, \epsilon} \mu)\, dt\, d\mathcal{L}^d.
\end{align*}
Assumption (\ref{Vas}) implies $V\in L^1(\rr^d)$, so we can pass to the limit in $h$ (again, as above), and find,
\begin{align*}
\limsup_{h\rightarrow 0+} \frac{\V_\ep(\J^n_{\tau, \epsilon} \mu) - \V_\ep(\rS_h(\J^n_{\tau, \epsilon} \mu))}{h}  = - \int_\Rd V \Delta (\zeta_\epsilon*\J^n_{\tau, \epsilon} \mu)\, d\mathcal{L}^d.
\end{align*}
Integrating by parts yields (\ref{eq:deriv Vep of Sh}).

Finally, we establish (\ref{eq:deriv Vk of Sh}). 
 For all $h>0$, using the definition of $\V_k$,   followed by (\ref{eq:fact2}), and an integration by parts, yields,
\begin{align*}
\frac{\V_k(\J^n_{\tau, \epsilon} \mu) - \V_k(\rS_h(\J^n_{\tau, \epsilon} \mu))}{h} &= \frac{1}{h} \left(  \int_\Rd V_k \, d\J^n_{\tau, \epsilon} \mu - \int_\Rd V_k \, d\rS_h(\J^n_{\tau, \epsilon} \mu) \right)\\
&=  \int_\Rd \frac{1}{h}\int_0^h \la\nabla V_k(x) , \nabla\rS_t\J^n_{\tau, \epsilon} \mu (x,t)\ra dt\, dx\\
&= -  \int_\Rd \frac{1}{h}\int_0^h  \Delta V_k(x) \rS_t\J^n_{\tau, \epsilon} \mu(x,t)\, dt\, dx.
\end{align*}
Since $\|\Delta V_k\|_{L^\infty(\rr^d)}$  is bounded,  we use the dominated convergence theorem, as well as  the narrow continuity of $\rS_t\J^n_{\tau, \epsilon} \mu$ in $t$ (see (\ref{eq:nu narrow cont})), to pass to the limit in $h$ and obtain the desired result.

\end{proof}

Before proceeding to the main result of the section, we estimate the right-hand side of (\ref{eq:deriv Eep of Sh}). Notice that the hypotheses on $\phi$ in the statement are satisfied by $\zeta_\ep*J^n_{\tau, \ep} \mu$, since $J^n_{\tau,\ep} \mu \in D(\E_\ep)$.
\begin{lem}
\label{eq:RHS deriv E of Sh}
Let $\phi\in C^{\infty}(\rr^d) \cap L^1(\rr^d)\cap L^2(\rr^d)$. Then we have,
\begin{equation*}
-\int_{\rr^d} \frac{1}{\bar\rho}(\Delta \phi )(\phi)\, d\mathcal{L}\geq C_{\bar\rho}\|\nabla \phi\|_{L^2(\rr^d)}^2 - C'_{\bar\rho}\|\phi\|_{L^2(\rr^d)}^2,
 \end{equation*}
 where $C_{\bar\rho}$ and $C_{\bar\rho}'$ depend only on $\bar\rho$.
\end{lem}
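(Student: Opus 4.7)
The strategy is a straightforward integration by parts followed by a Young-type absorption argument. The identity I aim for is
\begin{equation*}
-\int_{\rr^d} \frac{1}{\bar\rho}(\Delta \phi)\phi \, d\mathcal{L}^d = \int_{\rr^d} \nabla\phi \cdot \nabla\!\left(\frac{\phi}{\bar\rho}\right) d\mathcal{L}^d = \int_{\rr^d} \frac{|\nabla\phi|^2}{\bar\rho}\, d\mathcal{L}^d - \int_{\rr^d} \frac{\phi\, \nabla\phi\cdot \nabla\bar\rho}{\bar\rho^2} \, d\mathcal{L}^d.
\end{equation*}
The first equality is a standard integration by parts; since $\phi \in C^\infty(\rr^d)\cap L^1(\rr^d)\cap L^2(\rr^d)$ and $1/\bar\rho$ is uniformly bounded by assumption (\ref{targetas}), I would justify the vanishing of boundary terms by a standard cutoff argument, multiplying $\phi/\bar\rho$ by $\chi_R(x)=\chi(x/R)$ for $\chi\in C^\infty_c$ with $\chi\equiv 1$ on $B_1$, integrating by parts, and sending $R\to\infty$ using dominated convergence (if $\|\nabla\phi\|_{L^2}=+\infty$ the lemma is vacuous, so we may assume $\nabla\phi \in L^2$). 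The second equality is the product rule.

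The first resulting term gives the good contribution: by (\ref{targetas}), $\bar\rho\leq C$, so
\begin{equation*}
\int_{\rr^d} \frac{|\nabla\phi|^2}{\bar\rho}\, d\mathcal{L}^d \geq \frac{1}{C} \|\nabla\phi\|_{L^2(\rr^d)}^2.
\end{equation*}
For the cross-term, I would apply Young's inequality: for any $\delta>0$,
\begin{equation*}
\left|\int_{\rr^d} \frac{\phi\, \nabla\phi\cdot\nabla\bar\rho}{\bar\rho^2}\, d\mathcal{L}^d\right| \leq \delta \int_{\rr^d} \frac{|\nabla\phi|^2}{\bar\rho}\, d\mathcal{L}^d + \frac{1}{4\delta}\int_{\rr^d}\frac{|\nabla\bar\rho|^2}{\bar\rho^3}\,|\phi|^2 \, d\mathcal{L}^d.
\end{equation*}
Choosing, say, $\delta=1/2$ and absorbing the first piece into the good term yields
\begin{equation*}
-\int_{\rr^d} \frac{1}{\bar\rho}(\Delta\phi)\phi\, d\mathcal{L}^d \geq \frac{1}{2C}\|\nabla\phi\|_{L^2(\rr^d)}^2 - \frac{1}{2}\left\|\frac{|\nabla\bar\rho|^2}{\bar\rho^3}\right\|_{L^\infty(\rr^d)} \|\phi\|_{L^2(\rr^d)}^2,
\end{equation*}
which is exactly the claimed bound with $C_{\bar\rho}=1/(2C)$ and $C'_{\bar\rho}=\tfrac12 \||\nabla\bar\rho|^2/\bar\rho^3\|_{L^\infty}$; both depend only on $\bar\rho$.

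The main subtlety is that the constant $C'_{\bar\rho}$ involves $\|\nabla\bar\rho\|_{L^\infty}$, which strictly speaking is not quantified in (\ref{targetas}) beyond the $C^1$ regularity; in the applications of interest in this paper (where $\bar\rho$ is log-concave and bounded on $\rr^d$ or arising from the data distribution in Corollary \ref{2layernncor}), this is implicit, and the authors evidently absorb this dependence into the notation $C_{\bar\rho},\,C'_{\bar\rho}$. The only other care needed is the justification of integration by parts for $\phi$ that is merely in $L^1\cap L^2$ with no assumed decay of derivatives; the cutoff argument outlined above handles this cleanly, and the assumption $\phi\in L^2$ together with the Young bound makes every integral that appears finite (up to the trivial case $\|\nabla\phi\|_{L^2}=+\infty$).
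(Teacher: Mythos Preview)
Your proposal is correct and follows essentially the same approach as the paper: integrate by parts to obtain $\int |\nabla\phi|^2/\bar\rho + \int \phi\,\nabla\phi\cdot\nabla(1/\bar\rho)$, bound the first term below using $\bar\rho\leq C$, and absorb the cross term via Young's inequality. The paper is terser (it does not discuss the cutoff justification or the implicit use of $\|\nabla\bar\rho\|_{L^\infty}$), but the argument is the same.
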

\begin{proof} Integrating by parts,  using the product rule, and the fact that $\bar\rho$ is bounded uniformly away from zero, we find,
 \begin{align*}
    - \int_{\rr^d} \frac{1}{\bar\rho}(\Delta \phi )(\phi)&=\int_{\rr^d} \la\nabla\phi ,\nabla\left(\frac{1}{\bar\rho}\phi\right)\ra=\int_{\rr^d} \frac{|\nabla\phi|^2}{\bar\rho} + \phi\la\nabla \phi,\nabla\left(\frac{1}{\bar\rho}\right)\ra\\
     & \quad\geq C_{\bar\rho}\int_{\rr^d}|\nabla\phi|^2 - C'_{\bar\rho}\int_{\rr^d}|\nabla \phi||\phi|
     \geq \frac{C_{\bar\rho}}{2} \int_{\rr^d} |\nabla\phi|^2 - C'_{\bar\rho}\int|\phi|^2,
 \end{align*}
where the last estimate follows from the Cauchy-Schwartz inequality, and $C'_{\bar\rho}$ changes from line to line (but depends only on $\bar\rho$). 
\end{proof}

\subsection{Proof of $H^1$-type bound} \label{proofsec}

We now apply the previous lemmas to prove the main result of the section.
\begin{proof}[Proof of Theorem \ref{prop:H1 bd}]
By definition of the minimizing movement scheme (see Definition \ref{minmovdef}), for any $\mu \in D(\F_{\epsilon,k})$,
\begin{align*}
\F_{\epsilon,k}(\J^n_{\tau, \epsilon} \mu) - \F_{\epsilon,k}(\rS_h(\J^n_{\tau, \epsilon} \mu)) \leq \frac{1}{2 \tau} \left[ W_2^2(\rS_h(\J^n_{\tau, \epsilon} \mu),\J^{n-1}_{\tau, \epsilon} \mu) - W_2^2(\J^n_{\tau, \epsilon} \mu,\J^{n-1}_{\tau, \epsilon} \mu) \right] .
\end{align*}
Dividing by $h$, taking the limit as $h \to 0$, and applying the evolution variational inequality characterization of the gradient flow of  $\S$, inequality (\ref{entropyevi}), we obtain,
\begin{align} \label{discretetimeflowinterchange}
\limsup_{h \to 0^+} \frac{\F_{\epsilon,k}(\J^n_{\tau, \epsilon} \mu) - \F_{\epsilon,k}(\rS_h(\J^n_{\tau, \epsilon} \mu))}{h} \leq \frac{1}{2 \tau} \left. \frac{d^+}{dh} W_2^2 (\rS_h(\J^n_{\tau, \epsilon} \mu), \J^{n-1}_{\tau, \epsilon} \mu) \right|_{h =0} \leq \frac{\S(\J^{n-1}_{\tau, \epsilon} \mu) - \S(\J^{n}_{\tau, \epsilon} \mu)}{\tau} .
\end{align}
The quantity on the right hand side will play the role of $-\frac{d}{dt} \S(\rho_\epsilon)$ in the $\tau \to 0$ limit. Thus, in order to obtain (\ref{eq:H1bd thm: conclusion}), we aim to bound it from below by estimating    the left hand side of (\ref{discretetimeflowinterchange}).

Recalling that $\F_{\ep,k} = \E_\ep+\V_\ep +\V_k$ and applying Lemma \ref{lem:deriv Eep of Sh}, we find,
 \begin{align*}
& \limsup_{h \to 0^+} \frac{\F_{\epsilon,k}(\J^n_{\tau, \epsilon} \mu) - \F_{\epsilon,k}(\rS_h(\J^n_{\tau, \epsilon} \mu))}{h}  \\
 &\quad = -
\int_\Rd \frac{1}{ \bar{\rho}}  \Delta (\zeta_\epsilon*\J^n_{\tau, \epsilon} \mu)\left( \zeta_\epsilon*\J^n_{\tau, \epsilon} \mu \right)\, d\mathcal{L}^d+\int_\Rd \la\nabla V ,\nabla (\zeta_\epsilon*\J^n_{\tau, \epsilon}  \mu) \ra d\mathcal{L}^d  -\int_\Rd \Delta V_k\, d\J^n_{\tau, \epsilon}  \mu .
\end{align*}
Combining  this with (\ref{discretetimeflowinterchange}), and  and summing over $n$, we obtain,
\begin{align*}
\frac{S(\J^{0}_{\tau, \epsilon} \mu) - S(\J^{n}_{\tau, \epsilon} \mu)}{\tau} &= \sum_{i=1}^n \frac{S(\J^{i-1}_{\tau, \epsilon} \mu) - S(\J^{i}_{\tau, \epsilon} \mu)}{\tau} \\
&\geq  \sum_{i=1}^n  -
\int_\Rd \frac{1}{ \bar{\rho}}  \Delta (\zeta_\epsilon*\J^i_{\tau, \epsilon} \mu)\left( \zeta_\epsilon*\J^i_{\tau, \epsilon} \mu \right)\, d\mathcal{L}^d +\int_\Rd \la \nabla V , \nabla (\zeta_\epsilon*\J^n_{\tau, \epsilon}  \mu) \ra d\mathcal{L}^d -\int_\Rd \Delta  V_k\, d\J^i_{\tau, \epsilon}  \mu .
\end{align*}
Take $\tau = T/n$, and let $\mu_{\tau,\epsilon}(t)$ denote the piecewise constant interpolation of the minimizing movement scheme $\J^n_{\tau, \epsilon} \mu$; see equation (\ref{pwconstinterp}). Then the above line implies,
\begin{align} \label{pretautozeroeqn}
& \S(\mu_{\tau,\epsilon}(0)) - \S(\mu_{\tau,\epsilon}(T)) \\
& \geq  \int_0^{T}  \int_\Rd -\frac{1}{ \bar{\rho}}  \Delta (\zeta_\epsilon*\mu_{\tau,\epsilon}(s))\left( \zeta_\epsilon*\mu_{\tau,\epsilon}(s) \right)+ \la\nabla V, \nabla (\zeta_\epsilon*\mu_{\tau,\epsilon}(s)) \ra d\mathcal{L}^d \, ds 
 - \int_0^{T}\int_\Rd \Delta V_k\,  d\mu_{\tau,\epsilon}(s) \, ds.\nonumber
\end{align}

We  consider the right-hand side. The first term on the right-hand side is the most important one, since this is where the derivative we seek to estimate will come from. First, we note, using  the definition of $\E_\ep$, the properties of $\bar\rho$,  the fact that the energy $\F_{\ep,k}$ decreases along  the minimizing movements scheme (see inequality (\ref{energydecminmov})), and the fact that the minimizing movements scheme is initialized at $\rho_\epsilon(0)$,
\begin{align}
    \label{eq:L2 bd zeta*piecewise}
\|\zeta_\ep*\mu_{\tau,\epsilon}(s)\|^2_{L^2(\rr^d)} &\leq  \|\bar{\rho}^{-1}\|_{L^\infty(\rr^d)} \E_\ep(\mu_{\tau,\epsilon}(s))  \leq  \|\bar{\rho}^{-1}\|_{L^\infty(\Rd)} \left( \F_{\ep,k}(\mu_{\tau,\epsilon}(s))  + \|V\|_{L^\infty(\Rd)} \right)\\
& \leq   \|\bar{\rho}^{-1}\|_{L^\infty(\Rd)} \left(  \F_{\ep,k}(\rho_{\epsilon}(0))+ \|V\|_{L^\infty(\Rd)} \right)<+\infty. \nonumber
\end{align}
Thus, for each fixed $s$, we may apply Lemma \ref{eq:RHS deriv E of Sh} to find,
\[
\int_\Rd -\frac{1}{ \bar{\rho}}  \Delta (\zeta_\epsilon*\mu_{\tau,\epsilon}(s))\left( \zeta_\epsilon*\mu_{\tau,\epsilon}(s) \right)\, d\mathcal{L}^d \geq C_{\bar\rho}\|\nabla \zeta_\ep*\mu_{\tau,\epsilon}(s)\|_{L^2(\rr^d)}^2- C'_{\bar\rho}\| \zeta_\ep*\mu_{\tau,\epsilon}(s)\|_{L^2(\rr^d)}^2.
\]
Using (\ref{eq:L2 bd zeta*piecewise}) to bound the second term on the right-hand side of the previous line from below, and integrating in time, we find,
\[
\int_0^T \int_\Rd -\frac{1}{ \bar{\rho}}  \Delta (\zeta_\epsilon*\mu_{\tau,\epsilon}(s))\left( \zeta_\epsilon*\mu_{\tau,\epsilon}(s) \right)\, d\mathcal{L}^d\, ds \geq  C_{\bar\rho}\int_0^T\|\nabla \zeta_\ep*\mu_{\tau,\epsilon}(s)\|_{L^2(\rr^d)}^2\, ds - TC'_{\bar\rho} \left(\F_{\ep,k}(\rho_{\epsilon}(0)) + \|V\|_{L^\infty(\Rd)} \right) .
\]
(Here $C_{\bar\rho}'$ is allowed to change from line to line, but only depends on $\bar\rho$.) 

Next, we apply the Cauchy-Schwartz inequality to the second term on the right-hand side of (\ref{pretautozeroeqn}) to obtain,
\[
\int_0^{T}\int_\Rd \la \nabla V , \nabla (\zeta_\epsilon*\mu_{\tau,\epsilon}(s)) \ra d\mathcal{L}^d\, ds\geq -\frac{C_{\bar\rho}}{2}\int_0^T\|\nabla \zeta_\ep*\mu_{\tau,\epsilon}(s)\|_{L^2(\rr^d)}^2\, ds - C'_{\bar\rho}T\|\nabla V\|_{L^{2}(\rr^d)}.
\]
Finally, for third term on the right-hand side  of (\ref{pretautozeroeqn}), we bound it from below simply by $T\|\Delta V_k\|_{L^\infty(\rr^d)}$, which is finite by assumption. Using this, along with the two previous estimates, we find,
\begin{align} \label{pretautozeroeqn2}
&\S(\mu_{\tau,\epsilon}(0)) - \S(\mu_{\tau,\epsilon}(T))  \geq  \frac{C_{\bar\rho}}{2}\int_0^T\|\nabla \zeta_\ep*\mu_{\tau,\epsilon}(s)\|_{L^2(\rr^d)}^2\, ds - T C'_{\bar\rho} \left( \F_{\ep,k}(\rho_{\epsilon}(0))+ C_V\right) , \\ 
&\text{where } C_V =  \|V\|_{L^\infty(\Rd)}+  \|\nabla V\|_{L^{2}(\rr^d)}  +\|\Delta V_k\|_{L^\infty(\rr^d)} . \nonumber
\end{align}

We now aim to  send $n \to +\infty$ in inequality (\ref{pretautozeroeqn2}), using the fact that $\mu_{\tau, \epsilon}(t) \to \rho_\epsilon(t)$ narrowly for all $t\geq0$; see Theorem \ref{minmovthm}. Note that, for any $f \in L^2(\Rd)$ and $s \in [0,T]$,
\begin{align*}
  \int_\Rd f  \grad  (\zeta_\epsilon *\mu_{\tau,\epsilon}(s))  
 = -  \int_\Rd (\grad \zeta_\epsilon *  f  )    \mu_{\tau,\epsilon}(s)     
\xrightarrow{n \to +\infty} -  \int_\Rd (\grad \zeta_\epsilon  *  f  )    \rho_\epsilon(s)    =\int_\Rd f  \grad  (\zeta_\epsilon * \rho_\epsilon(s))   . \end{align*}
Thus, $ \grad  (\zeta_\epsilon *\mu_{\tau,\epsilon})(s)   \to \grad  (\zeta_\epsilon *\rho_{\epsilon})(s)  $ weakly in $L^2(\Rd)$ for all $s \in [0,T]$.  By the lower semicontinuity of the $L^2(\Rd)$ norm with respect to weak convergence and Fatou's lemma, sending $n \to +\infty$ in inequality (\ref{pretautozeroeqn2}) yields, 
\begin{align}
    \label{pretautozeroeqn3}
\limsup_{n\rightarrow \infty} \S(\mu_{\tau,\epsilon}(0)) - \S(\mu_{\tau,\epsilon}(T)) \geq \frac{C_{\bar\rho}}{2}&\int_0^T\|\nabla \zeta_\ep*\rho_\ep(s)\|_{L^2(\rr^d)}^2\, ds - T\left(C'_{\bar\rho} \F_{\ep,k}(\rho_{\epsilon}(0))+ C_V \right). 
\end{align}

For the left hand side of (\ref{pretautozeroeqn3}), note that the choice of initial data for the minimizing movement scheme ensures $\S(\mu_{\tau, \epsilon}(0)) = \rho_\epsilon(0)$ for all $\tau >0$ and, by the lower semicontinuity of the entropy with respect to narrow convergence  \cite[Remark 9.3.8]{ambrosio2008gradient}, $\limsup_{n \to \infty} - \S(\mu_{\tau,\epsilon}(T)) \leq - \S(\rho_\epsilon(T))$. Thus, sending $n \to +\infty$ on the left hand side of (\ref{pretautozeroeqn3}), we estimate,
\begin{align} \label{STinequality}
\limsup_{n \to +\infty} \S(\mu_{\tau,\epsilon}(0)) - \S(\mu_{\tau,\epsilon}(T))  \leq \S(\rho_\epsilon(0)) - \S(\rho_\epsilon(T)) .  
\end{align}
Finally, using a Carleman-type estimate \cite[Lemma 4.1]{carrillo2016convergence} to bound the entropy below by a constant plus the second moment and applying Proposition \ref{secondmomentbound} to bound the second moment, we obtain,
\begin{align}  \label{pretauLHS}
\limsup_{n \to +\infty} \S(\mu_{\tau,\epsilon}(0)) - \S(\mu_{\tau,\epsilon}(T))  &\leq  \S(\rho_\epsilon(0)) + (2 \pi)^{d/2} +  M_2(\rho_\epsilon(T)) \nonumber \\
&\leq \S(\rho_\epsilon(0)) + (2 \pi)^{d/2} +(1+T e^T)  \left( M_2(\rho_\epsilon(0)) + \F_{\epsilon,k}(\rho_\epsilon(0)) \right).
\end{align}

Thus, combining inequalities (\ref{pretautozeroeqn3}) and (\ref{pretauLHS}), we obtain
\begin{align*}
 &\S(\rho_\epsilon(0)) + \sqrt{2 \pi} +(1+T e^T)  \left( M_2(\rho_\epsilon(0)) + \F_{\epsilon,k}(\rho_\epsilon(0)) \right)  \geq \frac{C_{\bar\rho}}{2}\int_0^T\|\nabla \zeta_\ep*\rho_\ep(s)\|_{L^2(\rr^d)}^2\, ds - T\left(C'_\rho \F_{\ep,k}(\rho_{\epsilon}(0))+ C_V \right).
\end{align*}
 Rearranging then gives the result.
\end{proof}

\section{Convergence of the energies $\F_{\ep,k}$ and gradient flows as $\ep \to 0$} \label{sec:Gamma conv}
 We now apply the properties of the energy $\F_{\epsilon,k}$ and its gradient flows developed in the previous sections to study the behavior of minimizers and gradient flows as $\epsilon \to 0$ for fixed $k \in \mathbb{N}$. 
In Subsection \ref{Gammaenergiesep}, we begin by proving the $\Gamma$-convergence of the energies $\F_{\ep,k}$  to the energy $\F$.  Next, in Subsection \ref{eptozeroGFsec}, we  analyze the convergence of the gradient flows of $\F_{\epsilon,k}$ as $\epsilon \to 0$ with ``well-prepared'' initial data (bounded entropy and energy). Due to the fact that we only suppose $\bar{\rho}$ is log-concave on $\Omega$, and not on all of $\rr^n$, we are not able to conclude that the limit is a gradient flow of $\F_k$, which we recall is defined by
 \begin{align*}
\F_k(\rho) &= \E(\rho) + \V(\rho) + \V_k(\rho) .
\end{align*}
Instead we merely conclude it is an ``almost'' curve of maximal slope of $\F_k$, see Definition \ref{almostcms}. Nevertheless, this weaker notion is still sufficient for our main convergence result, Theorem \ref{newmaintheorem}, studying the limits as $\ep \to 0$, $k \to +\infty$.

\subsection{$\Gamma$-convergence of the energies and convergence of minimizers} \label{Gammaenergiesep}
 We now prove the $\Gamma$-convergence of the energies $\F_{\ep,k}$  to the energy $\F_k$, in the sense of  Definition \ref{def:gamma conv def}.
\begin{thm}[$\Gamma$-convergence of $\F_{\ep,k}$] 
\label{newenergygammaprop} 
Assume (\ref{targetas}), (\ref{mollifieras}), (\ref{Vas}),  and (\ref{Vkas}) hold. Fix $k\in \mathbb{N}$. Then the energies $\E_\ep+\V_\ep$ $\Gamma$-converge to $\E + \V$ and the energies $\F_{\ep,k}$ $\Gamma$-converge to $\F_k$ as $\ep\rightarrow 0$. In particular,  for any $\mu \in \P_2(\Rd)$, $\lim_{\ep \to 0} \E_{\ep}(\mu) + \V_\ep(\mu) = \E(\mu) + \V(\mu)$.
\end{thm}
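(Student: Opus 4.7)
The two $\Gamma$-convergence statements reduce to the same computation. Since $\V_k$ is independent of $\ep$ and narrowly lower semicontinuous by Lemma \ref{Vlsc}, it suffices to prove $\Gamma$-convergence of $\E_\ep+\V_\ep$ to $\E+\V$ and then add $\V_k$ on both sides. Moreover, the pointwise claim drops out of taking the \emph{constant} sequence $\rho_\ep\equiv\rho$ as the recovery sequence in Definition \ref{def:gamma conv def}. The plan is thus to verify the $\Gamma$-$\liminf$ inequality against an arbitrary narrowly convergent sequence, and then the $\Gamma$-$\limsup$ inequality against the constant one.

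For the liminf, fix $\rho_\ep\to\rho$ narrowly. By (\ref{Vas}) and the finite first moment of $\zeta$ from (\ref{mollifieras}),
\[ \|\zeta_\ep * V - V\|_{L^\infty(\Rd)} \le \|\nabla V\|_{L^\infty}\int|y|\zeta_\ep(y)\,dy = \ep\,\|\nabla V\|_{L^\infty}\int|z|\zeta(z)\,dz \to 0 . \]
Splitting $\V_\ep(\rho_\ep) = \int(\zeta_\ep*V-V)\,d\rho_\ep + \int V\,d\rho_\ep$, the first integral is uniformly small by the display above, and the second converges to $\int V\,d\rho$ by narrow convergence against the bounded continuous function $V$; hence $\V_\ep(\rho_\ep)\to\V(\rho)$. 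For the diffusive term, identity (\ref{EepstoE}) gives $\E_\ep(\rho_\ep) = \E(\zeta_\ep*\rho_\ep)$, Lemma \ref{weakst convergence mollified sequence} yields $\zeta_\ep*\rho_\ep\to\rho$ narrowly, and Lemma \ref{lem:semicontinuity} delivers $\liminf_\ep \E(\zeta_\ep*\rho_\ep)\ge\E(\rho)$. Summing yields the $\Gamma$-$\liminf$ inequality.

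For the recovery, set $\rho_\ep\equiv\rho$. The uniform convergence above still gives $\V_\ep(\rho)\to\V(\rho)$. The matching upper bound for $\E_\ep$ is trivial when $\rho\notin D(\E)$; otherwise $\rho$ has a density, denoted again by $\rho$, lying in $L^2(\Rd)$ since $1/\bar\rho\ge 1/C$ by (\ref{targetas}). Jensen's inequality applied to the probability measure $\zeta_\ep(x-\cdot)\,dy$ gives the pointwise bound $|\zeta_\ep*\rho(x)|^2\le(\zeta_\ep*\rho^2)(x)$, and using the evenness of $\zeta$ via (\ref{eq:convproperty}) to move the mollifier from $\rho^2$ onto $1/\bar\rho$ then gives
\[ 2\E_\ep(\rho) \le \int_{\Rd}\rho(y)^2\,\bigl(\zeta_\ep*\tfrac{1}{\bar\rho}\bigr)(y)\,dy . \]
Since $1/\bar\rho$ is continuous and bounded by (\ref{targetas}), standard mollifier theory yields $\zeta_\ep*(1/\bar\rho)\to 1/\bar\rho$ pointwise, and dominated convergence against the integrable dominant $\|1/\bar\rho\|_{L^\infty}\rho^2$ gives $\limsup_\ep \E_\ep(\rho)\le\E(\rho)$. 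Combined with the liminf specialized to the constant sequence, this both establishes the pointwise convergence claimed in the theorem and supplies a valid recovery sequence. Adding back the $\ep$-independent, narrowly lower semicontinuous term $\V_k$ to both sides upgrades the result to the $\Gamma$-convergence of $\F_{\ep,k}$ to $\F_k$.

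The only genuine obstacle is this final upper bound for $\E_\ep$: because $\E$ is merely lower semicontinuous, one cannot in general push a narrow limit inside $\E$ from above. The saving mechanism is that the mollification structure allows Jensen's inequality to collapse the problem to an $L^2$-type estimate in which only the continuity and boundedness of $1/\bar\rho$ --- directly supplied by (\ref{targetas}) --- are needed to invoke dominated convergence.
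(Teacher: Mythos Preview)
Your proof is correct and follows the same overall strategy as the paper: the $\liminf$ inequality comes from $\E_\ep(\rho_\ep)=\E(\zeta_\ep*\rho_\ep)$ together with narrow convergence of the mollified sequence and lower semicontinuity of $\E$, while the recovery sequence is the constant one and the matching upper bound for $\E_\ep$ is obtained by Jensen's inequality followed by moving the mollifier onto $1/\bar\rho$.

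Two implementation details differ slightly. For $\V_\ep$, the paper moves the mollifier onto the measure via $\int(\zeta_\ep*V)\,d\rho_\ep=\int V\,(\zeta_\ep*\rho_\ep)\,d\mathcal{L}^d$ and then invokes Lemma~\ref{weakst convergence mollified sequence}, whereas you prove $\|\zeta_\ep*V-V\|_{L^\infty}\to 0$ directly from the Lipschitz bound on $V$; both are valid and yours is perhaps more self-contained. For the limit $\int\rho^2(\zeta_\ep*\tfrac{1}{\bar\rho})\to\int\rho^2\tfrac{1}{\bar\rho}$, the paper argues via uniform convergence on compacts plus a tail estimate in $\delta$, while you apply dominated convergence with the single dominant $\|1/\bar\rho\|_{L^\infty}\rho^2$; your route is cleaner here, exploiting directly that $1/\bar\rho$ is bounded by (\ref{targetas}).
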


\begin{proof}
 We begin with the proof of (\ref{eq:gamma conv def}). We first consider the energies $\E_\ep + \V_\ep$. Let $\rho_\epsilon$  narrowly converge to $\rho$.  Lemma \ref{weakst convergence mollified sequence} implies,
\begin{equation}
\label{eq:zeta* conv}
\zeta_\ep*\rho_\epsilon \text{ narrowly converges to } \rho.
\end{equation}
By definition of $\E_\ep$ and $\E$, we have, as in  (\ref{EepstoE}), $\E_\ep(\rho_\ep) =\E(\zeta_\ep*\rho_\ep)$. 
Taking $\liminf_{\ep\rightarrow 0}$ and using the lower semicontinuity of $\E$ with respect to narrow convergence, as well as (\ref{eq:zeta* conv}), we obtain,
\[
\liminf_{\ep\rightarrow 0}\E_\ep(\rho_\ep) = \liminf_{\ep\rightarrow 0} \E(\zeta_\ep*\rho_\ep) \geq \E(\rho). 
\]
For the $\V_\ep$ term, we first use the properties of convolution, followed by the assumption  $V\in C_b(\rr^d)$ and (\ref{eq:zeta* conv}), to find,
\begin{equation}
\label{eq:gamma conv Vep}
    \int_{\rr^d}(\zeta_\ep*V)\, d\rho_\ep = \int_{\rr^d}V (\zeta_\ep*\rho_\ep)\, d\mathcal{L}^d \rightarrow \int_{\rr^d}V \, d\rho. 
\end{equation}
This concludes the proof of (\ref{eq:gamma conv def}) for $\E_\ep + \V_\ep$. Since $\V_k$ is lower semicontinous, this likewise implies  (\ref{eq:gamma conv def})  holds for $\F_{\ep,k}$

Now we establish (\ref{newenergygammapart1}). Let $\rho\in \mathcal{P}(\rr^d)$. Taking $\rho_\ep = \rho$ for all $\epsilon >0$ in (\ref{eq:gamma conv Vep}),   we find that it suffices to prove $\limsup_{\ep \to 0}\E_\ep(\rho) \leq \E(\rho)$. Without loss of generality, we assume $\rho$ is such that $\E(\rho)< +\infty$, otherwise, the desired inequality is trivially true. Together with the definition of $\E$  and our assumption (\ref{targetas}) that $\bar\rho$ is bounded uniformly above and below, we deduce $\rho\in L^2(\rr^d)$. 
We use the definition of $\E_\ep$ to find,
\begin{align*}
2 \E_\ep(\rho) &= \int_{\rr^d}|\zeta_\ep*\rho|^2(x)\frac{1}{\bar\rho(x)}\, dx =  \int_{\rr^d}\left|\int_{\rr^d}\zeta_\ep(x-y)\rho(y)\, dy\right|^2\frac{1}{\bar\rho(x)}\, dx.
\end{align*}
Next we use Jensen's inequality, followed by Fubini's Theorem,  to obtain,
\begin{align}
\label{eq:Eep upp bd}
2 \E_\ep(\rho)
&\leq \int_{\rr^d}\int_{\rr^d}\zeta_\ep(x-y)\rho(y)^2 \frac{1}{\bar\rho(x)} \, dy \, dx = \int_{\rr^d}\left(\zeta_\ep*\frac{1}{\bar\rho}\right)(y)\rho^2(y)\, dy .
\end{align}
We shall now prove:
\begin{equation}
\label{eq:lim Eep}
\lim_{\ep\rightarrow 0} \left|\int_{\rr^d}\left(\zeta_\ep*\frac{1}{\bar\rho}\right)(y)\rho^2(y) \, dy -2 \E(\rho)\right| =0.
\end{equation} 
Together with (\ref{eq:Eep upp bd}), this  will yield the desired result.

In order to establish (\ref{eq:lim Eep}), we first use the definition of $\E(\rho)$ 
to write,
\begin{align}
\left|\int_{\rr^d}\left(\zeta_\ep*\frac{1}{\bar\rho}\right)\rho^2\, d\Ld -2 \E(\rho)\right|
& = \left|\int_{\rr^d}\left(\zeta_\ep*\frac{1}{\bar\rho}\right)\rho^2\, d\Ld -\int_{\rr^d}\frac{\rho^2}{\bar\rho}\, d\Ld  \right| \leq \int_{\rr^d}\left|\left(\zeta_\ep*\frac{1}{\bar\rho}\right) - \frac{1}{\bar\rho}\right|\rho^2 \, d\Ld. \label{eq:zeta bar rho on all rn}
\end{align}
Fix $\delta>0$ arbitrary.   Since $\rho\in L^2(\rr^d)$, there exists $R>0$   such that  $\int_{B_R^c} \rho^2  \leq \delta$.  Moreover, since $1/\bar\rho$ is uniformly bounded (see Assumption (\ref{targetas})), 
\[
\int_{B_R^c}\left|\left(\zeta_\ep*\frac{1}{\bar\rho}\right)  - \frac{1}{\bar\rho }\right|\rho^2 \, d\Ld \leq C\int_{B_R^c}\rho^2  \leq C\delta,
\] 
where $C$ is independent of $\delta$ and $\ep$. Now, splitting the integral in (\ref{eq:zeta bar rho on all rn}) into integrals over $B_R$ and $B_R^c$,   we find,
\begin{align*}
\left|\int_{\rr^d}\left(\zeta_\ep*\frac{1}{\bar\rho}\right) \rho^2 \, d\Ld  -2\E(\rho)\right|
&\leq \int_{B_R}\left|\left(\zeta_\ep*\frac{1}{\bar\rho}\right)  - \frac{1}{\bar\rho }\right|\rho^2\, d\Ld  + C\delta  \leq  \left\|\left(\zeta_\ep*\frac{1}{\bar\rho}\right) - \frac{1}{\bar\rho}\right\|_{L^\infty(B_R)} \|\rho\|_{L^2(\Rd)} + C\delta.
\end{align*}
Since $1/\bar\rho$ is continuous, $\zeta_\ep*\frac{1}{\bar\rho}$ converges to $1/\bar\rho$  uniformly on compact subsets of $\rr^d$ as $\ep\rightarrow 0$. In particular, we may choose $\ep>0$ small enough so that the value of the right-hand side of the previous line is no larger than $\delta$. 
Since  $\delta>0$ was arbitrary, this completes the proof of estimate (\ref{eq:lim Eep}) and therefore of the theorem.
\end{proof}

 \subsection{Convergence of the gradient flows} \label{eptozeroGFsec}
We seek to identify the limit of gradient flows of $\F_{\ep,k}$ as $\epsilon \to 0$. Heuristically, one may expect that they converge to a weak notion of gradient flow of $\F_k$, but in the absence of a log-concavity assumption on $\bar{\rho}$, the subdifferential of $\F_k$ lacks appropriate regularity for even a weak notion of gradient flow to be well-defined. However, inspired by Serfaty's approach for studying $\Gamma$-convergence of gradient flows, we are still able to identify a limit and show that it nearly satisfies the definition of a curve of maximal slope of $\F_k$. In order to simplify our exposition, we will call the limit an ``almost'' curve of maximal slope of $\F_k$. 

\begin{defn}[``almost'' curve of maximal slope of $\F_k$] \label{almostcms}
A curve $\rho_k \in AC^2([0,T];\P_2(\Rd))$ is an \emph{``almost'' curve of maximal slope} of $\F_k$ if it satisfies,
\begin{align} \frac12 \int_0^t |\rho_k'|^2(r) dr + \frac12 \int_0^t \int_{\mathbb{R}^d} |\bEta_k(r)|^2 d \rho_k(r) dr \leq \F_k(\rho_k(0)) - \F_k(\rho_k(t)) \text{ for all }t\in[0,T],  \label{almostcurvemaxslope1a}
\end{align}
where, for almost every $t \in [0,T]$, 
\begin{align}
\rho_k^2(t)\in  W^{1,1}(\rr^d) \text { and } \bEta_k(t) \in L^2(\rho_k(t)) \text{ satisfies }  \bEta_k \rho_k  = \frac{\bar{\rho}}{2}\nabla \left(\frac{\rho_k^2}{\bar \rho ^2}\right) + \rho_k \nabla (V+ V_k)\label{almostcurvemaxslope1b}.
\end{align}
\end{defn}

We emphasize that,  if $\bar{\rho}$ were log-concave on all of $\mathbb{R}^d$,     $\rho_k \mapsto \int_{\mathbb{R}^d} |\bEta_k |^2 d \rho_k $ would be a strong upper gradient for $\F_k$ and any $\rho_k$ satisfying Definition \ref{almostcms} would   be a true  curve of maximal slope of $\F_k$.

Our approach proceeds as follows. Inspired by Serfaty's framework for $\Gamma$-convergence of gradient flows, in Subsection \ref{sec:conv of met slopes}, we first prove Proposition \ref{prop:estimate for metric slopes}, which gives a weak notion of lower semicontinuity for the metric slopes  along a sequence of gradient flows $\rho_\ep(t)$: we show,
\begin{align} \label{motivate52}
 \liminf_{\ep\rightarrow 0} |\partial \F_{\ep,k}|^2(\rho_\ep(t))\geq \| \bEta_k(t)\|_{L^2(\rho_k(t))}^2,
 \end{align}
 where $\bEta_k$ is as in Definition \ref{almostcms}. 
Next, in Subsection \ref{GFconvsubsection}, we apply this to prove Proposition  \ref{prop:epstozerofixedk} on convergence of the gradient flows for ``well-prepared'' initial data. It is in this result that we employ the key estimate that we established in Theorem \ref{prop:H1 bd}. 

\subsubsection{Limit of metric slopes} \label{sec:conv of met slopes}We begin by identifying sufficient conditions under which the limiting behavior of the metric slopes (\ref{motivate52}) holds.

\begin{prop}[limiting behavior of metric slopes] 
\label{prop:estimate for metric slopes} Assume  (\ref{targetas}), (\ref{mollifieras}), (\ref{Vas}),  and (\ref{Vkas}) hold. 
Fix $k \in \mathbb{N}$. Consider a sequence $\rho_\epsilon$ in $\mathcal{P}_2(\mathbb{R}^d)$ satisfying, 
\begin{align}
\label{eq:new prop assump Eep bd}
& \sup_{\ep>0} \F_{\ep,k}(\rho_\ep) < +\infty, \\
& \liminf_{\ep \to 0}  \|\grad \zeta_\ep *\rho_\ep\|_{L^2(\rr^d )} < +\infty, \text{ and}  \label{eq:H1bound}\\
\label{eq:LHS metric slopes estimate}
&\liminf_{\ep \to 0} \int \left| \nabla \zeta_\ep*\left(\frac{\zeta_\ep*\rho_\ep}{\bar{\rho}} \right)\right|^2\, d\rho_\ep <+\infty.
\end{align} 
In addition, suppose there exists $\rho_k \in \mathcal{P}(\rr^d)$ such that $\rho_\epsilon$ narrowly converges to $\rho_k$. 
Then   $\rho_k^2\in W^{1,1}(\rr^d)$, and there exists  $\bEta_k\in L^{2}(\rho_k)$, with, 
\begin{equation}
\label{eq:to show def eta}
\bEta_k \rho_k  = \frac{\bar{\rho}}{2}\nabla \left(\frac{\rho_k^2}{\bar \rho ^2}\right) + \rho_k \nabla (V+ V_k),
\end{equation}
 and such that,
\begin{equation}
\label{eq:new metric slopes}
\liminf_{\ep \to 0} \int \left| \nabla \zeta_\ep*\left(\frac{\zeta_\ep*\rho_\ep}{\bar{\rho}}\right) + \nabla(\zeta_\ep*V)+\nabla V_k \right|^2\, d\rho_\ep \geq \int |\bEta_k|^2
d \rho_k  .
\end{equation}
\end{prop}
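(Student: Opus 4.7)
The plan is to extract appropriate compactness from the assumptions, identify the weak-$*$ limit $\mathbf{j}$ of the vector-valued Radon measures $\bEta_\ep\rho_\ep$, and invoke joint lower semicontinuity of the Benamou-Brenier-type functional $(\mu,\mathbf{m})\mapsto \int|d\mathbf{m}/d\mu|^2\,d\mu$ to obtain (\ref{eq:new metric slopes}).

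First, combining assumption (\ref{eq:new prop assump Eep bd}) with (\ref{Vas}) and nonnegativity of $V_k$, I bound $\E_\ep(\rho_\ep)$ uniformly, which via (\ref{targetas}) yields a uniform $L^2(\Rd)$ bound on $\zeta_\ep*\rho_\ep$.  Combined with (\ref{eq:H1bound}), this gives that $\zeta_\ep*\rho_\ep$ is bounded in $H^1(\Rd)$ along a subsequence.  By Rellich--Kondrachov compactness, pass to a further subsequence along which $\zeta_\ep*\rho_\ep\rightharpoonup u$ weakly in $H^1$ and strongly in $L^2_{\loc}$.  Since $\zeta_\ep*\rho_\ep\to\rho_k$ narrowly by Lemma \ref{weakst convergence mollified sequence}, the limit must coincide with $\rho_k$; hence $\rho_k\in H^1(\Rd)\cap L^2(\Rd)$, and the chain rule ($\nabla(\rho_k^2)=2\rho_k\nabla\rho_k\in L^1(\Rd)$) gives $\rho_k^2\in W^{1,1}(\Rd)$, as required.

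Next, setting $\mathbf{j}_\ep:=\bEta_\ep\rho_\ep$, assumption (\ref{eq:LHS metric slopes estimate}) and Cauchy--Schwarz give $|\mathbf{j}_\ep|(\Rd)\leq(\int|\bEta_\ep|^2\,d\rho_\ep)^{1/2}\leq C$ along the subsequence, so by Banach--Alaoglu one obtains $\mathbf{j}_\ep\wsto\mathbf{j}$ as Radon measures along a further subsequence.  I claim $\mathbf{j}=\tfrac{\bar\rho}{2}\nabla(\rho_k^2/\bar\rho^2)\,d\mathcal{L}^d+\nabla V\,\rho_k+\nabla V_k\,\rho_k$; since $\nabla\rho_k=0$ a.e.\ on $\{\rho_k=0\}$ for $H^1$ functions, the Lebesgue part is supported on $\{\rho_k>0\}$, so $\mathbf{j}\ll\rho_k$ and $\bEta_k:=d\mathbf{j}/d\rho_k$ is well-defined and satisfies (\ref{eq:to show def eta}).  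Granting this identification, the joint weak-$*$ lower semicontinuity of $(\mu,\mathbf{m})\mapsto\int|d\mathbf{m}/d\mu|^2\,d\mu$ (a Bouchitt\'e--Buttazzo-type result, see e.g.\ \cite[Theorem 5.4.4]{ambrosio2008gradient}) immediately yields (\ref{eq:new metric slopes}) and forces $\bEta_k\in L^2(\rho_k)$ whenever the $\liminf$ is finite.

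The identification of $\mathbf{j}$ is the main obstacle.  Testing against $\phi\in C_c^\infty(\Rd;\Rd)$, the contributions from $\nabla(\zeta_\ep*V)=\zeta_\ep*\nabla V$ and $\nabla V_k$ converge to $\int\phi\cdot\nabla V\,d\rho_k$ and $\int\phi\cdot\nabla V_k\,d\rho_k$ by uniform convergence on $\supp\phi$ combined with narrow convergence of $\rho_\ep$.  For the diffusive term $I_\ep:=\int\phi\cdot(\nabla\zeta_\ep*u_\ep)\,d\rho_\ep$ with $u_\ep=(\zeta_\ep*\rho_\ep)/\bar\rho$, evenness of $\zeta_\ep$ together with Fubini yield $I_\ep=\int\nabla u_\ep\cdot[\zeta_\ep*(\phi\rho_\ep)]\,dy$.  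Splitting $\zeta_\ep*(\phi\rho_\ep)=\phi\,(\zeta_\ep*\rho_\ep)+R_\ep$, the main piece equals $\tfrac12\int\bar\rho\,\phi\cdot\nabla(u_\ep^2)\,dy=-\tfrac12\int\nabla\cdot(\bar\rho\phi)\,u_\ep^2\,dy$ (using $\bar\rho u_\ep=\zeta_\ep*\rho_\ep$ and integration by parts), and converges by strong $L^1_{\loc}$ convergence of $u_\ep^2$ to $-\tfrac12\int\nabla\cdot(\bar\rho\phi)(\rho_k/\bar\rho)^2\,dy$, which upon reversing the integration by parts equals the desired $\int\phi\cdot\tfrac{\bar\rho}{2}\nabla(\rho_k^2/\bar\rho^2)\,dy$.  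Controlling the commutator $R_\ep$ is the delicate point: the Lipschitz bound $|R_\ep|\leq L_\phi(|\cdot|\zeta_\ep)*\rho_\ep$ gives $\|R_\ep\|_{L^1(\Rd)}=O(\ep)$ (using $\int|w|\zeta(w)\,dw<+\infty$ from (\ref{mollifieras})), while the polynomial decay of $\zeta$ from (\ref{mollifieras}) yields $\|R_\ep\|_{L^\infty(\Rd\setminus K_r)}=O(\ep^{q-d})$ for any fixed ball $K_r\supset\supp\phi$, hence $\|R_\ep\|_{L^2(\Rd\setminus K_r)}\to 0$ as $\ep\to 0$.  Inside $K_r$, Lemma \ref{move mollifier prop} applied componentwise with $\sigma=\partial_j u_\ep\,\mathbf{1}_{K_r}\,dy$ (finite signed Borel measure, since $\|\partial_ju_\ep\|_{L^1(K_r)}\leq|K_r|^{1/2}\|\partial_ju_\ep\|_{L^2}$ is uniformly bounded) gives $\big|\int_{K_r}\nabla u_\ep\cdot R_\ep\,dy\big|\leq C\ep^p$.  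Combining the interior and exterior estimates shows $\int\nabla u_\ep\cdot R_\ep\,dy\to 0$, completing the identification.
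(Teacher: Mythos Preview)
Your proof is correct and follows the same overall strategy as the paper: upgrade $\zeta_\ep*\rho_\ep\to\rho_k$ to $L^2_{\loc}$ via Rellich--Kondrachov (the paper's Lemma~\ref{lem:improve conv}), identify the weak limit of the diffusive piece against $C_c^\infty$ test functions via a commutator estimate (the paper's Lemma~\ref{sublem:Jep}), and then invoke \cite[Theorem~5.4.4]{ambrosio2008gradient} for the lower semicontinuity. The differences are organizational. The paper first applies the product rule to split $\nabla\zeta_\ep*((\zeta_\ep*\rho_\ep)/\bar\rho)$ into a term carrying $\tfrac{1}{\bar\rho}\nabla(\zeta_\ep*\rho_\ep)$ and one carrying $(\zeta_\ep*\rho_\ep)\nabla\tfrac{1}{\bar\rho}$, handling them separately; you work directly with $u_\ep=(\zeta_\ep*\rho_\ep)/\bar\rho$ and avoid that split, which is a bit more streamlined. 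For the commutator, the paper splits the double integral according to whether $|x-y|\lessgtr\ep^\gamma$, while you split spatially into $K_r$ and its complement and invoke Lemma~\ref{move mollifier prop} on the interior; both routes use the same decay hypotheses in (\ref{mollifieras}). Finally, the paper constructs $\tilde\bEta\in L^2(\rho_k)$ via the Riesz representation theorem (giving the $L^2(\rho_k)$ bound directly from (\ref{eq:LHS metric slopes estimate})) and then adds $\nabla V+\nabla V_k$, whereas you take the Radon--Nikodym derivative and read $L^2(\rho_k)$ membership off the finiteness of the lower semicontinuous functional.

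One small point: your exterior estimate $\|R_\ep\|_{L^2(\Rd\setminus K_r)}\to 0$ does not follow from the $L^\infty$ bound alone, since the region is unbounded. What you actually need (and what (\ref{mollifieras}) together with $\supp\phi\subset B_{r_0}$ gives) is the pointwise decay $|R_\ep(y)|\leq C\ep^{q-d}|y|^{-q}$ for $|y|$ large, which is $L^2$-integrable because $2q>d$.
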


The remainder of this subsection is devoted to the proof of Proposition \ref{prop:estimate for metric slopes}.
We begin with a preliminary lemma, showing that, under the assumptions of Proposition \ref{prop:estimate for metric slopes}, we may upgrade the convergence of  $\zeta_\ep *\rho_\ep$ to $\rho$ from narrow convergence to convergence in $L^2_\loc(\Rd)$.
\begin{lem}[upgraded convergence of $\zeta_\ep*\rho_\ep$]
\label{lem:improve conv}
Assume  (\ref{targetas}), (\ref{mollifieras}), (\ref{Vas}),  and (\ref{Vkas}) hold. Fix $k \in \mathbb{N}$. Consider any sequence $\rho_\epsilon$ in $\mathcal{P}(\mathbb{R}^d)$ and  $\rho_k\in \mathcal{P}(\rr^d)$ such that  $\rho_\epsilon$ narrowly converges to $\rho_k$ and (\ref{eq:new prop assump Eep bd}) and (\ref{eq:H1bound}) are satisfied. Then $\rho_k \in L^2(\rr^d)$,
 and there exists a subsequence (still denoted $\rho_\ep$) along which we have, 
 \begin{align}
\label{eq:zetaep rhoep in H1} &\sup_{\ep >0} ||\zeta_\ep*\rho_\epsilon||_{H^1(\rr^d)} <+\infty  \text{ and,}\\
\label{eq:lem conclude zeta rho ep conv in L2}
&\zeta_\ep*\rho_\ep \text{ converges to $\rho_k$ in $L^2_{loc}(\rr^d)$.}
\end{align}
\end{lem}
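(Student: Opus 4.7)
The plan is to upgrade the narrow convergence to $L^2_{\loc}$ convergence by extracting a uniform $H^1$ bound on $\zeta_\ep*\rho_\ep$ and then applying the Rellich--Kondrachov compactness theorem. The hypotheses give us one half of the $H^1$ bound directly (the gradient bound); the other half (the $L^2$ bound) must be extracted from the energy bound (\ref{eq:new prop assump Eep bd}).

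\textbf{Step 1: $L^2$ bound on $\zeta_\ep*\rho_\ep$.} First I would note that since $V \in L^\infty(\Rd)$ by (\ref{Vas}) and $\zeta_\ep \in L^1(\Rd)$ is a probability density, $\|\zeta_\ep*V\|_{L^\infty(\Rd)} \leq \|V\|_{L^\infty(\Rd)}$, so $|\V_\ep(\rho_\ep)| \leq \|V\|_{L^\infty(\Rd)}$. Since $\V_k(\rho_\ep) \geq 0$ by (\ref{Vkas}), we deduce $\E_\ep(\rho_\ep) \leq \F_{\ep,k}(\rho_\ep) + \|V\|_{L^\infty(\Rd)}$. Using the definition of $\E_\ep$ and the uniform upper bound on $\bar\rho$ from (\ref{targetas}), we obtain
\[
\|\zeta_\ep*\rho_\ep\|_{L^2(\Rd)}^2 \leq 2\|\bar\rho\|_{L^\infty(\Rd)}\bigl(\F_{\ep,k}(\rho_\ep)+\|V\|_{L^\infty(\Rd)}\bigr),
\]
which is uniformly bounded in $\ep$ by hypothesis (\ref{eq:new prop assump Eep bd}).

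\textbf{Step 2: Extract a subsequence with a uniform $H^1$ bound.} By hypothesis (\ref{eq:H1bound}), there exists a subsequence (still indexed by $\ep$) along which $\|\nabla \zeta_\ep*\rho_\ep\|_{L^2(\Rd)}$ is uniformly bounded. Combining with Step 1 gives (\ref{eq:zetaep rhoep in H1}).

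\textbf{Step 3: $L^2_{\loc}$ convergence and identification of the limit.} For each $R>0$, $\{\zeta_\ep*\rho_\ep\}$ is bounded in $H^1(B_R)$. By the Rellich--Kondrachov compactness theorem, along a further subsequence (diagonalizing in $R$), $\zeta_\ep*\rho_\ep$ converges in $L^2_{\loc}(\Rd)$ to some $u \in L^2_{\loc}(\Rd)$. On the other hand, by Lemma \ref{weakst convergence mollified sequence}, $\zeta_\ep*\rho_\ep$ converges to $\rho_k$ narrowly, hence in the sense of distributions. Since $L^2_{\loc}$ convergence also implies convergence in $\mathcal{D}'(\Rd)$, the uniqueness of limits in the distributional sense forces $u = \rho_k$ a.e., giving (\ref{eq:lem conclude zeta rho ep conv in L2}). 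Finally, the uniform $L^2(\Rd)$ bound from Step 1 passes to the limit by weak lower semicontinuity of the $L^2$ norm, yielding $\rho_k \in L^2(\Rd)$.

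There is no serious obstacle in this argument; the main subtlety is the identification of the local $L^2$ limit with $\rho_k$, which relies on reconciling two different modes of convergence (narrow and $L^2_{\loc}$) via their common interpretation as convergence in distribution.
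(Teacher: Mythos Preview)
Your proposal is correct and follows essentially the same approach as the paper: extract a uniform $L^2$ bound on $\zeta_\ep*\rho_\ep$ from the energy bound, combine with the gradient hypothesis to get an $H^1$ bound along a subsequence, apply Rellich--Kondrachov for $L^2_{\loc}$ compactness, and identify the limit via Lemma \ref{weakst convergence mollified sequence}. The only cosmetic difference is in how $\rho_k \in L^2(\Rd)$ is obtained: the paper invokes the $\Gamma$-convergence result (Theorem \ref{newenergygammaprop}) to pass the energy bound to the limit, while you use weak lower semicontinuity of the $L^2$ norm directly---your route is slightly more self-contained here, but both are equally valid.
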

\begin{proof}[Proof of Lemma \ref{lem:improve conv}]
By assumption (\ref{eq:new prop assump Eep bd}) and  the definition of $\F_{\ep,k}$, we find,
\begin{equation}
\label{eq:bd on zeta*rho ep L^2}
+\infty > \sup_{\ep>0} \F_{\ep,k}(\rho_\ep)+ \|V\|_{L^\infty(\Rd)}\geq \sup_{\ep>0} \E_\ep(\rho_\ep) = \sup_{\ep>0} \frac{1}{2} \int_{\rr^d}\frac{|\zeta_\ep*\rho|^2 }{\bar{\rho}}  \geq \frac{1}{2 \|\bar{\rho}\|_{L^\infty(\Rd)}} \sup_{\ep>0} \int_{\rr^d} |\zeta_\ep*\rho|^2 .
\end{equation}
Similarly, since Theorem \ref{newenergygammaprop} ensures the $\Gamma$-convergence of $\F_{\ep,k}$ to $\F_k$, statement (\ref{newenergygammapart1}) in Definition \ref{def:gamma conv def} of $\Gamma$-convergence ensures,
\[
+\infty >\sup_{\ep>0} \F_{\ep,k}(\rho_\ep)+\|V\|_{L^\infty(\Rd)} \geq \F_k(\rho_k)+\|V\|_{L^\infty(\Rd)} \geq \E(\rho_k)   \geq \frac{1}{2 \|\bar{\rho}\|_{L^\infty(\Rd)}} \int_{\rr^d} \rho_k^2\, dx,
\] 
so  $\rho_k \in L^2(\rr^d)$.

Combining assumption (\ref{eq:H1bound}) with the estimate (\ref{eq:bd on zeta*rho ep L^2}) we find  that, up to a subsequence, (\ref{eq:zetaep rhoep in H1}) holds.  
Therefore, by the Rellich-Kondrachov embedding theorem, we find that, up to  another subsequence,  $\zeta_\ep*\rho_\ep$ converges in $L^2_{loc}(\rr^d)$. On the other hand, Lemma \ref{weakst convergence mollified sequence},  implies that $\zeta_\ep*\rho_\ep$ narrowly converges to  $\rho_k$. The uniqueness of limits therefore implies  (\ref{eq:lem conclude zeta rho ep conv in L2}).
\end{proof}

A key step in  studying the limiting behavior 
of the metric slopes of $\F_{\epsilon,k}$, as in Proposition \ref{prop:estimate for metric slopes}, is to identify the weak limit of $ \nabla \zeta_\ep*\left(\frac{1}{\bar{\rho}}\left(\zeta_\ep*\rho_\ep\right)\right)$ in $L^1(\rho_\epsilon)$. 
With this weak limit in hand,  the desired result  
will then follow from general results due to Ambrosio, Gigli, and Savar\'e on lower semicontinuity of integral functions with varying measures  \cite[Theorem 5.4.4 (ii)]{ambrosio2008gradient}. In the following lemma, we characterize the weak limit. 
\begin{lem}[weak limit of subdifferentials]
\label{sublem:Jep}
Assume  (\ref{targetas}), (\ref{mollifieras}), (\ref{Vas}),  and (\ref{Vkas}) hold. Fix $k \in \mathbb{N}$. Consider any sequence $\rho_\epsilon$ in $\mathcal{P}(\mathbb{R}^d)$ and  $\rho_k \in \mathcal{P}(\rr^d)$ such that  $\rho_\epsilon$ narrowly converges to $\rho_k$ and (\ref{eq:new prop assump Eep bd}), (\ref{eq:H1bound}), and (\ref{eq:LHS metric slopes estimate}) are satisfied. 
 For all $\ep>0$ and $f\in C^\infty_c(\rr^d)$,  define,
\begin{align}\label{Ldef}
L_\ep(f) = \int_{\rr^d} f\left( \nabla \zeta_\ep*\left(\frac{1}{\bar{\rho}}\left(\zeta_\ep*\rho_\ep\right)\right)\right)\, d\rho_\ep \   \text{ and } \  L(f) = \int_{\rr^d}-\frac{1}{2} \nabla \left(\frac{f}{\bar \rho}\right)\rho_k^2\, dx + \int_{\rr^d} f\rho_k^2\nabla\left(\frac{1}{\bar\rho}\right)\, d x .
\end{align}
There exists a subsequence, still denoted by $\ep$, so that, for any $f\in C^\infty_c(\rr^d)$, we have,  
\begin{equation}
    \label{eq:Lep to L}
\lim_{\ep\rightarrow 0} L_\ep(f) =  L(f).
\end{equation}
Furthermore, $L$ is a bounded linear operator on $L^2(\rho_k)$.
\end{lem}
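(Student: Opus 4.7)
My plan is to first invoke Lemma \ref{lem:improve conv} to pass to a subsequence along which $u_\ep := \zeta_\ep*\rho_\ep$ is bounded in $H^1(\rr^d)$ and converges to $\rho_k$ strongly in $L^2_{\loc}(\rr^d)$, then to rewrite $L_\ep(f)$ in a form where the $\ep\to 0$ limit is transparent. Setting $v_\ep := \zeta_\ep*(f\rho_\ep)$, the evenness of $\zeta_\ep$ (so that $\nabla\zeta_\ep$ is odd) combined with Fubini produces $L_\ep(f) = -\int_{\rr^d}(u_\ep/\bar\rho)\,\nabla v_\ep\,dz$, and an integration by parts---justified because $v_\ep$ and $\nabla v_\ep$ decay rapidly at infinity thanks to the compact support of $f$ and the polynomial decay of $\zeta_\ep$ in Assumption (\ref{mollifieras})---yields $L_\ep(f) = \int_{\rr^d}\nabla(u_\ep/\bar\rho)\cdot v_\ep\,dz$.

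I will then decompose $v_\ep = fu_\ep + (v_\ep - fu_\ep)$ and split $L_\ep(f) = I_\ep^{\mathrm{main}} + I_\ep^{\mathrm{err}}$. For the main piece, expanding $\nabla(u_\ep/\bar\rho) = \nabla u_\ep/\bar\rho + u_\ep\nabla(1/\bar\rho)$, using $2u_\ep\nabla u_\ep = \nabla u_\ep^2$, and integrating by parts once more yields
\[
I_\ep^{\mathrm{main}} \;=\; -\tfrac12\int_{\rr^d}\nabla(f/\bar\rho)\,u_\ep^2\, dz \;+\; \int_{\rr^d} f u_\ep^2\,\nabla(1/\bar\rho)\, dz.
\]
Since $f\in C_c^\infty(\rr^d)$ and $\bar\rho\in C^1$ is bounded away from zero, both $\nabla(f/\bar\rho)$ and $f\nabla(1/\bar\rho)$ are bounded and supported in $\supp f$, so the $L^2_{\loc}$ convergence $u_\ep\to\rho_k$ upgrades to $u_\ep^2\to\rho_k^2$ in $L^1_{\loc}(\rr^d)$, giving $I_\ep^{\mathrm{main}}\to L(f)$.

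The main obstacle will be showing $I_\ep^{\mathrm{err}}\to 0$. My approach is to exploit the pointwise formula $(v_\ep - fu_\ep)(z) = \int \zeta_\ep(z-x)[f(x)-f(z)]\,d\rho_\ep(x)$ together with a Taylor expansion of $f$ around $z$: the first-order contribution is odd in $x-z$ and vanishes in the limit via the evenness of $\zeta_\ep$ paired with the uniform $H^1$ bound on $u_\ep$, while the quadratic remainder carries a prefactor $\int|y|^2\zeta_\ep(y)\,dy = O(\ep^2)$ that ensures smallness. An alternative route is to apply Lemma \ref{move mollifier prop} to the vector-valued finite signed measure $\phi\,dz$ with $\phi := \nabla(u_\ep/\bar\rho)$, after a cutoff localization adapted to $\supp f$; outside the cutoff, $v_\ep$ is controlled by the tail decay $\zeta_\ep(y)\leq C_\zeta\ep^{q-d}|y|^{-q}$ with $q>d+1$ from Assumption (\ref{mollifieras}), so the complement contribution is negligible.

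Finally, for the boundedness of $L$ on $L^2(\rho_k)$, Cauchy--Schwarz gives $|L_\ep(f)| \leq \|f\|_{L^2(\rho_\ep)}\cdot\|\nabla\zeta_\ep*(u_\ep/\bar\rho)\|_{L^2(\rho_\ep)}$; the second factor has finite $\liminf$ by hypothesis (\ref{eq:LHS metric slopes estimate}), and $\|f\|_{L^2(\rho_\ep)}\to\|f\|_{L^2(\rho_k)}$ by the narrow convergence $\rho_\ep\to\rho_k$ applied to the bounded continuous function $f^2$. Passing to $\liminf$ and combining with $L_\ep(f)\to L(f)$ yields $|L(f)|\leq M\|f\|_{L^2(\rho_k)}$ for some $M<\infty$, so by density $L$ extends uniquely to a bounded linear operator on $L^2(\rho_k)$. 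A standard diagonal argument over a countable dense subset of $C_c^\infty(\rr^d)$ furnishes a single subsequence on which convergence holds simultaneously for all test functions.
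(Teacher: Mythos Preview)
Your overall architecture coincides with the paper's: both pass to a subsequence via Lemma~\ref{lem:improve conv}, rewrite $L_\ep(f)=\int v_\ep\,\nabla(u_\ep/\bar\rho)\,dz$ with $v_\ep=\zeta_\ep*(f\rho_\ep)$, isolate the commutator $v_\ep-fu_\ep$, and pass to the limit on the ``main'' piece via $L^2_{\loc}$ convergence of $u_\ep\to\rho_k$. Your boundedness argument for $L$ is identical to the paper's.

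There is, however, a genuine gap in your primary route for $I_\ep^{\mathrm{err}}$. The claim that the first-order Taylor contribution ``is odd in $x-z$ and vanishes in the limit via the evenness of $\zeta_\ep$'' is not correct: evenness of $\zeta_\ep$ makes $y\mapsto y\,\zeta_\ep(y)$ odd, but its convolution with the \emph{arbitrary} probability measure $\rho_\ep$ does not vanish, and the resulting term is not small in general. In fact, Taylor expanding beyond first order is unnecessary and introduces dimension-dependent losses (e.g.\ $\||\cdot|^2\zeta_\ep\|_{L^2}\sim \ep^{(4-d)/2}$). The paper controls the full commutator using only the Lipschitz bound $|f(x)-f(z)|\le L_f|x-z|$ followed by a near/far split $\{|x-z|<\ep^\gamma\}\cup\{|x-z|\ge\ep^\gamma\}$: on the near set the factor $\ep^\gamma$ pairs with $\int u_\ep|\nabla u_\ep|\le\|u_\ep\|_{H^1}^2$; on the far set one uses the decay $\zeta_\ep(y)|y|\le C\ep^\delta$ (with $\delta>1$) against $\|\nabla\zeta_\ep\|_{L^1}=O(\ep^{-1})$. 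This works in every dimension.

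Your alternative route---Lemma~\ref{move mollifier prop} applied to $\sigma=\chi\,\nabla(u_\ep/\bar\rho)\,d\mathcal L^d$ with a cutoff $\chi$, plus tail control of $v_\ep$ outside $\supp f$ via the polynomial decay of $\zeta_\ep$---does close correctly. Note, though, that the cutoff is essential: without it $|\sigma|(\rr^d)=\|\nabla u_\ep/\bar\rho\|_{L^1}$ need not be uniformly bounded. Finally, the diagonal argument over a countable dense family is unnecessary: once you are on the subsequence furnished by Lemma~\ref{lem:improve conv}, the limit $L_\ep(f)\to L(f)$ holds deterministically for every $f\in C_c^\infty(\rr^d)$, so no further extraction is needed.
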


\begin{proof}
By Lemma \ref{lem:improve conv},  we may choose a subsequence, still denoted $\rho_\ep$, along which (\ref{eq:zetaep rhoep in H1}) and (\ref{eq:lem conclude zeta rho ep conv in L2}) hold.

In order to characterize $\lim_{\ep \to 0} L_\ep(f)$, we begin by breaking up the expression for $L_\ep(f)$ into two terms, which we will estimate separately. Using the definition of $L_\ep$ and properties of convolution, we  find that, for any $f\in C^\infty_c(\rr^d)$,  
\begin{align}
L_\ep(f) &=  \int_{\rr^d} f\left(\zeta_\ep* \left(\frac{1}{\bar{\rho}}\nabla\left(\zeta_\ep*  \rho_\ep\right)\right)\right)\, d\rho_\ep + \int_{\rr^d} f\left(\zeta_\ep* \left(\nabla \left(\frac{1}{\bar{\rho}}\right)\left(\zeta_\ep*\rho_\ep\right)\right)\right)\, d\rho_\ep \nonumber \\
&=  \int_{\rr^d} \left((f \rho_\ep )*\zeta_\ep \right) \left(\frac{1}{\bar{\rho}}\nabla\left(\zeta_\ep*  \rho_\ep\right)\right)\, d \mathcal{L}^d + \int_{\rr^d} \left((f \rho_\ep)*\zeta_\ep\right) \left(\left( \nabla\frac{1}{\bar{\rho}}\right)\left(\zeta_\ep*\rho_\ep\right)\right)\, d \mathcal{L}^d \nonumber \\
&=:I_\ep(f) +J_\ep(f).
\label{eq:Iep Jep def}
\end{align}

We begin by showing,
\begin{equation}
    \label{eq:newlimJep}
    \lim_{\ep\rightarrow 0} J_\ep(f) = \int_{\rr^d} f\rho_k^2\nabla\left(\frac{1}{\bar \rho}\right)\, d \mathcal{L}^d.
\end{equation}
To this end, we apply Lemma \ref{move mollifier prop}, with $\sigma  = \zeta_\ep*\rho_\ep \nabla
\left(\frac{1}{\bar \rho}\right)\,d \mathcal{L}^d$ and $\nu = \rho_\ep$ to find, for  $C_\zeta>0$ as in assumption (\ref{mollifieras}), there exist $p, L_f>0,$ and $C_{\bar{\rho}}>0$ so that,
\begin{align*}
\left|J_\ep(f) - \int_{\rr^d} f (\zeta_\ep*\rho_\ep)^2 \nabla \left(\frac{1}{\bar \rho}\right)\, d \mathcal{L}^d \right|&\leq \ep^p L_f\left(\int_{\rr^d} (\zeta_\ep*\rho_\ep)^2 |\nabla (1/\bar{\rho})| \, d \mathcal{L}^d +C_\zeta \int (\zeta_\ep*\rho_\ep)\left|\nabla \frac{1}{\bar \rho}\right|\,d \mathcal{L}^d \right)\\
& \leq \ep^p L_f C_{\bar{\rho}}\left(\int_{\rr^d} (\zeta_\ep*\rho_\ep)^2\, d \mathcal{L}^d +C_\zeta \right).
\end{align*}
By  (\ref{eq:zetaep rhoep in H1}) of Lemma \ref{lem:improve conv},  the right-hand side converges to 0 as $\ep\rightarrow 0$, which implies that (\ref{eq:newlimJep}) holds.

Next, we consider $\lim_{\ep \to 0} I_\ep(f)$. For any $f\in C^\infty_c(\rr^d)$, define,
\[
\tilde{I}_\ep(f) = \frac{1}{2}\int_{\rr^d} \frac{f}{\bar \rho}\nabla \left(\left(\zeta_\ep *\rho_\ep\right)^2\right)\, d\Ld.
\]
Note that the $L^2$ convergence of $\zeta_\ep * \rho_\ep$ to $\rho_k$ established in  (\ref{eq:lem conclude zeta rho ep conv in L2}) of Lemma \ref{lem:improve conv} ensures that, for any $f\in C^\infty_c(\rr^d)$,
\begin{align*}
 & \lim_{\ep\rightarrow 0} \tilde{I}_\ep(f) = \lim_{\ep\rightarrow 0}  -\frac{1}{2}\int_{\rr^d} \nabla \left(\frac{f}{\bar{\rho}}\right)(\zeta_\ep * \rho_\ep)^2 \, d\Ld = -\frac{1}{2}\int_{\rr^d} \nabla \left(\frac{f}{\bar{\rho}}\right)\rho_k^2\, d\Ld.
\end{align*}
Thus, to complete our proof that $\lim_{\ep \to 0} L_\ep(f) = L(f)$, it suffices to prove that, for any  $f\in C^\infty_c(\rr^d)$,
\begin{equation}
    \label{eq:new suffice}
    \lim_{\ep\rightarrow 0}|I_\ep(f)-\tilde{I}_\ep(f)|=0.
\end{equation}

Using the definitions of $I_\ep(f)$ and $\tilde{I}_\ep(f)$, followed by some rearranging, we find,
\begin{align*}
|I_\ep(f)-\tilde{I}_\ep(f)| & = \left|\int_{\rr^d} \left((f \rho_\ep )*\zeta_\ep \right)(x) \frac{1}{\bar{\rho} (x)}\nabla(\zeta_\ep*  \rho_\ep)(x)\, dx  -  \int_{\rr^d} f(x)(\zeta_\ep*\rho_\ep)(x) \frac{1}{\bar\rho (x)}\nabla(\zeta_\ep*\rho_\ep)(x)\, dx \right|, \\
&= \left| \int_{\rr^d}\left[\left((f \rho_\ep )*\zeta_\ep \right)(x) - f(x)(\zeta_\ep*\rho_\ep)(x) \right] \frac{1}{\bar{\rho} (x)}\nabla(\zeta_\ep*  \rho_\ep)(x)\, dx \right|.
\end{align*}
We have, for all $x\in\rr^d$,
\[
((f \rho_\ep )*\zeta_\ep)(x)  - f(x)(\zeta_\ep*\rho_\ep)(x) = \int_{\rr^d} (f(y)-f(x))\rho_\ep(y)\zeta_\ep(x-y)\, dy.
\] 
Thus, using this and  Fubini's Theorem we find,
\begin{align*}
|I_\ep(f)-\tilde{I}_\ep(f)| & \leq \iint \left|f(y)-f(x)\right|\rho_\ep(y)\zeta_\ep(x-y)\frac{1}{\bar{\rho }(x)}\left|\nabla(\zeta_\ep*  \rho_\ep)(x)\right|\, dy\, dx.
\end{align*}
Since $\bar{\rho}$ is bounded uniformly from below  and $f\in C^\infty_c(\rr^d)$,
\begin{align}
\label{eq:I-L bd}
|I_\ep(f)-\tilde{I}_\ep(f)| & \leq \frac{\|\nabla f\|_\infty}{\inf \bar{\rho}}\iint \left|x-y\right|\rho_\ep(y)\zeta_\ep(x-y)\left|\nabla(\zeta_\ep*  \rho_\ep)(x)\right|\, dy\, dx.
\end{align}

Next, we claim that there exist $C>0$, $\gamma\in (0,1)$ and $\delta>1$, all depending only on $\zeta$, such that, 
\begin{equation}
    \label{eq:zeta ep property}
\zeta_\ep(x-y)|x-y|\leq 
C\ep^\delta \quad \text{ for } |x-y|>\ep^\gamma.
\end{equation}
Indeed, let $q$ be as in Assumption (\ref{mollifieras}), define $\delta'=q-(d+1)>0$ and $\gamma = \delta'/2(d+\delta')$. The definition of $\zeta_\ep$ and assumption (\ref{mollifieras}) imply,
\[
\zeta_\ep(z)|z|= \zeta\left(\frac{z}{\ep}\right)\frac{|z|}{\ep^d}\leq C |z|^{-(d+1+\delta')}\ep^{d+1+\delta'} |z| \ep^{-d} = C|z|^{-d-\delta'}\ep^{1+\delta'}.
\]
Thus, for $|z|>\ep^{\gamma}$ we obtain,  $\zeta_\ep(z)|z|\leq C \ep^{-(d+\delta')\gamma}\ep^{1+\delta'}= C\ep^{1+\delta'/2}$. The inequality (\ref{eq:zeta ep property}) now follows by taking  $\delta = 1+\delta'/2$.

Thus, breaking up the integral on the right-hand side of (\ref{eq:I-L bd}) into two regions and using (\ref{eq:zeta ep property}), we find,
\begin{align*}
&|I_\ep(f)-\tilde{I}_\ep(f)|  \\
&\quad \leq \frac{\|\nabla f\|_\infty}{\inf \bar{\rho}}\left( \ep^\gamma\iint_{|x-y|<\ep^\gamma} \rho_\ep(y)\zeta_\ep(x-y)\left|\nabla(\zeta_\ep*  \rho_\ep)(x)\right|\, dy\, dx 
+ C\ep^\delta \iint_{|x-y|>\ep^\gamma} \rho_\ep(y)\left|\nabla(\zeta_\ep*  \rho_\ep)(x)\right|\, dy\, dx\right)\\
&\quad \leq \frac{\|\nabla f\|_\infty}{\inf \bar{\rho}} \left( \ep^\gamma\int (\rho_\ep*\zeta_\ep)(x)\left|\nabla(\zeta_\ep*  \rho_\ep)(x)\right|\,  dx 
+ C\ep^\delta \int \left|(\nabla\zeta_\ep*  \rho_\ep)(x)\right|\, dx\right).
\end{align*}
Now we use H\"older's inequality for the first term on the right-hand side, and Young's inequality for the second term to obtain,
\begin{align*}
|I_\ep(f)-\tilde{I}_\ep(f)|  &\leq C_f\left( \ep^\gamma ||\zeta_\ep*  \rho_\ep||_{L^2(\rr^d)}||\nabla \zeta_\ep*  \rho_\ep||_{L^2(\rr^d)} 
+ \ep^\delta ||\nabla \zeta_\ep||_{L^1(\rr^d)}\right).
\end{align*}
To bound the first term on the right-hand side we recall that $\zeta_\ep*  \rho_\ep$ is bounded in $H^1(\rr^d)$ uniformly in $\ep$ (see the estimate (\ref{eq:zetaep rhoep in H1}) from Lemma \ref{lem:improve conv}). For the second term, we note $\ep^\delta ||\nabla \zeta_\ep||_{L^1(\rr^d)} = \ep^{\delta-1}  ||\nabla \zeta||_{L^1(\rr^d)}$. Since $\gamma>0$ and $\delta - 1>0$, this ensures $\lim_{\ep \to 0} |I_\ep(f)-\tilde{I}_\ep(f)|  =0$, which completes the proof that $\lim_{\ep \to 0} L_\ep(f) = L(f)$.
 
It remains to show that $L$ is a bounded linear operator on $L^2(\rho_k)$. We will show that, for any $f\in C^{\infty}_c(\rr^d)$, 
\[
|L(f)|\leq C\|f\|_{L^{2}(\rho_k)}.
\]
Indeed, since $\rho_k \in L^2(\Rd)$, $\rho_k$ is a Radon measure, so $C^1_c(\rr^d)$ is dense in $L^2(\rho_k)$    \cite[Corollary 4.2.2]{bogachev2007measure}, and there exists a unique extension of $L$ to $L^2(\rho_k)$ enjoying the same bound.

Fix arbitrary $f\in C^\infty_c(\rr^d)$. By definition of $L_\ep$ in equation (\ref{Ldef}) and H\"older's inequality, 
\[
|L_\ep(f)|\leq \|f\|_{L^2(\rho_\ep)} \left|\left|\nabla \zeta_\ep*\left(\frac{1}{\bar{\rho}}\left(\zeta_\ep*\rho_\ep\right)\right)\right|\right|_{L^2(\rho_\ep)}.
\]
Thus, by assumption (\ref{eq:LHS metric slopes estimate}), there exists $C>0$ so that,
\[
|L(f) | = \liminf_{\ep \to 0} |L_\ep(f)|\leq  C \liminf_{\ep \to 0}  ||f||_{L^2(\rho_\ep)} = C   ||f||_{L^2(\rho_k)} ,
\]
which gives the result.
\end{proof}

We now apply the previous lemmas to prove our result on the limit of the metric slopes.
\begin{proof}[Proof of Proposition \ref{prop:estimate for metric slopes}]
Choose a subsequence, still denoted by $\rho_\ep$, so that, 
\[ \lim_{\ep \to 0} |\partial \F_{\ep, k}|(\rho_\ep) = \liminf_{\ep \to 0} |\partial \F_{\ep, k}|(\rho_\ep) . \]
 It suffices to show  $\rho_k^2\in W^{1,1}(\rr^d)$, there exists ${\boldsymbol{\eta}_k}\in L^2(\rho)$ satisfying (\ref{eq:to show def eta}), and, up to a further subsequence,
\begin{align}
\label{eq:to show met slope est}
&\lim_{\ep\rightarrow 0 }\int_{\rr^d} f\left( \nabla \zeta_\ep*\left(\frac{1}{\bar{\rho}}\left(\zeta_\ep*\rho_\ep\right)\right)+ \nabla(\zeta_\ep*V)+\nabla V_k\right)\, d\rho_\ep = \int f \bEta_k\, d\rho_k \quad \text{ for all }f\in C^\infty_c(\rr^d).
\end{align}
The estimate (\ref{eq:new metric slopes}) then follows by applying \cite[Theorem 5.4.4 (ii)]{ambrosio2008gradient}, completing the proof.

Notice that, for any $f\in C^\infty_c(\rr^d)$, the fact that $\nabla V$ and $\nabla V_k$ are continuous and Lemma \ref{weakst convergence mollified sequence} ensure,
\begin{align}
    \label{eq:Vs in met slopes}
\lim_{\ep\rightarrow 0 }\int_{\rr^d} f\left( \nabla(\zeta_\ep*V)\right)\, d\rho_\ep &=\lim_{\ep\rightarrow 0 }\int_{\rr^d} \nabla V \left( \zeta_\ep*(f\rho_\ep)\right)\, d\mathcal{L}^d = \int f\nabla V \, d\rho_k, \\
\lim_{\ep\rightarrow 0 }\int_{\rr^d} f\left( \nabla V_k\right)\, d\rho_\ep &= \int f\nabla V_k \, d\rho_k.
\end{align}
Next, we use the definitions of $L_\ep(f)$ and $L(f)$, as well as the convergence of $L_\ep(f)$ to $L(f)$ established in (\ref{eq:Lep to L}) of Lemma \ref{sublem:Jep}. Combining these with the Riesz Representation Theorem on $L^2(\rho_k)$ (which we can apply to the operator $L$ due to, again, Lemma \ref{sublem:Jep}), we find  that there exists $\tilde{\bEta} \in L^2(\rho_k)$ such that,
\begin{equation*}
 \lim_{\ep\rightarrow 0 }\int_{\rr^d} f\left( \nabla \zeta_\ep*\left(\frac{1}{\bar{\rho}}\left(\zeta_\ep*\rho_\ep\right)\right)\right)\, d\rho_\ep =\int_{\rr^d}-\frac{1}{2} \nabla \left(\frac{f}{\bar \rho}\right)\rho_k^2\, dx + \int_{\rr^d} f\rho_k^2\nabla\left(\frac{1}{\bar\rho}\right)\, d x = \int f \tilde{\bEta}\, d\rho_k  .
\end{equation*}
Rearranging, we obtain,
\begin{align*}
-\frac{1}{2}\int_{\rr^d} \nabla \left(\frac{f}{\bar \rho}\right)\rho_k^2\, dx &= \int_{\rr^d} f\teta \rho_k - f\rho_k^2\nabla\left(\frac{1}{\bar\rho}\right)\, d x
 =\int_{\rr^d}\frac{f}{\bar\rho}\left(\teta \rho_k \bar \rho - \bar\rho \rho_k^2\nabla\left(\frac{1}{\bar\rho}\right)\right)\, d x.
\end{align*}
Since the previous line holds for all $f\in C^\infty_c(\rr^d)$, we deduce $\rho_k^2\in W^{1,1}(\rr^d)$  and 
\[
\nabla\left(\frac{\rho_k^2}{2}\right) = \teta \rho_k \bar \rho - \bar\rho \rho_k^2\nabla\left(\frac{1}{\bar\rho}\right).
\]
Finally, by the chain rule for $W^{1,1}(\rr^d)$ functions and the previous line, we have,
\begin{align*}
    \nabla\left(\frac{\rho_k^2}{\bar \rho ^2}\right)&= \nabla(\rho_k^2) \frac{1}{\bar \rho ^2} + \rho_k^2\nabla\left(\frac{1}{\bar \rho ^2}\right)= \frac{1}{\bar \rho ^2}  \left( 2\teta \rho_k \bar \rho - 2\bar\rho \rho_k^2\nabla\left(\frac{1}{\bar\rho}\right)\right)+ \rho_k^2\nabla\left(\frac{1}{\bar \rho ^2}\right)= 2\teta \frac{\rho_k}{\bar \rho}.
\end{align*}
Thus, 
\begin{equation}
\label{eq:to show def eta 2}
\tilde{\bEta} \rho_k  = \frac{\bar{\rho}}{2}\nabla \left(\frac{\rho_k^2}{\bar \rho ^2}\right).
\end{equation}
Finally, defining $\bEta_k = \teta+\nabla V+\nabla V_k$, the facts that $\nabla V\in L^\infty(\rr^d)$ and $\nabla V_k\in L^2(\rho_k)$ (see sentence following Assumption (\ref{Vkas})), ensure $\bEta_k \in L^2(\rho_k)$ and (\ref{eq:to show met slope est}) holds.
\end{proof}

 \subsubsection{Convergence of gradient flows} \label{GFconvsubsection}
We now apply the result on the limiting behavior of  the metric slopes, obtained in Proposition \ref{prop:estimate for metric slopes}, as well as the $\Gamma$-convergence of the energies, obtained in Theorem \ref{newenergygammaprop}, to show that  gradient flows of $\F_{\ep,k}$ with ``well-prepared'' initial data converge to an ``almost curve of maximal slope'' of  $\F_k$.  We emphasize that this result does not require a log-concavity assumption on $\bar{\rho}$.

\begin{prop} 
\label{prop:epstozerofixedk} Assume    (\ref{mollifieras}), (\ref{Vas}), (\ref{Vkas}), (\ref{targetas})  hold. Fix $T>0$ and $k \in \mathbb{N}$.
For   $\ep>0$, let $\rho_{\ep,k}\in AC^2([0,T];\P_2(\Rd))$ be a gradient flow of $\F_{\ep,k}$ satisfying,
\begin{align}
&\sup_{\ep>0} \mathcal{S}(\rho_{\ep,k}(0))<\infty \quad \text{ and }  \quad \sup_{\ep>0} M_2(\rho_{\ep,k}(0))<\infty  \label{eq:assum grad flow conv thm entropy} .
\end{align} 
Suppose  there exists $\rho_k(0) \in D(\F_k) \cap \P_2(\Rd)$   
such that,
\begin{align}
\rho_{\ep,k}(0) \xrightarrow[]{\ep \to 0} \rho_k(0) \text{ narrowly } \quad  \text{ and } \quad \lim_{\ep\rightarrow 0} \F_{\ep,k}(\rho_{\ep,k}(0)) = \F_k(\rho_k(0)).\label{eq:assum grad flow conv thm initial time} 
\end{align}
Then, there exists $\rho_k \in AC^2([0,T];\P_2(\Rd))$ that is an ``almost'' curve of maximal slope of $\F_k$, in the sense of Definition \ref{almostcms}, and a subsequence $\epsilon^{(k)}_n$, depending on $k$,  so that 
\begin{align} \label{HYPepssubseqconvunif} \lim_{n \to +\infty} W_1(\rho_{\ep^{(k)}_n,k}(t), \rho_k(t)) = 0 \text{ uniformly for } t \in [0,T].
\end{align}
\end{prop}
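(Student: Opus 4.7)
The plan is to combine Serfaty's $\Gamma$-convergence framework from Proposition \ref{prop:conv of almost cms} with the $H^1$ bound from Theorem \ref{prop:H1 bd} and the metric slope identification from Proposition \ref{prop:estimate for metric slopes}. Each gradient flow $\rho_{\ep,k}$ satisfies the curve of maximal slope equation from Theorem \ref{curvemaxslope} with $\bEta_{\ep,k}(r) = \partial^\circ \F_{\ep,k}(\rho_{\ep,k}(r))$, which by Proposition \ref{prop:subdiff char ep>0} is given by $\nabla \zeta_\ep * ((\zeta_\ep*\rho_{\ep,k})/\bar\rho) + \nabla(\zeta_\ep*V) + \nabla V_k$. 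Since $\E_\ep \geq 0$, $\V_k \geq 0$, and $\V_\ep \geq -\|V\|_{L^\infty}$, the energies $\F_{\ep,k}$ are bounded below uniformly in $\ep$; $\Gamma$-convergence to $\F_k$ was established in Theorem \ref{newenergygammaprop}; and the initial data hypotheses (\ref{hyp:gen prop F(0)}) are exactly our assumptions (\ref{eq:assum grad flow conv thm entropy})--(\ref{eq:assum grad flow conv thm initial time}).

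Applying Proposition \ref{prop:conv of almost cms}, I would extract a subsequence $\ep_n = \ep^{(k)}_n \to 0$ and a limit curve $\rho_k \in AC^2([0,T];\P_2(\Rd))$ satisfying (\ref{HYPepssubseqconvunif}) together with
\begin{equation*}
\tfrac12 \int_0^t |\rho_k'|^2(r)\,dr + \tfrac12 \int_0^t \liminf_{n\to\infty} \int_{\Rd}|\bEta_{\ep_n,k}(r)|^2\,d\rho_{\ep_n,k}(r)\,dr \leq \F_k(\rho_k(0)) - \F_k(\rho_k(t))
\end{equation*}
for all $t\in[0,T]$. In particular, the left-hand side is finite for all such $t$.

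The remaining and most delicate step is identifying the inner $\liminf$ with $\int |\bEta_k(r)|^2\,d\rho_k(r)$ for a.e.\ $r$, so that $\bEta_k$ satisfies (\ref{almostcurvemaxslope1b}). To do so I would verify the three hypotheses (\ref{eq:new prop assump Eep bd})--(\ref{eq:LHS metric slopes estimate}) of Proposition \ref{prop:estimate for metric slopes} for $\rho_{\ep_n,k}(r)$ at a.e.\ $r$. For (\ref{eq:new prop assump Eep bd}), energy monotonicity along the gradient flow gives $\F_{\ep,k}(\rho_{\ep,k}(r)) \leq \F_{\ep,k}(\rho_{\ep,k}(0))$, which is uniformly bounded by (\ref{eq:assum grad flow conv thm initial time}). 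For (\ref{eq:H1bound}), Theorem \ref{prop:H1 bd} combined with the uniform bounds on $\S(\rho_{\ep,k}(0))$, $M_2(\rho_{\ep,k}(0))$, and $\F_{\ep,k}(\rho_{\ep,k}(0))$ yields $\sup_\ep \int_0^T \|\nabla\zeta_\ep*\rho_{\ep,k}(r)\|_{L^2(\Rd)}^2\,dr < \infty$; by Fatou's lemma, $\liminf_n \|\nabla\zeta_{\ep_n}*\rho_{\ep_n,k}(r)\|_{L^2(\Rd)}^2$ is finite at a.e.\ $r$. For (\ref{eq:LHS metric slopes estimate}), the curve of maximal slope inequality and the uniform upper and lower bounds on $\F_{\ep,k}$ give a uniform $L^2$-in-time bound on $|\partial \F_{\ep,k}|(\rho_{\ep,k}(\cdot))$, which via Fatou produces, at a.e.\ $r$, a finite $\liminf$ of $\int |\bEta_{\ep_n,k}(r)|^2\,d\rho_{\ep_n,k}(r)$; subtracting the contributions from $\nabla(\zeta_\ep * V)$ and $\nabla V_k$ (which are controlled in $L^2(\rho_{\ep,k})$ using Assumptions (\ref{Vas})--(\ref{Vkas}) and the uniform second moment bound from Proposition \ref{secondmomentbound}) isolates the desired bound on the $\E_\ep$-part. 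Finally, narrow convergence of $\rho_{\ep_n,k}(r)$ to $\rho_k(r)$ at every $r$ follows from uniform $W_1$ convergence in (\ref{HYPepssubseqconvunif}).

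With the hypotheses of Proposition \ref{prop:estimate for metric slopes} verified at a.e.\ $r$, it produces $\bEta_k(r) \in L^2(\rho_k(r))$ satisfying (\ref{eq:to show def eta}) together with the lower semicontinuity
\begin{equation*}
\liminf_{n\to\infty} \int_{\Rd}|\bEta_{\ep_n,k}(r)|^2\,d\rho_{\ep_n,k}(r) \;\geq\; \int_{\Rd}|\bEta_k(r)|^2\,d\rho_k(r).
\end{equation*}
Inserting this into the inequality produced by Proposition \ref{prop:conv of almost cms} yields (\ref{almostcurvemaxslope1a}), completing the identification of $\rho_k$ as an ``almost'' curve of maximal slope of $\F_k$. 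The main obstacle will be the a.e.-in-$t$ application of Proposition \ref{prop:estimate for metric slopes}: the $H^1$ bound coming from Theorem \ref{prop:H1 bd} is only integrated in time, so one must argue pointwise via Fatou and then ensure measurable selection of $\bEta_k(r)$; the latter is painless because $\bEta_k(r)\rho_k(r)$ is determined by (\ref{almostcurvemaxslope1b}) in terms of $\rho_k(r)$, which is itself measurable in $r$, and the integrated $L^2$ bound ensures $\bEta_k \in L^2(\rho_k(r)\,dr\,dx)$.
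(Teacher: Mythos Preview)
Your proposal is correct and follows essentially the same approach as the paper: apply Proposition \ref{prop:conv of almost cms} using the curve-of-maximal-slope inequality with $\bEta_{\ep,k} = \partial^\circ \F_{\ep,k}(\rho_{\ep,k})$ from Proposition \ref{prop:subdiff char ep>0}, then verify the hypotheses of Proposition \ref{prop:estimate for metric slopes} at a.e.\ $r$ via Theorem \ref{prop:H1 bd} and Fatou's lemma. The only cosmetic differences are that the paper reads the a.e.\ finiteness of $\liminf_n \|\bEta_{\ep_n,k}(r)\|_{L^2(\rho_{\ep_n,k}(r))}^2$ directly off the output inequality (\ref{generalconc:almostcurvemaxslope}) of Proposition \ref{prop:conv of almost cms} rather than going back to the original maximal-slope inequality, and it leaves the ``subtraction of the $V$ and $V_k$ contributions'' implicit; your extra remarks on measurable selection of $\bEta_k(r)$ are harmless but unnecessary, since $\bEta_k(r)\rho_k(r)$ is pinned down by the formula (\ref{almostcurvemaxslope1b}).
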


\begin{proof}[Proof of Proposition \ref{prop:epstozerofixedk}]
By Theorem \ref{curvemaxslope}, $\rho_{\ep,k}$ is a curve of maximal slope of $\F_{\ep,k}$, so 
 \begin{align} \frac12 \int_0^t |\rho_{\ep,k}'|^2(r) dr + \frac12 \int_0^t |\partial \F_{\ep,k}|^2(\rho_{\ep,k}(r)) dr \leq \F_{\ep,k}(\rho_{\ep,k}(0)) - \F_{\ep,k}(\rho_{\ep,k}(t))  \ , \quad \text{ for all } 0 \leq t \leq T . \label{fepkmaxslope}
 \end{align}
  We seek to apply Proposition \ref{prop:conv of almost cms}. Theorem \ref{newenergygammaprop} ensures that $\F_{\ep,k}$ $\Gamma$-converges to $\F_{k}$.  Next, we note that, the previous line, together with the explicit characterization of $|\partial \F_{\ep,k}|^2$ given in Proposition   \ref{prop:subdiff char ep>0}, yields that the hypothesis (\ref{hyp:gen prop alpha curve max slope}) holds with $\bEta_{\ep,k}(r)\in L^2(\rho_{\ep,k}(r))$ given by,
\[
\bEta_{\ep,k}(r) = \nabla \zeta_{\ep}*\left(\frac{1}{\bar{\rho}}\left(\zeta_{\ep}*\rho_{\ep,k}(r)\right)\right)+\nabla (\zeta_{\ep}*V) +\nabla V_k .
\]
In addition, the hypotheses of the present proposition guarantee that (\ref{hyp:gen prop F(0)}) hold. Thus, Proposition \ref{prop:conv of almost cms}  ensures that  there exists $\rho_k \in AC^2([0,T];\P_2(\Rd))$ and a subsequence $\epsilon^{(k)}_n$, depending on $k$, so that (\ref{HYPepssubseqconvunif}) holds and with,  for all $t\in [0,T]$,  
\begin{align} \label{ep0kfinitealmostcurvemax}
\frac12 \int_0^t |\rho_k'|^2(r) dr + \frac12 \int_0^t \left(\liminf_{n\rightarrow \infty} \int_{\mathbb{R}^d} |\bEta_{\ep_n^{(k)}}(r)|^2 d \rho_{\ep_n^{(k)},k}(r) \right)dr \leq \F_k(\rho_k(0)) - \F_k(\rho_k(t)) .
\end{align}

In order to conclude, it suffices to establish that $\rho_k$ satisfies the conditions of Definition \ref{almostcms}, that is, for almost every $r\in [0,T]$, we have:
\begin{align}
&\rho_k(r)^2\in W^{1,1}(\rr^d),  \label{eq:bd eta ep k want -2}\\ 
&\text{there exists $\bEta_k(r)\in L^2(\rho_k(R))$ satisfying (\ref{almostcurvemaxslope1b}), and}\label{eq:bd eta ep k want -1}\\ 
&\liminf_{n\rightarrow \infty} \int_{\mathbb{R}^d} |\bEta_{\ep_n^{(k)}}(r)|^2 d \rho_{\ep_n^{(k)},k}(r) \geq \int_{\rr^d}|\bEta_k(r)|^2\, d\rho_k(r). \label{eq:bd eta ep k want}
\end{align}

 Note that we may assume 
\begin{align} \label{energyboundtimezero}
\sup_{n \in \mathbb{N}} \F_{\epsilon^{(k)}_n,k} \left(\rho_{\epsilon^{(k)}_n,k}(0) \right)<+\infty.
\end{align}
Combining this with Theorem \ref{prop:H1 bd} and assumption (\ref{eq:assum grad flow conv thm entropy})   of the present theorem, we obtain,
\begin{equation*}
\liminf_{n\to \infty} \int_0^T||  \nabla\zeta_{\ep_n^{(k)}}*\rho_{\ep_n^{(k)},k} (r) ||^2_{L^2(\rr^d)}\, dr <+\infty .\end{equation*} 
Thus, by Fatou's lemma, for almost every $r\in [0,T]$, the above integrand must be finite. Likewise,   inequality (\ref{ep0kfinitealmostcurvemax}) ensures the left-hand side of (\ref{eq:bd eta ep k want}) is finite for a.e. $r\in [0,T]$. 

We seek to apply Proposition \ref{prop:estimate for metric slopes}. 
Fix $r \in [0,T]$ such that
\begin{equation}
 \label{eq:H1bd applying}
\liminf_{n\rightarrow \infty} \int_{\mathbb{R}^d} |\bEta_{\ep_n^{(k)}}(r)|^2 d \rho_{\ep_n^{(k)},k}(r) < +\infty \quad \text{ and } \quad \liminf_{n\rightarrow \infty} ||  \nabla\zeta_{\ep_n^{(k)}}* \rho_{\ep_n^{(k)},k}(r)  ||_{L^2(\rr^d)} <+\infty.
\end{equation} 
 Inequality (\ref{energyboundtimezero}), together with the fact that the energy $\F_{\ep,k}$ decreases in time along the gradient flow $\rho_{\ep, k}$  implies that (\ref{eq:new prop assump Eep bd}) holds at time $r$. Thus, by our standing hypotheses on $V$ and $V_k$, the hypotheses of Proposition \ref{prop:estimate for metric slopes}  hold at time $r$. Consequently, the conclusion of Proposition \ref{prop:estimate for metric slopes}    yields (\ref{eq:bd eta ep k want -2}), (\ref{eq:bd eta ep k want -1}), and (\ref{eq:bd eta ep k want}) at time $r$.

\end{proof}

\section{Convergence of energies $\F_k$ and ``almost'' curves of maximal slope as $k \to +\infty$} \label{eptozeroGFsecnew} 

The present section has two main goals. First, we show that the energies $\F_k$ $\Gamma$-converge to the energy $\F$ and use this to prove  Theorem \ref{minimizersthm}, that minimizers of $\F_{\ep,k}$ converge to the unique minimizer of $\F$ as $\ep \to 0$, $k \to +\infty$. Our second goal is to show that, if $\bar{\rho}$ is log-concave on $\Omega$, then as the  confining potentials $V_k$ approximate $V_\Omega$, the
``almost'' curves of maximal slope of $\mathcal{F}_{k}$ (see   Definition \ref{almostcms}) converge to a gradient flow  of $\mathcal{F}$ as $k \to +\infty$. We then use this to conclude our main result, Theorem \ref{newmaintheorem}, that gradient flows of $\F_{\ep,k}$ converge to a gradient flow of $\F$ as $k \to +\infty$ and $\epsilon = \epsilon(k) \to 0$. Finally, in Section \ref{particlesection}, we extend our result to cover particle initial data (Theorem \ref{thm:conv with particle i.d.}), long time behavior (Corollary \ref{quantcor}). and establish our results concerning two-layer neural networks (Corollary \ref{2layernncor}).

Our result on the $k \to +\infty$ limit generalizes work by  Alasio, Bruna, and Carrillo \cite{alasio2020role} to the case of weighted porous medium equations.  As in Proposition \ref{prop:epstozerofixedk}, which considered the $\epsilon \to 0$ limit, we use an approach based on $\Gamma$-convergence of gradient flows, which is different from the approach used in the aforementioned work \cite{alasio2020role}. We are optimistic this new approach will be more easily generalizable to a range of Wasserstein gradient flows.

We begin by showing $\Gamma$-convergence of the energies $\V_k$ to $\V_\Omega$, in the sense of Definition \ref{def:gamma conv def}.  
\begin{thm}[$\Gamma$ convergence of energies $\V_k$ to $\V_\Omega$]  \label{Vkgammathm} Assume (\ref{targetas}), (\ref{Omegaas}), (\ref{Vas}), (\ref{Vkas}), and (\ref{Vkinftyas}).  Then, the energies $\V_k$ $\Gamma$-converge to $\V_\Omega$ and the energies $\F_{k}$ $\Gamma$-converge to $\F$ as $k\rightarrow \infty$. In particular,  $\lim_{k \to +\infty} \V_k(\mu) = \V_\Omega(\mu)$ for  any $\mu \in \P_2(\Rd)$.
\end{thm}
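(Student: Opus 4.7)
The plan is to verify the two conditions of Definition \ref{def:gamma conv def} for the sequence $\V_k \to \V_\Omega$, deduce the pointwise convergence statement as a corollary, and then add the $k$-independent summands $\E$ and $\V$ to upgrade to $\F_k \to \F$. The main subtlety will be in the $\liminf$ inequality, where we must convert the coercivity hypothesis (\ref{Vkinftyas}) into a genuine lower bound along a narrowly convergent sequence.

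For the $\liminf$ inequality (\ref{eq:gamma conv def}), fix $\rho_k \in \P(\Rd)$ narrowly convergent to $\rho$. If $\supp\rho \subseteq \overline{\Omega}$, then $\V_\Omega(\rho) = 0$, and since $V_k \geq 0$ by (\ref{Vkas}), trivially $\liminf_k \V_k(\rho_k) \geq 0 = \V_\Omega(\rho)$. Otherwise, pick $x_0 \in \supp\rho \setminus \overline{\Omega}$ and choose $r>0$ small enough that the open ball $B = B_r(x_0)$ satisfies $\overline{B} \subset (\overline{\Omega})^c$, so that $B \subset\joinrel\subset \Omega^c$. By definition of support, $\rho(B) > 0$, and since $B$ is open, the Portmanteau theorem yields $\liminf_k \rho_k(B) \geq \rho(B) > 0$. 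Using nonnegativity of $V_k$ and monotonicity,
\begin{equation*}
\V_k(\rho_k) \geq \int_B V_k \, d\rho_k \geq \rho_k(B) \cdot \inf_{x \in B} V_k(x).
\end{equation*}
Hypothesis (\ref{Vkinftyas}) gives $\inf_{x\in B} V_k(x) \to +\infty$, which combined with the lower bound on $\rho_k(B)$ forces $\liminf_k \V_k(\rho_k) = +\infty = \V_\Omega(\rho)$.

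For the recovery sequence (\ref{newenergygammapart1}), given $\rho \in \P_2(\Rd)$ the plan is to take the constant sequence $\rho_k \equiv \rho$. When $\V_\Omega(\rho) = +\infty$ the bound is automatic. When $\supp\rho \subseteq \overline{\Omega}$, the key observation is that by (\ref{Vkas}) each $V_k$ is continuous and by (\ref{Vkas}) $V_k = 0$ on the open set $\Omega$; continuity then forces $V_k = 0$ on $\overline{\Omega}$ and hence on $\supp\rho$, giving $\V_k(\rho) = 0 = \V_\Omega(\rho)$. This establishes $\V_k \xrightarrow{\Gamma} \V_\Omega$. The pointwise statement $\lim_k \V_k(\mu) = \V_\Omega(\mu)$ now follows by applying the $\liminf$ inequality to the constant sequence $\rho_k \equiv \mu$ and combining with the recovery sequence bound.

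Finally, to upgrade to $\F_k \xrightarrow{\Gamma} \F$, the plan is to exploit that $\E$ and $\V$ do not depend on $k$ and enjoy good continuity: by Lemma \ref{lem:semicontinuity}, $\E$ is narrowly lower semicontinuous, while $\V$ is narrowly continuous on $\P(\Rd)$ since $V \in C_b(\Rd)$ by (\ref{Vas}). For any $\rho_k \to \rho$ narrowly,
\begin{equation*}
\liminf_k \F_k(\rho_k) \geq \liminf_k \E(\rho_k) + \lim_k \V(\rho_k) + \liminf_k \V_k(\rho_k) \geq \E(\rho) + \V(\rho) + \V_\Omega(\rho) = \F(\rho),
\end{equation*}
using the $\liminf$ inequality for $\V_k$ established above. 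The recovery sequence $\rho_k \equiv \rho \in \P_2(\Rd)$ again works, since then $\E(\rho_k)$ and $\V(\rho_k)$ are constant while $\V_k(\rho) \to \V_\Omega(\rho)$ by the pointwise statement. The main obstacle throughout is the first case of the $\liminf$ step, whose resolution is the Portmanteau-plus-coercivity argument outlined above; everything else is essentially bookkeeping.
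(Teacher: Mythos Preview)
Your proof is correct and follows essentially the same approach as the paper: both argue the $\liminf$ inequality by a Portmanteau-plus-coercivity argument on a ball compactly contained in $\Omega^c$, and both take the constant recovery sequence. One small citation slip: the fact that $V_k = 0$ on $\Omega$ comes from assumption (\ref{Vkinftyas}), not (\ref{Vkas}); otherwise your argument is slightly more explicit than the paper's (you invoke continuity of $V_k$ to extend $V_k=0$ to $\overline{\Omega}$, whereas the paper writes $V_k \le V_\Omega$ pointwise, which implicitly uses the same fact).
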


\begin{proof}
We first establish item (\ref{eq:gamma conv def}) for the energies $\V_k$ and $\V_\Omega$. Without loss of generality, we may assume $\liminf_{k \to +\infty} \V _k(\rho_k)<+\infty$ so, up to a subsequence, \begin{align} \label{Vkbdd}
\sup_{k \in \mathbb{N}} \V_k(\rho_k) <+\infty .
\end{align}

 To show inequality (\ref{eq:gamma conv def}),   
 it suffices to prove that $\supp \rho \subseteq \overline{\Omega}$, since   $\V_k(\rho_k)$ is nonnegative and   $\V_\Omega(\rho)$ would equal zero. Suppose, for the sake of contradiction that $\supp \rho \not \subseteq \overline{\Omega}$, so that there exists $x \in \overline{\Omega}^c$  
  and an open ball $B$ containing $x$ so that $B  \subset \subset \overline{\Omega}^c$ and $\rho(B)>0$. By the Portmanteau theorem, the fact that $\rho_k \to \rho$ narrowly ensures $\liminf_{k \to +\infty} \rho_k(B) \geq \rho(B) >0$. Thus, up to taking another subsequence, we may assume that there exists $\delta >0$ so that $\rho_k(B) \geq \delta$ for all $k \in \mathbb{N}$. By definition of $\V_k$, this implies,
 \begin{align*}
 \liminf_{k \to +\infty} \int_\Rd V_k d \rho_k \geq  \liminf_{k \to +\infty} \int_{B} V_k  d \rho_k \geq  \liminf_{k \to +\infty} \left( \inf_{x \in B} V_k(x) \right) \rho_k(B)  \geq \delta \liminf_{k \to +\infty} \left( \inf_{x \in B} V_k(x) \right) = +\infty ,
 \end{align*}
 where the last inequality follows from Assumption (\ref{Vkinftyas}) on $V_k$. This contradicts (\ref{Vkbdd}). Thus, we must have $\supp \rho \subseteq \overline{\Omega}$, which completes the proof of item (\ref{eq:gamma conv def}). Note that, since $\E$ and $\V$ are lower semicontinuous and bounded below, it follows immediately that inequality (\ref{eq:gamma conv def}) holds for the energies $\F_k$ and $\F$.

 It remains to prove item (\ref{newenergygammapart1}). To this end, we note that we may write $\V_\Omega(\rho) = \int V_\Omega \, d\rho$, where $V_\Omega(x)$ is given by (\ref{eq:VOmega def}). 
 Assumption (\ref{Vkinftyas}) on $V_k$ implies $V_k(x) \leq V_\Omega(x)$ for all $x \in \Rd$. Therefore we find,
\begin{align*}
\limsup_{k \to +\infty} \V_k(\rho)  =  \limsup_{k \to +\infty} \int V_k d \rho \leq \int V_\Omega d \rho =  \V_\Omega(\rho),
\end{align*}
and thus conclude by recalling the definitions of $\V_k$ and $\V$. Likewise, we also obtain (\ref{newenergygammapart1}) for $\F_k$ and $\F$.
\end{proof}

As a corollary of Theorems \ref{newenergygammaprop} and \ref{Vkgammathm}, we obtain the result of Theorem \ref{minimizersthm}: minimizers   of $\F_{\ep,k}$ converge to a minimizer of $\F$. The additional assumptions we add -- that   $V_k $ are all greater than $V_1$ and the sublevel sets of $V_1$ are compact -- are natural in the context of taking the $V_k$'s to be diverging to $+\infty$ off of $\overline{\Omega}$. 

\begin{proof}[Proof of Theorem \ref{minimizersthm}]
First, we show that $\F$ has a unique minimizer. Suppose that $\rho_0$ and $\rho_1$ are both minimizers of $\F$. Since $\F$ is proper, we have $\F(\rho_0), \F(\rho_1) < +\infty$, so $\supp \rho_1, \supp\rho_2 \subseteq \overline\Omega$. Thus,
\begin{align*}
\F((1-\alpha) \rho_0 + \alpha \rho_1)  &= \frac12 \int_\Omega \frac{|(1-\alpha)\rho_0 + \alpha \rho_1|^2}{\bar{\rho}} + \int_\Omega V d ((1-\alpha) \rho_0 + \alpha \rho_1) \\
&= (1-\alpha) \F(\rho_0) + \alpha \F(\rho_1) - \alpha(1-\alpha) \int_\Omega \frac{|\rho_0-\rho_1|^2}{\bar{\rho}} .
\end{align*}
Since $\bar{\rho}$ is uniformly bounded above, we must have $\rho_0=\rho_1$, else $\F((1-\alpha) \rho_0 + \alpha \rho_1) < \F(\rho_0)$ for $\alpha \in (0,1)$, contradicting the choice of $\rho_0$ as a minimizer.

Now, we show minimizers of $\F_{\ep,k}$ converge to the unique minimizer of $\F$. Again, using that  $\F$ is proper,  take $\nu\in \mathcal{P}_2(\rr^d)$ such that $\F(\nu) <+\infty$, so in particular, $\V_\Omega(\nu) = 0 = \V_k(\nu)$. Since $\rho_{\ep,k}$ minimizes $\F_{\ep,k}$, using the fact from Theorem \ref{newenergygammaprop} that $\E_\ep + \V_\ep$ $\Gamma$-converges to $\E + \V$, we have
\begin{align} \label{minconvineq1}
\limsup_{\ep \to 0, k\rightarrow +\infty}\F_{\ep,k}(\rho_{\ep,k}) &\leq \limsup_{\ep \to 0, k\rightarrow +\infty}\F_{\ep,k}(\nu) = \limsup_{\ep \to 0, k \to +\infty} \E_\ep(\nu) + \V_\ep(\nu) + 0 \\
& = \E (\nu) + \V(\nu) + 0 = \F(\nu)< +\infty. \nonumber
\end{align}
Thus,  we may assume that, up to a subsequence, $\F_{\ep,k}(\rho_{\ep,k})$ is uniformly bounded above in $\ep>0$, $k\in \mathbb{N}$. Since $\E_\ep$ and $\V_\ep$ are bounded below uniformly in $\epsilon >0$, $\V_k(\rho_{\ep,k})$ must be   bounded above uniformly in $\ep>0$ and $k \in \mathbb{N}$. Next,   the assumption $V_{k}\geq V_1$ implies,
\[
\sup_{\ep >0, k\in \N}\int V_1\, d\rho_{\ep,k}\leq \sup_{\ep >0, k\in \N}\int V_k\, d\rho_{\ep,k} \leq \sup_{\ep >0, k\in \N}\V_{k}(\rho_{\ep,k})<+\infty.
\]
Together with the fact that the sublevel sets of $V_1$ are compact, this  guarantees that the sequence $\rho_{\ep,k}$ is tight; see \cite[Remark 5.1.5]{ambrosio2008gradient}. Thus, up to another subsequence, there exists $\rho \in \P(\Rd)$ so that $\rho_{\ep,k} \to \rho$. 
By inequality (\ref{minconvineq1}),  Theorem \ref{newenergygammaprop}, and Theorem \ref{Vkgammathm}, we have,  for any $\nu \in D(\F)$,
\begin{align*}
 \F(\nu) &= \E(\nu) + \V(\nu) + \V_\Omega(\nu) = \lim_{\ep \to 0, k\to +\infty} \E_\ep(\nu) + \V_\ep(\nu) + \V_k(\nu)  = \lim_{\ep \to 0, k\to +\infty} \F_{\ep,k}(\nu) \geq  \\
 &\quad \geq \liminf_{\ep \to 0, k\to +\infty} \F_{\ep,k}(\rho_{\ep,k}) \geq  \liminf_{\ep \to 0, k\to +\infty} \Big( \E_{\ep}(\rho_{\ep,k}) +  \V_{\ep}(\rho_{\ep,k}) \Big) +  \liminf_{\ep \to 0, k\to +\infty} \V_{k}(\rho_{\ep,k}) .
 \end{align*}
Next, we choose subsequences that attain the $\liminf$, and then apply Theorem \ref{Vkgammathm}, to find,
\begin{align*}
 \F(\nu)&\geq   \lim_{n \to +\infty} \Big( \E_{\ep_n}(\rho_{\ep_n,k_n}) +  \V_{\ep}(\rho_{\ep_n,k_n}) \Big) +  \lim_{m  \to +\infty} \V_{k_m}(\rho_{\ep_m,k_m}) \\
 &\geq \E(\rho) + \V(\rho) + \V_\Omega(\rho) = \F(\rho)   .
 \end{align*}

Since $\nu$ was an arbitrary measure in the domain of $\F$, this shows $\rho $ is the unique minimizer of $\F$.
Finally, since the above argument shows that every subsequence of $\rho_{\ep, k}$  has a further subsequence that converges to $\rho$,  the original sequence $\rho_{\ep,k}$ must converge to $\rho$.
\end{proof}

We now turn our attention from minimizers to gradient flows and prove that ``almost'' curves of maximal slopes of $\F_k$ converge to a gradient flow of $\F$ as $k \to +\infty$.

\begin{prop} \label{ktoinftyprop}
Assume   (\ref{Omegaas}), (\ref{Vas}), (\ref{Vkas}), (\ref{Vkinftyas}), (\ref{targetas}), and that $\bar{\rho}$ is log-concave on $\Omega$. Fix $T>0$. For $k \in \mathbb{N}$, let $\rho_k \in AC^2([0,T];\P_2(\Rd))$ be an ``almost'' curve of maximal slope of $\F_k$, in the sense of Definition \ref{almostcms}, and suppose  there exists $\rho(0) \in D(\F) \cap \P_2(\Rd)$ such that
\begin{align} \label{kinftyichyp}
\rho_k(0) \xrightarrow{k \to +\infty} \rho(0) \text{ narrowly }, \quad \lim_{k \to+\infty} \F_k(\rho_k(0)) = \F(\rho(0)) \ , \quad \text{ and }\quad \sup_{k \in \mathbb{N}} M_2(\rho_k(0)) < +\infty.
\end{align}
Then
\begin{align} \label{W1convkinfty}
\lim_{k \to +\infty} W_1(\rho_k(t), \rho(t)) = 0, \text{ uniformly for } t \in [0,T],
\end{align}
where $\rho \in AC^2([0,T];\P_2(\Rd))$ is the unique gradient flow of $\F$ with initial conditions $\rho(0)$.
\end{prop}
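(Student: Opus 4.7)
The plan is to apply Proposition~\ref{prop:conv of almost cms} with $\F_\alpha = \F_k$ and limit energy $\F$. The required $\Gamma$-convergence $\F_k \to \F$ is Theorem~\ref{Vkgammathm}; the uniform lower bound $\F_k(\mu)\geq -\|V\|_{L^\infty(\rr^d)}$ is immediate from $\E,\V_k\geq 0$ and (\ref{Vas}); and (\ref{kinftyichyp}) supplies the hypothesis (\ref{hyp:gen prop F(0)}). The ``almost'' curve of maximal slope inequality (\ref{almostcurvemaxslope1a}) satisfied by each $\rho_k$ plays the role of (\ref{hyp:gen prop alpha curve max slope}). Consequently, along a subsequence, there exists $\rho\in AC^2([0,T];\P_2(\rr^d))$ with $W_1(\rho_k(t),\rho(t))\to 0$ uniformly in $t\in[0,T]$ and
\[
\tfrac12\int_0^t|\rho'|^2(r)\,dr + \tfrac12\int_0^t \Bigl(\liminf_{k\to\infty}\int_{\rr^d}|\bEta_k(r)|^2\,d\rho_k(r)\Bigr)\,dr \leq \F(\rho(0))-\F(\rho(t)), \quad t\in[0,T].
\]

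The decisive step is to bound this liminf from below by $|\partial\F|^2(\rho(r))$ at a.e.\ $r$. First, the dissipation inequality combined with (\ref{kinftyichyp}) gives $\sup_{k,t}\V_k(\rho_k(t))<+\infty$, and the Portmanteau-type argument from the proof of Theorem~\ref{Vkgammathm} forces $\supp\rho(t)\subseteq\overline{\Omega}$ for every $t\in[0,T]$. Next, fixing $r$ at which the liminf is finite, a further subsequence extraction together with \cite[Theorem~5.4.4(ii)]{ambrosio2008gradient} applied to the vector-valued measures $\bEta_k(r)\rho_k(r)$ yields $\bxi(r)\in L^2(\rho(r);\rr^d)$ with $\bEta_k\rho_k \wsto \bxi\rho$ weakly and
\[
\liminf_{k\to\infty}\int_{\rr^d}|\bEta_k(r)|^2\,d\rho_k(r) \geq \int_{\rr^d}|\bxi(r)|^2\,d\rho(r).
\]
To identify $\bxi$, I would test the defining identity $\bEta_k\rho_k = \tfrac{\bar\rho}{2}\nabla(\rho_k^2/\bar\rho^2) + \rho_k\nabla V + \rho_k\nabla V_k$ against $\phi\in C^\infty_c(\Omega;\rr^d)$: since $V_k\equiv 0$ on $\Omega$ and $V_k\geq 0$, the minimum is attained on $\Omega$, so $\nabla V_k\equiv 0$ there and the last term drops. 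The uniform $L^2(\rr^d)$ bound on $\rho_k$ (from $\F_k(\rho_k(t))\leq \F_k(\rho_k(0))\leq C$), combined with the weighted control of $\nabla(\rho_k^2/\bar\rho^2)$ extracted from $\int_0^T\int|\bEta_k|^2 d\rho_k<\infty$ and Rellich--Kondrachov, provides strong $L^2_{\loc}(\Omega)$ convergence of a further subsequence of $\rho_k$ to $\rho$. Passing to the limit then yields $\bxi\rho = \tfrac{\bar\rho}{2}\nabla(\rho^2/\bar\rho^2) + \rho\nabla V$ on $\Omega$, so by Proposition~\ref{prop:subdiff char ep=0} we have $\bxi\in\partial^\circ\F(\rho(r))$ and by Proposition~\ref{metricslopesubdifabscts} we conclude $\|\bxi\|_{L^2(\rho(r))}\geq |\partial\F|(\rho(r))$.

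Combining the two steps gives the integrated curve of maximal slope inequality
\[
\tfrac12\int_0^t|\rho'|^2(r)\,dr + \tfrac12\int_0^t|\partial\F|^2(\rho(r))\,dr \leq \F(\rho(0))-\F(\rho(t)), \quad t\in[0,T].
\]
The log-concavity of $\bar\rho$ on $\Omega$ ensures, via Proposition~\ref{prop:convexity ep=0}, that $\F$ is proper, lower semicontinuous, and convex along generalized geodesics, so Theorem~\ref{curvemaxslope} identifies $\rho$ as the unique gradient flow of $\F$ with initial datum $\rho(0)$. Uniqueness of the limit upgrades the subsequential convergence to the full sequence, yielding (\ref{W1convkinfty}). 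The principal obstacle is the identification of $\bxi$ in the previous paragraph: extracting enough compactness on $\rho_k$ to pass to the limit in the nonlinear term $\nabla(\rho_k^2/\bar\rho^2)$ using only the a priori information implicit in the dissipation inequality, without an analogue of the $H^1$ estimate Theorem~\ref{prop:H1 bd} (which pertained to the $\ep\to 0$ limit). The log-concavity hypothesis is then essential a second time, both through its role in making $|\partial\F|$ a strong upper gradient and through the explicit subdifferential characterization of Proposition~\ref{prop:subdiff char ep=0} used in the identification.
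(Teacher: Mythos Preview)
Your proposal is correct and follows essentially the same route as the paper: apply Proposition~\ref{prop:conv of almost cms} via the $\Gamma$-convergence of Theorem~\ref{Vkgammathm}, then identify the weak limit $\bxi$ of $\bEta_k\rho_k$ on $\Omega$ by exploiting $\nabla V_k\equiv 0$ there together with $W^{1,1}(\Omega)$ compactness of $(\rho_k/\bar\rho)^2$, and finally invoke Proposition~\ref{prop:subdiff char ep=0} and Theorem~\ref{curvemaxslope}. The only organizational difference is that you apply \cite[Theorem~5.4.4(ii)]{ambrosio2008gradient} first to produce $\bxi\in L^2(\rho)$ and then identify it on $\Omega$ (which suffices since $\supp\rho\subseteq\overline{\Omega}$), whereas the paper first verifies the weak convergence against all $f\in C^\infty_c(\rr^d)$ via a cutoff argument and then applies the lower-semicontinuity result; your ordering slightly streamlines the argument, and your concern in the final paragraph about compactness is unfounded---the $W^{1,1}$ bound you extract from the dissipation is exactly what the paper uses.
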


\begin{proof}
Recall from Theorem \ref{Vkgammathm} that $\F_k$ $\Gamma$-converges to $\F$. Thus, by Proposition \ref{prop:conv of almost cms},  we find that there exists $\rho \in AC^2([0,T];\P_2(\Rd))$ so that, up to a subsequence,  (\ref{W1convkinfty}) holds and
\begin{align} \frac12 \int_0^t |\rho'|^2(r) dr +  \frac12 \int_0^t \left( \liminf_{k\rightarrow \infty} \int_{\mathbb{R}^d} |\bEta_k(r)|^2 d \rho_k(r) \right) dr \leq \F(\rho(0)) - \F(\rho(t))  \quad \text{ for all } 0 \leq t \leq T .   \label{conc:almostcurvemaxslope}
\end{align}
  
Furthermore, by Definition \ref{almostcms} of an ``almost'' curve of maximal slope, we see that $\F_k(\rho_k(t))\leq \F_k(\rho_k(0))$ for all $k \in \mathbb{N}$, $t\in [0,T]$. Combining this with (\ref{kinftyichyp}) yields 
\begin{align} \label{energydecayVk}
  & \sup_{t \in [0,T], k \in \mathbb{N}}  \F_k(\rho_k(t)) \leq  \sup_{k \in \mathbb{N}} \F_k(\rho_k(0)) <+\infty.
  \end{align}
Furthermore, since $\rho_k(t) \xrightarrow{W_1} \rho(t)$, hence narrowly, for all $t \in [0,T]$, Theorem \ref{Vkgammathm}  implies,
\begin{align} \label{Fkboudningamma}
 \sup_{t \in [0,T]} \F(\rho(t)) \leq \sup_{t  \in [0,T]} \liminf_{k \to +\infty} \F_{k}(\rho_k(t))  \leq \sup_{t \in [0,T], k \in \mathbb{N}}\F_k(\rho_k(t)) < +\infty .
\end{align}

Let us use $A_k$ to denote the measure zero subset of $[0,T]$ on which condition (\ref{almostcurvemaxslope1b}) of Definition \ref{almostcms} fails, and note that $\cup_{k=1}^{+\infty} A_k$ is likewise a set of measure zero, so that for almost every $t \in [0,T]$, condition (\ref{almostcurvemaxslope1b}) holds for all $k \in \mathbb{N}$.   The remainder of the proof will be devoted to establishing that, for almost every $t \in [0,T]$, we have 
\begin{align} \label{metricslopektoinfty1}
\liminf_{k \to +\infty} \int_{\mathbb{R}^d} |\bEta_k(t)|^2 d \rho_k(t)  \geq   \int_{\mathbb{R}^d} |\bEta(t)|^2 d \rho(t) ,
\end{align}
for $\rho$ and $\bEta$ satisfying
\begin{align} \label{rhoetaconditions1} & (\rho(t)/\bar{\rho})^2 \in W^{1,1}_\loc(\Omega) \\
 &\bEta(t) \rho(t) = \frac{\bar{\rho}}{2} \nabla (\rho(t)/\bar{\rho})^2 + \nabla V \rho(t) \text{  on } \Omega.  \label{rhoetaconditions2}
\end{align}
Indeed, combining inequalities (\ref{conc:almostcurvemaxslope}) and (\ref{metricslopektoinfty1}) yields
that $\rho$ satisfies
\begin{align*}
\frac12 \int_0^t |\rho'|^2(r) dr + \frac12 \int_0^t \int_{\mathbb{R}^d} |\bEta(r)|^2 d \rho(r) dr \leq \F(\rho(0)) - \F(\rho(t))  \ , \quad \text{ for all } 0 \leq t \leq T.   
\end{align*}
 Proposition \ref{prop:subdiff char ep=0}, which characterizes the metric slope of $\F$, and Theorem \ref{curvemaxslope}, imply that $\rho$ is the unique gradient flow of $\F$ with initial data $\rho(0)$. Finally, we remark that this  argument  shows that every subsequence of $\rho_k$ has a further subsequence that converges to $\rho$ in the  sense (\ref{W1convkinfty}), implying that the original sequence must also converge to $\rho$.

For almost every $t \in [0,T]$,  inequality (\ref{conc:almostcurvemaxslope}) ensures that the left-hand side of (\ref{metricslopektoinfty1}) is finite. Fix such a $t \in [0,T]$.  Since the left-hand side of (\ref{metricslopektoinfty1}) is finite, passing to a subsequence in $k$, we may assume
\begin{equation}
\label{eq:etak L2 bdd}
   \sup_{k\in \mathbb{N}} \| \bEta_k(t)\|_{L^2(\rho_k(t))}  <+\infty .
\end{equation}
To conclude the proof, it remains to show that  (\ref{metricslopektoinfty1}), (\ref{rhoetaconditions1}), and (\ref{rhoetaconditions2}) hold at this time. From now on, we will suppress dependence on $t$, for simplicity of notation.

Since $\V$ and $\V_k$ are bounded below, inequality (\ref{energydecayVk})  implies, 
\begin{align} \label{unifL2bound}
\left( \frac{\inf \bar{\rho}}{2} \right) \sup_{k \in \mathbb{N} } \|\rho_k\|_{2}^2 \leq  \sup_{k \in \mathbb{N} } \frac{1}{2} \int_\Rd \frac{|\rho_k|^2}{\bar{\rho}} = \sup_{k \in \mathbb{N}} \E(\rho_k) < +\infty .
\end{align}
Likewise, inequality (\ref{Fkboudningamma}) and the definition of $\F$   implies   
$\supp \rho(t) \subseteq \overline{\Omega}$. 

Next, we note that, since   Assumption (\ref{Vas}) ensures $\nabla V \in L^\infty$, applying the triangle inequality and  inequality (\ref{eq:etak L2 bdd}) yields,
\begin{align}
\sup_k \int \left| \frac{\bar{\rho}}{2} \nabla (\rho_k/\bar{\rho})^2  +\nabla V_k \rho_k \right| &= \sup_k  \left\| \bEta_k - \nabla V  \right\|_{L^1(\rho_k)} \leq \sup_k  \left\|  \bEta_k - \nabla V  \right\|_{L^2(\rho_k)}   \label{Fkslopederbound}  <+\infty.  
\end{align}

By \cite[Theorem 5.4.4]{ambrosio2008gradient}, provided we have (\ref{rhoetaconditions1}) and (\ref{rhoetaconditions2}),  in order to show (\ref{metricslopektoinfty1}), it suffices to show
\begin{align}
&\liminf_{k \to +\infty } \int_\Rd f \left( \frac{\bar{\rho}}{2} \nabla (\rho_k/\bar{\rho})^2 + \nabla V \rho_k +\nabla V_k \rho_k \right)   =  \int_{\Omega} f \left( \frac{\bar{\rho}}{2} \nabla (\rho/\bar{\rho})^2 + \nabla V \rho \right)   \ \text{ for all }f \in C^\infty_c(\Rd), \label{Fkgammasuffice2}
\end{align}
where we use that $\rho = 0$ a.e. on $\Omega^c$.
By Assumption (\ref{Vas}) on $V$, we have $f \nabla V \in C_b(\Rd)$, so since $\rho_k$ narrowly converges to $\rho$, we find,
\begin{align*}
\liminf_{k \to +\infty } \int_\Rd f \nabla V  \rho_k = \int_{\Omega} f \nabla V \rho \ , \quad \text{ for a.e. } t \in [0,T].
\end{align*} 
Thus, (\ref{Fkgammasuffice2}) is equivalent to the claim that, for all $f \in C^\infty_c(\Rd)$,
\begin{align} \label{Vksloperemains}
\liminf_{k \to +\infty } \int_\Rd f \left( \frac{\bar{\rho}}{2} \nabla (\rho_k/\bar{\rho})^2  +\nabla V_k \rho_k \right) =  \int_{\Omega} f  \frac{\bar{\rho}}{2} \nabla (\rho/\bar{\rho})^2  . 
\end{align}
We will establish (\ref{Vksloperemains}) for test functions $f \in C^\infty_c(\Omega)$. Then, we will  extend to the general case of $f \in C^\infty_c(\Rd)$ via a cutoff function to obtain  (\ref{Vksloperemains}).

First, we consider the region $\Omega$.  By Assumption 
(\ref{Vkinftyas}), which ensures $V_k$ vanishes on $\Omega$ for all $k$, inequality (\ref{Fkslopederbound}) implies,
\begin{align} \label{labelunifW1bd}
\left( \frac{\inf \bar{\rho} }{2}  \right)  \sup_k  \int_{\Omega} \left|  \nabla (\rho_k/\bar{\rho})^2    \right| \leq  \sup_k  \int_{\Omega} \left| \frac{\bar{\rho}}{2} \nabla (\rho_k/\bar{\rho})^2    \right| <+\infty . 
\end{align}
Since $(\rho_k/\bar{\rho})^2 \in W^{1,1}(\rr^d)$, combining (\ref{unifL2bound}) and (\ref{labelunifW1bd}), we obtain that $(\rho_k/\bar{\rho})^2$ is bounded in $W^{1,1}({\Omega})$. Thus, up to a subsequence, $(\rho_k/\bar{\rho})^2$ converges in $L^1({\Omega})$ and almost everywhere to some $g \in L^1({\Omega})$ with $g \geq 0$. 
  Furthermore,
\begin{align*}
 \left\|\frac{\rho_k}{\bar{\rho}} - \sqrt{g} \right\|_{L^1({\Omega})} & \leq \sqrt{|{\Omega}|}  \left\|\frac{\rho_k}{\bar{\rho}} - \sqrt{g} \right\|_{L^2({\Omega})} \leq \sqrt{|{\Omega}|} \left( \int_{\Omega} \left| \frac{\rho_k}{\bar{\rho}} - \sqrt{g} \right| \left(\frac{\rho_k}{\bar{\rho}} + \sqrt{g} \right) \right)^{1/2} \\
 & =   \sqrt{|{\Omega}|} \left( \int_{\Omega} \left|\left(\frac{\rho_k}{\bar{\rho}}\right)^2 - g \right|   \right)^{1/2} ,
\end{align*}
so $\rho_k/\bar{\rho} \to \sqrt{g}$ in $L^1({\Omega})$. Combining this with the fact that $\rho_k \to \rho $ narrowly, we obtain $\sqrt{g} = \rho/\bar{\rho}$ a.e. on ${\Omega}$. Therefore, for all $f\in C^\infty_c(\Omega)$, the fact that $ V_k$ vanishes on $\overline{\Omega}$ ensures,
\begin{align}\label{Fkgammalongeqn}
\liminf_{k \to +\infty } \int_{\Omega} f \left( \frac{\bar{\rho}}{2} \nabla (\rho_k/\bar{\rho})^2  + \nabla V_k \rho_k \right) &= \liminf_{k \to +\infty } \int_{\Omega} f \frac{\bar{\rho}}{2} \nabla (\rho_k/\bar{\rho})^2    = - \liminf_{k \to +\infty } \int_{\Omega} \nabla \left( f  \frac{\bar{\rho}}{2} \right)  (\rho_k/\bar{\rho})^2    \\
&= - \int_{\Omega} \nabla \left( f  \frac{\bar{\rho}}{2} \right) (\rho/\bar{\rho})^2  . \nonumber
\end{align}
By inequality (\ref{Fkslopederbound}), the left hand side of the equation may be bounded above by,
\[ \sup_k \|f\|_\infty \left\|\frac{\bar{\rho}}{2} \nabla (\rho_k/\bar{\rho})^2  + \nabla V_k \rho_k \right\|_{L^1(\Rd)} \leq  \|f\|_\infty (\sup_k \|\bEta_k\|_{L^2(\rho_k)}+ \|\nabla V\|_\infty ) ,\]
which is finite by (\ref{eq:etak L2 bdd}). Thus,  we conclude $(\rho/\bar{\rho})^2 \in BV(\Omega)$. Thus, for all $f \in C^\infty_c(\Rd)$, 
\begin{align} \label{eqnafterlongone}
 - \int_{\Omega} \nabla \left( f  \frac{\bar{\rho}}{2} \right) (\rho/\bar{\rho})^2 =  \int_{\Omega}  f  \frac{\bar{\rho}}{2} \nabla (\rho/\bar{\rho})^2 .
 \end{align}

Next, we seek to apply the Riesz Representation Theorem to the operator,
\[
L(f) = \int_{\Omega}  f  \frac{\bar{\rho}}{2} \nabla (\rho/\bar{\rho})^2 .  
\]
 We first verify the boundedness of this operator on $L^2(\rho;\Omega)$. To this end, we use the definition of $L$ and the equalities (\ref{Fkgammalongeqn}) and (\ref{eqnafterlongone}) to find,
 \[
 L(f) = \liminf_{k \to +\infty } \int_{\Omega} f \left( \frac{\bar{\rho}}{2} \nabla (\rho_k/\bar{\rho})^2  + \nabla V_k \rho_k \right) .
 \]
 Recalling the definition of $\bEta_k$,  then using H\"older's inequality, and  using  the boundedness of $\nabla V$, we obtain,
 \begin{align*}
 \int_{\Omega} \left|f \left( \frac{\bar{\rho}}{2} \nabla (\rho_k/\bar{\rho})^2  + \nabla V_k \rho_k \right) \right| &= \int_{\Omega}|f (\bEta_k\rho_k - \nabla V \rho_k)|\\
 &\leq \|f\|_{L^2(\rho_k;\Omega)}(\sup_k \|\bEta_k\|_{L^2(\rho_k)}+\|\nabla V\|_{L^\infty(\Rd)}) .
 \end{align*}
 Finally, taking the limit in $k$, and using the narrow convergence of $\rho_k$ to $\rho$, we find that the desired bound on $L$ holds:
 \[
 |L(f)| \leq \|f\|_{L^2(\rho;\Omega)} \left(\sup_k \|\bEta_k\|_{L^2(\rho_k)} + \|\nabla V\|_{L^\infty(\rr^d)}\right),
 \]
  where we use  the estimate (\ref{eq:etak L2 bdd}) to see that the first term in the parenthesis is finite.

Thus, by the Riesz Representation theorem, there exists $\mathbf{w} \in L^2(\rho;{\Omega})$ so that, 
\[ \int f   \mathbf{w} \rho = \int f \frac{\bar{\rho}}{2} \nabla (\rho/\bar{\rho})^2  , \quad \text{ for all } f \in C^\infty_c({\Omega} ) .\]
Since $\|\mathbf{w} \rho \|_{L^1(\mathcal{L}^d;{\Omega})} = \|\mathbf{w}\|_{L^1(\rho;{\Omega})} \leq  \|\mathbf{w}\|_{L^2(\rho;{\Omega})} $, this shows that $(\rho/\bar{\rho})^2 \in W^{1,1}(\Omega)$, so (\ref{rhoetaconditions1}) holds. Likewise, $\bEta:= \mathbf{w} + \nabla V \in L^2(\rho)$   satisfies the conditions of  (\ref{rhoetaconditions2}). Finally, integrating by parts on the right hand side of (\ref{Fkgammalongeqn}) gives (\ref{Vksloperemains}) for all $f \in C^\infty_c(\Omega)$. 

It remains to show that (\ref{Vksloperemains}) holds for all $f \in C^\infty_c(\Rd)$. By the fact that we just showed it holds for test functions in  $ C^\infty_c(\Omega)$, for any smooth cutoff function $0 \leq \phi \leq 1$  that is compactly supported in $\Omega$, we have,
\begin{align*}
\liminf_{k \to +\infty } \int_\Rd f \left( \frac{\bar{\rho}}{2} \nabla (\rho_k/\bar{\rho})^2  +\nabla V_k \rho_k \right)&= \liminf_{k \to +\infty } \int_\Rd (f \phi + f (1-\phi)) \left( \frac{\bar{\rho}}{2} \nabla (\rho_k/\bar{\rho})^2  +\nabla V_k \rho_k \right) \\
& =  \underbrace{\int_{\Omega} f \phi \frac{\bar{\rho}}{2} \nabla (\rho/\bar{\rho})^2}_{I_1}  + \underbrace{\liminf_{k \to +\infty } \int_\Rd f (1-\phi) \left( \frac{\bar{\rho}}{2} \nabla (\rho_k/\bar{\rho})^2  +\nabla V_k \rho_k \right)}_{I_2} .
\end{align*}
To estimate $I_1$, note that, 
\begin{align*}
\left|f \phi \frac{\bar{\rho}}{2} \nabla (\rho/\bar{\rho})^2 \right|    \leq \frac{\|f\|_\infty \|\bar{\rho}\|_\infty}{2} | \mathbf{w} \rho | \in L^1(\mathcal{L}^d;\Omega) .
\end{align*}
To estimate   $I_2$, note that,
\begin{align*}
I_2 \leq \liminf_{k \to +\infty} \| f(1-\phi)\|_{L^2(\rho_k)} \left( \| \bEta_k\|_{L^2(\rho_k)} + \|\nabla V\|_\infty  \right)\leq \|f(1-\phi)\|_{L^2(\rho)}  \left(\sup_k \| \bEta_k\|_{L^2(\rho_k)} + \|\nabla V\|_\infty \right) ,
\end{align*}
where, 
\begin{align*}
|f(1-\phi)|^2 \rho \leq |f|^2 \rho \in L^1(\mathcal{L}^d;\Omega) .
\end{align*}
Thus, by the dominated convergence theorem, for all $\delta >0$, choosing $\phi$ sufficiently  close to $1$ pointwise on $\overline{\Omega}$, we obtain,
\begin{align*}
\left| \liminf_{k \to +\infty } \int_\Rd f \left( \frac{\bar{\rho}}{2} \nabla (\rho_k/\bar{\rho})^2  +\nabla V_k \rho_k \right) - \int_{\Omega} f   \frac{\bar{\rho}}{2} \nabla (\rho/\bar{\rho})^2  \right| \leq \left| I_1 - \int_{\Omega} f   \frac{\bar{\rho}}{2} \nabla (\rho/\bar{\rho})^2 \right| + I_2 \leq  \delta . 
\end{align*}
Since $\delta>0$ was arbitrary, this completes the proof of (\ref{Vksloperemains}).

\end{proof}

We conclude with the proof of Theorem \ref{newmaintheorem}.

\begin{proof}[ Proof of Theorem \ref{newmaintheorem}]
As in the statement of the theorem, let $\rho\in AC^2([0,T];\P_2(\Rd))$ be the unique gradient flow of $\F$ with initial condition $\rho(0)$, the existence of which is guaranteed by Proposition \ref{prop:PDE ep=0}.  
By Theorem \ref{newenergygammaprop}, for all $k \in \mathbb{N}$,
\begin{align*}
\lim_{\ep \to 0} \F_{\ep,k}(\rho(0)) = \F_k(\rho(0)),
\end{align*}
so by Proposition \ref{prop:epstozerofixedk}, there exists and ``almost'' curve of maximal slope $\rho_k \in AC^2([0,T];\P_2(\Rd))$ and a  subsequence $\{\epsilon^{(k)}_j\}_{j=1}^\infty$, depending on $k$,  so that  
\begin{align} \label{NEWepssubseqconvunif} \lim_{j \to +\infty} W_1(\rho_{\ep^{(k)}_j,k}(t), \rho_k(t)) = 0 \text{ uniformly for } t \in [0,T].
\end{align}
In particular, for each $k \in \mathbb{N}$, there exists  $\epsilon_k>0$ so that $\lim_{k \to +\infty} \epsilon_k = 0$ and 
\begin{align}  W_1(\rho_{\ep_k,k}(t), \rho_k(t)) < \frac{1}{k} \text{ for all } t \in [0,T].
\end{align}
Furthermore, since Theorem \ref{newenergygammaprop} ensures
\begin{align*}
\lim_{k \to +\infty} \F_k(\rho(0)) = \F(\rho(0)) ,
\end{align*}
Proposition \ref{ktoinftyprop} implies
\begin{align}
\lim_{k \to +\infty} W_1(\rho_k(t), \rho(t)) = 0, \text{ uniformly for } t \in [0,T]. \label{eq:121902}
\end{align}

Fix $\delta >0$. Choose $K_\delta >0$ so that, for all $k \geq K_\delta$, $W_1(\rho_k(t), \rho(t)) < \delta/2$ for all $t \in [0, T]$. Then, for all $k \geq \max\{2/\delta, K_\delta \}$, 
\begin{align*}
W_1(\rho_{\ep_k,k}(t), \rho(t)) &\leq W_1(\rho_k(t), \rho(t)) + W_1(\rho_{\ep_k, k}(t), \rho_k(t)) \leq \frac{\delta}{2} + \frac{1}{k} \leq \delta,
\end{align*}
for all $ t \in [0, T]$. This gives the result.
\end{proof}

\subsection{Extension to particle initial data and application to two-layer neural networks} \label{particlesection}
In the previous sections, we have shown that gradient flows of $\F_{\ep,k}$ with ``well-prepared'' initial data converge to a gradient flow of $\F$, as $k \to +\infty$, $\ep =\ep(k) \to 0$. Unfortunately, our assumption of ``well-preparedness'' requires that the initial data of $\F_{\ep,k}$ have bounded entropy (\ref{eq:assum grad flow conv thm entropy}), which is a crucial assumption for obtaining the $H^1$-type bound on the mollified gradient flow (Theorem \ref{prop:H1 bd}) and the lower semicontinuity of the metric slopes (Proposition \ref{prop:estimate for metric slopes}). This assumption explicitly excludes initial data given by an empirical measure.

We now use stability of the gradient flows of $\F_{\ep,k}$ to extend the convergence result to initial data given by an empirical measure, obtaining the proof of our third major theorem, Theorem \ref{thm:conv with particle i.d.}.  
This is based on the elementary fact that that any measure $\mu\in \mathcal{P}_2(\rr^d)$ can be approximated to arbitrary accuracy by an empirical measure. For lack of a reference, we recall this  in Lemma \ref{sketchlem2}. (In fact, our construction of the empirical measure   in the proof of Lemma \ref{sketchlem2} closely parallels what we employ in our numerical method.)
 It can be seen from the proof of Lemma \ref{sketchlem2}  that, if $\supp \mu \subseteq [-R, R)^d$, then  $N$  can be taken to be the smallest integer larger than $(2\sqrt{d}R/\delta)^d$. More generally, in order for an empirical measure constructed from $N$ i.i.d. samples of  a measure $\mu \ll \mathcal{L}^d$ to converge to $\mu$ in the Wasserstein metric, $N$ must scale like $O(1/\delta^d)$ \cite{dobric1995asymptotics,dudley1969speed}. Our requirement that the initial conditions of $\F_{\ep,k}$ have bounded entropy implies $\mu \ll \mathcal{L}^d$, so this scaling requirement   is sharp in our case.  However, if $\mu$ were permitted to concentrate on lower dimensional sets, recent work by Weed and Bach has shown these requirements can be weakened \cite{weed2019sharp}.

Once we have extended our result to particle initial data, in Theorem \ref{thm:conv with particle i.d.}, we are then able to quickly obtain our two main corollaries. Corollary \ref{quantcor} shows that, on bounded domains $\Omega$ and in the absence of an external potential $V$, the particle solution indeed approximates the target $\bar{\rho}$ in the long time limit. Next, Corollary \ref{2layernncor} shows that the overparametrized limit of two-layer neural networks converges, as the variance of the radial basis function goes to zero, to a solution of (\ref{mainpde}), which is the gradient flow of a convex energy.

We begin with the proof of Theorem \ref{thm:conv with particle i.d.}.

\begin{proof}[Proof of Theorem \ref{thm:conv with particle i.d.}]
First, let $\rho_{\ep,k}(t)$ be the gradient flow of $\F_{\ep,k}$ with initial data $\rho(0)$. By Theorem \ref{newmaintheorem}, as $k \to +\infty$, $\epsilon = \epsilon(k) \to 0$,
\begin{equation}
\label{eq:particle id rhoep wsto rho}
\lim_{  k \to +\infty} W_1(\rho_{\ep,k}(t), \rho(t)) = 0, \text{ uniformly for } t \in [0,T],
\end{equation}
 where $\rho(t)$ is the gradient flow of $\F$ with initial data $\rho(0)$. By Proposition \ref{prop:PDE ep=0}, $\rho$ is the unique weak solution of (\ref{mainpde}).
 Recall from Lemmas \ref{lem:semicontinuity}-\ref{Vlsc} and Propositions \ref{prop:convexity ep=0} and \ref{lem:Eepsconvex} that $\F_{\ep,k}$ is lower semicontinuous and $\lambda_\ep$-convex along generalized geodesics  with, 
\begin{equation}
\label{eq:use lambda ep}
\lambda_\ep  = -\ep^{-d-2} ||1/\bar\rho||_{L^\infty(\rr^d)}||D^2\zeta||_{L^\infty(\rr^d)}+ \inf_{\{x, \xi \in \Rd\}} \xi^t D^2 V(x) \xi,
\end{equation}
and note that $-\infty<\lambda_\ep\leq 0$. 
 
 By Proposition \ref{prop:ODE Eep},  the evolving empirical measure $\rho^N_{\ep,k}(t)$, as defined in the statement of Theorem \ref{thm:conv with particle i.d.},  is the unique gradient flow of $\F_{\ep,k}$ with initial data $\rho^N_{\ep,k}(0)$. 
By  (\ref{eq:particle id rhoep wsto rho}), it suffices to show that, as $k \to +\infty$, $\ep = \ep(k) \to 0$, , $N = N(\ep) \to +\infty$,
\begin{equation}
\label{eq:particle id nuep wsto rhoep}
W_1(\rho^N_{\ep,k}(t),\rho_{\ep,k}(t)) \to 0, \text{ uniformly for  }t\in [0,T]. 
\end{equation}

Since $\rho^N_{\ep,k}(t)$ and $\rho_{\ep,k}(t)$ are both gradient flows of the $\lambda_\epsilon$-convex energy $\F_{\ep,k}$,  classical stability estimates for gradient flows \cite[Theorem 11.2.1]{ambrosio2008gradient} ensure that, for all  $t\in [0,T]$,
\[
W_1(\rho^N_{\ep,k}(t),\rho_{\ep,k}(t)) \leq W_2(\rho^N_{\ep,k}(t), \rho_{\ep,k}(t))\leq e^{-\lambda_\ep t}W_2(\rho^N_{\ep,k}(0), \rho(0))  .\]
By hypothesis (\ref{rateofmu0N}), the right hand side goes to zero uniformly in $t \in [0,T]$, which completes the proof.
\end{proof}

We now apply this to obtain the proof of Corollary \ref{quantcor}.

\begin{proof}[Proof of Corollary \ref{quantcor}]
Let $\rho(t)$ be the solution of (\ref{mainpde}) with initial data $\rho(0)$, as in Theorem \ref{thm:conv with particle i.d.}. By Proposition \ref{prop:PDE ep=0}, $\rho$ is the unique gradient flow of $\F$ with initial data $\rho(0)$, so by Proposition \ref{longtime},
\[ \lim_{t \to +\infty } W_1\left(\rho(t),  {\indi_{\overline{\Omega}}\bar{\rho}} \right) \leq \lim_{t \to +\infty } W_2\left(\rho(t),  {\indi_{\overline{\Omega}}\bar{\rho}} \right) = 0 . \]
Combining this with Theorem \ref{thm:conv with particle i.d.} gives the result.
\end{proof}

We conclude with the proof of Corollary \ref{2layernncor}.

\begin{proof}[Proof of Corollary \ref{2layernncor}]
The evolving empirical measure $\rho^N_{\ep}(t)$, as defined in the statement of Corollary \ref{2layernncor}, coincides with the evolving empirical measure in Theorem \ref{thm:conv with particle i.d.}. Thus, the convergence of $\rho^N_{\ep,k}(t)$ to ${\rho}(t)$ is an immediate consequence of this theorem. 

Furthermore, by Proposition \ref{prop:PDE ep=0}, $\rho(t)$ is the unique gradient flow of $\F$. Expanding the square in the definition of $\mathcal{R}$ and applying Tonelli's theorem, as in equation (\ref{ReptoEep}), we see that $\F$ coincides with   $\mathcal{R}$, up to a constant. By Definitions \ref{subdiffdef} and \ref{gradientflowdef}, the gradient flows of two energies   coincide. Thus, $\rho(t)$ is the gradient flow of $\mathcal{R}$. Similarly, from Definition \ref{defi:semiconvexity-geod}, we see that adding or subtracting a constant from an energy does not affect its convexity properties. Thus, Proposition \ref{prop:convexity ep=0} ensures that $\mathcal{R}$  is convex.

\end{proof}

\section{Numerical Simulation} \label{numericssection}

We now implement the particle method described in Theorem \ref{thm:conv with particle i.d.},   demonstrating how the system of deterministic ordinary differential equations (\ref{mainepsode}-\ref{fdefintro})
can be used to numerically approximate solutions of the diffusive partial differential equation (\ref{mainpde}). Our numerical examples explore long time behavior of solutions, the effect of the confining potential $V_k$ on the dynamics, the decay of the KL divergence along particle method solutions, and the rate of convergence as $N\to +\infty$, $\ep \to 0$, for fixed $k >>1$, both to solutions of (\ref{mainpde}) at intermediate times and to the target $\bar{\rho}$ on $\Omega$ in the long time limit.   Our simulations are conducted in Python using the NumPy, SciPy, CuPy, and Matplotlib libraries \cite{hunter2007matplotlib, CuPy, virtanen2020scipy,van2011numpy}.

\subsection{Details of numerical approach} \label{numericaldetailssec}
We now describe the details of our numerical approach. 
Since the main goal of our study is to illustrate how nonlocal particle dynamics can approximate local diffusion equations, we consider the external potential $V = 0$. We take the dimension $d=1$, a Gaussian mollifier,
\begin{equation} 
\zeta_{\epsilon}(x) =  e^{-x^2/2\epsilon^2} /(\sqrt{2 \pi \epsilon^2}),
\end{equation}
and choose the underlying domain as  $\Omega = (-1,1)$. We approximate no-flux boundary conditions on $\Omega$ via the confining potential,
\begin{equation} \label{confiningpotentialdef}
V_k(x)=\begin{cases}
\frac{k}{2}(x+1)^2 &\text{ if }x < -1,\\ 
\frac{k}{2}(x-1)^2 &\text{ if }x>1,\\
0 &\text{ otherwise,}
\end{cases}
\end{equation}
where $k \in \mathbb{N}$ controls the strength of the confinement.

Unless otherwise specified, we choose, 
\begin{align} \label{epsrelateN}
\epsilon = 4/N^{0.99} .
\end{align} Note that this relationship between $\ep$ and $N$ is better than expected from our rigorous results; see the  discussion after Corollary \ref{quantcor}. As will be seen from our choice of initial conditions $\{X^i_0\}_{i=1}^N$ below, the choice of $\epsilon$ in (\ref{epsrelateN}) ensures that the mollifiers have sufficient overlap and that different particles can ``sense'' each other through the function $f(X^i,X^j)$.

Similarly, unless otherwise specified, we choose 
\begin{align}
k=10^9 .
\end{align}Our choice of $k$, corresponding to strong confinement, is motivated by the desire to more closely approximate the dynamics of   (\ref{mainpde}) on the bounded domain $\Omega$ with no-flux boundary conditions.
We anticipate that different choices of dimension, mollifier, underlying domain, and confining potential may affect the rate of convergence of our method, but, as our main convergence theorems are not quantitative, we leave a detailed numerical analysis of the these  effects to future work.

 The first step in our method is to approximate the initial condition $\rho_0$ in  (\ref{mainpde}) by an empirical measure $  \sum_{i=1}^N \delta_{X^i_0} m^i$ with locations $\{ X^i_0\}_{i=1}^N \subseteq \Rd$ and weights $\{m^i\}_{i=1}^N \subseteq [0,+\infty)$ satisfying   $\sum_{i=1}^N m^i = 1$.  In practice, we do this by dividing the domain $\Omega = (-1,1)$ into $N$ intervals of equal measure. The location $X^i_0$ is chosen to be the center of the $i^{th}$  interval, and the weight  $m^i$ is chosen to approximate the integral of $\rho_0$ over the interval:
 \[   m^i =  h \rho_0(X^i_0) \approx \int_{X_0^i - h/2}^{X_0^i +h/2} \rho_0(x) dx, \quad \text{ for } h = |\Omega|/N .\]
 See Lemma \ref{sketchlem2}.

With the initial conditions in hand, the next step is to solve the system of ODEs (\ref{mainepsode}-\ref{fdefintro}). For general $\bar{\rho}$, this is an integral equation, which would be   expensive to compute. In the present work, we consider $\bar{\rho}$ for which the integral in equation (\ref{fdefintro}) can be pre-computed exactly, yielding to a closed form, analytic expression for $f(X^i,X^j)$ and reducing (\ref{mainepsode}) to a standard system of ODEs. In Appendix \ref{formulassection}, we provide  explicit formulas for $f(X^i, X^j)$ in the case $\bar{\rho}$ is piecewise constant or $\bar{\rho} = C/(1+|x|^2)$, the latter being a prototypical example of a log-concave target. While it will not be  possible to obtain a closed form expression for $f(X^i,X^j)$ for all choices of $\bar{\rho}$, we are optimistic that taking sufficiently accurate piecewise constant approximations would yield good results. We leave a detailed analysis of the convergence of our method under various approximations of the target  $\bar{\rho}$ to future work.
Once a closed form expression for $f(X^i, X^j)$ is obtained, the system of ODEs \eqref{mainepsode} may then be solved using a standard numerical integrator. In the present work, we use the SciPy implementation of the backward differentiation formula (BDF) with a maximum time step of $10^{-5}$.

Finally, we seek to understand qualitative properties of the \emph{particle solution}, that is, the evolving empirical measure,
\begin{align} \label{particlesolution} \rho^N_{\epsilon,k}(t) = \sum_{i=1}^N \delta_{X^i(t)} m^i ,
\end{align}
 as well as its relation to the solution  $\rho(t)$ of   (\ref{mainpde}) and the target  $\bar{\rho}$. To visually depict $\rho^N_{\epsilon,k}(t)$ and compute its difference from $\rho(t)$ and $\bar{\rho}$ with respect to classical $L^p$ norms and statistical divergences, we will often consider the following kernel density estimate, given by convolving $\rho^N_{\ep,k}(t)$ with the mollifier $\zeta_\ep$:
\begin{align} \label{blobrobotrhoeps}
\tilde{\rho}^N_{\epsilon,k}(x,t) = ({\rho}^N_{\epsilon,k}(t)*\zeta_\ep)(x)= \sum_{i=1}^N \zeta_\epsilon(X^i(t) -x) m^i .
\end{align}
 According to Lemma \ref{weakst convergence mollified sequence},   if there exists $\mu \in \P(\Rd)$ so that ${\rho}^N_{\epsilon,k}$ narrowly converges to $\mu$  as $\ep \to 0$, then  the kernel density estimator   $\tilde{\rho}^N_{\epsilon,k}$ also narrowly converges to $\mu$  as $\ep \to 0$. Thus our main results that guarantee convergence of ${\rho}^N_{\epsilon,k}$ also ensure convergence of $\tilde{\rho}^N_{\epsilon,k}$. 

Furthermore, when the target  $\bar{\rho}$ is normalized to satisfy   $\int_\Omega \bar{\rho} = 1$,  solutions of    (\ref{mainpde}) dissipate the  Kullback-Leibler (KL) divergence with respect to  $\bar{\rho} $ on $\Omega$ exponentially quickly in time (see inequality (\ref{KLdivergenceexpdecay})). We will numerically illustrate that this key property is preserved by our approximate solutions $\tilde{\rho}^N_{\epsilon, k}$.  We   compute the KL divergence  on $\bar{\Omega}$ via,
\begin{align} \label{KLdivergencecomputation}
{\rm KL} \left( \frac{\tilde{\rho}^N_{\epsilon,k}(t)}{C_{\epsilon,k,N}(t)}, \bar{\rho} \right) = \int_\Omega \left(\frac{\tilde{\rho}^N_{\epsilon,k}(x,t)}{C_{\epsilon,k,N}(t)} \right) \log \left(\frac{\tilde{\rho}^N_{\epsilon,k}(x,t)}{C_{\epsilon,k,N}(t)\bar{\rho}(x)}\right) dx , \     \text{ for } \    C_{\epsilon,k,N}(t) = \int_{\Omega} \tilde{\rho}^N_{\epsilon,k}(x,t)dx ,
\end{align} 
where the constant $C_{\epsilon,k,N}(t)$ allows us to compensate for the fact that, since $\tilde{\rho}^N_{\epsilon,k}$ is not in general supported on $\overline{\Omega}$,  the restriction of  $\tilde{\rho}^N_{\epsilon,k} $ to $\overline{\Omega}$ is not a probability measure and ${\rm KL}(\tilde{\rho}^N_{\ep,k}(t),\bar{\rho})$ can be negative. On the other hand, $\tilde{\rho}^N_{\epsilon,k}  /C_{\epsilon,k,N}$ is always a probability measure on $\overline{\Omega}$, so that equation (\ref{KLdivergencecomputation}) gives a well-defined, nonnegative statistical divergence.  We compute the integrals in (\ref{KLdivergencecomputation}) numerically, using the SciPy library's \texttt{quad} function.

A final key quantity of our numerical scheme is the value of the energy $\F_{\ep, k}$ along the solution of the gradient flow $\rho^N_{\ep,k}$. At the continuous time level, the gradient flow structure ensures that $\F_{\ep,k}(\rho^N_{\ep,k}(t))$ is always decreasing in time; see Theorem \ref{curvemaxslope} and Proposition \ref{prop:ODE Eep}. To investigate the rate of decrease numerically, we first obtain the following expression for $\F_\ep$ in this setting:   
\begin{align} 
\F_{\ep,k}(\rho^N_{\ep,k}(t)) &= \E_\ep(\rho^N_{\ep,k}(t)) + \V_k(\rho^N_{\ep,k}(t)) = \frac{1}{2}\int_{\Rd} \frac{|\zeta_\ep * \rho^N_{\ep,k}(t)|^2}{\bar{\rho} } \, d {\mathcal{L}}^d +\int_{\Rd} V_k  \, d\rho^N_{\ep,k}(t)  \label{Fepkparticledef} \\
&= \frac{1}{2}\int_{\Rd} \zeta_\ep * \left(\frac{ \zeta_\ep * \rho^N_{\ep,k} }{\bar{\rho}} \right) \hspace{-1mm} (t) \,\, d \rho^N_{\ep,k} (t) +\int_{\Rd} V_k \, d\rho^N_{\ep,k}(t) \nonumber \\
&= \frac{1}{2} \sum_{i=1}^N\sum_{j=1}^N g(X^i(t),X^j(t)) m^im^j  +\sum_{i=1}^N m^iV_k(X^i(t)), \text{ where } \nonumber \\
g(x,y) &:= \int_\mathbb{R} \frac{   \zeta_\epsilon(x - z) \zeta_\epsilon(y - z)}{\bar{\rho}(z)}  \, dz  \label{gxydef}. 
\end{align}
We note that  $g(x,y)$ is related to the function $f(x,y)$ defined in equation (\ref{fdefintro}) by $f = \nabla_x g$, and the integral in the definition of $g$ may be likewise computed explicitly for our choices of $\bar{\rho}$, as we describe in Appendix \ref{formulassection}.

We close this discussion of the details of our numerical method with a few remarks on its efficient implementation in Python. As an interacting particle system, computing the evolution of the particle trajectories   (\ref{mainepsode}-\ref{fdefintro}) is inherently an $O(N^2)$ computation for a strictly positive mollifier $\zeta_\ep$. The expectation is that the computational effort would decrease for a compactly supported mollifier: indeed, if $\supp \zeta_\ep  \subset \subset B_{R\ep}(0)$, then $f(X^i,X^j)$ would vanish for $|X^i-X^j|>2R \ep$.  However, rigorously proving that the computational effort indeed decreases to $O(mN)$, where $m$ represented the average number of particles lying within the radius of a given mollifier, would require careful estimates on the repulsive forces between the particles and is left for future work. Nevertheless, even for a strictly positive mollifier, we are able to achieve good computational speed in practice by using the following techniques. First, we provide an analytical Jacobian to the ODE solver rather than relying on finite difference approximations. Second, we leverage the structure of the integrand to compute these partial derivatives efficiently. Finally, we parallelize these computations using the CuPy library for GPU-accelerated computing \cite{CuPy}.
 These  elements of our implementation  allow us to speed up our calculations by  two orders of magnitude compared to previous work by the first author  \cite{CarrilloCraigPatacchini}. Namely, we performed the same simulations as those used to generate Figure 1 of \cite{CarrilloCraigPatacchini} (the evolution of density over time, starting from Barenblatt initial data), both using the code of \cite{CarrilloCraigPatacchini}, as well as with our implementation. In Figure \ref{compfigure}, we record the resulting improvement in terms of computational time.
\begin{figure}[h!] \footnotesize \label{compfigure}
  \begin{tabular}{|l|l|l|} 
			\hline
			Time & Carrillo, et. al. \cite{CarrilloCraigPatacchini} & Present Work \\
			\hline 
			$N = 100$ \ & 0.04s & 0.05s   \\
			$N = 200$ \ & 0.41s & 0.08s   \\
			$N = 400$ \ & 3.35s & 0.14s   \\
			$N = 800$ \  & 38.96s & 0.35s  \\
			$N = 1600$ & 461.96s & 5.73s \\
			\hline
		\end{tabular}
		\caption{Computational time for  simulation of $\rho^N_{\ep,k}(t)$ using  our numerical method and implementation (right column) and that of  \cite{CarrilloCraigPatacchini} (left column). Here the target is $\bar\rho_{\text{uni}}$, we take  $k=0$, $t=0.15$, and the initial condition is the Barenblatt profile (\ref{eq:barenblatt}.}
\end{figure}

 These simulations were performed on a standard desktop PC (Intel Core i7-10700 CPU @ 2.9 GHz, 16 GB RAM) with a consumer-level GPU (NVIDIA GeForce RTX 2060 Super). This improvement demonstrates how recent advances in open source scientific computing methods, even in high level languages like Python, are making computing interacting particle systems tractable, even for large numbers of particles.

\subsection{Simulation Results}
We now turn to several numerical examples that illustrate key properties of our method.
In the following simulations, we consider three main choices of target: uniform, log-concave, and piecewise-constant, given by,
	\begin{align}
	\label{uniform} \bar{\rho}_{\text{uni}}(x) &= \frac12 ,  \\
 \label{logconcave}	\bar{\rho}_{\text{log-con}}(x) &=
	\frac{2}{\pi (1+|x|^2)}  ,  \\
\bar{\rho}_{\text{pw-const}}(x) &=  \begin{cases} 
1/3 ~ &{\rm for} ~ x \in (-\infty, -0.75) \cup [-0.25,0.25 ) \cup [0.75, +\infty) ,\\
2/3 ~ &{\rm for} ~ x \in [-0.75, -0.25 ) \cup [0.25, 0.75) .
\end{cases} \label{pwconstant} 
	\end{align}

\subsubsection{Evolution of density and particle trajectories}

\begin{figure}[h!]  
 {\centering \hspace{.5cm}  $\bar{\rho}_\text{uni}$ \hspace{4.3cm} $\bar{\rho}_\text{log-con}$ \hspace{4.3cm} $\bar{\rho}_\text{pw-const}$} \\
{\hspace{-.9cm} \includegraphics[height=5.5cm]{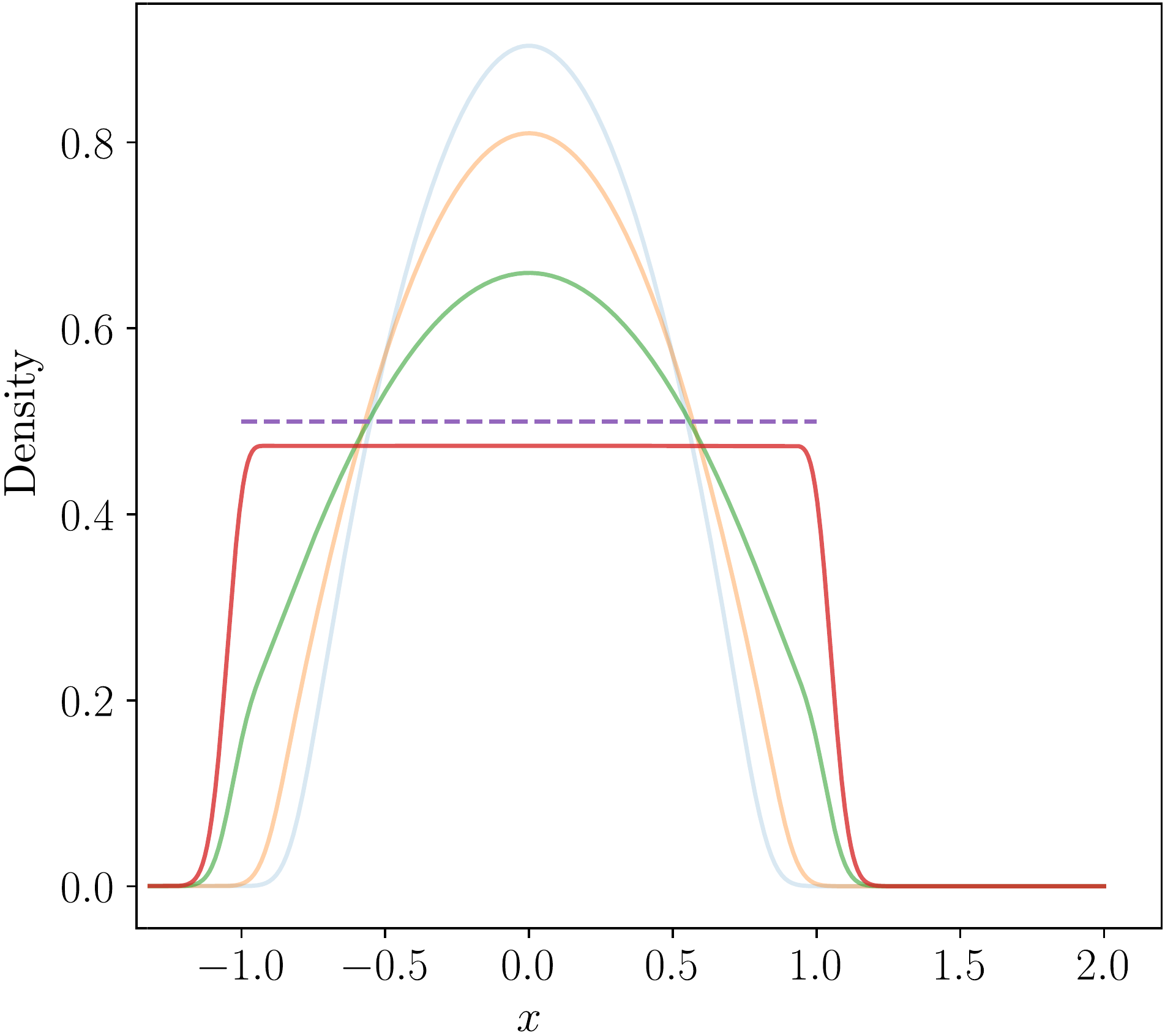}
\includegraphics[height=5.5cm]{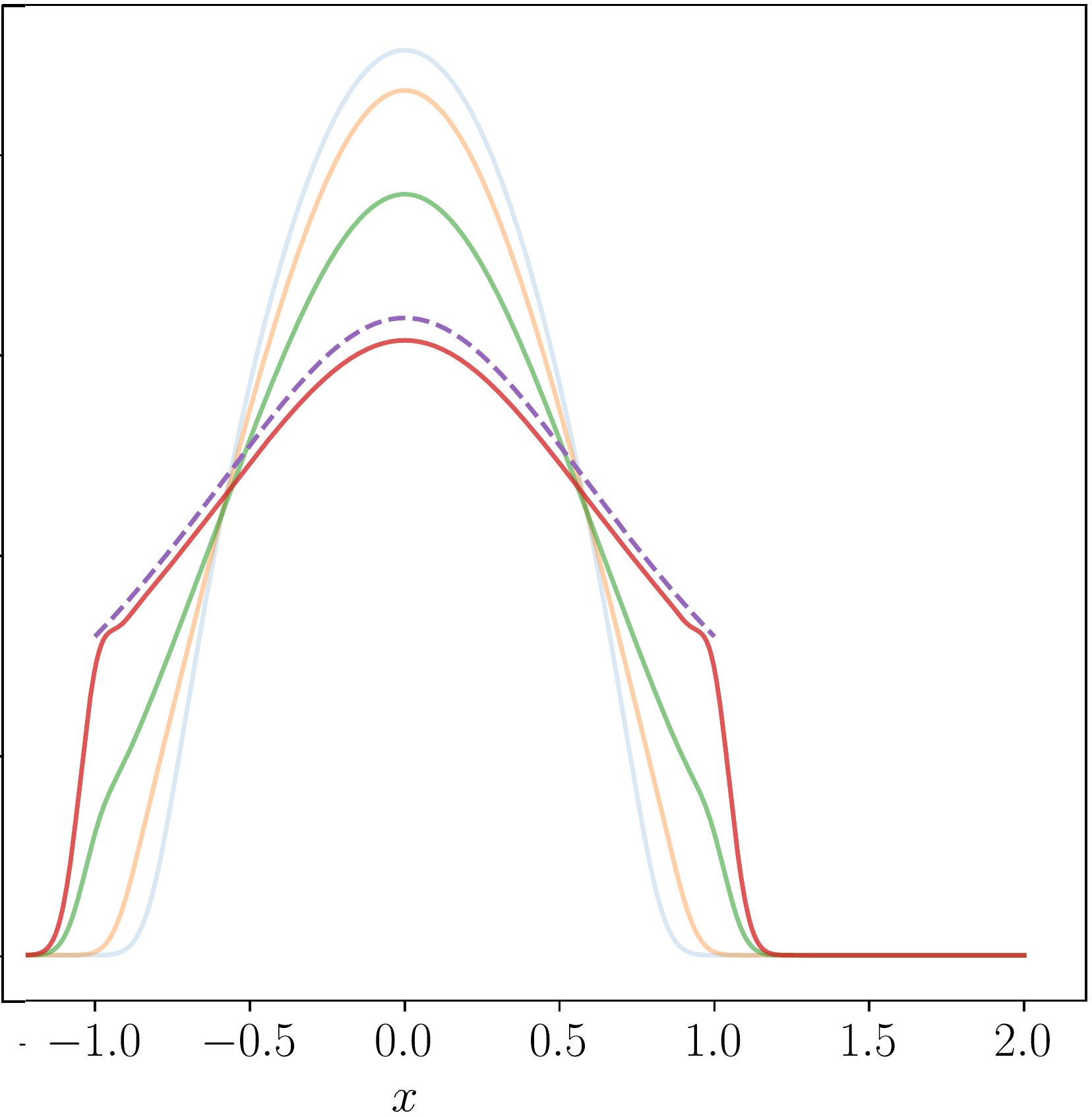}
\includegraphics[height=5.5cm,trim={0cm 0cm .05cm 0cm},clip]{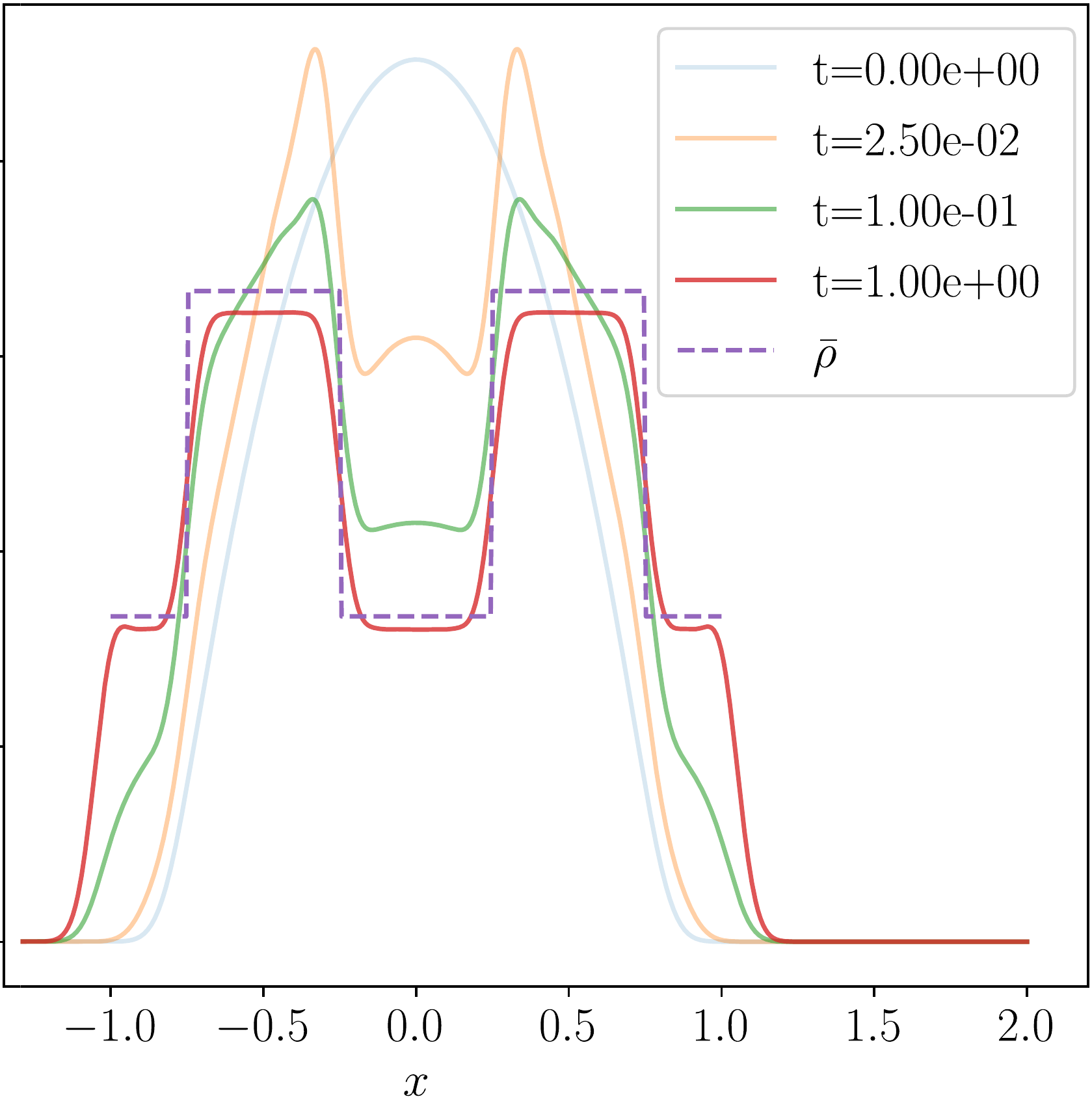}  \hspace{-5cm} }\\
 
\hspace{-3.1cm} 
    \includegraphics[height=5.97cm]{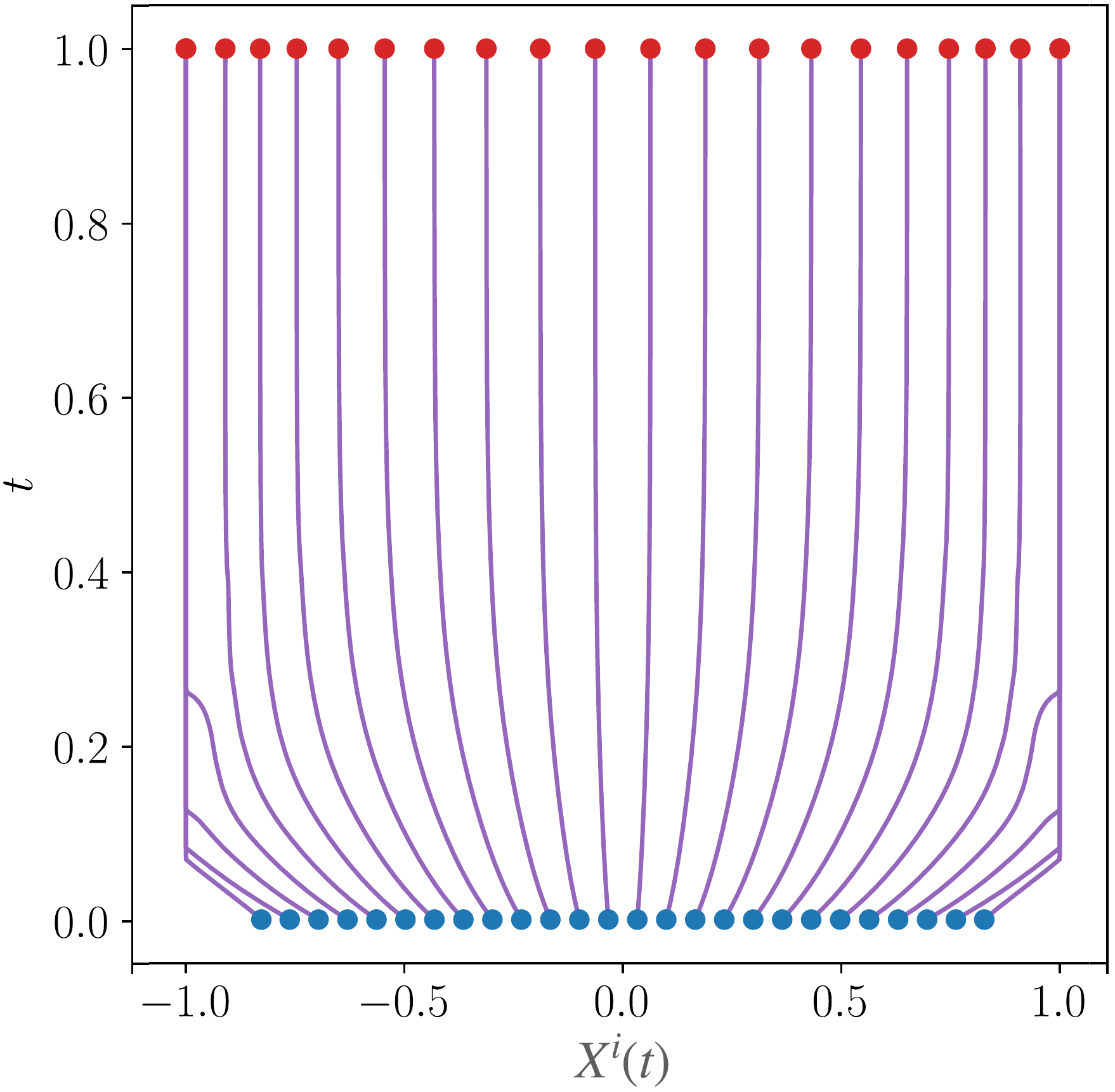}
                      \includegraphics[height=6.03cm]{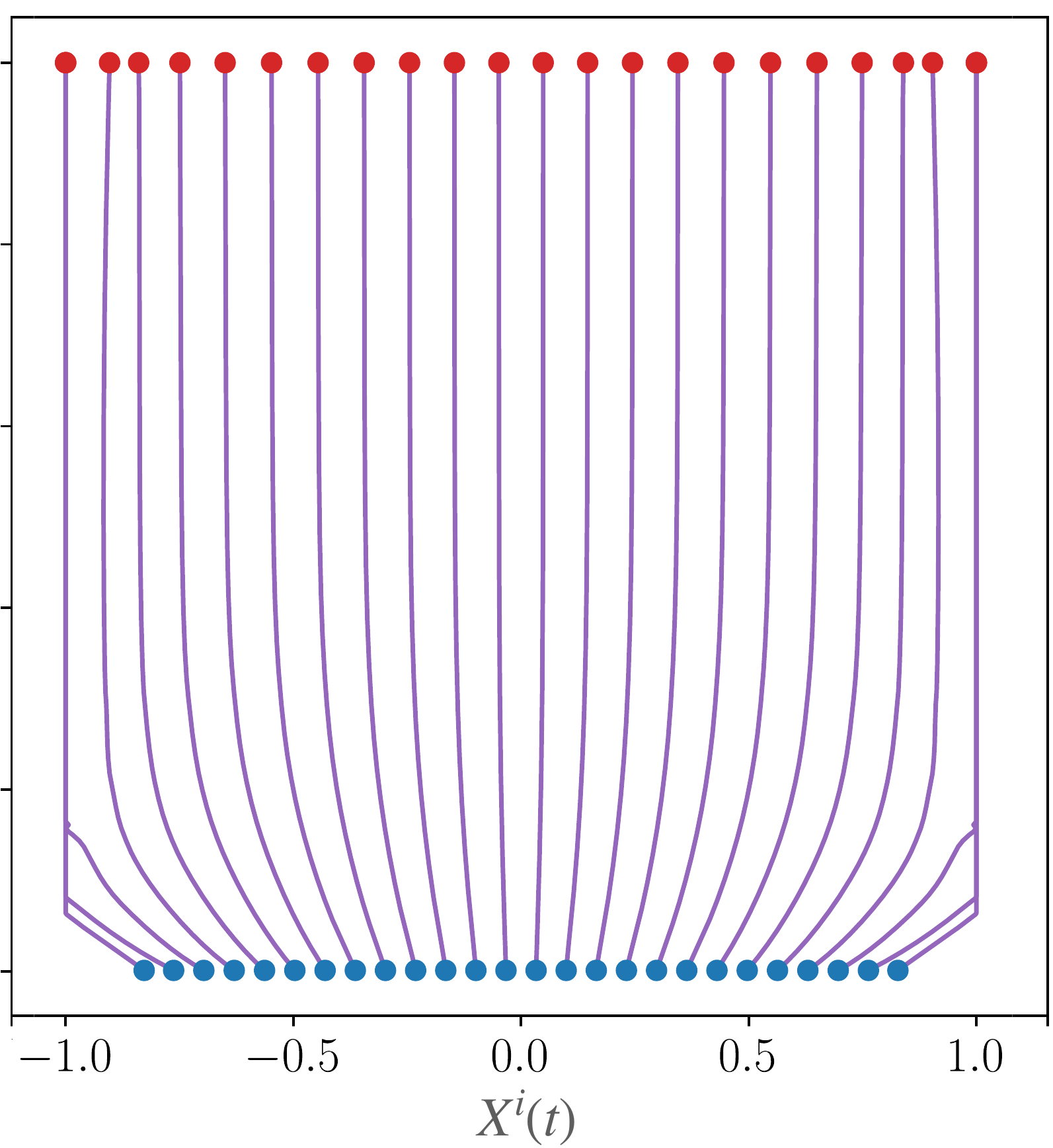}
                      \includegraphics[height=5.97cm,trim={0cm 0cm 0cm 0cm},clip]{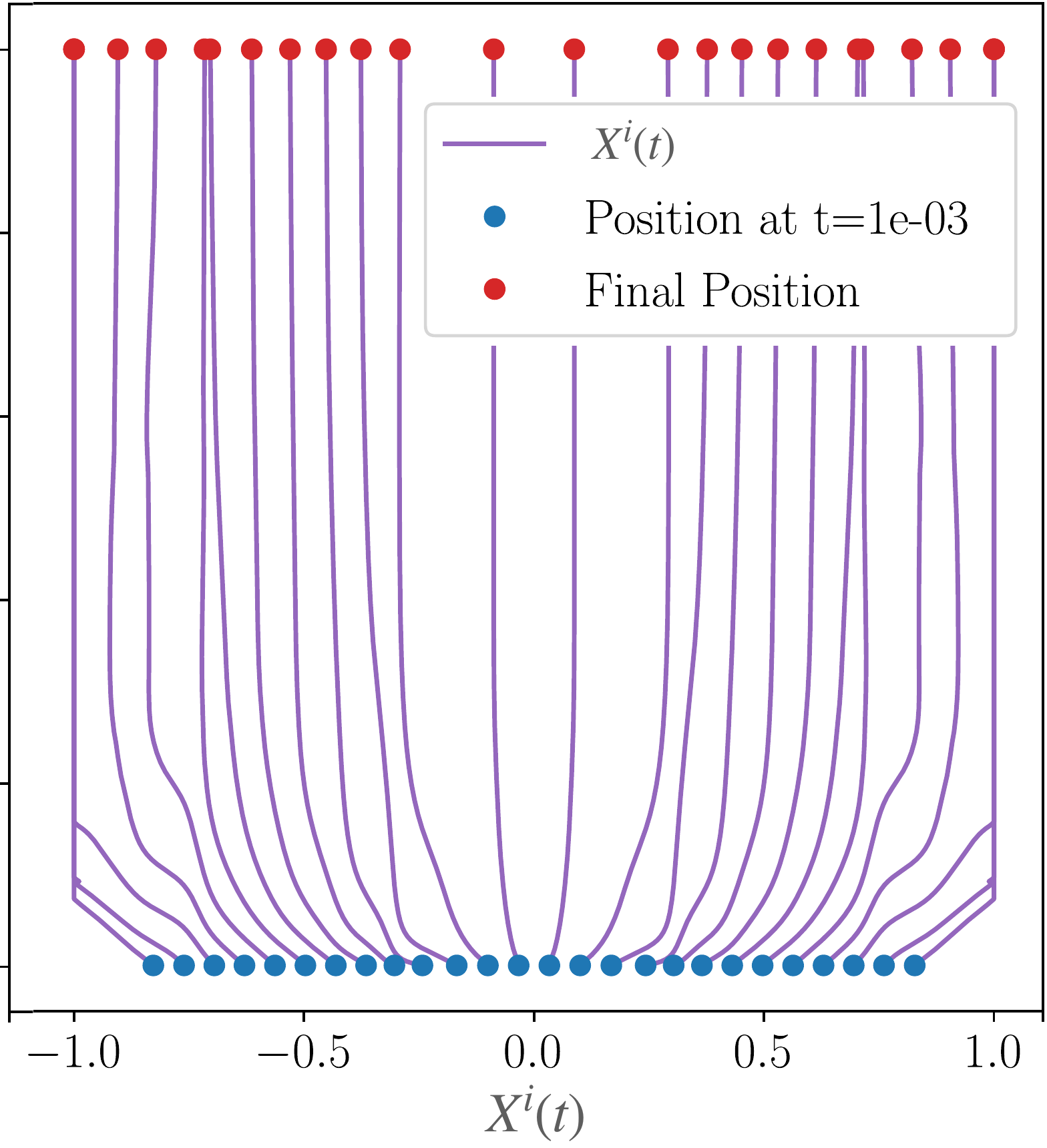}  \hspace{-2.5cm}

\caption{Simulation of the evolution of the density $\tilde{\rho}^N_{\ep,k}(t)$ for the three targets   defined in (\ref{uniform})-(\ref{pwconstant}), with $N=101$, $k=10^9$, and  initial data the Barenblatt profile (\ref{eq:barenblatt}).   Top Row: Snapshots of $\tilde{\rho}^N_{\ep,k}(t)$ for the indicated times $t$.  Bottom Row: 
Evolution of corresponding particle trajectories $X^i(t)$.}
\label{fig:fig1}
\end{figure}

In Figure \ref{fig:fig1}, we illustrate qualitative properties of numerical solutions by plotting   the kernel density estimate $\tilde{\rho}^N_{\epsilon,k}(x,t)$, defined in equation (\ref{blobrobotrhoeps}), in the top row and the trajectories of the particles $X^i(t)$ in the bottom row. We conduct our simulation for $N=101$ particles, of which 20 are plotted in the bottom row.  We consider three choices of target: $\bar{\rho}_{\text{uni}}$ (left), $\bar{\rho}_{\text{log-con}}$ (middle),  and $\bar{\rho}_{\text{pw-const}}$ (right).
 In all cases, our initial condition is  given by a {\it Barenblatt profile} $\psi_\tau(x)$, with $\tau = 0.0625$:
\begin{align}
\psi_\tau(x) &=  \frac{\tau^{-1/3}}{12} \left(3^{4/3} -  \frac{|x|^2}{    \tau^{2/3}} \right)_+ .
\label{eq:barenblatt}
\end{align}

In the top row of Figure \ref{fig:fig1}, we observe that, for all choices of target  $\bar{\rho}$, the kernel density estimate of the solution  $\tilde{\rho}^N_{\epsilon,k}(x,t)$ flows toward $\bar{\rho}$ on $\Omega$. For $\bar{\rho}_{\text{uni}}$ and  $\bar{\rho}_{\text{log-con}}$, this provides numerical verification of Corollary \ref{quantcor}, since   these  targets $\bar{\rho}$ are log-concave. On the other hand, while $\bar{\rho}_{\text{pw-const}}$ is not log-concave, and thus falls outside the scope of our theoretical results,  it does satisfy a Poincar\'e inequality, so previous work on asymptotic behavior on smooth \cite{chewi2020svgd} and weak \cite{dolbeault2008lq, GMP2013} solutions of (\ref{mainpde}) ensure that exact solutions of the continuum PDE converge to the target $\bar{\rho}_\text{pw-const}$ exponentially quickly in time; see, for example, inequality (\ref{KLdivergenceexpdecay}). Consequently, although this case lies outside the realm of our rigorous results, it is not surprising that we observe  convergence of  $\rho^N_{\ep,k}$  to $\bar{\rho}_{\text{pw-const}}$ in the long-time limit numerically.   

In the bottom row of Figure \ref{fig:fig1}, we observe that the particles evolve relatively quickly to their steady state, with most  stopping by time $t=0.3$. This stands in stark contrast to classical stochastic approaches for sampling, such as Langevin dynamics \cite{eberlelecturenotes}, and stochastic methods in the control theory literature \cite{mesquita2008optimotaxis,elamvazhuthi2016coverage}, in which particles remain in perpetual motion, complicating the  choice of an appropriate \emph{stopping time}, beyond which continued evolution doesn't  lead to  improved accuracy.

\begin{figure}[h!]  
 {\centering \hspace{.5cm}  $k=0$ \hspace{3.9cm} $k=100$ \hspace{4.1cm} $k=10^9$} \\

\includegraphics[height=5.42cm]{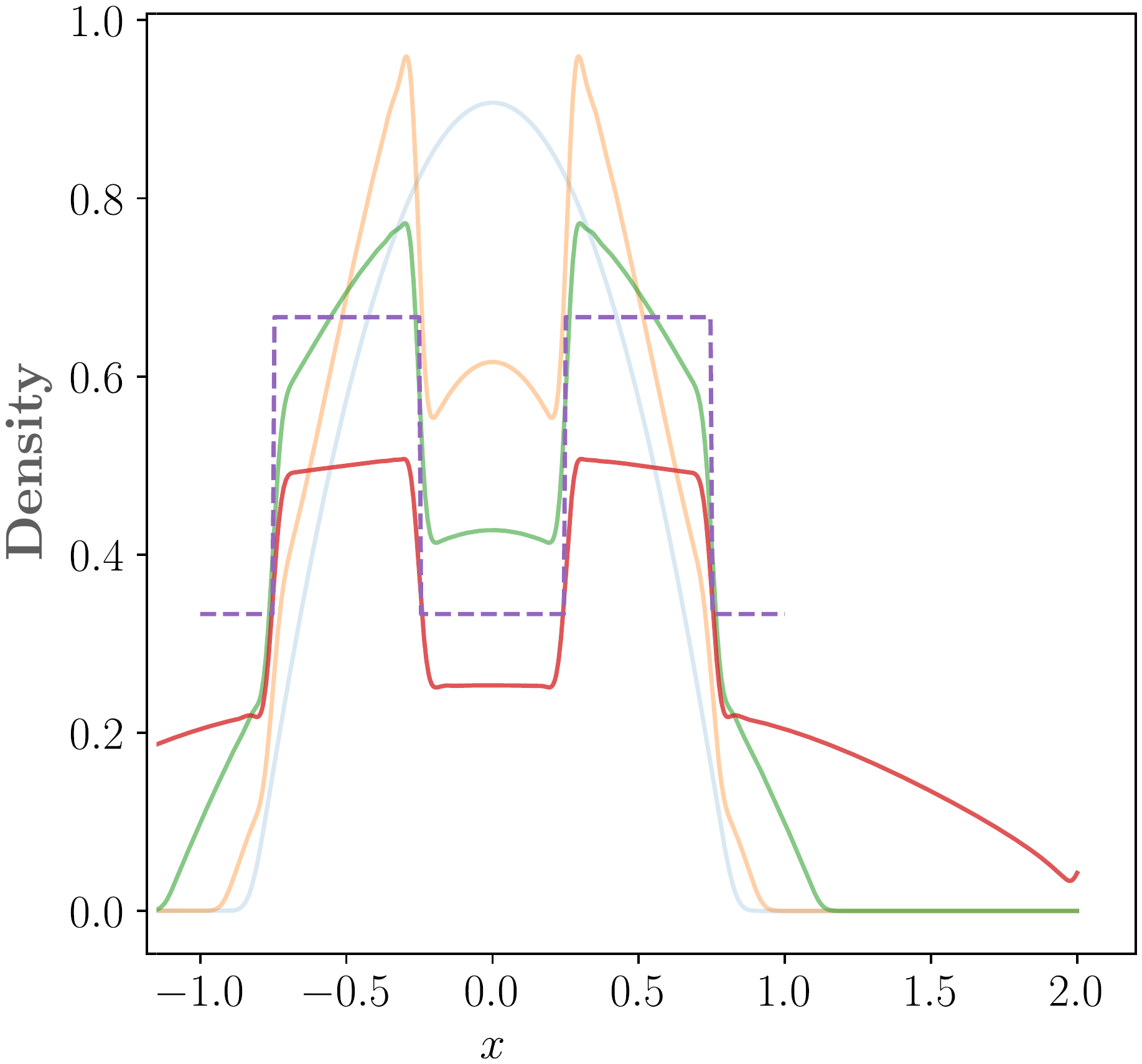}
\includegraphics[height = 5.42 cm,trim={1cm 0cm .05cm 0cm},clip]{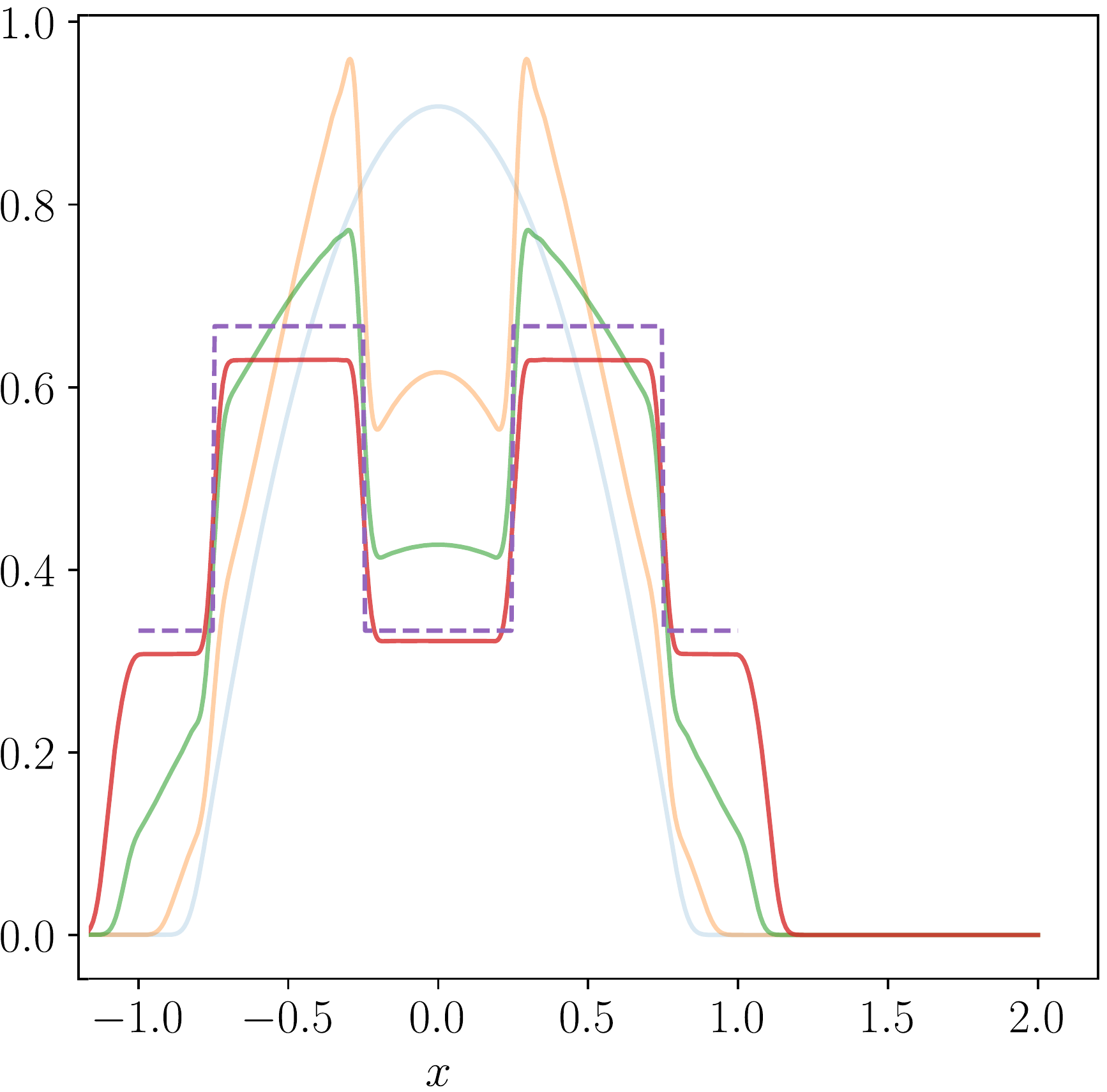}
\includegraphics[height = 5.42cm,trim={1cm 0cm 0cm 0cm},clip]{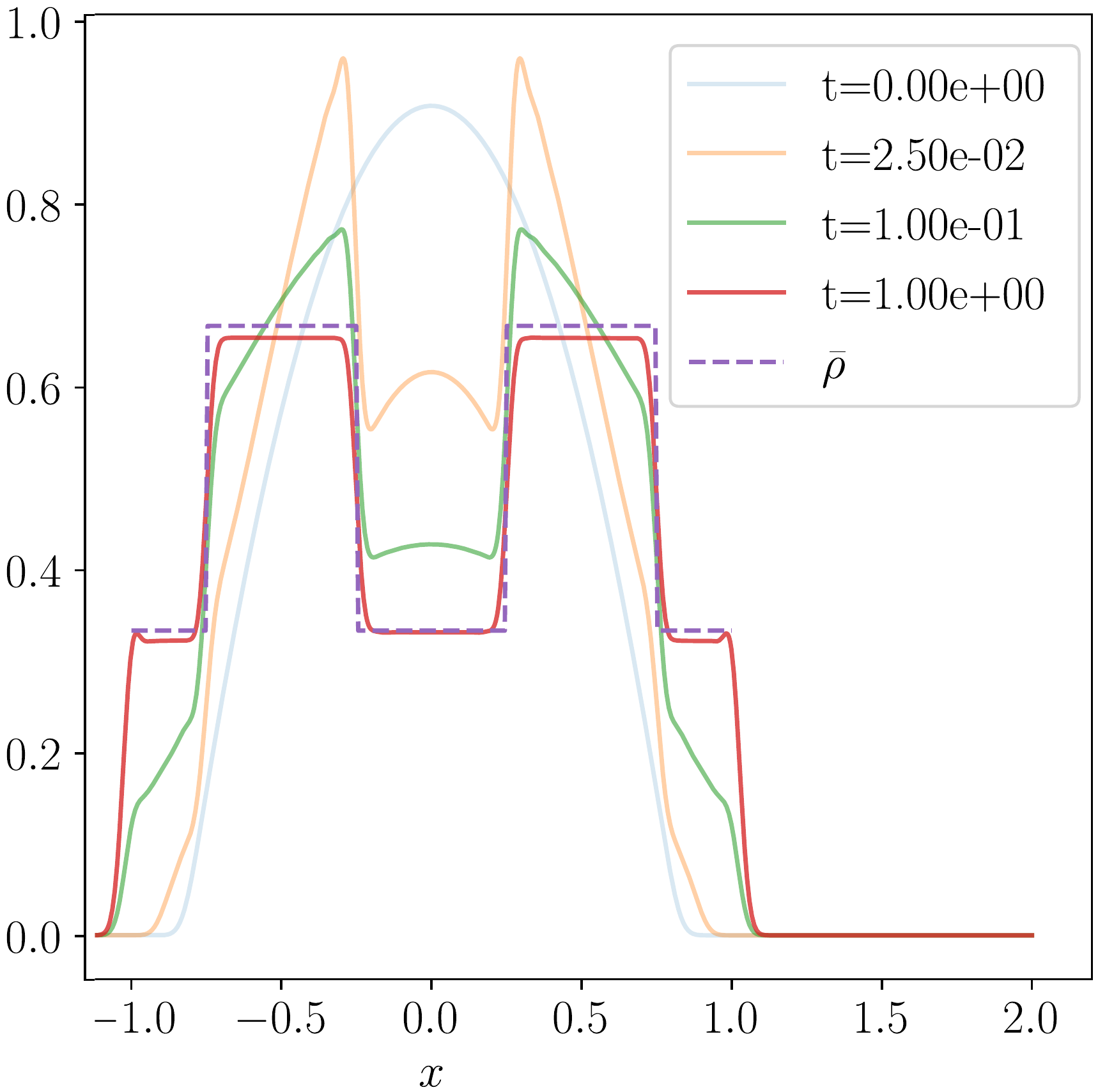} 
\caption{Comparison of how the strength of the confining potential  affects the evolution of the density. Here, $\bar\rho=\bar{\rho}_{\text{pw-const}}$, $N=200$, and the initial data is the Barenblatt profile (\ref{eq:barenblatt}). Left: no confinement ($k=0$). Middle: medium confinement ($k=100)$. Right: strong confinement ($k=10^9$).}
\label{fig:fig3}
\end{figure}

\subsubsection{Effect of confining potential on evolution of density }

In Figure \ref{fig:fig3}, we consider the effect of the confining potential on the dynamics. For a fixed number of particles $N = 200$ and initial conditions given by $\bar{\rho}_\text{pw-const}$, we plot the evolution of the kernel density estimate $\tilde{\rho}^N_{\epsilon,k}(x,t)$ as the strength of the confining potential $V_k$ is increased, from $k=0$ (left, no confinement) to $k=100$ (middle, moderate confinement) and $k =10^9$ (right, strong confinement). All simulations are conducted with Barenblatt initial data, as in equation (\ref{eq:barenblatt}).

In the $k=0$ plot in Figure \ref{fig:fig3}, we observe that the support of $\tilde{\rho}^N_{\ep,k}(x,t)$ quickly spreads outside the closure of the domain $\overline{\Omega} = [-1,1]$. This is due to fact that   $k=0$ implies $V_0 = 0$, by equation (\ref{confiningpotentialdef}), so there is no confining potential, which is equivalent to taking $\Omega = \Rd$. In this case,  Theorem \ref{thm:conv of grad flows}  ensures that, for $\ep>0$ small and $N \in \mathbb{N}$ large, the particle method approximates solutions of  the (\ref{mainpde}) equation on $\Rd$ without boundary. The diffusive effect of this equation causes the particles to spread.

In the $k=100$ plot, we observe that even a weak confining potential causes the support of the kernel density estimate to remain mostly inside of $\overline{\Omega}$, with only a small amount of mass leaking out the sides of the domain. And, in the $k=10^9$ plot, when the confinement effect is very strong,  we observe that the support of the kernel density estimate is even closer to  $\overline{\Omega}$. In general, we expect the support of the kernel density estimate $\tilde{\rho}^N_{\ep,k}(t)$ to \emph{always} be slightly larger than the domain, since even when all particles are confined to $\overline{\Omega}$, the kernel density estimate  will satisfy,
\[ \supp \tilde{\rho}^N_{\ep, k}(t) = \{X^1(t), \dots, X^n(t) \} + \supp \varphi_\epsilon .\]
 However,  in the limit $N \to +\infty$, $\ep \to 0$, and $k\to +\infty$, the support of $\tilde{\rho}^N_{\ep, k}$ will be contained in $\overline{\Omega}$.
Finally, note that, by preventing mass from leaking out of the domain, strong confinement gives the best agreement between the long time behavior ($t = 1$) of the kernel density estimate and   the desired target  $\bar{\rho}_\text{pw-const}$ on $\Omega$, in agreement with Corollary \ref{quantcor}.

\begin{figure}[h!]
 {\centering \hspace{.5cm}  $\bar{\rho}_\text{uni}$ \hspace{4.3cm} $\bar{\rho}_\text{log-con}$ \hspace{4.3cm} $\bar{\rho}_\text{pw-const}$} \\
\hspace{-.5cm} \includegraphics[height=4cm,trim={0cm 0cm .55cm 0cm},clip]{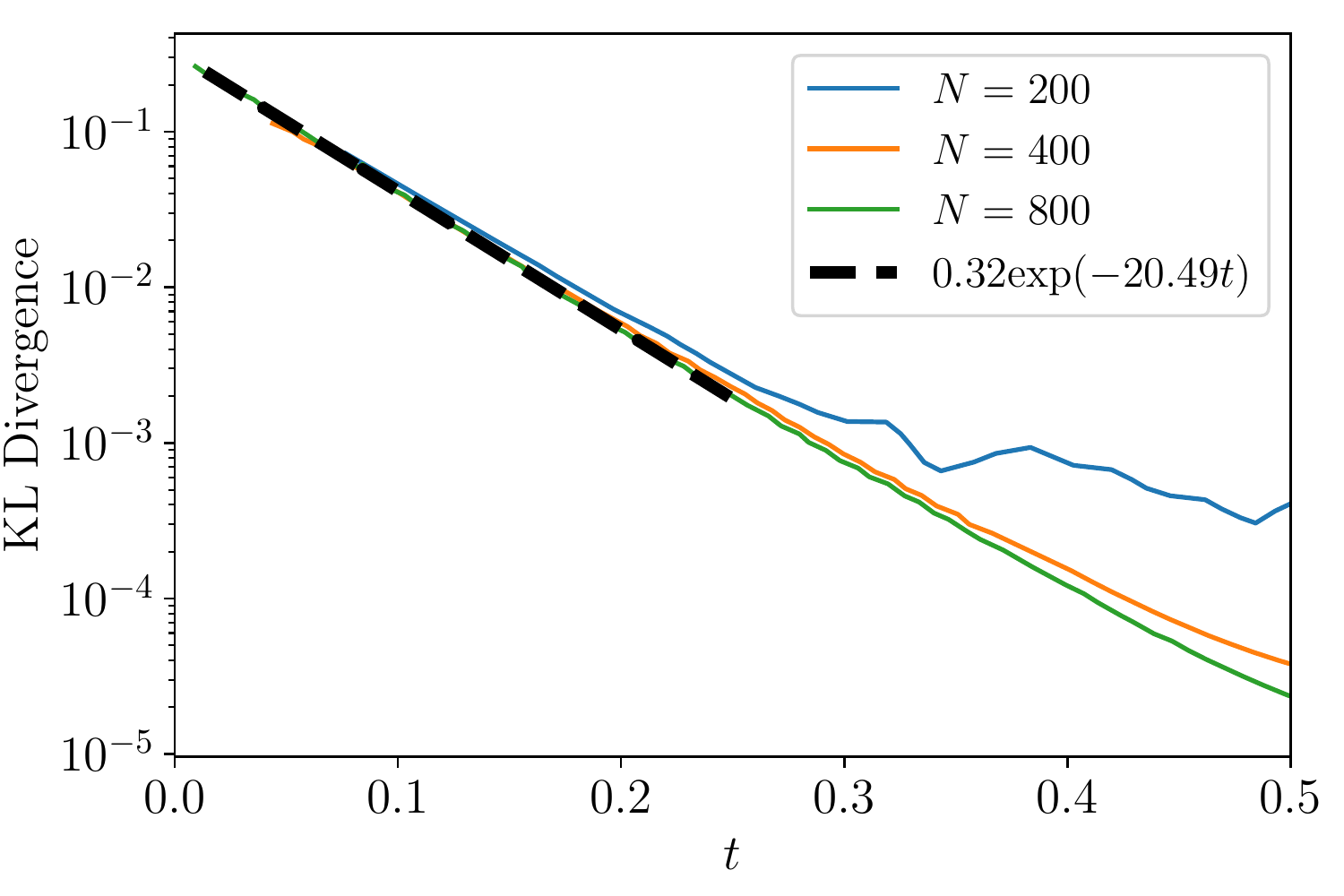}
\includegraphics[height=4cm,trim={.9cm 0cm .7cm 0cm},clip]{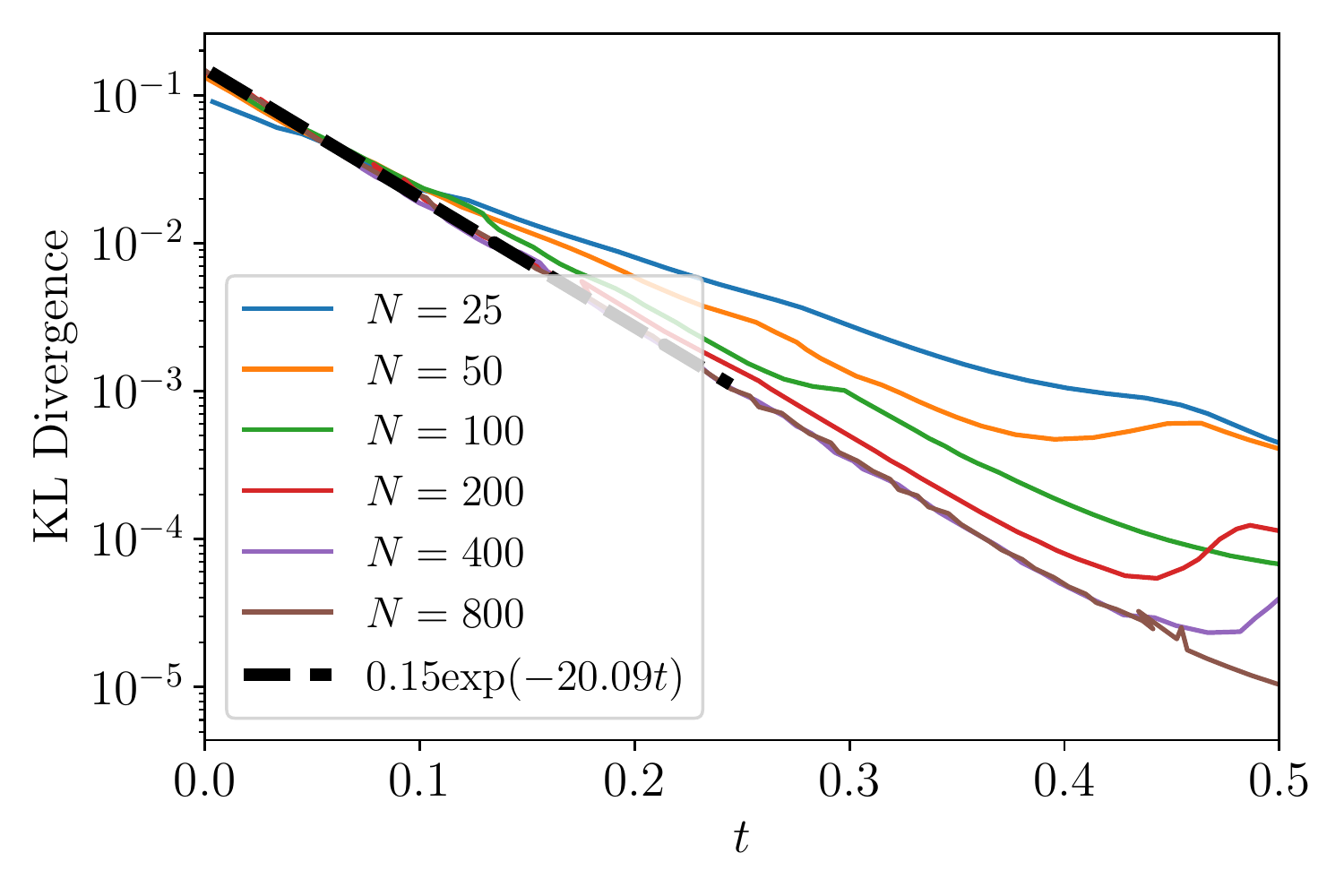}
\includegraphics[height=4cm,trim={.7cm 0cm .55cm 0cm},clip]{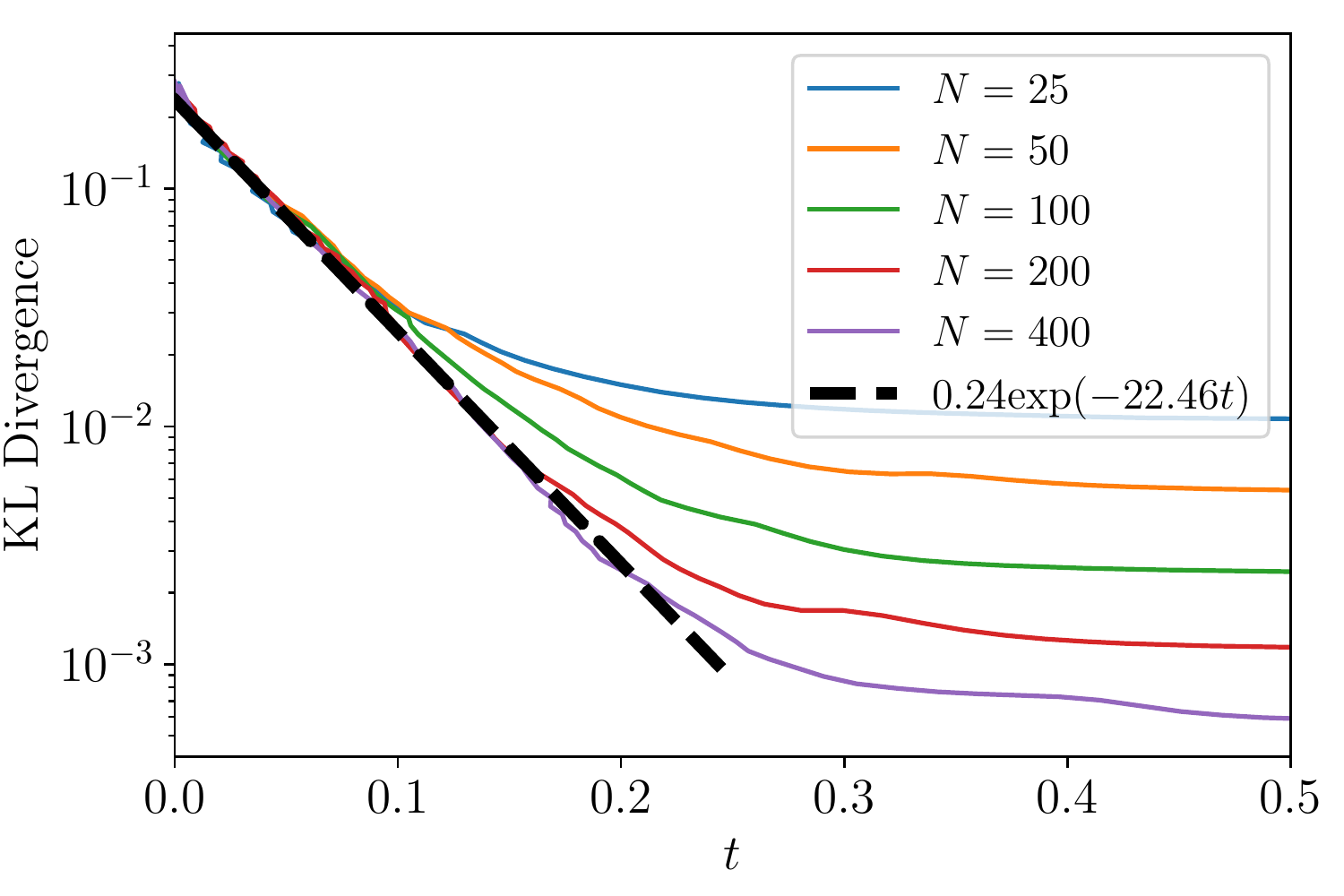}
\caption{Evolution of KL divergence between $\tilde{\rho}^N_{\ep,k}(t)$ and  $\bar{\rho}$ for three choices of target   (\ref{uniform})-(\ref{pwconstant}) and three choices of  $N$  (solid lines). We plot the line of best fit for $t\in [0,0.25]$ (dashed line). We take $k=10^9$, $t=2$, and initial data is the Barenblatt profile (\ref{eq:barenblatt}). 
}
\label{kldecay}
\end{figure}

\subsubsection{Decay of KL divergence}
In Figure \ref{kldecay}, we examine   the decay of KL divergence between the kernel density estimate  $\tilde{\rho}^N_{\ep,k}(t)$ and the  target  $\bar{\rho}$ on $\Omega$, as computed via equation (\ref{KLdivergencecomputation}). We consider three choices of target, $\bar{\rho}_{\text{uni}}$ (left), $\bar{\rho}_{\text{log-con}}$ (middle),  and $\bar{\rho}_{\text{pw-const}}$ (right), and varying numbers of particles $N$.  All simulations are conducted with Barenblatt initial data. Since each of the three targets  $\bar{\rho}$ satisfies a Poincar\'e inequality, the  inequality (\ref{KLdivergenceexpdecay}) implies that the KL divergence between $\bar\rho$ and smooth solutions $\rho(t)$ of the (\ref{mainpde}) equation decays exponentially quickly in time. We seek to observe to what extent this property is preserved by the numerical solution $\tilde{\rho}^N_{\ep,k}(t)$, which approximates $\rho(t)$ in the limit $N \to +\infty$, $\ep   \to 0$, and $k \to +\infty$, as in Theorem \ref{thm:conv with particle i.d.}.

For all three choices of target, we indeed observe an initial regime in which the KL divergence decays exponentially, as indicated by linear decay on the  semilog plots in Figure \ref{kldecay}. We estimate the rate of decay by plotting the line of best fit on the time interval $t \in [0,0.25]$, as shown  by the dashed line. After the initial period of exponential decay, the KL divergence often appears to level off, particularly for smaller numbers of particles. For larger numbers of particles, the period of exponential decay lasts longer. This  indicates that, for smaller numbers of particles, the discretization error in the approximation of (\ref{mainpde}) becomes dominant sooner, slowing the decay of the KL divergence.

The fact that our numerical approximation $\tilde{\rho}^N_{\ep,k}(t)$ preserves, up to discretization error, the key property of exponential decay of the KL divergence testifies to the benefit of  structure-preserving numerical schemes---in our case, designing a numerical scheme that preserves the continuum PDE's gradient flow structure also succeeds in capturing asymptotic behavior at the level of the particle method.
 
\subsubsection{Decay of energy}  
In Figure \ref{egydcay}, we examine   the decay of the energy $\F_{\ep,k}$ along the particle method solution $\rho^N_{\ep,k}(t)$, as computed via equations (\ref{Fepkparticledef}-\ref{gxydef}). We consider three choices of target, $\bar{\rho}_{\text{uni}}$ (left), $\bar{\rho}_{\text{log-con}}$ (middle),  and $\bar{\rho}_{\text{pw-const}}$ (right), and varying numbers of particles $N$.  All simulations are conducted with Barenblatt initial data. 

In all three cases, we observe that the energy decreases along the flow. This is expected since (up to the time discretization error of the ODE solver) our particle method solution $\rho^N_{\ep,k}(t)$ is exactly a gradient flow of the energy $\F_{\ep,k}$.  For both of the log-concave energies, $\bar{\rho}_{\text{uni}}$ and $\bar{\rho}_{\text{log-con}}$, we observe an initial period of exponential decay, for $t \in [0,0.5]$, which we approximate by a line of best fit, shown by the dashed line. We do not observe a corresponding period of exponential decay for the non-log-concave energy $\bar{\rho}_{\text{pw-const}}$.

\begin{figure}[h!]
 {\centering \hspace{.5cm}  $\bar{\rho}_\text{uni}$ \hspace{4.3cm} $\bar{\rho}_\text{log-con}$ \hspace{4.3cm} $\bar{\rho}_\text{pw-const}$} \\
\hspace{-.75cm} \includegraphics[height=4cm,trim={.3cm 0cm 0.40cm 0cm},clip]{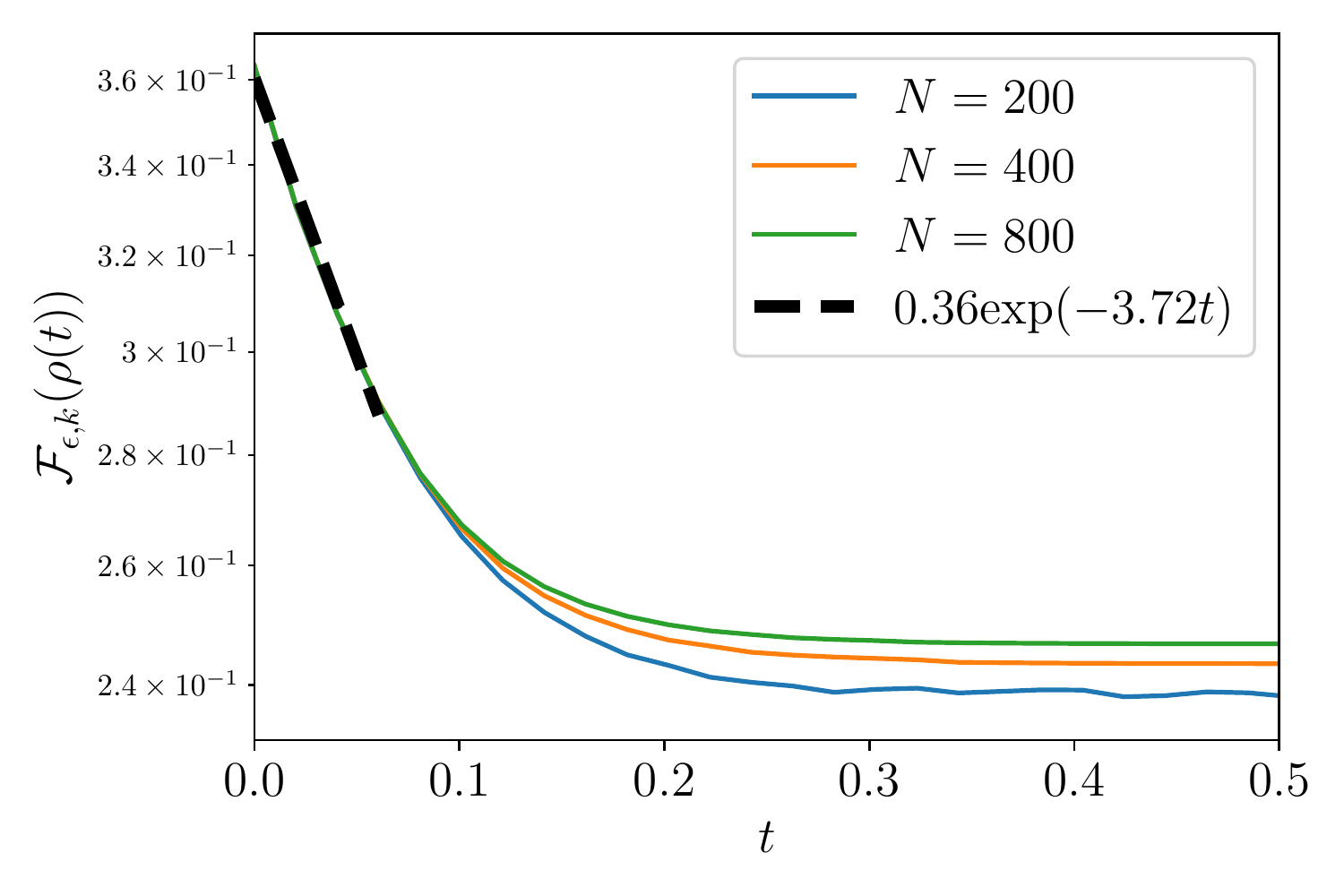}
\includegraphics[height=4cm,trim={1cm 0cm 0cm 0cm},clip]{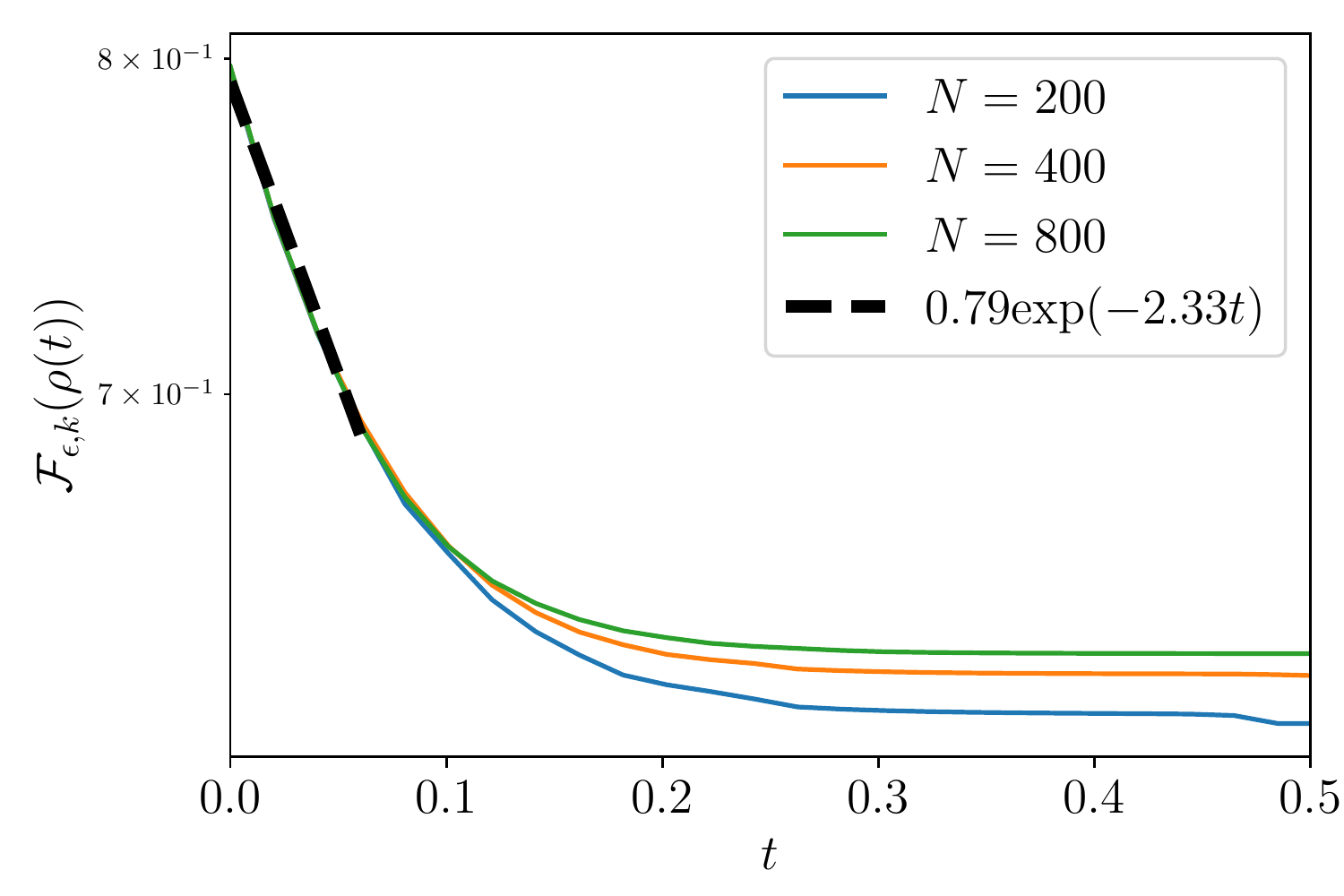}
\includegraphics[height=4cm,trim={1cm 0cm .45cm 0cm},clip]{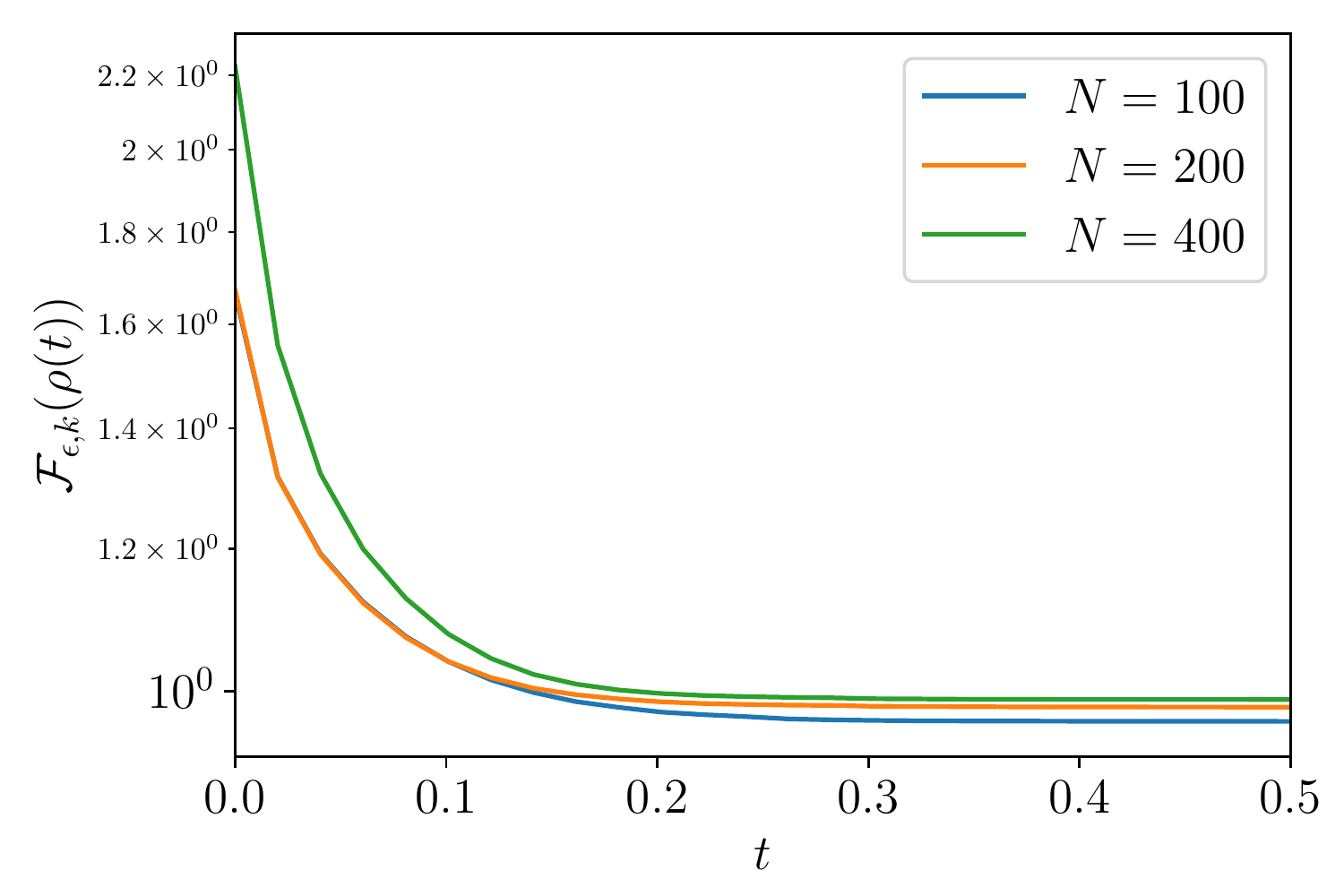}
\caption{Evolution of $\F_{\ep,k}(\rho^N_{\ep,k})$ for three choices of target   (\ref{uniform})-(\ref{pwconstant}) and three choices of $N$  (solid lines). We include the line of best fit for $t\in [0,0.5]$ (dashed line) on the left and middle plots. We take $k=10^9$, and the initial data is the Barenblatt profile (\ref{eq:barenblatt}).  }
\label{egydcay}
\end{figure}

\begin{figure}[h!]
{\centering \hspace{.5cm}  $\bar{\rho}_\text{uni}$ \hspace{6.25cm} $\bar{\rho}_\text{log-con}$ }\\
$k = 0$ \\
\includegraphics[height=5.5cm]{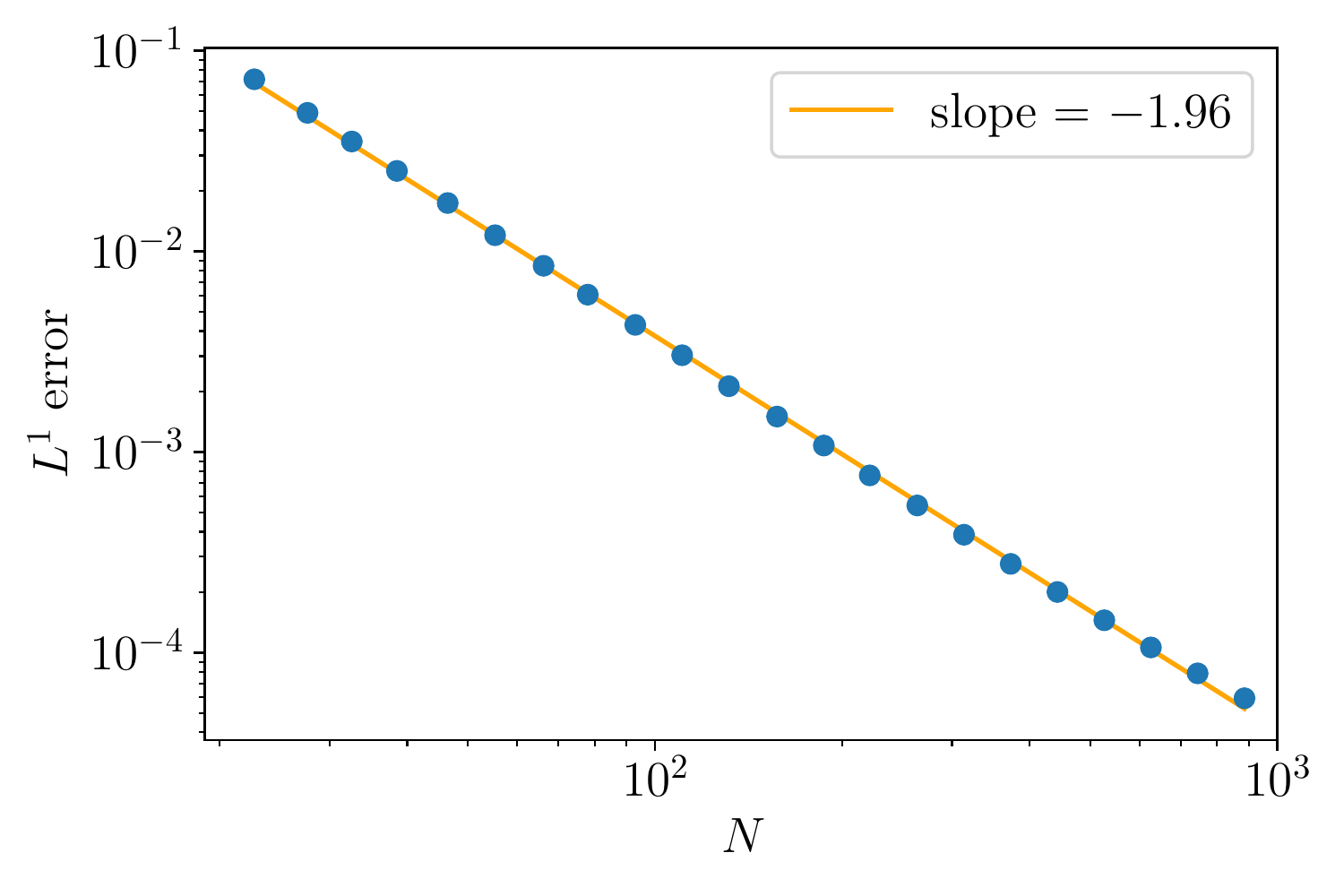}
\includegraphics[height=5.5cm,trim={1cm 0cm .05cm 0cm},clip]{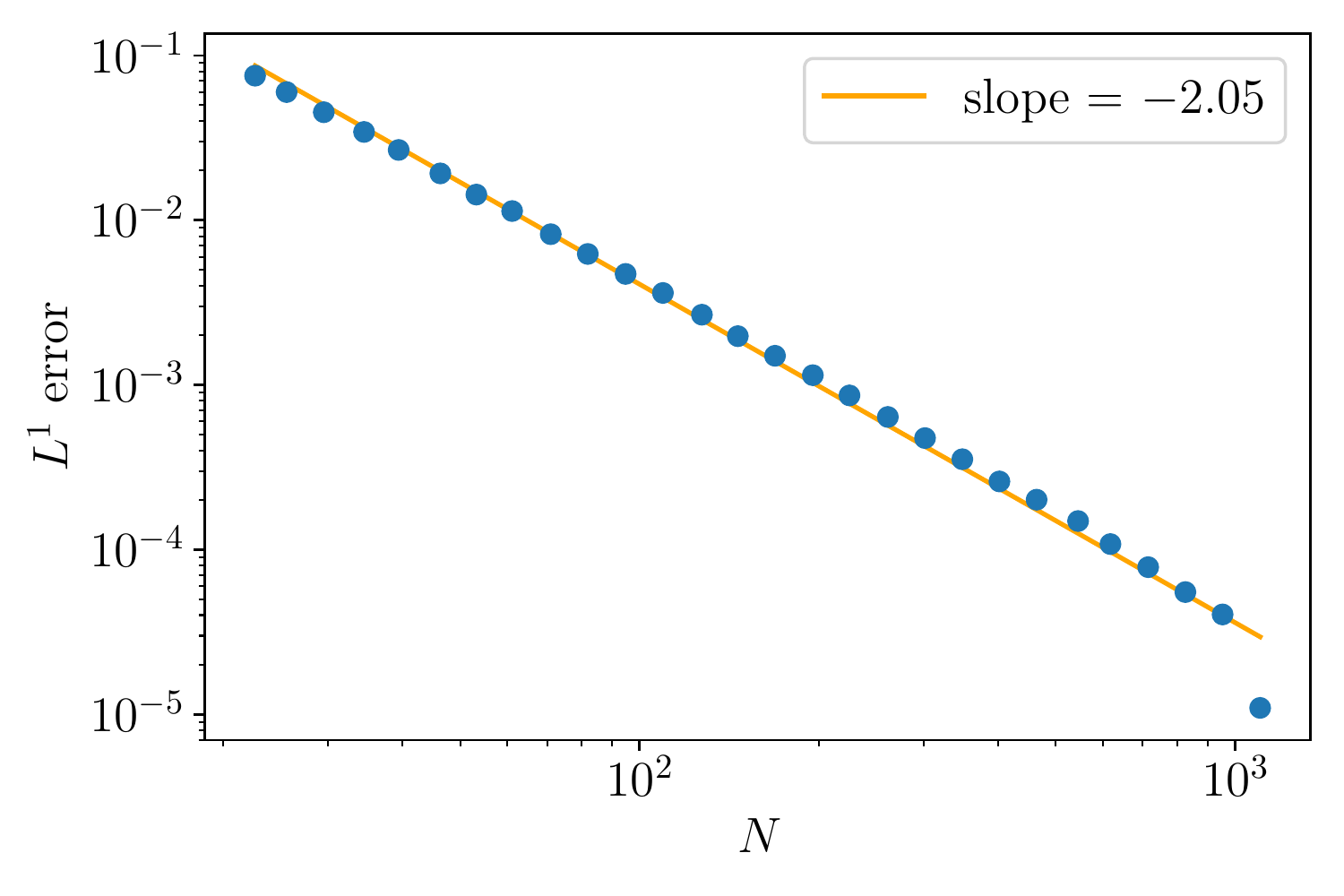} \\
$k=100$\\
\includegraphics[height=5.5cm]{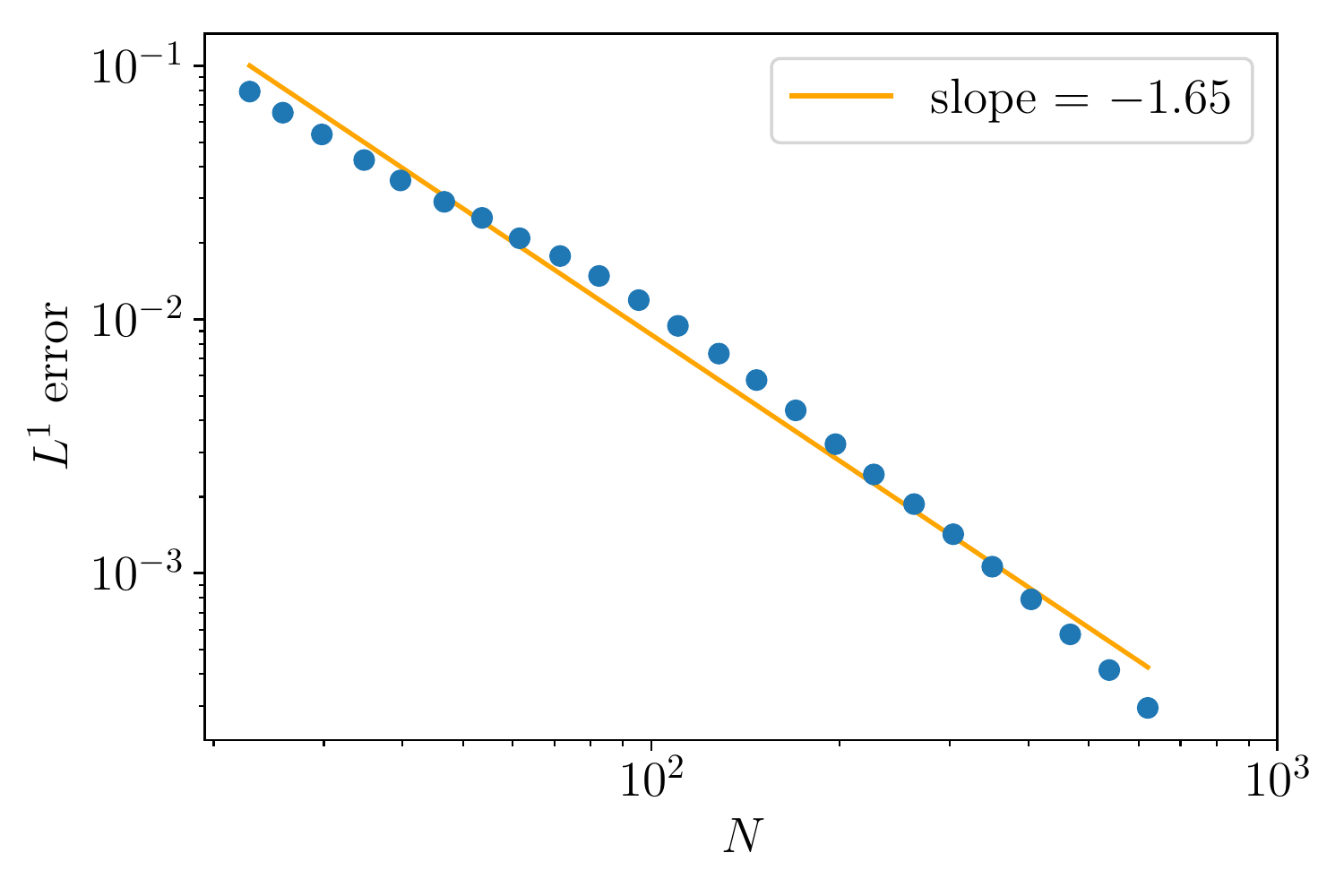}
\includegraphics[height=5.5cm,trim={1cm 0cm 0cm 0cm},clip]{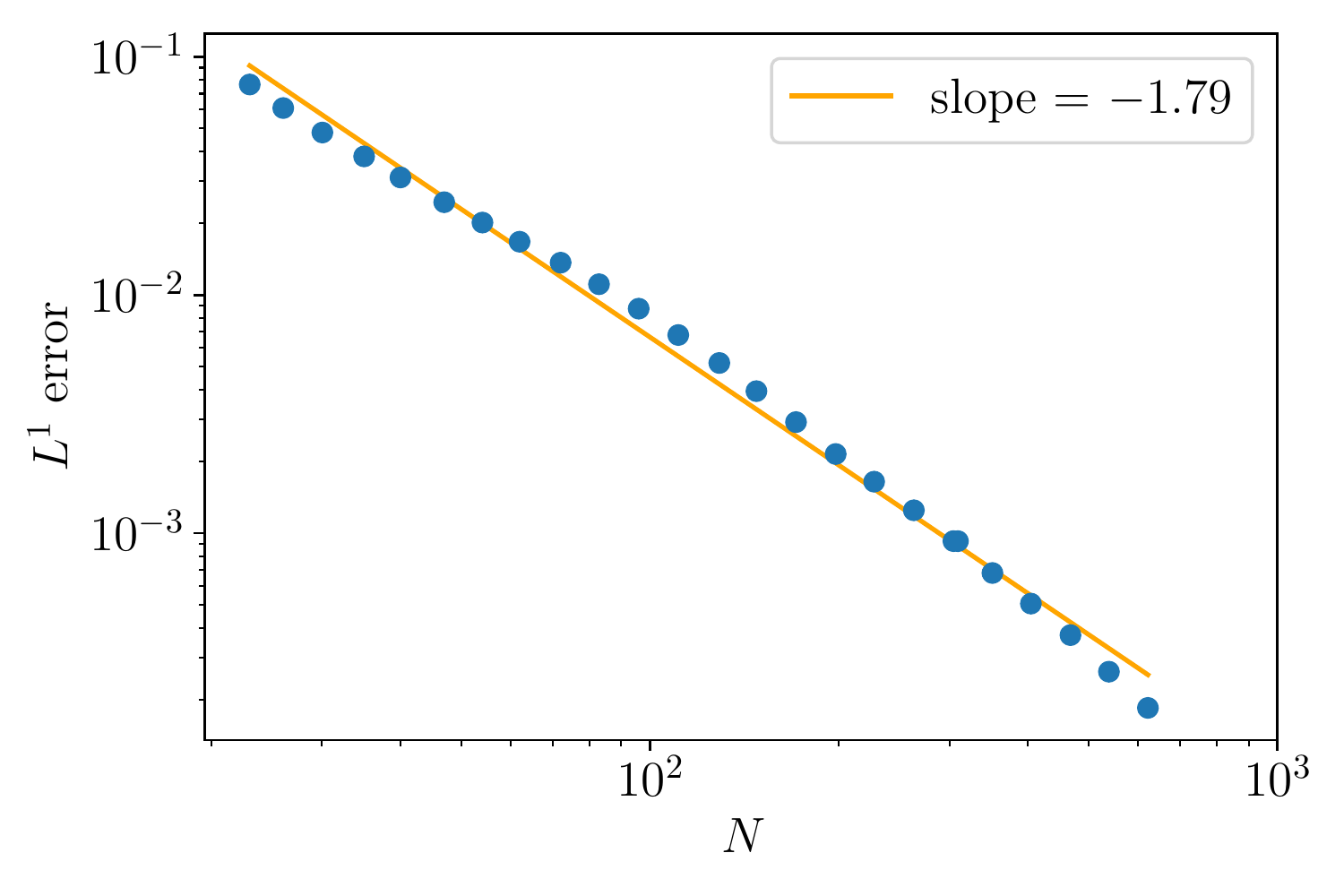} \\
$k= 10^9$ \\
\includegraphics[height=5.5cm]{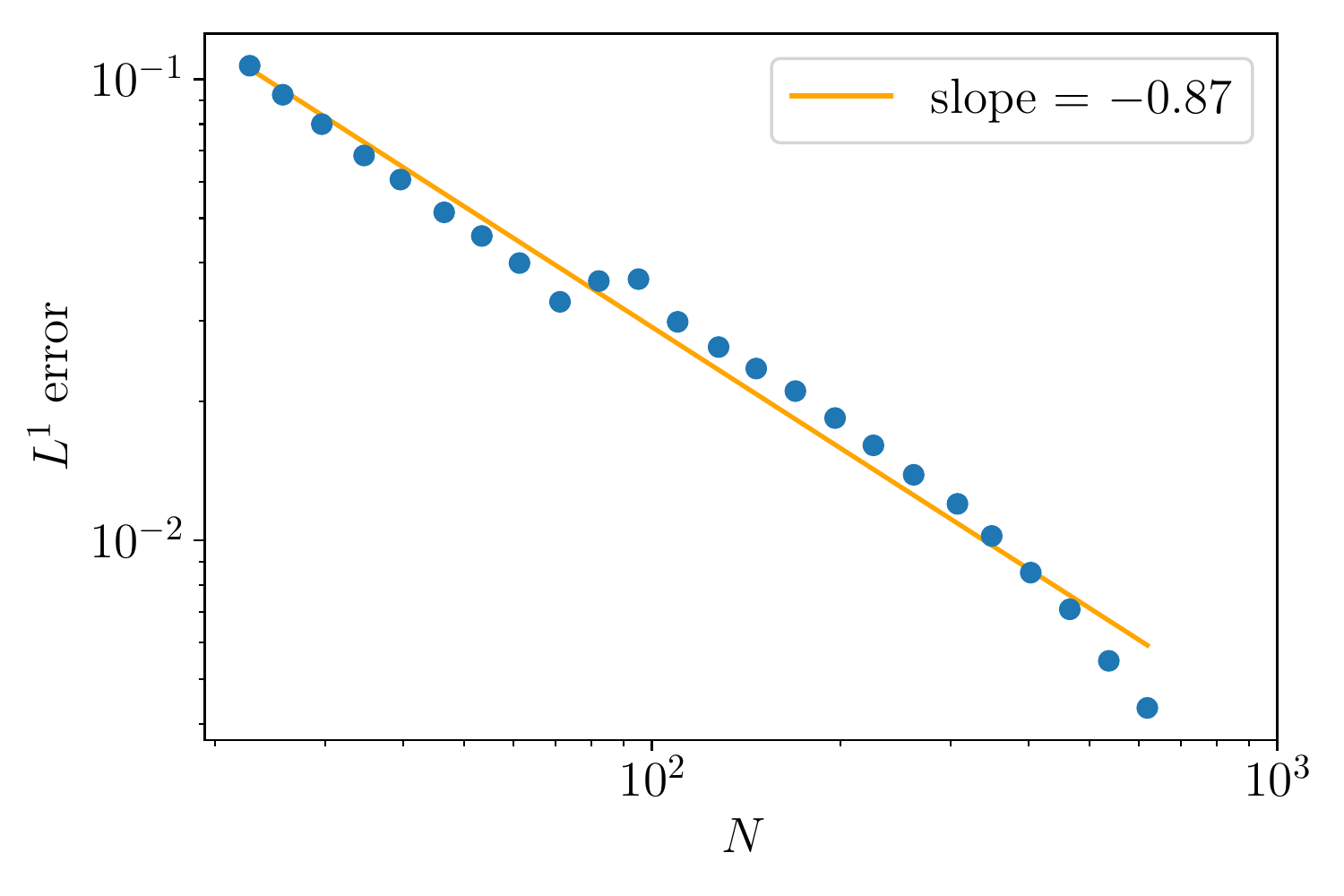}
\includegraphics[height=5.5cm,trim={1cm 0cm 0cm 0cm},clip]{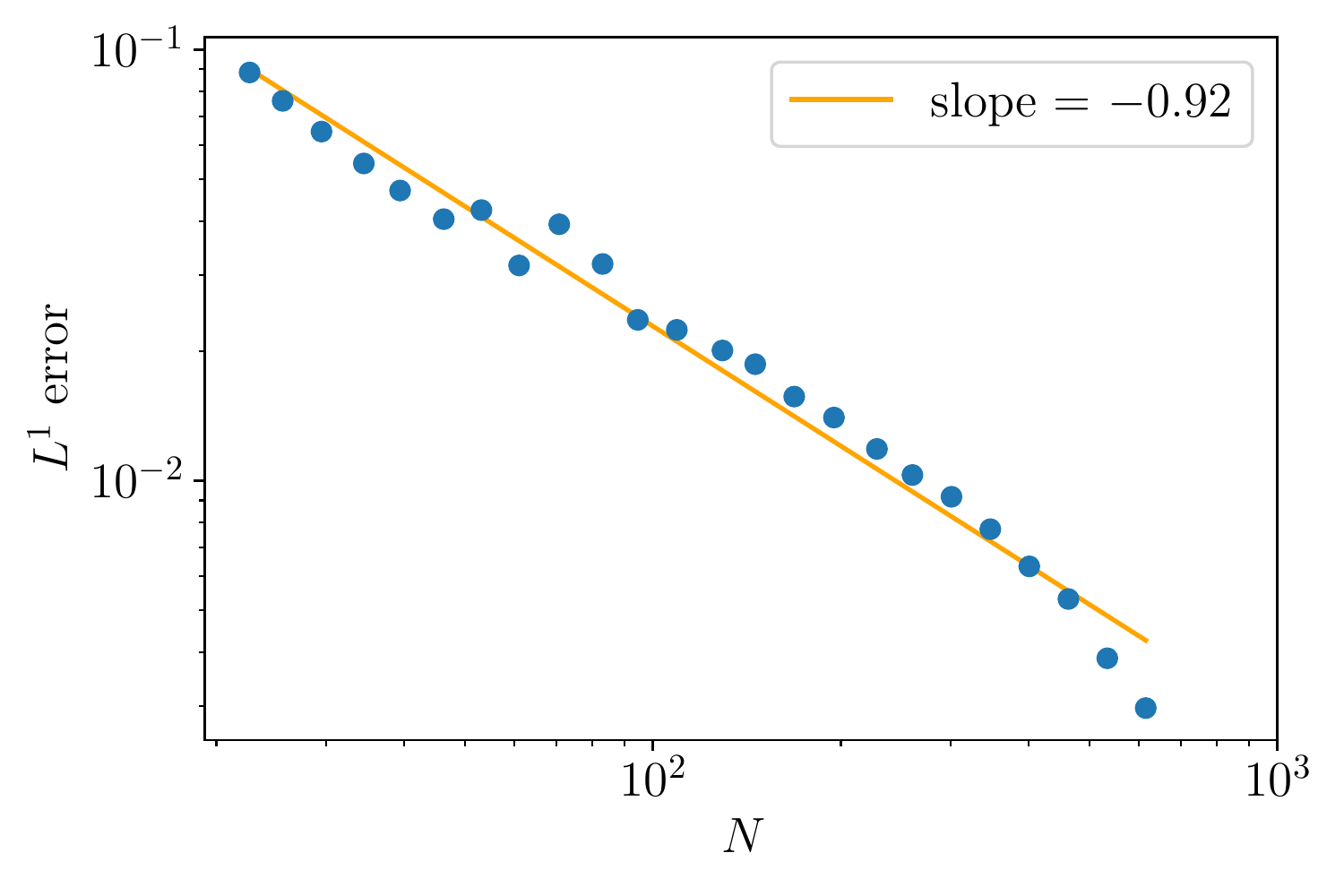}
\caption{The effect of $k$ on the rate of convergence in $N$ of the $L^1$ error (\ref{computeL1norm}) between $\tilde\rho^N_{\ep,k}$ and the numerical solution. In the left-hand column the target is $\bar\rho_{\text{uni}}$ and in the right-hand column the target is $\bar\rho_{\text{log-con}}$. Here $t=0.1$, and the initial condition is the Barenblatt profile (\ref{eq:barenblatt}).}
\label{convergenceWPMEBarenblatt}
\end{figure}

\subsubsection{Convergence to weighted porous medium equation}  

In Figures  \ref{convergenceWPMEBarenblatt} and \ref{convergenceWPMEChar}, we examine the rate of convergence of the kernel density estimate $\tilde{\rho}^N_{\ep, k}(t)$ as $k \to +\infty$, $\ep \to 0$, and $N \to +\infty$. Given that, for general $\bar{\rho}$, we lack an analytic expression for the solution $\rho(t)$ of  (\ref{mainpde}) to which we expect the solutions to converge, we instead compare our numerical solution with $N$ particles at time $t=0.1$ to the numerical solution with $N_{\text{max}} = 1,280$ particles at time $t=0.1$ via, 
\begin{align} \label{computeL1norm} L^1 \text{ error } =  \int_\Omega \left| \tilde{\rho}^N_{\ep(N), k}(x,t) - \tilde{\rho}^{N_{\text{max}}}_{\ep(N_{\text{max}}),k }(x,t) \right| dx , \quad  \quad t = 0.1 ,
\end{align}
where $\ep(N)$ is as in equation (\ref{epsrelateN}) and the integral is evaluated using the SciPy library's \texttt{quad} function. Furthermore, since we only expect good convergence rates when the solution of the underlying weighted porous medium equation is sufficiently regular, we restrict our attention to the smooth targets  $\bar{\rho}_\text{uni}$ and $\bar{\rho}_\text{log-con}$.

 In Figure \ref{convergenceWPMEBarenblatt}, we consider how the presence of a confining potential affects the rate of convergence, for both $\bar{\rho}_\text{uni}$ and $\bar{\rho}_\text{log-con}$. All simulations are conducted with Barenblatt initial data. We choose values of $N$ from $N=20$ to $N=640$, with logarithmic spacing.   In the top row, for no confinement ($k=0$), we observe  second order convergence. In the middle row, for moderate confinement ($k=100$), we observe slightly less than second order convergence. Finally, in the bottom row, for strong confinement ($k = 10^9)$, we observe less than first order convergence. 
 
  This example illustrates that there is a delicate balance underlying the choice of the stegnth of the confining potential. On one hand, the confinement must be selected to be sufficiently strong to prevent mass from leaking out of the domain and to ensure that the long time limit agrees well with the desired target; see Figure \ref{fig:fig3}. On the other hand,  selecting the confinement to be too strong can lead in a degradation of the rate of convergence as $\ep \to 0$, $N \to +\infty$, as more particles would be required for a given degree of accuracy. 
 
 \begin{figure}[h!]
 \hspace{1cm} $\bar{\rho}_\text{uni}$ \hspace{7cm} $\bar{\rho}_\text{log-con}$\\
\includegraphics[height=5.5cm]{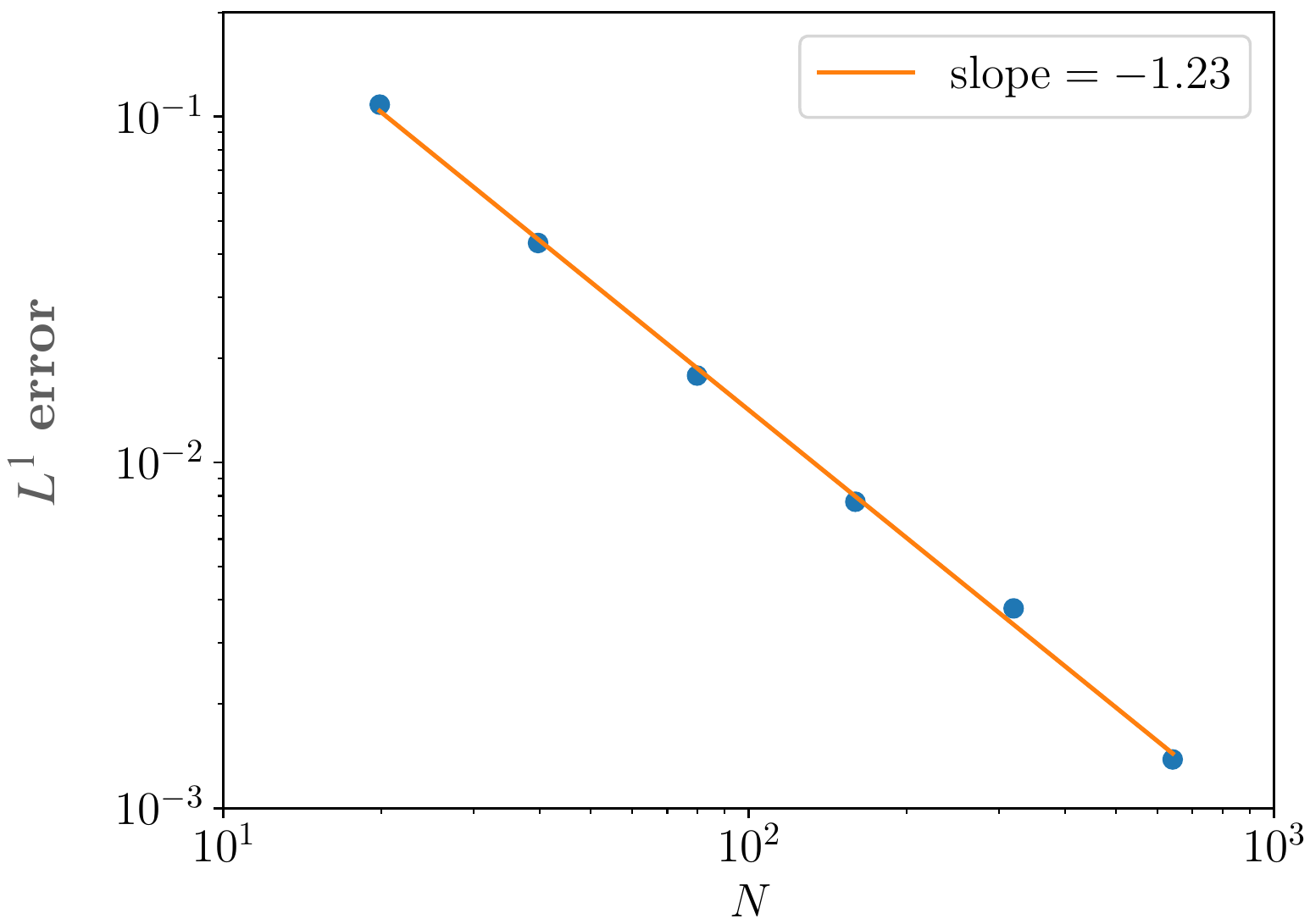}
\includegraphics[height=5.5cm,trim={1cm 0cm 0cm 0cm},clip]{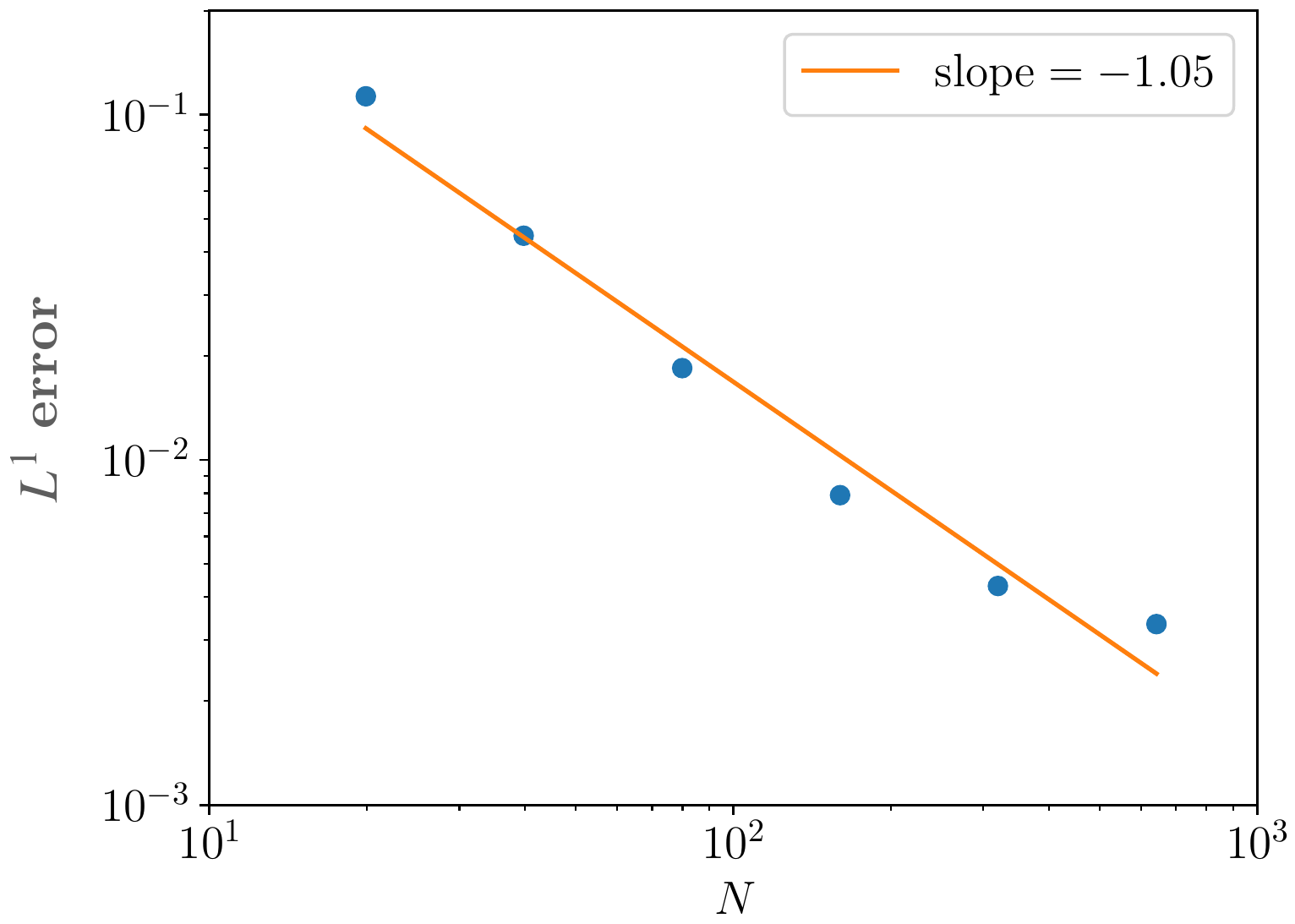} 
\caption{The effect of the initial condition on the rate of convergence in $N$ of the $L^1$ error (\ref{computeL1norm}) between $\tilde\rho^N_{\ep,k}$ and the numerical solution for two choices of target $\bar\rho$.  
Here $k=10^9$, $t=0.1$, and we take the uniform initial condition  (\ref{eq:uniID}).}
\label{convergenceWPMEChar}
\end{figure}

In Figure \ref{convergenceWPMEChar}, we consider the role   the initial conditions play  in determining the rate of convergence of the method. In particular, unlike the previous simulation, which was conducted with Barenblatt initial conditions, we now consider uniform initial conditions,
\begin{equation}
\label{eq:uniID}
    \mu_0(x) = \begin{cases} \frac12 &\text{ if } x \in [-1,1] , \\ 0 &\text{ otherwise.}\end{cases}
\end{equation} 
We consider the case of no confinement, $k = 0$, since the previous figure showed the fastest rate of convergence, of approximately second order, in this case; see Figure \ref{convergenceWPMEBarenblatt}, top row. We compute the $L^1$ error as in equation (\ref{computeL1norm}) with $N_\text{max} = 1,280$ and $N$ from $N =20$ to $N =640$ logarithmically spaced.

Unlike in the previous case, in which we observed near second order convergence in the absence of confinement, in this case we observe closer to first order convergence for both $\bar{\rho}_\text{uni}$ (left) and $\bar{\rho}_\text{log-con}$ (right).   We believe this is due to the fact that the continuum solution $\rho(t)$ of  (\ref{mainpde})  with uniform initial conditions, as above, has worse regularity than the solution for Barenblatt initial conditions. In previous work by the first author and Bertozzi \cite{CrBe14} on a regularized particle method for the related aggregation equation, which also has a gradient flow structure in the Wasserstein metric, it was shown that the rate of convergence of the   particle method  depended strongly on the regularity of the solution of the underlying PDE, in the sense that  lower regularity of the continuum solution led  to a slower rate of convergence of the numerical solution. While the convergence results in the present paper are purely qualitative, it appears that there may a similar dependence on regularity for the rate of convergence of our particle method to  (\ref{mainpde}).

 \begin{figure}[h!]
  \hspace{1cm} $\bar{\rho}_\text{uni}$ \hspace{7cm} $\bar{\rho}_\text{log-con}$\\
 \includegraphics[height=5.5cm]{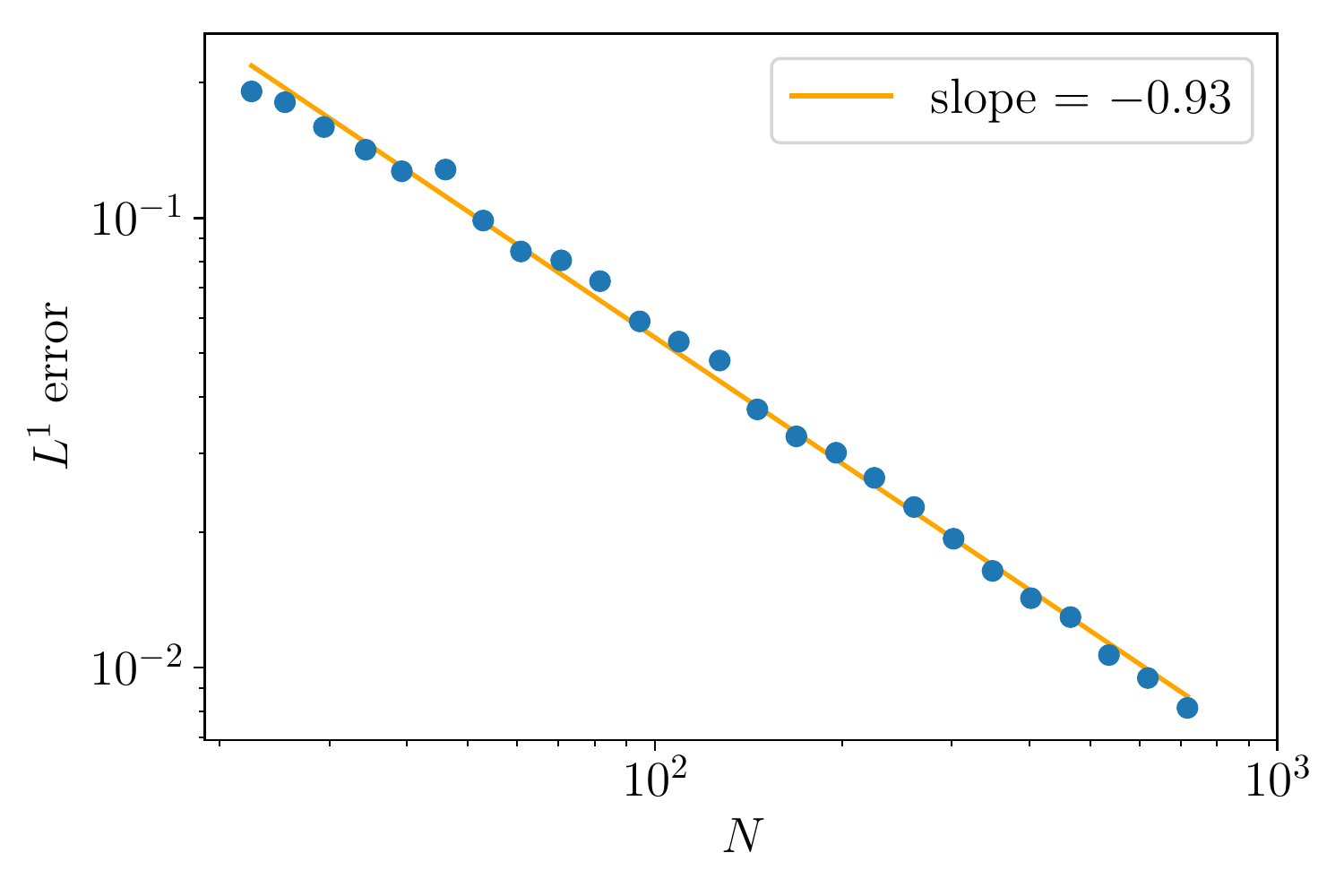}
 \includegraphics[height=5.5cm,trim={1cm 0cm 0cm 0cm},clip]{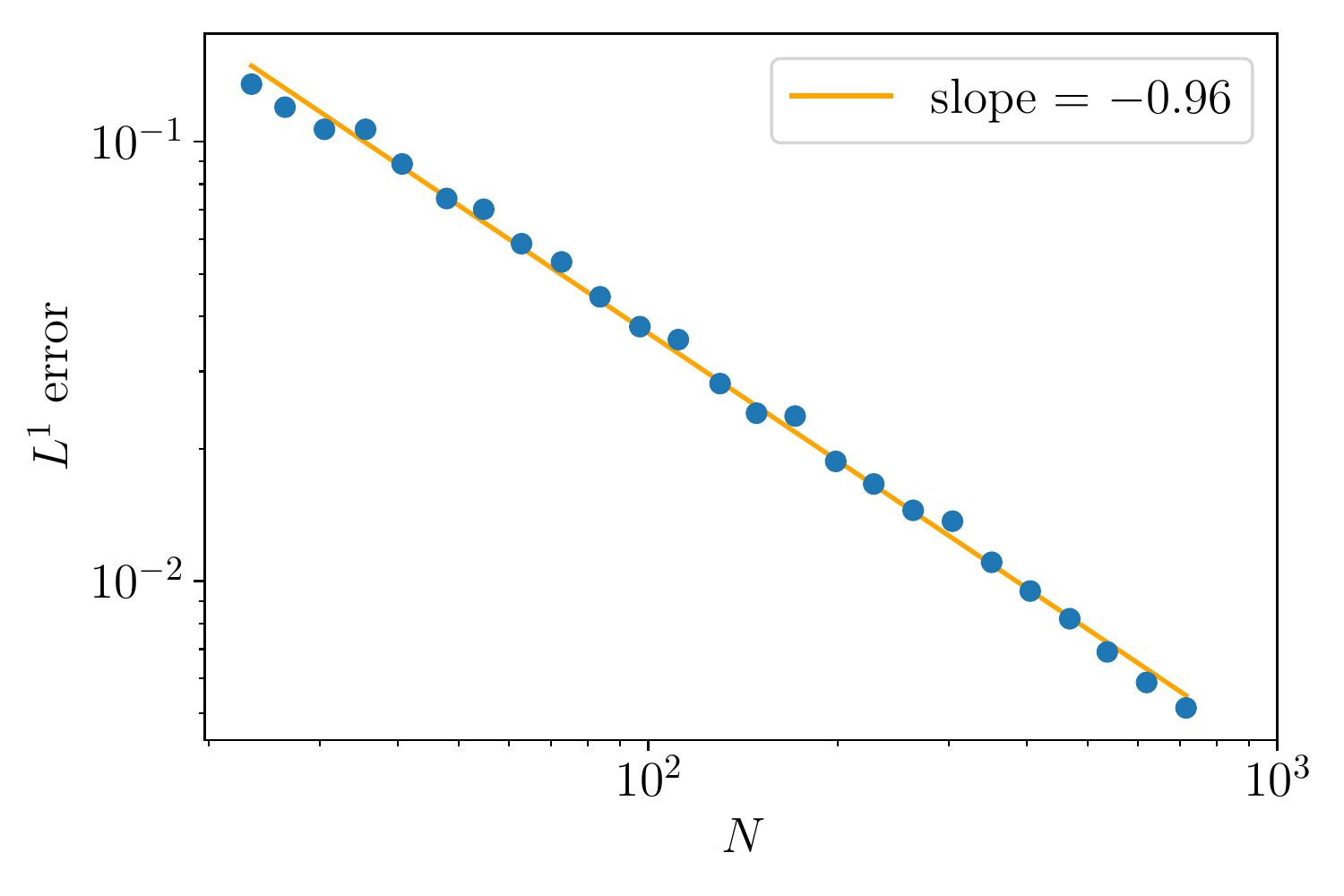}
 \caption{The  rate of convergence in $N$ of the $L^1$ error (\ref{computeL1normlongtime}) between $\tilde\rho^N_{\ep,k}$ and the target $\bar\rho$ for two choices of target. Here $t=2$, $k=10^9$, and we take the uniform initial condition (\ref{eq:uniID}).}
 \label{convergenceSteadyState}
 \end{figure}

 \subsubsection{Convergence to Steady State} 
 In Figure \ref{convergenceSteadyState}, we conclude our study of properties of the numerical method by  examining the rate of convergence  of the kernel density estimate $\tilde{\rho}^N_{ \ep, k}(t)$ to the  target  $\bar{\rho}$ in the long time limit, as the number of particles $N$ increases. As we only expect good convergence rates when the target  is sufficiently regular,  we restrict our attention to the smooth targets $\bar{\rho}_\text{uni}$ and $\bar{\rho}_\text{log-con}$. Furthermore, as illustrated in Figure \ref{fig:fig3}, since strong confinement is necessary to obtain convergence to the target  as $t \to +\infty$, we choose $k = 10^9$. We consider Barenblatt initial conditions and values of $N$ from $N = 20$ to $N = 720$, logarithmically spaced. We compute the $L^1$ error via,
 \begin{align} \label{computeL1normlongtime} L^1 \text{ error } =  \int_\Omega \left| \tilde{\rho}^N_{\ep,  k}(x,T) - \bar{\rho}(x) \right| dx , \quad  \quad T = 2.0 ,
\end{align}
where the integral is evaluated using the SciPy library's \texttt{quad} function.

For both $\bar{\rho}_\text{uni}$ and $\bar{\rho}_\text{log-con}$ we observe nearly first order convergence of our particle approximation to the target $\bar{\rho}$. This provides a quantitative numerical result to complement our qualitative result from Corollary \ref{quantcor}, in which we show that there exist parameters $T \to +\infty$, $k \to +\infty$, $\ep \to 0$, $N \to +\infty$ for which our particle method indeed provides a way to approximate $\bar{\rho}$ on $\Omega$, as relevant for applications in quantization.

\appendix
\addcontentsline{toc}{section}{Appendices}
\addtocontents{toc}{\protect\setcounter{tocdepth}{-1}}

\section{Wasserstein gradient flows} \label{appendix}
 
 We begin with a proof of Proposition \ref{metricslopesubdifabscts}, relating the metric slope and subdifferential.
\begin{proof}[Proof of Proposition \ref{metricslopesubdifabscts}]
By definition of the subdifferential and local slope, for all $\bgamma \in \Gamma_0(\mu,\nu)$,
\begin{align*}
|\partial \G|(\mu) &= \limsup_{\nu \to \mu}\frac{(\G(\mu) - \G(\nu) )_+}{W_2(\mu,\nu)} \leq  \limsup_{\nu \to \mu} \frac{1}{W_2(\mu,\nu)} \left( \int_{\Rd \times \Rd} \la \bxi(x), x-y \ra d \bgamma(x,y) - \frac{\lambda}{2} W_2^2(\mu,\nu) \right)_+  \\
&\leq \limsup_{\nu \to \mu}  \left(\frac{\|\bxi\|_{L^2(\mu)} W_2(\mu,\nu)}{W_2(\mu,\nu) } + \frac{\  \lambda_-}{2}W_2(\mu,\nu)\right) = \|\bxi\|_{L^2(\mu)} ,
\end{align*}
where $\lambda_- = \max \{ -\lambda, 0\}$. This shows inequality (\ref{subdiffmetricineq}). Uniqueness of the minimal subdifferential follows from the strict convexity of $\|\cdot \|_{L^2(\mu)}$.
\end{proof}

We now describe the proof of Theorem \ref{curvemaxslope}, which is a collection of results due to Ambrosio, Gigli, and Savar\'e that ensure well-posedness of Wasserstein gradient flows, as well as their characterization via curves of maximal slope.

\begin{proof}[Proof of Theorem \ref{curvemaxslope}]
Existence and uniqueness of the gradient flow, as well as the fact that the gradient flow is a curve of maximal slope, follows from \cite[Theorem 11.2.1]{ambrosio2008gradient}. 

Conversely, suppose $\mu(t) \in AC^2([0,T];\P_2(\Rd))$ is a curve of maximal slope in the sense of inequality (\ref{curvemaxslopedef}).
By the definition of strong upper gradient \cite[Definition 1.2.1]{ambrosio2008gradient}, Young's inequality, and the fact that, under the assumptions of the theorem, the local slope $|\partial \G|$ is a strong upper gradient \cite[Corollary 2.4.19]{ambrosio2008gradient}, $t \mapsto \G(\mu(t))$ is absolutely continuous, and
\begin{align*}
- \frac{d}{dt} \G(\mu(t)) \leq  \frac{1}{2} |\partial \G|^2(\mu(t)) + \frac{1}{2} |\mu'|^2(t), \text{ for almost every } t \geq 0 .
\end{align*}
Define $f(t) =  \frac{d}{dt} \G(\mu(t)) +  \frac{1}{2} |\partial \G|^2(\mu(t)) + \frac{1}{2} |\mu'|^2(t)$. Then we must have $f(t) \geq 0$ for a.e. $t \geq 0$ and inequality (\ref{curvemaxslopedef}) ensures
\[ \int_0^t f(r) dr \leq 0 \text{ for all } t \geq 0 . \]
Therefore, we must have $f(t) = 0$ for a.e. $t \geq 0$ and, integrating $f$ from $(a,b) \subseteq [0,+\infty)$, we see $\G(\mu(t))$ must be decreasing. This shows $\mu(t)$ is a curve of maximal slope in the pointwise sense of Ambrosio, Gigli, and Savar\'e \cite[Definition 1.3.2]{ambrosio2008gradient}. 
Finally, 
  \cite[Theorem 11.1.3]{ambrosio2008gradient} ensures it is a gradient flow of $\G$. (This theorem applies since functionals that are  $\lambda$-convex are \emph{regular}, in the sense required by the theorem, and functionals that are $\lambda$-convex along generalized geodesics satisfy the required coercivity assumption in \cite[equation 11.1.13b]{ambrosio2008gradient}: see  \cite[Lemma 10.3.8, Definition 10.3.9]{ambrosio2008gradient} for  {regularity} and \cite[Assumption 4.0.1, Lemma 4.1.1]{ambrosio2008gradient} for coercivity.)
  
Finally, the fact that $\mu(t)$ is a gradient flow of $\G$ if and only if it satisfies the Evolution Variational Inequality follows from \cite[Theorem 11.1.4]{ambrosio2008gradient}.
\end{proof}

Next, we define a discrete time approximation of a Wasserstein gradient flow, known as a \emph{minimizing movement scheme}, which was famously  introduced in the Wasserstein context by Jordan, Kinderlehrer, and Otto \cite{jordan1998variational}.
 
\begin{defn}[minimizing movement scheme] \label{minmovdef}
Suppose $\G$ is proper, lower semicontinuous, and $\lambda$-convex along generalized geodesics. Define the \emph{proximal operator} $\J_\tau$ by,
 \begin{align*}
\J_\tau \mu = \argmin_{\nu\in\mathcal{P}_2(\rr^d)} \left\{ \frac{1}{2 \tau} W_2^2(\mu, \nu) + \G(\nu) \right\} ,
\end{align*}
and define the \emph{minimizing movement scheme} $\J^n_{\tau} \mu$ by,
\begin{align*}
 \J^{n}_{\tau } ( \mu) = \underbrace{\J_{\tau} \circ \J_{\tau} \circ \dots \circ \J_{\tau}}_{\text{$n$ times}}(\mu).
\end{align*}

\end{defn}
Note that, by definition, the energy decreases along the minimizing movement scheme:
\begin{align} \label{energydecminmov}
\G(\J^n_\tau \mu) \leq \G(\J^{n-1}_\tau \mu).
\end{align}

We recall the following theorem on the convergence, due to Ambrosio, Gigli, and Savar\'e.

\begin{thm}[convergence of minimizing movement scheme, {\cite[Theorem 4.0.9]{ambrosio2008gradient}}] \label{minmovthm}
Suppose $\G$ is proper, lower semicontinuous, and $\lambda$-convex along generalized geodesics and $\mu \in D(\G)$.  Fix $T >0$, and take a piecewise constant interpolation of the minimizing movement scheme,
\begin{align} \label{pwconstinterp}
 \mu_\tau(s) = \J^n_{\tau} \mu \quad \text{ for } s \in ((n-1)\tau,n \tau] .
 \end{align}
Then for all $t \in [0,T]$, we have $\lim_{n \to +\infty}\mu_\tau(t) = \mu(t)$ narrowly, where $\mu(t)$ is the gradient flow of $\G$ with initial data $\mu$.
\end{thm}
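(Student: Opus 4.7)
The plan is to establish this classical result via the standard three-step program: a priori estimates, compactness, and identification of the limit as the unique gradient flow. Throughout, I will interpret the index in the limit as $\tau = \tau_n \to 0$, since the piecewise constant interpolation $\mu_\tau$ depends on the step size $\tau$.

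\textbf{Step 1: A priori estimates for the discrete scheme.} First, I would verify that the proximal operator $J_\tau \mu$ is well defined for $\tau$ sufficiently small (depending on $\lambda_-$), by combining the lower semicontinuity of $\G$ with the coercivity induced by the $W_2^2/(2\tau)$ penalty. The direct method in the calculus of variations yields existence of minimizers. From the defining inequality of $J^n_\tau \mu$, comparing with $J^{n-1}_\tau \mu$ as a competitor gives the discrete energy dissipation,
\begin{equation*}
\frac{W_2^2(J^n_\tau \mu, J^{n-1}_\tau \mu)}{2 \tau} + \G(J^n_\tau \mu) \leq \G(J^{n-1}_\tau \mu).
\end{equation*}
Summing yields $\sum_{n=1}^{N} W_2^2(J^n_\tau \mu, J^{n-1}_\tau \mu) \leq 2 \tau (\G(\mu) - \inf \G)$, from which a Cauchy--Schwarz argument gives the H\"older-type bound $W_2(\mu_\tau(s), \mu_\tau(t)) \leq C(\mathcal{G})\sqrt{|t-s| + \tau}$ for a constant depending only on $\G(\mu)$ and $\inf \G$. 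I would also derive a uniform second-moment bound, using that $J_\tau$ does not increase $M_2$ too wildly (a standard Gronwall argument from the penalty structure).

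\textbf{Step 2: Compactness.} The uniform second-moment bound ensures that $\{\mu_\tau(t)\}_{\tau, t}$ is tight (hence narrowly relatively compact), pointwise in $t$. Combined with the H\"older estimate above, a refined Ascoli--Arzel\`a argument in the narrow topology, as in Proposition \ref{prop:conv of almost cms} and Lemma \ref{lem:compactprime}, extracts a subsequence $\tau_n \to 0$ and a limit curve $\mu \in AC^2([0,T]; \P_2(\Rd))$ with $\mu_{\tau_n}(t) \to \mu(t)$ narrowly for every $t \in [0,T]$.

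\textbf{Step 3: Identification of the limit as the gradient flow.} This is the heart of the proof and where the hypothesis of $\lambda$-convexity \emph{along generalized geodesics} (as opposed to merely along geodesics) is essential. Indeed, taking a generalized geodesic from $J^{n-1}_\tau \mu$ to any competitor $\nu$ with base $J^n_\tau \mu$, using $\lambda$-convexity of $\G$ along it, and exploiting the minimality of $J^n_\tau \mu$, one obtains the discrete evolution variational inequality
\begin{equation*}
\frac{W_2^2(J^n_\tau \mu, \nu) - W_2^2(J^{n-1}_\tau \mu, \nu)}{2\tau} + \frac{\lambda}{2} W_2^2(J^n_\tau \mu, \nu) + \G(J^n_\tau \mu) \leq \G(\nu).
\end{equation*}
Fixing $\nu \in D(\G)$, summing over $n$ from $\lceil s/\tau \rceil + 1$ to $\lceil t/\tau \rceil$, and using lower semicontinuity of $W_2^2(\,\cdot\,,\nu)$ and of $\G$ under narrow convergence, one passes to the limit $\tau_n \to 0$ to deduce that $\mu(t)$ satisfies the continuous EVI of Theorem \ref{curvemaxslope}(ii).

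\textbf{Step 4: Uniqueness and convergence of the full sequence.} By Theorem \ref{curvemaxslope}, the EVI characterizes the unique gradient flow of $\G$ with initial data $\mu$. Hence every subsequential limit coincides with $\mu(t)$, so the whole family $\mu_\tau(t)$ converges narrowly to $\mu(t)$ for all $t \in [0,T]$.

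\textbf{Main obstacle.} The technically delicate step is Step 3, the passage to the limit in the discrete EVI. It requires choosing the correct interpolation and using generalized geodesics to obtain the $\lambda/2$ term with the right sign, since naive geodesic convexity would not suffice to close the argument for nonzero $\lambda$. The lower semicontinuity of $W_2^2(\cdot,\nu)$ and $\G$ along the extracted narrow-convergent subsequence, together with handling the telescoped left-hand side at the continuous-time level, is where care must be taken.
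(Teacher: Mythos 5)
Your sketch is correct and reconstructs the standard argument from Chapter~4 of Ambrosio--Gigli--Savar\'e; the paper does not reproduce a proof at all, merely citing \cite[Theorem~4.0.9]{ambrosio2008gradient}, which is precisely the result your four steps re-derive. The one small observation worth making: the statement as written has $\lim_{n\to+\infty}\mu_\tau(t)$ with $\tau$ not tied to $n$; you correctly read this as $\tau=\tau_n\to 0$ (elsewhere the paper uses $\tau=T/n$), and the rest of your outline---well-posedness of the JKO step for $\tau<1/\lambda_-$, discrete energy dissipation and the $1/2$-H\"older estimate, tightness from moment bounds, the discrete EVI obtained by comparing along a generalized geodesic with base $\J_\tau^n\mu$, passage to the limit via lower semicontinuity, and uniqueness from Theorem~\ref{curvemaxslope}---is exactly the AGS proof.
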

\begin{proof}
This theorem is an immediate consequence of  \cite[Theorem 4.0.9]{ambrosio2008gradient}.
\end{proof}

We continue with an elementary result bounding the Wasserstein distance between a curve of maximal slope and a fixed reference measure.

\begin{prop}[$M_2$ bound for 2-absolutely continuous curves] \label{secondmomentbound}
 Suppose $\rho(t) \in AC^2([0,T];\P_2(\Rd))$.
  Then we have 
  \begin{align} \label{secmomtimebound}
M_2(\rho(t)) \leq  \left(1 + t e^t   \right) \left(M_2(\rho(0)) +  \int_0^T |\rho'|^2(r) dr \right) \quad \text{ for all } t \in [0,T].
\end{align}
\end{prop}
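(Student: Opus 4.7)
The plan is to reduce the second-moment bound to an absolute-continuity estimate for the function $t \mapsto M_2(\rho(t))^{1/2}$, then conclude by a Gronwall argument. The starting observation is that equation (\ref{eq:Wp Mp}) from the preliminaries gives $M_2(\rho(t)) = W_2^2(\rho(t), \delta_0)$, so the Wasserstein distance is exactly the square root of the second moment. This lets us bring the absolute continuity of $\rho$ in $(\P_2(\Rd),W_2)$ directly to bear on $M_2(\rho(t))$.

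First, I would use the triangle inequality for $W_2$ to obtain $|M_2(\rho(t))^{1/2} - M_2(\rho(s))^{1/2}| = |W_2(\rho(t),\delta_0) - W_2(\rho(s),\delta_0)| \leq W_2(\rho(t),\rho(s))$, and then invoke Definition \ref{defi:ac-curve} together with the identification of the metric derivative $|\rho'|$ in Definition \ref{defi:metric-derivative} to bound the right-hand side by $\int_s^t |\rho'|(r)\,dr$. Consequently, $t \mapsto M_2(\rho(t))^{1/2}$ is itself $2$-absolutely continuous on $[0,T]$ with $\tfrac{d}{dt} M_2(\rho(t))^{1/2} \leq |\rho'|(t)$ for a.e.\ $t$.

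Next, I would apply the chain rule to $M_2(\rho(t)) = \bigl(M_2(\rho(t))^{1/2}\bigr)^2$ and use the AM--GM inequality:
\begin{equation*}
\frac{d}{dt} M_2(\rho(t)) \leq 2\, M_2(\rho(t))^{1/2} \, |\rho'|(t) \leq M_2(\rho(t)) + |\rho'|^2(t) \quad \text{for a.e. } t \in [0,T].
\end{equation*}
Multiplying by $e^{-t}$ to form an integrating factor gives $\frac{d}{dt}\bigl(e^{-t} M_2(\rho(t))\bigr) \leq e^{-t} |\rho'|^2(t)$, and integrating from $0$ to $t$ yields
\begin{equation*}
M_2(\rho(t)) \leq e^{t}\, M_2(\rho(0)) + e^{t}\int_0^{t} e^{-s}|\rho'|^2(s)\,ds \leq e^{t}\left( M_2(\rho(0)) + \int_0^T |\rho'|^2(r)\,dr \right).
\end{equation*}

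Finally, to obtain the precise constant $(1+te^t)$ stated in the proposition, I would note the elementary inequality $e^t \leq 1 + te^t$ for all $t \geq 0$. This can be verified by observing that the function $\varphi(t) := 1 + te^t - e^t = 1 + (t-1)e^t$ satisfies $\varphi(0) = 0$ and $\varphi'(t) = te^t \geq 0$. Substituting this bound into the previous estimate gives inequality (\ref{secmomtimebound}). There is no real obstacle here: the only small subtlety is justifying the chain rule for $t \mapsto M_2(\rho(t))$, which follows immediately from the absolute continuity of $t \mapsto M_2(\rho(t))^{1/2}$ established in the first step.
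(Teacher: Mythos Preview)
Your proof is correct and reaches the same Gronwall inequality as the paper, but by a more elementary route. The paper introduces the functional $\mathcal{W}(\rho) = -\tfrac12 W_2^2(\rho,\mu)$ for an arbitrary reference $\mu \in \P_2(\Rd)$, invokes its $(-1)$-convexity and lower semicontinuity to conclude that the local slope $|\partial \mathcal{W}|$ is a strong upper gradient, and then estimates $|\partial \mathcal{W}|(\rho) \leq W_2(\rho,\mu)$ via the triangle inequality; this yields $\tfrac12[W_2^2(\rho(t),\mu)-W_2^2(\rho(0),\mu)] \leq \int_0^t W_2(\rho(s),\mu)|\rho'|(s)\,ds$, after which Cauchy--Schwarz and Gronwall give the bound. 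You bypass the upper-gradient machinery entirely: the reverse triangle inequality already shows $t \mapsto W_2(\rho(t),\delta_0)$ is absolutely continuous with a.e.\ derivative bounded by $|\rho'|(t)$, and squaring plus AM--GM gives the same differential inequality directly. Your argument is shorter and self-contained; the paper's argument, in exchange, illustrates the strong-upper-gradient framework in a concrete case and actually proves the slightly more general statement for any reference measure $\mu$ before specializing to $\mu=\delta_0$.
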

 
 \color{black}
 \begin{proof}
 We shall  show that, for any $\mu \in \P_2(\Rd)$, 
 \begin{align} \label{W22timebound}
 W_2^2(\rho(t),\mu) \leq   \left(1 + t e^t   \right) \left[ W_2^2(\rho(0),\mu) +  \int_0^T |\rho'|^2(r) dr \right] \quad \text{ for all } t \in [0,T].
\end{align}
The desired estimate then follows   from taking $\mu = \delta_0$, as in (\ref{eq:Wp Mp}).

Define $\mathcal{W}(\rho) = - \frac{1}{2} W_2^2(\rho, \mu)$. Since $\mathcal{W}$ is (-1)-convex  and lower semicontinuous \cite[Proposition 9.3.12]{ambrosio2008gradient}, the local slope $|\partial \mathcal{W}|(\rho)$ is a strong upper gradient for $\mathcal{W}$ (see \cite[Definition 1.2.1, Corollary 2.4.10]{ambrosio2008gradient}), which implies,  
\begin{align} \label{strongupperW22}
\left| \mathcal{W}(\rho(t)) - \mathcal{W}(\rho(0)) \right| \leq \int_0^t |\partial \mathcal{W}|(\rho(s)) |\rho'|(s) ds.
\end{align}
Furthermore, using the definition of local slope, rearranging, and applying the triangle inequality, yields,
\begin{align} \label{metricslopeW22}
	|\partial \mathcal{W}|(\rho) &= \limsup_{\nu \to \rho} \frac{  W_2^2(\nu,\mu) -W_2^2(\rho, \mu) }{2 W_2(\rho,\nu)} = \limsup_{\nu \to \rho} \frac{ ( W_2 (\nu,\mu) -W_2 (\rho, \mu)) ( W_2 (\nu,\mu) +W_2 (\rho, \mu)) }{2 W_2(\rho,\nu)} \nonumber \\ 
	&\leq \limsup_{\nu \to \rho} \frac{  W_2(\rho,\nu)(W_2 (\nu,\mu) +W_2 (\rho, \mu) )}{2 W_2(\rho,\nu)} = W_2(\rho,\mu).
\end{align}
Thus, combining (\ref{strongupperW22}) and (\ref{metricslopeW22}), we obtain,
\begin{align*}
\frac{1}{2} \left[ W_2^2(\rho(t),\mu) - W_2^2(\rho(0),\mu) \right] &\leq \left| \mathcal{W}(\rho(t)) - \mathcal{W}(\rho(0)) \right| \leq  \int_0^t W_2(\rho(s),\mu) |\rho'|(s) ds \\
&\leq  \| W_2(\rho(s),\mu) \|_{L^2([0,t])} \| |\rho'|(s) \|_{L^2([0,t])} \leq \frac12 \int_0^t W_2^2(\rho(s),\mu) ds + \frac12 \int_0^T |\rho'|^2(s) ds .
\end{align*}
By Gronwall's inequality, this implies inequality (\ref{W22timebound}).

 \end{proof}

We close this section by providing the construction of an  empirical measure approximating any measure $\mu \in \P_2(\Rd)$.

\begin{lem}[approximation via empirical measures]
\label{sketchlem2}
For all  $\mu\in \mathcal{P}_2(\rr^d)$ and  $\delta>0$,  there exists  $N \in \mathbb{N}$,  $\{X^i\}_{i=1,...N}\subseteq \rr^d$, and $\{m^i\}_{i = 1, \dots, N} \subseteq \rr^+$ with $\sum_{i=1}^N m^i =1$, such that  
$\mu^N = \sum_{i=i}^N \delta_{X^i}m^i$ satisfies
$W_2(\mu, \mu^N)\leq \delta$.
\end{lem}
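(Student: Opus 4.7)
\textbf{Proof plan for Lemma \ref{sketchlem2}.} The plan is a standard truncation-then-tessellation argument, exploiting the finite second moment of $\mu$ in an essential way. First, I would use finiteness of $M_2(\mu)$ to control the tails: by the dominated convergence theorem applied to the family $|x|^2 \indi_{\{|x|_\infty > R\}}$, choose $R>0$ large enough that
\begin{equation*}
\int_{\Rd \setminus [-R,R]^d} |x|^2 \, d\mu(x) \leq \frac{\delta^2}{2}.
\end{equation*}
This lets me treat the mass outside $[-R,R]^d$ crudely by transporting it to a single point (the origin).

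Next, I would discretize the cube $[-R,R]^d$. Pick a parameter $m \in \N$ to be fixed below, subdivide $[-R,R]^d$ into $M := m^d$ axis-aligned closed sub-cubes $Q_1, \dots, Q_M$ of side length $2R/m$ (with some deterministic rule at shared boundaries to make them disjoint), and let $X^i$ denote the center of $Q_i$. Set $Q_{M+1} := \Rd \setminus [-R,R]^d$, $X^{M+1} := 0$, and $N := M+1$. Define masses $m^i := \mu(Q_i)$; then $m^i \geq 0$ and $\sum_{i=1}^N m^i = 1$. The empirical measure is $\mu^N := \sum_{i=1}^N \delta_{X^i} m^i$ (one may safely discard any indices with $m^i = 0$).

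To estimate $W_2(\mu, \mu^N)$, I would write down the natural transport plan
\begin{equation*}
\bgamma := \sum_{i=1}^{N} (\id \times c_{X^i})_\# \bigl(\mu\llcorner Q_i\bigr),
\end{equation*}
where $c_{X^i}$ denotes the constant map $x \mapsto X^i$. A direct check gives $\pi^1_\# \bgamma = \mu$ and $\pi^2_\# \bgamma = \mu^N$, so $\bgamma \in \Gamma(\mu,\mu^N)$. From the definition of $W_2$,
\begin{equation*}
W_2^2(\mu,\mu^N) \leq \int_{\Rd \times \Rd} |x-y|^2 \, d\bgamma(x,y) = \sum_{i=1}^{M} \int_{Q_i} |x-X^i|^2 \, d\mu(x) + \int_{Q_{M+1}} |x|^2 \, d\mu(x).
\end{equation*}
On each $Q_i$ for $i \leq M$, the diameter bound $|x - X^i| \leq \sqrt{d}\, R/m$ yields $\int_{Q_i}|x-X^i|^2 d\mu \leq (dR^2/m^2)\mu(Q_i)$, which sums to at most $dR^2/m^2$. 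The last term is bounded by $\delta^2/2$ by the choice of $R$. Choosing $m \in \N$ with $m \geq R\sqrt{2d}/\delta$ makes the cube contribution at most $\delta^2/2$ as well, giving $W_2^2(\mu,\mu^N) \leq \delta^2$ and hence the claim.

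There is no real obstacle here; the argument is elementary and quantitative. The only mild care needed is to ensure $\{Q_i\}_{i=1}^{M+1}$ really partitions $\Rd$ (so the marginals of $\bgamma$ come out correctly) and to note the explicit count $N \leq 1 + \lceil R\sqrt{2d}/\delta \rceil^d$, which matches the scaling $N \sim (\diam(\supp\mu)/\delta)^d$ mentioned in the discussion following Lemma~\ref{sketchlem2} and is exactly the construction used in the numerical initialization of Section~\ref{numericaldetailssec}.
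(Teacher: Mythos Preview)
Your proof is correct and follows essentially the same truncation-then-tessellation argument as the paper's proof. The only cosmetic difference is that the paper carries out the two steps separately (first defining an intermediate compactly supported measure $\mu_R$ and applying the triangle inequality), whereas you combine both steps into a single transport plan; the underlying construction and estimates are identical.
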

\begin{proof}[Proof of Lemma \ref{sketchlem2}]

 Throughout this proof, we shall use $Q_r(0)$ to denote a cube in $\rr^d$ centered at $0$ and with side length $r>0$; namely, $Q_r(0) = \left[-\frac{r}{2}, \frac{r}{2}\right)^d$. 
For $x\in \rr^d$, let  $Q_r(x)= Q_r(0)+x$.

Fix $\mu\in \mathcal{P}_2(\rr^d)$ and   $\delta>0$.
 First, we reduce to the case of approximating a compactly supported measure. To this end,   note that since $\mu\in M_2(\rr^d)$,  
there exists   $R>0$ so that $\int_{Q_R(0)^c}|x|^2\, d\mu \leq \left(\frac{\delta}{2}\right)^2$. 
Consider the transport map,
\[
\bt_R(x) = \begin{cases}
x\quad \text{if } x\in Q_R(0),\\
0 \quad \text{otherwise},
\end{cases}
\]
and define $\mu_R$ via $\mu_R = (\bt_R)_{\#}\mu$. Then we find,
\begin{align}
\label{eq:construct emp measure est}
W_2(\mu, \mu_R) & \leq \left(\int|\bt_R(x)-x|^2\, d\mu\right)^{1/2} \leq \left(\int_{Q_R(0)^c}|x|^2\, d\mu\right)^{1/2}\leq \frac{\delta}{2},
\end{align}

We are now ready to define the approximating measure $\mu^N$. Choose $K\in \mathbb{N}$ large enough so that,
\begin{equation}
\label{eq:choiceN}
    K\geq \frac{2\sqrt{d}R}{\delta} ,
\end{equation}
  and consider a grid on $Q_R(0)$ where each cell has side length $R/K$, so that we have 
  $  Q_R(0) = \bigcup_{i=1}^{K^d}Q_{R/K}(X^i)$, 
where the centers $\{X^i\}_{i=1}^{m^d}$ are chosen such that the above union is disjoint. Let $N = K^d$, and define $\mu^N$ to be the sum of Dirac masses at the centers of the cells, with weights given by the mass of $\mu_R$ in each cell:
\[
\mu^N = \sum_{i=1}^{N}\delta_{X^i} m^i, \quad \text{with }m^i =  \mu_R \left(Q_{R/K}(X^i)\right).
\]

To estimate  $W_2(\mu_R, \mu^N)$, we consider the transport map $\bt:\rr^d\rightarrow \rr^d$ which, for $i=1,..., N$, moves all the mass in cell $Q_{R/K}(X^i)$ to $X^i$. Then $\mu^N = \bt_{\#} \mu_R$ and,
\[
W^2_2(\mu_R, \mu^N)\leq  \int |\bt(x) - x|^2\, d\mu_R = \sum_{i=1}^{N}\int_{Q_{R/K}(X^i)} |\bt(x) - x|^2\, d\mu_R \leq  \sum_{i=1}^{N}\int_{Q_{R/K}(X^i)} \left( \frac{\sqrt{d}R}{K} \right)^{2}   \, d\mu_R = \left( \frac{\sqrt{d}R}{K} \right)^2, 
\]
where the second inequality follows from the fact that  mass in the $i$th cell stays in the $i$th cell, so the largest distance mass could be moved is the diagonal length of the cell,   $\frac{\sqrt{d}R}{K}$.  Finally, we conclude by using the definition of $K$ in (\ref{eq:choiceN}), together with the  estimate (\ref{eq:construct emp measure est}), and the triangle inequality:
\[
W_2(\mu, \mu^N)\leq W_2(\mu, \mu_R)+W_2(\mu_R, \mu^N)\leq \frac{\delta}{2} +\frac{\delta}{2}.
\]
\end{proof}

\section{Further properties of energies and gradient flows with regularization and confinement} \label{energyappendix}

We provide the proof of Lemma \ref{lem:semicontinuity}, which ensures that the energies $\E$ and $\E_\ep$ are lower semicontinuous with respect to narrow convergence.

\begin{proof}[Proof of Lemma \ref{lem:semicontinuity}]
First we consider $\E$.  For this energy, lower semicontinuity  follows from the following result of Buttazo \cite[Corollary 3.4.2]{buttazzo1989semicontinuity}: given  $g: \mathbb{R}^d \times \mathbb{R} \rightarrow [0,+\infty]$, consider the functional $\G : \mathcal{P}(\mathbb{R}^d) \rightarrow [0,+\infty]$ defined by, 
\begin{equation}	
\label{eq:intfunc}
\G(\mu) =\begin{cases} \int_{\mathbb{R}^d} g(x,\mu(x))dx ~~ \text{ if } \mu \ll \mathcal{L}^d,\\
	+ \infty ~~ {\rm otherwise.}
	\end{cases}
	\end{equation}
Then if (i) $g$ is  lower semicontinous, (ii) for every $x \in \mathbb{R}^d$, the function $g(x,\cdot)$ is convex on $\mathbb{R}$, and (iii) there exists $\theta : \mathbb{R} \rightarrow \mathbb{R}$ with
 $\lim_{t \rightarrow \infty} \frac{\theta(t)}{t} = \infty ~{\rm and}~ g(x,y)\geq \theta (|y|)$  for every $ x \in \mathbb{R}^d, y \in \mathbb{R}$, then 
 the functional $\G$ is lower semicontinuous with respect to narrow convergence.

We now verify these hypotheses: our energy $\E$  is of the form \eqref{eq:intfunc}, for $g(x,y) =\frac{y^2}{2 \bar{\rho}(x)}$, which satisfies (i) and (ii). Furthermore, by setting $\theta (t) =Ct^2$, where $C =  (\max_{x \in\mathbb{R}^d}2 \bar{\rho}(x))^{-1}$, we see that $g$ satisfies (iii). Thus, $\E$ is lower semicontinuous with respect to narrow convergence.
	
	The lower semicontinuity of $\E_\ep$ follows directly from the definition of $\E_\ep(\mu) = \E(\zeta_\epsilon *\mu)$, Lemma \ref{weakst convergence mollified sequence}, and the lower semicontiuity of $\E$. 
\end{proof}

We now prove Proposition \ref{prop:convexity ep=0} by applying the general results of Ambrosio, Gigli, and Savar\'e \cite{ambrosio2008gradient} to immediately characterize the convexity of $\E$, $\V$, $\V_k$, and $\V_\ep$.

\begin{proof}[Proof of Proposition \ref{prop:convexity ep=0}]
First we show item \ref{econvex}. Define the log-concave extension of $\bar{\rho}$ by
\begin{align*}
\tilde{\rho} = e^{-W} , \quad W(x) = \begin{cases} - \log \bar{ \rho}(x) & \text{ if } x \in \Omega , \\ +\infty &\text{ otherwise.} \end{cases}
\end{align*}
In this way, $\tilde{\rho} = \bar{\rho}$ on $\Omega$, but $\tilde{\rho}$ is log-concave on all of $\mathbb{R}^d$. Furthermore, for all $\rho \in \P_2(\Rd)$,
\[ \E(\rho) + \V_\Omega(\rho ) = \begin{cases} \int_\Rd \frac{|\rho(x)|^2}{\tilde{\rho}(x)} d \mathcal{L}^d(x) & \text{ if }\rho \ll \tilde{ \rho}, \\ + \infty &\text{ otherwise.} \end{cases} \]
Finally, \cite[Theorem 9.4.12]{ambrosio2008gradient} ensures the energy on the right hand side is convex along generalized geodesics.

Item \ref{vconvex} is a consequence of the fact that, for any potential $W:\Rd \to \R\cup \{+\infty\}$ that is proper, lower semicontinuous, bounded below, and $\lambda$-convex, the corresponding energy $\rho \mapsto \int W \rho$ is $\lambda$-convex along generalized geodesics \cite[Proposition 9.3.2]{ambrosio2008gradient}. Next, recall that that $V \in C^2(\Rd)$  with Hessian bounded below implies   $D^2 V \geq \lambda I_{d \times d}$ for $\lambda = \inf_{\{x, \xi \in \Rd\}} \xi^t D^2 V(x) \xi$, hence we also have $D^2 (\zeta_\epsilon * V) \geq \lambda I_{d \times d}$ for all $\epsilon >0$. In particular, both $V$ and $(\zeta_\epsilon*V)$ are $\lambda$-convex, which implies $\V$ and $\V_\epsilon$ are $\lambda$-convex along generalized geodesics. Likewise, since $V_k$ is continuous, bounded below, and convex, $\V_k$ is convex along generalized geodesics. 
\end{proof}

Next we prove Proposition \ref{prop:subdiff char ep>0}, characterizing the minimal element of the subdifferential of $\F_{\ep, k}$. 
  
\begin{proof}[Proof of Proposition \ref{prop:subdiff char ep>0}]
For simplicity of notation, denote,
\begin{align} \label{vprojdef}
 \bxi =  \grad \frac{ \delta \E_\ep}{\delta \mu} + \nabla (\zeta_\epsilon*V)  +\nabla V_k.
\end{align}
Note that Lemma \ref{subdifflemma} and  Remark \ref{subdiffsumrem} on the additivity of the subdifferential ensure that $\bxi \in \partial \F_{\epsilon,k}(\mu)$.
In order to conclude  $\bxi  \in \partial^\circ \F_{\epsilon, k}$, it remains to show that   $\|\bxi  \|_{L^2(\mu)} \leq |\partial \F_{\epsilon,k}|(\mu)$. Proposition \ref{metricslopesubdifabscts}  will then give the result.

 Fix  $\psi \in C^1(\Rd)$ satisfying $\nabla \psi\in L^2(\mu)$, and define $\mu_\alpha = ( \id + \alpha \grad \psi)_{\#}\mu  $. By definition of the Wasserstein distance from $\mu$ to $\mu_\alpha$ in terms of minimizing over all transport plans from $\mu$ to $\mu_\alpha$, equation (\ref{eq:wass-p}), and the fact that $\left(\id \times (\id + \alpha \grad \psi) \right)_{\#} \mu$ is such a plan, 
\begin{align*}
W_2(\mu_\alpha, \mu)  \leq \| ( \id + \alpha \grad \psi) - \id \|_{L^2(\mu)} = \alpha \|\nabla \psi \|_{L^2(\mu)} .
\end{align*}
By definition of the metric slope,
\begin{align} \label{Gmetricslopelowerbound}
|\partial \F_{\epsilon, k}|(\mu) &= \limsup_{\nu \to \mu} \frac{ (\F_{\epsilon, k}(\mu) - \F_{\epsilon, k}(\nu))_+}{W_2(\mu,\nu)} \geq  \limsup_{\alpha \to 0} \frac{ (\F_{\epsilon, k}(\mu) - \F_{\epsilon, k}(\mu_\alpha))_+}{W_2(\mu,\mu_\alpha)} 
\\& \geq \frac{1}{\|\nabla \psi\|_{L^2(\mu)} } \limsup_{\alpha \to 0} \frac{ (\F_{\epsilon, k}(\mu) - \F_{\epsilon, k}(\mu_\alpha))_+}{\alpha} \nonumber .
\end{align}

We now apply inequality (\ref{Gmetricslopelowerbound})   to complete the proof.
  Recall from the sentence following assumption (\ref{Vkas}) that $V_k \in L^1(\nu)$ and $\nabla V_k \in L^2(\nu)$ for all $\nu \in \P_2(\Rd)$. Hence, $\mu_\alpha \in D(\F_{\epsilon,k})$ for all $\alpha \geq 0$. Thus, combining inequality (\ref{Gmetricslopelowerbound}) with Proposition \ref{prop:Fepder}, which  characterizes the directional derivatives of $\E_\epsilon$, $\V_\epsilon$, and $\V_k$, applied with,  
\[ \bgamma = (\id, \id , \id + \nabla \psi)_{\#} \mu , \]
  we obtain,
 \begin{align*}
|\partial \F_{\epsilon,k}|(\mu) \| \nabla \psi \|_{L^2(\mu)} &\geq \lim_{\alpha \to 0} \frac{ \E_\epsilon(\mu ) - \E_\epsilon(\mu_\alpha)}{\alpha} + \frac{\V_\epsilon(\mu) - \V_\epsilon(\mu_\alpha)}{\alpha} +  \frac{\V_k(\mu) - \V_k(\mu_\alpha)}{\alpha} \\
&= -\frac12 \int \frac{\zeta_\epsilon * \mu (x)}{\bar{\rho}(x)}  \int  \la \nabla \zeta_\epsilon\big(x-  y_2\big ) ,y_3-y_2 \ra d\pmb{\gamma} (y_1,y_2,y_3) \\
&\quad + \int \la \nabla (\zeta_\epsilon*V)(y_2) + \nabla V_k(y_2),y_3-y_2\ra dx \\
&=  - \int     \la  \frac12\left( \nabla \zeta_\epsilon*  \left(\frac{\zeta_\epsilon * \mu }{\bar{\rho}}\right) \right)  + \nabla (\zeta_\ep*V) + \nabla V_k  ,   \nabla \psi   \ra \ d \mu . \end{align*}
 Since the above inequality holds for any $\psi \in C^1$ with $\nabla \psi \in L^2(\mu)$, taking, 
 \[ \psi = - \frac12 \left( \zeta_\epsilon*  \left(\frac{\zeta_\epsilon * \mu }{\bar{\rho}}\right) \right)  -   (\zeta_\epsilon*V) - V_k\text{, so that } \nabla \psi = - \bxi ,\]
 we obtain $|\partial \F_{\epsilon,k}|(\mu) \|\nabla \psi\|_{L^2(\mu)} \geq \|\nabla \psi\|_{L^2(\mu)}^2$. Dividing through by $ \|\nabla \psi\|_{L^2(\mu)} = \|\bxi \|_{L^2(\mu)}$   gives the result.
 \end{proof}
 We now turn to a proof of Proposition \ref{prop:PDE ep=0}, which characterizes  the gradient flow of $\F$ in terms of a partial differential equation.
\begin{proof}[Proof of Proposition \ref{prop:PDE ep=0}]
Note that $\mu$ is a gradient flow of $\F$, with initial data $\mu_0 \in \overline{D(\F)}$, then, according to Theorem \ref{curvemaxslope}, $\mu$ is unique and is a curve of maximal slope for $\F$.  
Since $\F \geq -\|V\|_\infty$, this implies that for any $t>0$,
\begin{align}
\int_0^t|\partial \F|^2 (\mu(r))\, dr &\leq \F(\mu_0) +\|V\|_\infty <+ \infty .    \label{eq:slope in L1}
\end{align}
This ensures that $|\partial \F |(\mu(t))<+\infty$ for $\mathcal{L}^1$ almost every $t>0$, and since $D(|\partial \F |) \subseteq D(\F)$, we also have, 
\begin{align} \label{FOmegapdecharenergyfinite}
    \F(\mu(t))< +\infty \quad \text{ for a.e. }t>0.
\end{align}

 By inequality (\ref{FOmegapdecharenergyfinite}), $\mu(t) \ll \mathcal{L}^d$   and $\mu = 0$ a.e. on $\Rd \setminus \overline{\Omega}$ for almost every $t \geq 0$. Furthermore, Proposition \ref{prop:subdiff char ep=0}   implies that, for almost every $t \geq 0$,  $(\mu(t)/\bar\rho)^2\in W^{1,1}_{\loc}(\Omega)$  and that there exists $\bxi(t)\in \partial^\circ \F(\mu)$ with, 
\begin{equation}
    \label{eq:bxi}
 \bxi(t) \mu(t)= \frac{\bar{\rho}}{2}    \nabla (\mu(t)^2/\bar\rho^2)  + \nabla V \mu(t)  \quad  \text{ on } \Omega \quad \text{ and } \quad |\partial \F|(\mu) = \| \bxi(t)\|_{L^2(\mu(t))}.
\end{equation}
By Definition \ref{gradientflowdef} of gradient flow, we obtain that $\mu$ satisfies the continuity equation (\ref{eq:continuity eqn defn gf}) with $v(t)=-\bxi(t)$. Using the expression (\ref{eq:bxi}) for $\bxi$ therefore yields (\ref{eq:PDE E}). Finally, the containment (\ref{eq:soln space2})  follows from inequality (\ref{eq:slope in L1}) and equation (\ref{eq:bxi}).

On the other hand, suppose $\mu$ solves (\ref{eq:PDE E}) and satisfies (\ref{eq:soln space1}-\ref{eq:soln space2}). Then, defining $\bxi$ on the support of $\mu$ via (\ref{eq:bxi}) implies that the hypotheses of Proposition \ref{prop:subdiff char ep=0} are satisfied, so $\bxi \in \partial^\circ \F (\mu)$. From this we find that   (\ref{eq:PDE E}) is exactly the continuity equation in Definition \ref{gradientflowdef}  of the gradient flow, with $v(t)=-\bxi(t)$ satisfying $\|v(t)\|_{L^2(\mu(t)} \in L^1_\loc(0,+\infty)$. Thus, we have that $\mu \in AC^2([0,T];\P_2(\Rd))$ \cite[Theorem 8.3.1]{ambrosio2008gradient}, hence $\mu(t)$ is the unique gradient flow of $\F$ with initial data $\mu_0$, completing the proof of the proposition.
\end{proof}

The next result is a proof of Proposition \ref{PDEFeps}, which characterizes the gradient flow of $\F_{\epsilon,k}$ in terms of a partial differential equation.

\begin{proof}[Proof of Proposition \ref{PDEFeps}]
Suppose that $\mu(t)$ is the gradient flow of $\F_{\epsilon,k}$. Then the fact that $\mu(t)$ satisfies (\ref{eq:PDE Eepk}) follows directly from Definition \ref{gradientflowdef}, Proposition \ref{prop:subdiff char ep>0}, and Theorem \ref{curvemaxslope}. 

Now suppose that $\mu(t)$ satisfies (\ref{eq:PDE Eepk}). Then, the fact that the velocity field in the continuity equations is uniformly bounded ensures, by \cite[Theorem 8.3.1]{ambrosio2008gradient}, that  $\mu \in AC^2([0,T];\P_2(\Rd))$. Thus, the fact that $\mu$ is the gradient flow of $\F_{\epsilon,k}$ is again a consequence of Definition \ref{gradientflowdef}, Proposition \ref{prop:subdiff char ep>0}, and Theorem \ref{curvemaxslope}.
\end{proof}

We now consider the proof of Proposition \ref{prop:ODE Eep}, which shows that the gradient flow of $\F_{\epsilon,k}$ beginning at an empirical measure remains an empirical measure for all time and characterizes the ODE governing the evolution of the locations of the Dirac masses.

\begin{proof}[Proof of Proposition \ref{prop:ODE Eep}]
First note that, for all $\ep>0$ fixed,  the function of $(X^1, ..., X^N)$ that appears on the right-hand side of (\ref{eq:ODEepk}) is Lipschitz continuous, and therefore the ODE system (\ref{eq:ODEepk}) is well-posed.  
Suppose $X^i(t)$ solves (\ref{eq:ODEepk}). We claim that it suffices to show that $\mu(t)  = \sum_{i=1}^N \delta_{X^i(t)}m^i $   solves (\ref{eq:PDE Eepk}). Proposition \ref{PDEFeps} then ensures that $\mu(t)$ is the unique solution of the gradient flow.

The fact that     $\lim_{t \to 0^+} \mu(t) = \mu(0) $ in $W_2$  follows immediately from the definition of $\mu(t)$ and $\mu(0)$. 
Next, note that,  
\begin{align}
-\int_{\rr^d} \nabla \zeta_\ep(X^i(t)-z)\frac{1}{\bar\rho(z)}\sum_{j=1}^{N} m^j\zeta(z-X^j(t)) \, dz   \nonumber &= -\int_{\rr^d} \nabla \zeta_\ep(X^i(t)-z)\frac{1}{\bar\rho(z)} \zeta(z-y) \, d \mu(y)dz    \nonumber  \\
&= -\left(  \nabla\zeta_\ep*\left(\frac{\zeta_\ep*\mu}{\bar \rho}\right) \right)(X^i(t)) . \label{eq:velocity at mu ep}
\end{align}
Now, fix a test function $f\in C^\infty_c(\rr^d\times (0,+\infty))$.  By the Fundamental Theorem of Calculus and equation (\ref{eq:velocity at mu ep}), for each $1\leq i\leq N$,
\begin{align*}
 0 &= \int_0^\infty \frac{d}{dt}f(X^i(t),t)\, dt = \int_0^\infty \la \nabla f(X^i(t), t) , \dot{X}^i(t)\ra + \partial_t f(X^i(t), t)\, dt  \\
 &= \int_0^\infty \la\nabla f(X^i(t), t) , \left(-\left(  \nabla\zeta_\ep*\left(\frac{\zeta_\ep*\mu}{\bar \rho}\right) \right)(X^i(t))-  \nabla (\zeta_\epsilon*V)(X^i(t)) - \nabla V_k(X^i(t))\right) \ra + \partial_t f(X^i(t), t) \, dt.
\end{align*}
Multiplying by $m^i$, summing over $i$, and recalling the definition of $\mu$ yields,
\begin{align*}
0  
&= \int_0^\infty \int_{\rr^d} \la\nabla f(x, t) , \left(-\left(  \nabla\zeta_\ep*\left(\frac{\zeta_\ep*\mu}{\bar \rho}\right) \right)(x)-  \nabla (\zeta_\epsilon*V)(x) - \nabla V_k(x) \right)\ra + \partial_t f(x, t) \, d\mu  (x,t) \, dt .
\end{align*}
Thus,   $\mu $ is a distributional solution of the continuity equation (\ref{eq:PDE Eepk}).
  \end{proof}

\section{Explicit formulas for numerical method} \label{formulassection}
In this section, we collect a few explicit formulas that we use in the implementation of our numerical method. 
For our choices of uniform (\ref{uniform}), log-concave (\ref{logconcave}), and piecewise constant (\ref{pwconstant}) target, we have explicit formulas for the functions $f(x,y)$ and $g(x,y)$ defined in section \ref{numericaldetailssec}: see equations (\ref{fdefintro}) and (\ref{gxydef}). For the log-concave target measure, we obtain,
	\begin{align*}
 	f(x_i,x_j) &= \frac{-2\ep^2 x_i - 6 \ep^2 x_j + x_i^3 + x_i^2x_j - x_i x_j^2 + 4 x_i - x_\J^3 - 4 x_j}{16 \sqrt{\pi} \ep^3} C_{\bar{\rho}}  e^{-(x_i-x_j)^2/(4 \epsilon^2)}  \\
g(x_i,x_j) &=  [\psi_{ij}(+\infty ) - \psi_{ij}(-\infty ) ] \\
	\psi_{ij}(z) &=  \frac{C_{\bar{\rho}}}{8}   \ep e^{\frac{-(x_i^2+x^2_j+2z^2)}{2\ep^2}}  \Big ( -\sqrt{\pi} (2\ep^2+x_i^2+2x_ix_j+z^2+4)  \\  & \quad\quad \quad e^{\frac{(x_i^2+x^2_j+2x_ix_j)}{4\ep^2}} {\rm erf} \big( \frac{x_i+x_j-2z}{2}\big) -2 \ep (x_i+x_j+2z) \ep^{\frac{z(x_i+x_j)}{\ep^2}}\Big ) .
	\end{align*}

	For the uniform and piecewise constant targets, note that both may be expressed as,
	\begin{align*} \bar{\rho}(x) &= \sum_{k =1}^N c_k  \indi_{[b_k,b_{k+1}]}(x), 
	\end{align*}
	where  $\{c_k\}_{k=1}^N$ are positive constants chosen so that $\int_\Omega \bar{\rho} = 1$, $\{b_k\}_{k=1}^{N+1} \subseteq \R$.
	For any  target  of this form, we obtain
\begin{align*} 
	f(x_i,x_j) &=    \sum_{k=1}^N c_k^{-1} \left[ \varphi_{ij}(b_{k+1}) - \varphi_{ij}(b_k) \right]  \\
	\varphi_{ij}(z) &= - \frac{e^{-(x_i^2 + 2 z^2 +   x_j^2)/(2 \epsilon^2)}}{8 \pi \epsilon^3} \left(2 \epsilon e^{z(x_i+x_j)/ \epsilon^2)} - \sqrt{\pi} (x_i - x_j) e^{((x_i+x_j)^2+4z^2)/(4 \epsilon^2)} {\rm erf}\left(\frac{x_i - 2 z + x_j}{2 \epsilon}\right) \right)  \nonumber \\
	&\quad \quad\quad + \frac{e^{-(x_i^2 +  x_\J^2)/(2 \epsilon^2)}}{8 \pi \epsilon^3} \left(2 \epsilon   - \sqrt{\pi} (x_i - x_j) e^{(x_i+x_j)^2 /(4 \epsilon^2)} {\rm erf}\left(\frac{x_i + x_j}{2 \epsilon}\right) \right) \\ 
	g(x_i,x_j) &=  \sum_{k=1}^N [\psi_{ij}(b_{k+1}) - \psi_{ij}(b_k ) ] \\
\psi_{ij}&=	\frac{-1 }{4\sqrt{\pi}}e^{\frac{-(x_i-x_j)^2}{4\ep^2}}{\rm erf}\big(  \frac{x_i+x_j -2z}{2\ep} \big).
\end{align*}

\addtocontents{toc}{\protect\setcounter{tocdepth}{1}}

\bibliographystyle{abbrv}
\bibliography{Blob}
\end{document}